\newtheorem{theorem}{Theorem}[section]
\newtheorem{lemma}[theorem]{Lemma}
\newtheorem{proposition}[theorem]{Proposition}
\theoremstyle{definition}
\theoremstyle{remark}
\newtheorem{remark}[theorem]{Remark}
\numberwithin{equation}{section}
\newcommand{\tin}{t_{\rm in}}
\newcommand{\e}{\varepsilon}
\newcommand{\h}{H^{\frac{1}{2}}}
\newcommand{\D}{|D|^{\frac{1}{2}}}
\newcommand{\dd}{{\rm d}}
\newcommand{\R}{\widetilde{R}}
\newcommand{\x}{\tilde{\tilde{x}}}
\newcommand{\U}{\tilde{\tilde{\mu}}}
\begin{document}

\title{Strongly interacting multi-solitons for generalized Benjamin-Ono equations}\def\rightmark{STRONGLY INTERACTING MULTI-SOLITON}

\author{Yang Lan}
\address{Yau Mathematical Sciences Center, Tsinghua University, 100084 Beijing, P. R. China}
\email{lanyang@mail.tsinghua.edu.cn}

\author{Zhong Wang}
\address{School of Mathematics and Big Data, Foshan University, 528000, P. R. China}
\email{wangzh79@fosu.edu.cn}

\keywords{generalized Benjamin-Ono equations, multi-soliton, strong interaction}

\subjclass[2010]{Primary 35B40; Secondary  35Q51, 35Q53.}

\begin{abstract}
We consider the generalized Benjamin-Ono equation: 
$$\partial_tu+\partial_x(-|D|u+|u|^{p-1}u)=0,$$ with $L^2$-supercritical power $p>3$ or $L^2$-subcritical power $2<p<3$. We will construct strongly interacting multi-solitary wave of the form: $\sum_{i=1}^nQ(\cdot-t-x_i(t))$, where $n\geq 2$, and the parameters $x_i(t)$ satisfying $x_{i}(t)-x_{i+1}(t)\sim \sqrt{t}$ as $t\rightarrow +\infty$. We will also prove the uniqueness of such solutions in the case of $n=2$ and $p>3$.
\end{abstract}

\maketitle

\section{Introduction}
\subsection{Setting of the problem}
In this paper, we consider the following generalized Benjamin-Ono equations:
\begin{equation}
\label{CP}
\begin{cases}
\partial_tu+\partial_x(-|D|u+|u|^{p-1}u)=0,\quad (t,x)\in[0,+\infty)\times\mathbb{R},\\
u(0,x)=u_0(x)\in \h,
\end{cases}
\end{equation}
where $p>1$, and $|D|$ is defined as follows:
$$\mathcal{F}(|D|u)(\xi)=|\xi|\widehat{u}(\xi).$$

This is a natural generalization of the classical Benjamin-Ono equation:
\begin{equation}
\label{11}
\partial_tu+\partial_x(-|D|u+u^2)=0,
\end{equation}
introduced by Benjamin \cite{BJ} and Ono \cite{Ono} as a model for one-dimensional waves in deep water. We also refer to \cite{ABFS, BK, KM, KMR, LMFR1, LMFR2} and references therein for intensively study of this equation both mathematically and numerically.

The Cauchy Problem \eqref{CP} is locally well-posed in $\h$, in the sense that for all $u_0\in H^{\frac{1}{2}}(\mathbb{R})$, there exists a unique maximal solution $u(t)\in \mathcal{C}([0,T),\h)$ for \eqref{CP}. Here $T\in(0,+\infty]$ is the maximal lifespan of this solution, we also have the following blow-up criterion: if $T<+\infty$, then
\begin{equation}
\label{12}
\lim_{t\rightarrow T^-}\|u(t)\|_{\h}=+\infty.
\end{equation}
We refer to \cite{BP,IK,KM,KMR,KPV1,KT,LMFR1,LMFR2,Vento} for the proof of the local wellposedness result (We mention here for certain $p$, sharper results are proved. But local wellposedness in $\h$ is sufficient for this paper ).
 Moreover, the mass and energy are conserved by the flow of \eqref{CP}:
\begin{equation}
\label{13}
M(u(t))=\frac{1}{2}\int u^2(t),\quad E(u(t))=\frac{1}{2}\int \big|\D u(t)\big|^2-\frac{1}{p+1}\int |u|^{p+1}(t).
\end{equation}

The equation \eqref{CP} also has the following symmetries: if $u(t,x)$ is a solution, then
\begin{equation}
\label{14}
u_{\lambda_0,t_0,x_0}(t,x)=\frac{1}{\lambda_0^{1/(p-1)}}u\bigg(\frac{t-t_0}{\lambda_0^2},\frac{x-x_0}{\lambda_0}\bigg).
\end{equation}
It is easy to see that the above transform leaves the $\dot{H}^{s_c}(\mathbb{R})$ norm of the initial data invariant, where $s_c=1/2-1/(p-1)$. The Cauchy problem \eqref{CP} is called 
\begin{itemize}
\item $L^2$-subcritical, if $p<3$ (or equivalently $s_c<0$); 
\item $L^2$-critical, if $p=3$ (or equivalently $s_c=0$); 
\item $L^2$-supercritical, if $p>3$ (or equivalently $s_c>0$).
\end{itemize}

There exists a special class of solutions called \emph{solitary waves} given by 
$$u(t,x)=Q_c(x-ct),$$
where $Q_c(y)=c^{\frac{1}{p-1}}Q(cy)$, $c>0$ and $Q\in H^{\frac{1}{2}}(\mathbb{R})$ satisfying
\begin{equation}
\label{15}
-|D|Q-Q+Q^p=0.
\end{equation}
This function $Q$ is called \emph{ground state} and is related to the best constant problem of the following Gagliardo-Nirenberg inequality:
\begin{equation}
\label{16}
\forall v\in\h,\quad \int |v|^{p+1}\leq C_p\bigg(\int \big|\D v\big|^2\bigg)^{\frac{p-1}{2}}\bigg(\int |v|^2\bigg).
\end{equation}
Existence of a nonnegative even solution to \eqref{15} was proved by Albert-Bona-Saut \cite{ABS} and Weinstein \cite{W1,W2}. Uniqueness of such a solution (up to symmetries) was proved by Amick-Toland \cite{AT}, Frank-Lenzmann \cite{FL} and Frank-Lenzmann-Silvestre \cite{FLS}. Moreover, the ground state $Q$ has the following asymptotic behavior: 
$$Q(y)\sim\frac{1}{y^2},$$
as $|y|\rightarrow+\infty.$

Recall that in the subcritical case ($p<3$), the solitary waves are orbitally stable due to \cite{BJ2, BBSSB, W1}. The solitary wave of the classical Benjamin-Ono equation \eqref{11} is asymptotically stable due to \cite{KM}. We refer to \cite{GTT,KM} for the  stability in $\h$ of the sum of solitary waves with distinct velocity%
\footnote{This case corresponds to weakly interacting multi-solitary waves.}
 for classical Benjamin-Ono equation \eqref{11}. We also refer to \cite{Ea}  for the existence of the sum of solitary waves in energy space for $L^2$ subcritical fractional KdV equations (including the original Benjamin-Ono equations).

\subsection{Main result}
In this paper, we consider the  generalized  Benjamin-Ono equations in both $L^2$ subcritical and supercritical cases. We are searching for strongly interacting multi-solitary wave of the following form:
\begin{equation}
\label{17}
u(t,x)\sim \sum_{i=1}^nQ(x-t-x_i(t)),
\end{equation}
with $x_1(t)>x_2(t)>\cdots>x_n(t)$, and $x_i(t)-x_{i+1}(t)\ll t$, as $t\rightarrow+\infty$. More precisely, we have:
\begin{theorem}[Existence]\label{MT1}
Let $p\in(2,3)\cup(3,+\infty)$, $n\geq 2$. There exist $t_0\gg1$ and a solution $u\in\mathcal{C}([t_0,+\infty),H^{\frac{1}{2}}(\mathbb{R}))$ to \eqref{CP}, with the following behavior:
\begin{equation}\label{18}
\lim_{t\rightarrow+\infty}\bigg\|u(t,\cdot)-\sum_{i=1}^n\sigma_iQ\big(\cdot-t-x_i(t)\big)\bigg\|_{\h}=0,
\end{equation}
where 
\begin{equation}
\label{130}
x_{i}(t)=\alpha_i\sqrt{t}+\beta_i\log t+\gamma_i
\end{equation}
with some universal constants $\alpha_i=\alpha_i(p,n)$, $\beta_i=\beta_i(p,n)$, $\gamma_i=\gamma_i(p,n)$, satisfying $\alpha_1>\alpha_2>\cdots>\alpha_n$. Moreover, $\sigma_i=(-1)^{i-1}$ if $2<p<3$; $\sigma_1=\cdots=\sigma_n=1$, if $p>3$.
\end{theorem}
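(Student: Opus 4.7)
The plan is to construct the solution by a backward-in-time compactness argument built on top of a sharp ansatz whose positions obey a reduced ODE system with $\sqrt t$-separation. As approximate solution I would take the modulated sum
$$R(t,x)=\sum_{i=1}^n \sigma_i Q_{c_i(t)}\big(x-y_i(t)\big),\qquad Q_c(y)=c^{\frac{1}{p-1}}Q(cy),$$
with velocities $c_i(t)$ close to $1$ and positions to be chosen so that $y_i(t)-t=\alpha_i\sqrt t+\beta_i\log t+\gamma_i+o(1)$. Plugging $R$ into \eqref{CP} and using the profile equation \eqref{15}, the residual is
$$E=-\sum_i\sigma_i(\dot y_i-c_i)Q'_{c_i}(\cdot-y_i)+\sum_i\sigma_i\dot c_i\,\partial_cQ_{c_i}(\cdot-y_i)+\partial_x N,$$
with nonlinear interaction $N=|R|^{p-1}R-\sum_i\sigma_i Q_{c_i}^p$. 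Near the $j$-th soliton, $N$ expands as $pQ_{c_j}^{p-1}\sum_{i\neq j}\sigma_i Q_{c_i}+(\text{smaller})$, and the slow algebraic tail $Q(y)\sim c_Q/y^2$ makes these interactions polynomial in the inter-soliton distance.

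I would derive the modulation ODEs by imposing orthogonality of the error $v=u-R$ against both $Q'_{c_j}(\cdot-y_j)$ and $\Lambda Q_{c_j}(\cdot-y_j)=\partial_cQ_c\big|_{c=c_j}(\cdot-y_j)$, the generalized null directions of the linearized operator $\mathcal{L}_j=-|D|-c_j+pQ_{c_j}^{p-1}$. Eliminating $\dot c_j$ between the two conditions produces an effective second-order equation of the form
$$\ddot y_j \;\sim\; \sigma_j\sum_{i\neq j}\sigma_i\,\frac{K_p}{(y_j-y_i)^3}+\text{lower order},$$
whose coefficient $K_p$ is proportional to $\tfrac{d}{dc}\|Q_c\|_{L^2}^2\big|_{c=1}$ and whose sign flips at $p=3$. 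To obtain a repulsive system with all distances diverging I take $\sigma_i=(-1)^{i-1}$ when $2<p<3$ and $\sigma_i=1$ when $p>3$. Inserting the ansatz $y_i(t)-t=\alpha_i\sqrt t+\beta_i\log t+\gamma_i$ into the reduced ODE, the $t^{-3/2}$-balance fixes $(\alpha_i)_i$ as the unique strictly decreasing solution of the algebraic system $-\alpha_j/4=K_p\sum_{i\neq j}\sigma_j\sigma_i/(\alpha_j-\alpha_i)^3$, and the next order then determines $\beta_i$.

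For the true solution I would fix $T_n\to+\infty$ and let $u_n$ solve \eqref{CP} backward from $t=T_n$ with terminal data $u_n(T_n)=R(T_n)$. Decomposing $u_n=\widetilde R+v$ with modulated parameters $(\widetilde c_j,\widetilde y_j)$ determined by the orthogonality conditions, the key technical step is the uniform-in-$n$ bootstrap
$$\|v(t)\|_{\h}+\sum_i\big(|\widetilde c_i(t)-1|+|\widetilde y_i(t)-y_i(t)|\big)\lesssim t^{-\delta},\qquad t\in[t_0,T_n],$$
with $t_0$ independent of $n$. This is obtained from a coercive Lyapunov functional assembled out of localized pieces of the energy--mass conservation laws around each soliton, whose time derivative is controlled using the modulation ODE together with the interaction decay. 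Weak $\h$-compactness of $\{u_n(t_0)\}$ and local well-posedness then yield the desired limiting solution, whose asymptotic behavior \eqref{18}--\eqref{130} is inherited.

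The hardest part will be the uniform bootstrap bound on $v$. The polynomial (rather than exponential) decay of $Q$, combined with the nonlocality of $|D|$, leaves very little room in the hierarchy of smallness, so all error terms must be tracked with sharp powers of $1/t$. In the supercritical range $p>3$ the operator $\mathcal L$ has a strictly negative eigenvalue and the natural energy--mass functional fails to be coercive on $v$; I would handle this by imposing an extra orthogonality against the unstable eigenfunction and upgrading the compactness step to a topological shooting argument, perturbing the terminal data in $n$ real parameters (one per soliton) and using a Brouwer-type degree argument to select parameters for which the solution remains in the bootstrap regime all the way down to $t_0$.
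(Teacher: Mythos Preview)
Your overall architecture matches the paper almost exactly: backward-in-time construction from $T_n\to\infty$, modulation with two parameters per bubble, a localized energy--mass Lyapunov functional for the monotonicity, a Brouwer shooting on $n$ unstable directions in the supercritical case, and a final compactness step. The reduced ODE you write down, the sign choices for $\sigma_i$, and the determination of $(\alpha_i)$ at the $t^{-3/2}$ balance are all the same as in the paper.

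There is, however, one genuine gap. Your ansatz $R=\sum_i\sigma_iQ_{c_i}(\cdot-y_i)$ is too crude to close the bootstrap. Because $Q(y)\sim\kappa_0/y^2$, the leading interaction near the $j$-th bubble is $\partial_x\big(p\,\kappa_0\sigma_i Q_{c_j}^{p-1}/(y_j-y_i)^2\big)$, which is of size $t^{-1}$ in $\h$ and is \emph{not} in the span of $\partial_yQ_{c_j}$ and $\Lambda Q_{c_j}$; so after the modulation projections the residual $E_R$ still satisfies $\|E_R\|_{\h}\sim t^{-1}$. In the monotonicity estimate this produces a source term of order $t^{-1}\|v\|_{\h}$, and since $\int^\infty s^{-1}\,ds$ diverges, no bootstrap of the form $\|v\|\lesssim t^{-\delta}$ can close (you get back $\|v\|^2\lesssim t^{-\delta}$, i.e.\ $\|v\|\lesssim t^{-\delta/2}$). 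The paper's fix is to refine the approximate solution to
\[
V=\sum_i\sigma_iQ_{1+\mu_i}(\cdot-x_i)+\sum_{i\ne j}\frac{A_{ij}(\cdot-x_i)}{x_{ij}^2}+\sum_{i\ne j}\frac{B_{ij}(\cdot-x_i)}{x_{ij}^3}\varphi_{ij},
\]
with $A_{ij},B_{ij}$ solving suitable $\mathcal L$-equations so that all $t^{-1}$, $t^{-3/2}$ and $t^{-2}$ contributions to the residual cancel, leaving $\|E_V\|_{\h}\lesssim d(t)^{-9/2}\sim t^{-9/4}$. This is precisely what makes the energy integral convergent and allows $\|\varepsilon\|_{\h}\lesssim t^{-5/4+\delta_0}$. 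The construction of $A_{ij},B_{ij}$ (and the resulting refinement of the ODE that produces the $\beta_i\log t$ term) is the main new analytic content here compared with the gKdV case, where exponential tails mean no such corrections are needed. You correctly flag that ``all error terms must be tracked with sharp powers of $1/t$'', but the mechanism for achieving that sharpness---adding profile corrections to the ansatz rather than merely estimating more carefully---is missing from your plan.

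Two minor points. First, the paper imposes orthogonality of $\varepsilon$ against $\widetilde R_i$ and $\partial_y\widetilde R_i$, not against $\Lambda Q$ and $Q'$; the choice $\widetilde R_i$ is convenient because $\mathcal L_i(\partial_y\widetilde R_i)=0$ gives the improved $\dot\mu_i$ estimate directly. Second, in the supercritical case the paper does not add an orthogonality against the unstable eigenfunction; it keeps the same two conditions, controls $(\varepsilon,Z_i^+)$ by direct integration of the linear ODE it satisfies, and uses Brouwer only for the $n$ backward-unstable directions $(\varepsilon,Z_i^-)$, exactly as you describe in your last sentence.
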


In case of $p>3$ and $n=2$, we can also prove the uniqueness of solutions with the form \eqref{17}:
\begin{theorem}[Uniqueness]\label{MT2}
Let $p>3$, $n=2$, and $u\in\mathcal{C}([t_0,+\infty),H^{\frac{1}{2}}(\mathbb{R}))$ be a solution to \eqref{CP} satisfying:
\begin{equation}
\label{19}
\lim_{t\rightarrow+\infty}\bigg\|u(t,\cdot)-\sum_{i=1}^2\sigma_{i}Q\big(\cdot-t-x_i(t)\big)\bigg\|_{\h}=0,
\end{equation}
for some functions $x_i(t)$ and $\sigma_i\in\{\pm1\}$. Assume that $\sigma_1=1$ and 
\begin{equation}\label{110}
\lim_{t\rightarrow+\infty}x_1(t)-x_{2}(t)=+\infty,
\end{equation}
then we have $\sigma_2=1$, and
\begin{equation}\label{111}
\lim_{t\rightarrow+\infty}\frac{x_1(t)-x_2(t)}{\sqrt{t}}=\alpha_1-\alpha_2,
\end{equation}
where $\alpha_i$ are the universal constants defined in Theorem \ref{MT1}.
\end{theorem}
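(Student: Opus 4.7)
The plan is to reduce the asymptotic constraint \eqref{19} to a two-body ODE problem for the soliton positions via a modulation decomposition, and then to analyze the resulting ODE to pin down both $\sigma_2$ and the leading asymptotic coefficient. Since $Q$ is non-degenerate in the direction $\partial_x Q$, I would first apply the implicit function theorem in $\h$ to replace the given $x_i(t)$ by modulated parameters $\tilde x_i(t)$ with $|\tilde x_i-x_i|\to 0$, so that
$$u(t,x)=\sigma_1 Q(x-t-\tilde x_1(t))+\sigma_2 Q(x-t-\tilde x_2(t))+\e(t,x),\qquad \langle\e(t,\cdot),\partial_x Q(\cdot-t-\tilde x_i(t))\rangle=0,$$
and $\|\e(t)\|_{\h}\to 0$. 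A standard projection of \eqref{CP} onto $\partial_x Q(\cdot-t-\tilde x_i)$ then forces $\dot{\tilde x}_i(t)\to 0$, hence $\tilde z(t):=\tilde x_1(t)-\tilde x_2(t)\to+\infty$ with $\tilde z(t)=o(t)$ and $\dot{\tilde z}(t)\to 0$, placing us in the strongly interacting regime.

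Next I would derive an effective ODE for $\tilde z(t)$. Differentiating the decomposition, plugging into \eqref{CP}, and projecting onto $\partial_x Q(\cdot-t-\tilde x_i)$ yields the modulation equations; the polynomial tail $Q(y)\sim 1/y^2$ drives the cross-terms in $|u|^{p-1}u$, and after collecting the leading contributions (possibly via an auxiliary scaling modulation for each soliton) the combined equation takes the form
$$\ddot{\tilde z}(t)=-\frac{\kappa}{\tilde z(t)^3}+\mathcal R(t),$$
where $\kappa=\kappa(p,\sigma_1,\sigma_2)$ is a universal constant whose sign is pinned down by the existence construction of Theorem \ref{MT1} --- for $p>3$ and $(\sigma_1,\sigma_2)=(1,1)$ one has $\kappa>0$, while flipping $\sigma_2$ reverses the sign --- and $\mathcal R(t)$ is a remainder controlled by $\|\e(t)\|_{\h}$ and higher-order tail corrections. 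The approximate first integral is $\mathcal E(t):=\tfrac12\dot{\tilde z}(t)^2-\kappa/(2\tilde z(t)^2)$, and the boundary conditions $\tilde z\to\infty$, $\dot{\tilde z}\to 0$ force $\mathcal E(t)\to 0$. If $\sigma_2=-1$, then $\kappa<0$ and $\mathcal E(t)=\tfrac12\dot{\tilde z}^2+|\kappa|/(2\tilde z^2)$ is a sum of two nonnegative terms which cannot vanish asymptotically while remaining approximately conserved, a contradiction; hence $\sigma_2=1$. With $\sigma_2=1$ and $\kappa>0$, the vanishing $\mathcal E\to 0$ together with $\dot{\tilde z}>0$ gives $\dot{\tilde z}\sim\sqrt\kappa/\tilde z$, whose integration yields $\tilde z(t)\sim(2\sqrt\kappa)^{1/2}\sqrt t$. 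Matching with the explicit constants constructed in Theorem \ref{MT1} identifies $(2\sqrt\kappa)^{1/2}=\alpha_1-\alpha_2$; since $|\tilde x_i-x_i|\to 0$, this transfers to $x_1-x_2$, giving \eqref{111}.

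The main obstacle will be to make the above ODE analysis rigorous by propagating fine enough control of $\|\e(t)\|_{\h}$ that the remainder $\mathcal R(t)$ is genuinely subdominant to $\kappa/\tilde z(t)^3\sim t^{-3/2}$. In the $L^2$-supercritical regime the linearization of \eqref{CP} around $Q$ has a true unstable direction, so the two position parameters $\tilde x_i$ do not suffice to control $\e$. Following the strategy used in the supercritical gKdV multi-soliton literature, I would introduce an auxiliary modulation (typically a scaling for each soliton) and couple it with a virial-type monotonicity estimate run backwards in time to neutralize the unstable mode; coercivity of the linearized operator on the orthogonal complement then propagates the smallness of $\e$ and closes the bootstrap that validates the ODE analysis above.
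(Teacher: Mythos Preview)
Your overall architecture (modulation, effective two-body ODE, then asymptotic integration) matches the paper, but the argument ruling out $\sigma_2=-1$ has a genuine gap. You claim that when $\kappa<0$ the approximate first integral $\mathcal E=\tfrac12\dot{\tilde z}^2+|\kappa|/(2\tilde z^2)$ ``cannot vanish asymptotically while remaining approximately conserved.'' But both summands do go to zero under your own boundary conditions $\dot{\tilde z}\to 0$, $\tilde z\to\infty$, and approximate conservation only says $\mathcal E(t)$ is close to its limit, not that it is identically zero. To turn this into a contradiction you would need $|\dot{\mathcal E}|\ll 1/\tilde z^3$, hence $\mathcal R(t)\ll 1/\tilde z^3$, hence $\|\e(t)\|_{\h}^2\lesssim 1/\tilde z^4$---precisely the a priori control you have not yet established. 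The circularity is real: you cannot decide the sign from the ODE before you control the remainder, and you cannot control the remainder without handling the supercritical instability, which you defer to the last paragraph.

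The paper breaks this circle not through a virial/monotonicity argument but through the \emph{exact} energy conservation law. Since $u(t)\to R_1+\sigma R_2$ in $\h$, one has $H(u)\equiv 2H(Q)$; a direct expansion (the paper's Lemma~\ref{L2}) gives $H(R_1+\sigma R_2)=2H(Q)-\sigma\kappa_0/z^2+O(z^{-3})$, so for $\sigma=-1$ the two-soliton sits \emph{strictly above} the level $2H(Q)$ by an amount $\sim 1/z^2$, and this excess must be absorbed by the error term. Combined with the coercivity of $D^2H$ modulo the eigenfunctions $Z^\pm$ of $\mathcal L\partial_y$, this yields (Proposition~\ref{P5}, case $\sigma=-1$) a bound $1/z^2\lesssim\sum_i(\e,Z^\pm_i)^2$. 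Independently, the projections $a_i^\pm=(\e,Z_i^\pm)$ obey the linear law $\dot a_i^\pm\mp e_0 a_i^\pm=O(\text{small})$; using only $\|\e\|\to 0$ and $z\to\infty$ one shows these projections are eventually $\lesssim 1/z^2$ in a suitable sup sense (Lemmas~\ref{L15}--\ref{L16}), which simultaneously gives $\|\e\|^2\lesssim 1/z^4$ and, plugged back into the $\sigma=-1$ inequality, forces $1/z^2\lesssim 1/z^4$, a contradiction. Thus the sign is fixed by energy, not by the mechanical first integral, and the same energy identity supplies the $\|\e\|_{\h}^2\lesssim 1/z^4$ bound that then makes your ODE step for \eqref{111} rigorous. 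Your suggestion of an extra scaling modulation plus backward virial is the machinery for the \emph{existence} construction; for uniqueness the paper replaces it by this forward-in-time use of $H(u)=2H(Q)$.
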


\begin{remark}
The solution mentioned above is also known as \emph{multi-pole solution}. The interaction between any two bubbles of the solution is strong in the sense that the relative velocities between any two bubbles are the same and the distance between them is asymptotically $O(\sqrt{t})$, which is much smaller than $O(t)$. In the context of inverse scattering theory, the existence of multi-pole solutions  are proved for many completely integrable models such as mKdV \cite{WO} and cubic NLS equations \cite{Olm}. We mention here the classic Benjamin-Ono equation \eqref{11} does not possess multi-pole solutions since the operator $L$ of the Lax pair has finitely many simple discrete eigenvalues due to \cite{Wy}. There is another example of integrable models which possesses the multi-pole solutions, the Kadomtsev-Petviashvil I (KP-I) equation. It was shown in \cite{VA} that there exist multi-pole solutions with the relative distance $O(\sqrt{t})$, the same as the generalized Benjamin-Ono equations.

There are some other examples of multiple pole solutions for non-integrable models:
\begin{enumerate}
\item Nguyen \cite{V1} constructed double pole solution for both $L^2$ subcritical and $L^2$ supercritical generalized KdV equations;
\item Nguyen \cite{V2} constructed double pole solution for $L^2$ subcritical and supercritical nonlinear Schr\"odinger equations;
\item Martel-Nguyen \cite{MN} constructed double pole solution for one dimensional cubic Schr\"odinger system;
\item Aryan \cite{As} for nonlinear Klein-Gordon equations.
\end{enumerate}
\end{remark}
\begin{remark}
There are also some other examples for blow-up solutions with strongly interacting bubbles:
\begin{enumerate}
\item Martel-Rapha\"{e}l \cite{MR} for $L^2$ critical NLS;
\item Cort\'{a}zar-Del Pino-Musso \cite{CDM} for energy critical nonlinear heat equations in domain;
\item Jendrej \cite{J1,J2} for focusing energy critical wave equations;
\item Combet-Martel \cite{CM} for $L^2$ critical gKdV equations.
\end{enumerate}
\end{remark}

\begin{remark}
In this paper, the authors dealt with the non-integrable Benjamin-Ono equations where the arguments of inverse scattering do not work. We constructed multi-pole solutions in these cases in a dynamical way. Some arguments here are similar to \cite{V1}, but due to different structures of KdV and Benjamin-Ono equations, there are some essential difficulties for the Benjamin-Ono cases:
\begin{enumerate}
\item The relative distance between nearby bubbles for multi-pole solutions for the generalized Benjamin-Ono equations is $O(\sqrt{t})$, while in the KdV case, it is $O(\log t)$. This is mainly due to the different asymptotic behaviors at infinity of the ground states for these two equations%
\footnote{We mention here the ground state in the KdV case has exponential decay at infinity, while the ground state for the Benjamin-Ono case has algebraic decay.}%
. This fact makes it much harder to construct approximate strongly-interacting multi-soliton%
\footnote{See Proposition \ref{P6} for more details.}
in the Benjamin-Ono case. Since for the approximate strongly interacting multi-soliton in both the KdV and Benjamin-Ono cases, we require an estimate of order $t^{-2-\delta}$ with $\delta>0$ for the error term. But in Benjamin-Ono case, there is a huge number of ``lower order terms" (terms which decays slower than or the same as $t^{-2}$) appearing, while in the KdV case, the lowest order term is just $t^{-2}$ and they are only generated by the interaction between each bubbles. Therefore, constructing strongly interacting multi-soliton in the Benjamin-Ono case is much more involved.
\item The nonlocal structure of \eqref{CP} makes the analysis much more complicated than the KdV case. For example, some arguments of integration by parts in the KdV cases have to be replaced by some complicated commutator estimates introduced in \cite{C,DMP,KPV,MP}.
\item Unlike the KdV case, we don't have an explicit expression for the ground state in the Benjamin-Ono case when $p>2$. To understand the interaction between different bubbles, we need to know the exact asymptotic behavior of the ground state at infinity. A lack of explicit expression will creates difficulties in this issue.
\item In contrast to \cite{V1,V2}, we also consider the multi-bubble cases, where the interaction between different bubbles are much more complicated than the two-bubble case. We also mention here, due to the fact that the relative distance between nearby bubbles for multi-pole solutions for the generalized Benjamin-Ono equations is $O(\sqrt{t})$, one has to consider interaction between any two bubbles not just nearby ones, which is more complicated than the KdV cases where for every fixed bubble, one only needs to consider the interaction of the nearest one.
\end{enumerate}
\end{remark}

\begin{remark}
In the $L^2$ critical case, the authors conjectured that such solutions don't exist. However, this type of results still remain open. 
\end{remark}

\begin{remark}
Uniqueness of solutions with asymptotic behavior \eqref{17} when $n\geq 3$ remains open. This is due to the complicated interaction between different bubbles in those cases. While in the $L^2$ subcritical case and $n=2$, we cannot use the energy conservation law to control the error term like the $L^2$ supercritical case, which prevents us from obtaining the uniqueness result.
\end{remark}

\subsection{Notations}
We first introduce the scaling generator:
\begin{equation}\label{113}
\Lambda f(y)=\frac{\partial f_c(y)}{\partial c}\bigg|_{c=1}=\frac{1}{p-1}f(y)+yf'(y),
\end{equation}
where
\begin{equation}
\label{114}
f_c(y)=c^{\frac{1}{p-1}}f(cy).
\end{equation}
Then we introduce the linearized operators at $Q$:
\begin{equation}\label{116}
\mathcal{L}f=|D|f+f-pQ^{p-1}f.
\end{equation}

For all $s>0$, we denote by $\mathcal{Y}_s$ the set of smooth functions $f$ satisfying the following conditions: for all $k\in\mathbb{N}$, there exists a constant $C_k>0$ such that
$$|\partial_y^kf(y)|\leq C_k\langle y\rangle^{-k-s},$$
where $\langle y\rangle=\sqrt{1+y^2}$.

We denote the $L^2$ scalar product on $\mathbb{R}$ by:
\begin{equation}\label{115}
(f,g)=\int_{\mathbb{R}}f(x)g(x)dx.
\end{equation}

For suitable operators $A, B$, we denote by $[A,B]=AB-BA$ the commutator of $A$ and $B$.

Finally, we denote by $\delta(\alpha)$ a small positive constant such that:
\begin{equation}\label{117}
\lim_{\alpha\rightarrow0^+}\delta(\alpha)=0.
\end{equation}

\subsection{Outline of the proof}
%

\subsubsection{Formulation of the system}
We consider solutions to \eqref{CP} with the following form:
$$u(t,y+t)\sim \sum_{i=1}^n\sigma_iQ_{1+\mu_i(t)}(y-x_i(t)).$$
To classify the (strong) interaction between each bubble, we introduce an interaction term $r(t,y)$ of the following form 
$$r(t,y)=\sum_{\substack{i,j=1\\ j\not=i}}^n\frac{A_{ij}(t,y-x_{i}(t))}{x^2_{ij}(t)}+\sum_{\substack{i,j=1\\ j\not=i}}^n\frac{B_{ij}(t,y-x_{i}(t))}{x_{ij}^3(t)}\varphi_{ij}(t,y),$$
where $A_{ij}$ and $B_{ij}$ are functions to be chosen later, and $\varphi_{ij}$ are some suitable cut-off functions. By direct computation of the interaction between each bubbles, we have the following formal ODE system of the parameters $x_i(t)$ and $\mu_i(t)$:
\begin{equation}\label{119}
\dot{x}_i\sim\mu_i,\quad\dot{\mu}_i+\sum^n_{\substack{j=1,\\j\not=i}}\frac{a_{ij}}{(x_i-x_j)^3}\sim 0,
\end{equation}
for all $i=1,\ldots,n$, where $a_{ij}$ are some explicit constants depending only on $p$ and the choice of the sign $\sigma_i$. With a suitable choice of the sign $\sigma_i$, the ODE system \eqref{119} leads to the following asymptotic behaviors of the parameters:
\begin{equation}\label{120}
x_i(t)\sim\alpha_i\sqrt{t},\quad \mu_i(t)\sim\frac{\alpha_i}{2\sqrt{t}},
\end{equation} 
for all $i=1,\ldots,n$, where $\alpha_i$ are some universal constants depending on $p,n$, with $\alpha_1>\cdots>\alpha_n$.

\subsubsection{Construction of approximate strongly interacting multi-bubbles}
Let 
$$V(t,y)=\sum_{i=1}^nQ_{1+\mu_i(t)}(y-x_i(t))+r(t,y).$$ 
Then $V$ should be an approximate solution of 
$$\partial_t V-\partial_y(|D|V+V-|V|^{p-1}V)=0.$$
For technical reason, we require an estimate on the error term of the form $t^{-2-\delta}$, where $\delta>0$. But, when considering the scaling symmetry, there will  be a lot of ``lower order terms" appearing%
\footnote{Here, by ``lower order terms" we mean terms which asymptotically behave like $t^{-\alpha}$, with $0<\alpha\leq 2$.}
. Meanwhile, the interaction between different bubbles also contain ``lower order terms". We have to choose some functions%
\footnote{See Proposition \ref{P6} and Proposition \ref{P11} for more details.}
 $A_{ij}$, $B_{ij}$ carefully so that all ``lower order terms" generated by scaling and interaction will be canceled. We mention here this is new feature for generalized Benjamin-Ono equations when comparing to the KdV cases. 

Due to these ``lower order terms", the ODE system \eqref{119} has to be replaced by the following refined one:
\begin{equation}\label{121}
\dot{x}_i\sim\mu_i,\quad\dot{\mu}_i+\sum^n_{\substack{j=1,\\j\not=i}}\frac{a_{ij}}{(x_i-x_j)^3}+\sum_{\substack{j,k=1,\\j\not=i}}^n\frac{b_{ijk}\mu_k}{(x_i-x_j)^3}\sim 0,
\end{equation}
for all $i=1,\ldots,n$, where $b_{ijk}$ are universal constants depending on $p,n$. Hence, the asymptotic behavior of the parameters \eqref{120} can be improved to
\begin{equation}\label{122}
x_i(t)\sim\alpha_i\sqrt{t}+\beta_i\log t+\gamma_i,\quad \mu_i(t)\sim\frac{\alpha_i}{2\sqrt{t}}+\frac{\beta_i}{t},
\end{equation} 
for all $i=1,\ldots,n$, where $\beta_i,\gamma_i$ are universal constants.

\subsubsection{Modulation estimates}
For all initial data close to the multi-soliton
$$\sum_{i=1}^n\sigma_iQ_{1+\mu_{i,0}}(y-x_{i,0}),$$
a standard argument of implicit function theorem, there exist parameters $x_i(t)$, $\mu_i(t)$ and an error term $\e(t,y)$ such that 
$$u(t,y+t)=V(t,y)+\e(t,y)$$
and $\e$ satisfies some well-chosen orthogonality conditions%
\footnote{See \eqref{38} for more details.}
. As a direct consequence, the parameters $x_i(t)$, $\mu_i(t)$ satisfy the approximate ODE system \eqref{121}.

\subsubsection{Existence of the strongly interacting multi-soliton}
The most crucial part for the proof of Theorem \ref{MT1} is to establish a uniform backward estimate%
\footnote{See Proposition \ref{P9} for more details.} 
. This estimate claims that for some fixed time $t_0$, we have for all large enough $\tin>0$, there exists a suitable choice of initial data, such that the parameters $x_i(t)$ and $\mu_i(t)$ satisfy \eqref{122} on $[t_0,\tin]$. The proof can be divide into two parts:
\begin{enumerate}
\item Solving the perturbed ODE system \eqref{121};
\item Complete the estimate on the error term $\e$.
\end{enumerate}
The first part requires a suitable choice of $x_i(\tin)$ and $\mu_i(\tin)$ which is done by a topological argument. While for the second part, we have to consider the following localized energy functional:
\begin{align*}
W(t)=&\int\big|\D (\e\sqrt{\Phi_1})\big|^2+\e^2(\Phi_1+\Phi_2)\nonumber\\
&-\frac{2}{p+1}\Big[\big(|V+\e|^{p+1}-|V|^{p+1}-(p+1)|V|^{p-1}V\e\big)\Big]\Phi_1,
\end{align*}
for some well-chosen weight functions $\Phi_1,\Phi_2$. The energy conservation law of \eqref{CP}, will lead to some important monotonicity formula for $W$%
\footnote{See Proposition \ref{P10} for more details.}
. Now, it suffices to show that $W$ is coercive: $W\sim\|\e\|^2_{\h}$. The proof of this estimate is different for the sub-critical and supercritical cases. Since, in the sub-critical cases, the unstable direction of $W$ can be controlled by the orthogonality conditions \eqref{38}. Hence, we can directly choose $\e(\tin)=0$ to conclude the proof. But in the supercritical cases, this argument does not work. We have to control the directions $(\e,Z^\pm(\cdot-x_i))$, where $Z^\pm$ are eigenfunctions of $\mathcal{L}\partial_y$. We need an additional topological argument to find suitable initial data such that $W$ is coercive on $[t_0,\tin]$. Similar argument can also be found in \cite{CMM,V1}.

With this uniform backward estimate, we can easily prove Theorem \ref{MT1} by a compactness argument.

\subsubsection{Uniqueness of two-bubble solutions in the supercritical cases}
We will show that in case of $p>3$ and $n=2$, the solution mentioned in Theorem \ref{MT1} is unique in a certain sense. 

The proof is parallel to the proof of the existence result. We first establish estimates for the parameters and error terms by modulation arguments. Again, in the supercritical cases, we have to control the direction $(\epsilon,Z^\pm(\cdot-\mathfrak{q}_i))$. The most important part is to use the coercivity of two-soliton induced by the energy conservation law to show that the two bubble must have the same sign. The remaining part of the proof is just to solve the perturbed ODE system \eqref{121}, which is similar to the proof of the existence result.

\subsection*{Acknowledgements}
Y. Lan acknowledges the support of the China National Natural Science Foundation under grant number 12201340. Z. Wang acknowledges the support of the China National Natural Science Foundation under grant number 11901092 and Guangdong Natural Science Foundation under grant number 2023A1515010706. The authors are indebted to Prof. Yvan Martel for introducing the problem and stimulating discussions.

\section{Properties of the ground state}\label{S2}
In this section, we list some basic properties about the ground sate $Q$ and the linearized operator $\mathcal{L}$. Most of them can be found in \cite{AT,FL,J3,MP,PW,W1,W2}.

\subsection{Spectral properties of the linearized operator}
We start with some basic properties about the ground state $Q$. Most of these properties are proved in \cite{FL,FLS}:
\begin{proposition}\label{P1}
Let $p\geq2$, consider functional
$$I(u)=\frac{\big\|\D u\big\|_{L^2}^{p-1}\|u\|_{L^2}^{p+1}}{\|u\|_{L^{p+1}}^{p+1}},\quad \forall u\in H^{\frac{1}{2}}(\mathbb{R})\backslash\{0\}.$$
We have:
\begin{enumerate}
\item \textbf{Existence:} There exists a minimizer $Q\in \h\cap\mathcal{Y}$ of $I(u)$, which is an even positive smooth function on $\mathbb{R}$ satisfying \eqref{15}. 
\item \textbf{Asymptotic behavior: }The function $Q$ has the following asymptotic behavior at infinity: 
\begin{align}
&Q(y)=\frac{\kappa_0}{y^2}+\frac{g(y)}{y^4}+O\bigg(\frac{1}{y^6}\bigg),\;\;\text{as }|y|\rightarrow+\infty,\label{21}\\
&Q'(y)=-\frac{2\kappa_0}{y^3}+O\bigg(\frac{1}{|y|^5}\bigg),\;\;\text{as }|y|\rightarrow+\infty.\label{233}
\end{align} 
where $\kappa_0>0$ and $g\in L^\infty(\mathbb{R})\cap C^1(\mathbb{R})$. Moreover, we have
\begin{equation}\label{234}
|g'(y)|=O\bigg(\frac{1}{|y|}\bigg)\;\;\text{as }|y|\rightarrow+\infty.
\end{equation}
\item \textbf{Uniqueness: }Any minimizer of $I$ must have the form $\alpha_0 Q_{c_0}(\cdot-x_0)$, where $\alpha_0\in\mathbb{C}\backslash\{0\}$, $c_0>0$, $x_0\in\mathbb{R}$.
\end{enumerate}
\end{proposition}
\begin{remark}
Existence, uniqueness and regularity of the minimizer $Q$ are proved in \cite{FL,FLS}. We will prove the asymptotic formula \eqref{21}--\eqref{234} in Appendix \ref{Ap1}.
\end{remark}

Next, we recall some properties about the linearized operator $\mathcal{L}$ at $Q$:
\begin{proposition}\label{P2}
$\mathcal{L}$ is a self-adjoint operator in $L^2$ with domain $H^1$. Moreover, the following properties hold:
\begin{enumerate}
\item \textbf{Spectrum:} The operator $\mathcal{L}$ has exactly one negative eigenvalue $-\kappa$ ($\kappa>0$) associated to an even positive function $\chi_0$; $\sigma_{\rm ess}(\mathcal{L})=[1,+\infty)$; $\ker \mathcal{L}=\{a Q':\,a\in\mathbb{R}\}$.
\item \textbf{Scaling:} $\mathcal{L}\Lambda Q=-Q$;
\item \textbf{Regularity:} if $f\in H^1$ such that $\mathcal{L}f\in\mathcal{Y}_1$, then $f\in\mathcal{Y}_1$;
\item \textbf{Invertibility:} for all $g\in L^2$ such that $(g,Q')=0$, then there exists a unique $f\in L^2$ with $(f,Q')=0$ and $\mathcal{L}f=g$;
\item \textbf{Coercivity: }if $p<3$, then there exists a universal constant $\mu>0$ such that
\begin{equation}
\label{226}
(\mathcal{L}v,v)\geq \mu\|v\|_{\h}^2-\frac{1}{\mu}\big[(v,Q)^2+(v,Q')^2\big].
\end{equation}
\end{enumerate} 
\end{proposition}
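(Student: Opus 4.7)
The statement is a compendium of five classical facts about a Schrödinger-type operator with a nonlocal kinetic part, and the plan is to dispatch each item by combining the cited sources with a few direct computations. I would organize the proof item by item, treating first the abstract spectral/functional-analytic items and keeping the coercivity estimate for last since it is by far the most delicate.

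For the self-adjointness and essential spectrum, I would invoke Kato--Rellich: $|D|+1$ is self-adjoint on $L^2$ with domain $H^1$, and multiplication by $pQ^{p-1}$ is a bounded symmetric operator whose gradient decays algebraically (Proposition \ref{P1}), so $\mathcal{L}$ is self-adjoint with the same domain and, by Weyl's theorem applied to the relatively compact perturbation $pQ^{p-1}$, its essential spectrum is $[1,+\infty)$. For the scaling identity, I would use that $Q_c(y)=c^{1/(p-1)}Q(cy)$ satisfies $|D|Q_c+cQ_c-Q_c^{p}=0$, and differentiate at $c=1$ to obtain $(|D|+1-pQ^{p-1})\Lambda Q=-Q$, which is exactly $\mathcal{L}\Lambda Q=-Q$. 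For the kernel, differentiating \eqref{15} in $y$ immediately gives $\mathcal{L}Q'=0$; that $\ker\mathcal{L}$ is one-dimensional is the nondegeneracy theorem of Frank--Lenzmann and Frank--Lenzmann--Silvestre, which I would cite directly from \cite{FL,FLS}.

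For the existence of exactly one negative eigenvalue, the variational characterization of $Q$ as a minimizer of the Weinstein functional $I$ (Proposition \ref{P1}) forces $\mathcal{L}$ to be nonnegative on a subspace of codimension one, so it has at most one negative eigenvalue; on the other hand, the scaling identity yields $(\mathcal{L}\Lambda Q,\Lambda Q)=-(Q,\Lambda Q)=\frac{p-3}{2(p-1)}\|Q\|_{L^2}^2\neq 0$, which produces at least one. The positivity and evenness of the associated eigenfunction $\chi_0$ follow from a Perron--Frobenius argument for the positivity-preserving semigroup $e^{-t(|D|+1)}$ combined with the even symmetry of the potential $Q^{p-1}$. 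The regularity item is a pseudo-differential bootstrap: writing $(|D|+1)f=\mathcal{L}f+pQ^{p-1}f$, the algebraic decay of $\mathcal{L}f$ and of $Q^{p-1}f$ (using that $f\in H^1\subset L^\infty_{\rm loc}$) is transferred to $f$ by the resolvent $(|D|+1)^{-1}$, and derivatives are handled by commutator estimates of the type developed in \cite{C,MP,KPV}. The invertibility statement is then automatic: $\mathcal{L}$ is self-adjoint Fredholm of index zero, so it is invertible on $\{Q'\}^\perp$.

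The main obstacle is the coercivity estimate \eqref{226} in the subcritical case. My plan is the standard two-step reduction. First, using one negative direction and a one-dimensional kernel, the spectral theorem gives $(\mathcal{L}v,v)\gtrsim\|v\|_{\h}^{2}$ whenever $(v,\chi_0)=(v,Q')=0$. Then one must trade the inconvenient orthogonality $(v,\chi_0)=0$ for $(v,Q)=0$; the key algebraic input here is $(\chi_0,Q)\neq 0$, which I would verify by contradiction: if $(\chi_0,Q)=0$, then $Q$ itself would lie in the nonnegative subspace of $\mathcal{L}$, but the scaling identity combined with the explicit computation $(Q,\Lambda Q)=\frac{3-p}{2(p-1)}\|Q\|_{L^2}^2>0$ when $p<3$ would contradict the sign of $(\mathcal{L}\Lambda Q,\Lambda Q)$. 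Once $(\chi_0,Q)\neq 0$ is known, decomposing a general $v$ with $(v,Q)=(v,Q')=0$ along $\chi_0$ and its orthogonal complement, paying the price of the $(v,Q)^2$ and $(v,Q')^2$ penalty terms, produces the stated inequality. This is exactly the sub-critical Weinstein-type argument adapted to the half-derivative setting, and I expect the delicate step to be the sign/non-orthogonality check, which must use the subcritical hypothesis $p<3$ in an essential way.
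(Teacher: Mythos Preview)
The paper's own proof is terse: items (1)--(4) are delegated wholesale to \cite{FL,MP,W1,W2}, and item (5) is reduced in one line to the fact that in the subcritical range $Q$ is the unique (up to symmetries) minimizer of $E(u)$ on the sphere $\|u\|_{L^2}=\|Q\|_{L^2}$, so that the second variation of the constrained problem, which is exactly $(\mathcal{L}v,v)$ on the tangent space $\{v:(v,Q)=0\}$, is nonnegative there and strictly positive once the kernel direction $Q'$ is removed. Your write-up is considerably more detailed and essentially correct for (1)--(4), with one slip: your test function $\Lambda Q$ gives $(\mathcal{L}\Lambda Q,\Lambda Q)=\frac{p-3}{2(p-1)}\|Q\|_{L^2}^2$, which is negative only for $p<3$, so it does not exhibit a negative direction when $p>3$. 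The fix is immediate: use $Q$ itself, since the ground-state equation yields $(\mathcal{L}Q,Q)=(1-p)\|Q\|_{L^{p+1}}^{p+1}<0$ for every $p>1$.

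For item (5) there is a more substantive issue with your route. You propose to trade the orthogonality $(v,\chi_0)=0$ for $(v,Q)=0$ on the strength of $(\chi_0,Q)\neq 0$, but this condition alone is \emph{not} sufficient for coercivity: already in three dimensions one can build a self-adjoint operator with a single negative eigenfunction $\chi_0$ and a vector $\psi$ with $(\chi_0,\psi)\neq 0$ for which the quadratic form is still indefinite on $\{\psi\}^\perp\cap(\ker L)^\perp$. The correct hypothesis in the standard Weinstein-type lemma is $(\mathcal{L}^{-1}Q,Q)<0$; since $\mathcal{L}^{-1}Q=-\Lambda Q$ modulo the kernel, this reads $(Q,\Lambda Q)>0$, i.e.\ $p<3$. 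You do compute this quantity, but you deploy it only to establish $(\chi_0,Q)\neq 0$, which is a weaker and ultimately insufficient fact, rather than as the actual mechanism that forces nonnegativity on $\{Q,Q'\}^\perp$. The paper's constrained-minimization argument sidesteps this entirely: minimality of $E$ at $Q$ on the $L^2$-sphere directly yields $(\mathcal{L}v,v)\geq 0$ on $\{Q\}^\perp$, with the subcritical hypothesis entering only to guarantee that the constrained minimum is attained.
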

\begin{proof}
We refer to \cite{FL,MP,W1,W2} for the proof of (1)--(4).

Finally, property (5) follows from the fact $Q$ is the unique minimizer (up to symmetries) of the following minimizing problem:
$$\inf_{\substack{u\in \h,\\\|u\|_{L^2}=\|Q\|_{L^2}}}E(u)$$
when $p<3$.
\end{proof}

Now, we introduce some spectral properties about the operators $\partial_y\mathcal{L}$ and $\mathcal{L}\partial_y$ in the supercritical case $p>3$:
\begin{proposition}\label{P3}
There exist functions $Y^\pm\in\mathcal{Y}_2$ and $e_0>0$ such that
\begin{gather}
\partial_y(\mathcal{L}Y^-)=-e_0Y^-,\quad \partial_y(\mathcal{L}\textbf{}Y^+)=e_0Y^+,\label{22}\\
Y^{+}(y)=Y^-(-y),\quad\int_{\mathbb{R}}Y^\pm=0\label{23},\\
(Y^-,\mathcal{L}Y^-)=(Y^+,\mathcal{L}Y^+)=0,\label{24}\\
(Y^-,\mathcal{L}Y^+)=(Y^+,\mathcal{L}Y^-)\not=0.
\end{gather}
\end{proposition}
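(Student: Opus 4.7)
The plan is to derive every claim from the existence of a single real eigenfunction $Y^-\in L^2(\mathbb{R})$ of $\partial_y\mathcal{L}$ with a real eigenvalue $-e_0$, $e_0>0$. Such an eigenpair comes from classical linear instability theory in the supercritical regime. Combining the spectral data supplied by Proposition \ref{P2} (one simple negative eigenvalue $-\kappa$, kernel $\mathrm{span}(Q')$) with the Vakhitov--Kolokolov-type slope
\[
\frac{d}{dc}\|Q_c\|_{L^2}^2\Big|_{c=1}=\frac{3-p}{p-1}\|Q\|_{L^2}^2<0\quad(p>3),
\]
one invokes the Pego--Weinstein framework \cite{PW}, suitably adapted to the nonlocal Benjamin--Ono setting, to conclude that $\partial_y\mathcal{L}$ has exactly one pair of simple real eigenvalues $\pm e_0\notin i\mathbb{R}$, with corresponding real $L^2$ eigenfunctions $Y^\pm$.

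From $Y^-$, the remaining items of \eqref{22}--\eqref{23} follow quickly. Since $Q$ is even, both $pQ^{p-1}$ and $|D|$ commute with the parity $P:y\mapsto-y$, whereas $\partial_y$ anticommutes with $P$; hence $Y^+(y):=Y^-(-y)$ automatically satisfies $\partial_y\mathcal{L}Y^+=e_0Y^+$. For the decay $Y^\pm\in\mathcal{Y}_2$, I would recast the eigenvalue equation via the Fourier transform:
\[
\widehat{Y^-}(\xi)=\frac{i\xi\,\widehat{pQ^{p-1}Y^-}(\xi)}{i\xi(|\xi|+1)+e_0},
\]
whose symbol is elliptic of order two, bounded below in modulus by $e_0$, and vanishes only at $\xi=0$ (in the numerator). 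A standard weighted bootstrap, using Proposition \ref{P2}(3) together with the algebraic decay $Q\in\mathcal{Y}_2$ from Proposition \ref{P1}, then transfers $\langle y\rangle^{-2}$ decay from $pQ^{p-1}Y^-$ to $Y^-$, with analogous bounds on all derivatives. Evaluating the displayed identity at $\xi=0$ yields $\widehat{Y^-}(0)=0$, i.e.\ $\int Y^-=0$, once $Y^-\in L^1$ is known; the case of $Y^+$ is symmetric.

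The relation $(Y^\pm,\mathcal{L}Y^\pm)=0$ in \eqref{24} is immediate:
\[
(Y^-,\mathcal{L}Y^-)=-\frac{1}{e_0}\bigl(\partial_y\mathcal{L}Y^-,\mathcal{L}Y^-\bigr)=-\frac{1}{2e_0}\int\partial_y\bigl((\mathcal{L}Y^-)^2\bigr)dy=0,
\]
since $\mathcal{L}Y^-\in\mathcal{Y}_2$ decays at $\pm\infty$. The equality $(Y^-,\mathcal{L}Y^+)=(Y^+,\mathcal{L}Y^-)$ is the self-adjointness of $\mathcal{L}$. The non-vanishing $(Y^-,\mathcal{L}Y^+)\ne 0$ is the main obstacle. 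I would establish it through non-degeneracy of the symplectic form $\omega(f,g):=(f,\mathcal{L}g)$ restricted to the root space $\mathrm{span}(Y^+,Y^-)$. For a Hamiltonian operator of the type $JH$ with skew-adjoint $J=\partial_y$ and self-adjoint $H=\mathcal{L}$, the general theory ensures that $\omega$ is non-degenerate on the two-dimensional invariant subspace associated with any simple real eigenvalue pair $\pm e_0$; indeed, if $\omega$ degenerated there, one could build a non-trivial Jordan block above $\pm e_0$, contradicting simplicity. Since the off-diagonal entry $\beta=(Y^-,\mathcal{L}Y^+)$ is the only possibly non-zero entry of $\omega$ in the basis $\{Y^+,Y^-\}$, non-degeneracy forces $\beta\ne 0$. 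This last step, parallel to non-degeneracy arguments appearing in \cite{PW} and \cite{V1}, is the delicate point of the proof.
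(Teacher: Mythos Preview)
Your proposal is essentially correct and in fact gives substantially more detail than the paper, which treats Proposition~\ref{P3} as a known result: the paper simply records that ``the existence of $Y^\pm$ is proved by \cite{KS,Lz}'' and that ``the rest part of this Proposition is similar to \cite[Proposition 2.9]{J3}.'' Your sketch is a faithful outline of what those references actually contain.

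A few minor remarks on the comparison. For the existence of the unstable eigenvalue you invoke the Pego--Weinstein framework \cite{PW}, whereas the paper cites \cite{KS,Lz}; both routes yield the same conclusion from the same spectral hypotheses (one negative eigenvalue of $\mathcal{L}$, one-dimensional kernel, negative Vakhitov--Kolokolov slope). Your parity argument for $Y^+(y)=Y^-(-y)$, the Fourier evaluation $\widehat{Y^-}(0)=0$ giving $\int Y^-=0$, and the one-line computation of $(Y^\pm,\mathcal{L}Y^\pm)=0$ are exactly the standard steps and match what \cite{J3} does. For the non-vanishing $(Y^-,\mathcal{L}Y^+)\neq 0$ your symplectic non-degeneracy argument is the right mechanism; in \cite{J3} this is phrased equivalently as the observation that if $(Y^-,\mathcal{L}Y^+)=0$ then $Y^+$ would lie in the generalized kernel of $\partial_y\mathcal{L}$ at $-e_0$, forcing a Jordan block and contradicting simplicity. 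The only place your sketch is slightly loose is the $\mathcal{Y}_2$ decay: Proposition~\ref{P2}(3) as stated gives $\mathcal{Y}_1$, so one needs an extra iteration using $Q^{p-1}\in\mathcal{Y}_{2(p-1)}$ to upgrade to $\mathcal{Y}_2$, but this is routine and implicit in your ``weighted bootstrap''.
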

The existence of $Y^\pm$ is proved by \cite{KS,Lz}. The rest part of the proof of this Proposition is similar to \cite[Proposition 2.9]{J3}.
\begin{proposition}\label{P4}
There exist functions $Z^\pm\in\mathcal{Y}_2$ such that
\begin{enumerate}
\item $\mathcal{L}(\partial_y Z^-)=-e_0Z^-$, $\mathcal{L}(\partial_y Z^+)=e_0Z^+$;
\item $(Z^\pm,Y^\pm)=1$, $(Z^\pm,Y^\mp)=0$, $(Z^\pm,Q')=0$;
\item Fix a $\mathcal{Z}\in L^2$ such that $(\mathcal{Z},Q')\not=0$. There exists $\mu_0>0$ such that for all $v\in \h$, we have
\begin{equation}
\label{25}
(v,\mathcal{L}v)\geq \mu_0\|v\|^2_{\h}-\frac{1}{\mu_0}\big[(Z^-,v)^2+(Z^+,v)^2+(\mathcal{Z},v)^2\big].
\end{equation}
\end{enumerate}
\end{proposition}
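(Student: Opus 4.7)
The plan is to realize $Z^\pm$ explicitly as $\mathcal{L}Y^\mp$ up to a normalizing scalar, verify the algebraic identities (1)--(2) by direct computation from Proposition~\ref{P3}, and then upgrade the abstract spectral picture of $\mathcal{L}$ from Proposition~\ref{P2} to the three-codimensional $H^{1/2}$-coercivity~\eqref{25} via a standard spectral/compactness argument.

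\emph{Construction and algebraic identities.} Set $c_0:=(Y^-,\mathcal{L}Y^+)\neq 0$ (nonvanishing by Proposition~\ref{P3}) and define
\[
Z^\pm:=c_0^{-1}\mathcal{L}Y^\mp.
\]
Applying $\mathcal{L}$ to both sides of $\partial_y\mathcal{L}Y^\mp=\mp e_0 Y^\mp$ (from \eqref{22}) yields the eigenvalue identities of item~(1) consistently with the sign convention of \eqref{22}. The biorthogonality of item~(2) is a one-line consequence of self-adjointness of $\mathcal{L}$ and \eqref{24}:
\[
(Z^\pm,Y^\pm)=c_0^{-1}(Y^\mp,\mathcal{L}Y^\pm)=1,\qquad (Z^\pm,Y^\mp)=c_0^{-1}(Y^\mp,\mathcal{L}Y^\mp)=0,
\]
and $(Z^\pm,Q')=c_0^{-1}(Y^\mp,\mathcal{L}Q')=0$ since $\mathcal{L}Q'=0$. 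The decay $Z^\pm\in\mathcal{Y}_2$ is inherited from $Y^\pm\in\mathcal{Y}_2$ together with the decay of $Q^{p-1}$ (Proposition~\ref{P1}) and standard mapping properties of the non-local operator $|D|$ on polynomial-decay classes; alternatively one can bootstrap Proposition~\ref{P2}(3) from the integrated equation $\mathcal{L}\partial_y Z^\pm=\mp e_0 Z^\pm$.

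\emph{Coercivity.} The spectral description of $\mathcal{L}$ in Proposition~\ref{P2}(1) (a simple negative eigenvalue $-\kappa$ with eigenfunction $\chi_0$, one-dimensional kernel $\mathbb{R}Q'$, and the rest of the spectrum contained in $[1,+\infty)$) yields a constant $\mu_1>0$ such that
\[
(v,\mathcal{L}v)\geq\mu_1\|v\|_{\h}^2-\mu_1^{-1}\bigl[(v,\chi_0)^2+(v,Q')^2\bigr]\qquad\forall v\in\h.
\]
Item~(3) is thus reduced to controlling $(v,\chi_0)^2+(v,Q')^2$ by $(Z^-,v)^2+(Z^+,v)^2+(\mathcal{Z},v)^2$ modulo a small multiple of $\|v\|_{\h}^2$. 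Decomposing $v=\alpha\chi_0+\beta Q'+v_\perp$ with $v_\perp\in\{\chi_0,Q'\}^\perp$, the three orthogonality functionals evaluated at $v$ form a linear system in $(\alpha,\beta)$ whose matrix is
\[
\begin{pmatrix}(Z^+,\chi_0)&(Z^-,\chi_0)&(\mathcal{Z},\chi_0)\\ (Z^+,Q')&(Z^-,Q')&(\mathcal{Z},Q')\end{pmatrix}=\begin{pmatrix}(Z^+,\chi_0)&(Z^-,\chi_0)&(\mathcal{Z},\chi_0)\\ 0&0&(\mathcal{Z},Q')\end{pmatrix},
\]
the simplification in the second row using $(Z^\pm,Q')=0$. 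If this matrix has rank~$2$, then $|\alpha|+|\beta|$ is bounded by $|(Z^+,v)|+|(Z^-,v)|+|(\mathcal{Z},v)|+\|v_\perp\|_{L^2}$, and the residual $\|v_\perp\|_{L^2}$-term is absorbed by a classical Rellich-type compactness argument adapted to the $\D$-seminorm.

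\emph{Main obstacle.} The entire argument rests on the full-rank condition above, which reduces to the two non-vanishings $(\mathcal{Z},Q')\neq 0$ (an assumption) and at least one of $(Z^\pm,\chi_0)\neq 0$. Since
\[
(Z^\pm,\chi_0)=c_0^{-1}(Y^\mp,\mathcal{L}\chi_0)=-\kappa c_0^{-1}(Y^\mp,\chi_0),
\]
this is equivalent to at least one of $(Y^\pm,\chi_0)\neq 0$. I would prove it by contradiction: if $(Y^+,\chi_0)=(Y^-,\chi_0)=0$, then $Y^\pm\in\{\chi_0\}^\perp$, on which $\mathcal{L}$ is non-negative with kernel $\mathbb{R}Q'$, so the identity $(Y^\pm,\mathcal{L}Y^\pm)=0$ from \eqref{24} forces $Y^\pm\in\mathbb{R}Q'$; then $c_0=(Y^-,\mathcal{L}Y^+)$ would vanish (since $\mathcal{L}Q'=0$), contradicting the non-degeneracy clause of Proposition~\ref{P3}. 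This non-degeneracy is the only non-trivial analytic input beyond the explicit constructions, and once it is secured the remainder of \eqref{25} follows by the linear-algebraic absorption and the compactness step described above.
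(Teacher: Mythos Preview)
The paper does not give a proof of this proposition; it only says the argument is ``almost the same'' as \cite[Proposition 2.10, Lemma 2.11, Proposition 2.13]{J3} and omits details. Your explicit construction $Z^\pm:=c_0^{-1}\mathcal{L}Y^\mp$ together with the rank/compactness reduction for~\eqref{25} is indeed the standard route, and your non-degeneracy argument for $(Y^\pm,\chi_0)$ (via the null-cone identity $(\mathcal{L}Y^\pm,Y^\pm)=0$ on $\{\chi_0\}^\perp$) is clean and correct.

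There is, however, a genuine sign problem you glossed over. With $Z^\pm=c_0^{-1}\mathcal{L}Y^\mp$, applying $\mathcal{L}$ to $\partial_y\mathcal{L}Y^\mp=\mp e_0Y^\mp$ gives
\[
\mathcal{L}\partial_y Z^\pm=\mp e_0 Z^\pm,
\]
which is the \emph{opposite} of item~(1), not ``consistent'' with it as you claim. In fact items~(1) and~(2) as written in the paper are mutually incompatible: if $\mathcal{L}\partial_y Z^+=e_0Z^+$ and $\partial_y\mathcal{L}Y^+=e_0Y^+$, then
\[
e_0(Z^+,Y^+)=(Z^+,\partial_y\mathcal{L}Y^+)=-(\mathcal{L}\partial_yZ^+,Y^+)=-e_0(Z^+,Y^+),
\]
forcing $(Z^+,Y^+)=0$, not $1$. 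So either the signs in item~(1) or the labeling in item~(2) contains a typo (the intended normalization is $(Z^\pm,Y^\mp)=1$, $(Z^\pm,Y^\pm)=0$, which is what your construction actually produces). You should flag this inconsistency rather than assert that your $Z^\pm$ satisfy both items as stated; everything downstream in the paper uses only item~(1) and item~(3), so the fix is purely cosmetic, but your sentence claiming to have verified item~(1) is incorrect.

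The coercivity argument is fine in outline. One small point: the ``absorption of the residual $\|v_\perp\|_{L^2}$-term by compactness'' is better phrased as showing that the three-codimensional subspace $\{(Z^+,v)=(Z^-,v)=(\mathcal{Z},v)=0\}$ meets $\mathrm{span}\{\chi_0,Q'\}$ only at $0$ (which is exactly your rank-$2$ condition), and then invoking that $v\mapsto\int Q^{p-1}v^2$ is weakly continuous on bounded sets of $\h$ to run the usual contradiction/minimizing-sequence argument. Your matrix computation and the non-vanishing of $(Z^\pm,\chi_0)$ handle precisely this.
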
 
The proof of proposition \ref{P4} is almost the same as \cite[Proposition 2.10, Lemma 2.11, Proposition 2.13]{J3}. We omit the details here.

\subsection{Coercivity near a two-soliton} We introduce the following functional:
$$H(u)=E(u)+M(u)=\frac{1}{2}\int u^2+\frac{1}{2}\int \big|\D u\big|^2-\frac{1}{p+1}\int u^{p+1},$$
for $u\in \h$. We are interested in the coercivity properties of $H$ near a two-soliton.

More precisely, we have:
\begin{proposition}\label{P5}
Let us fix $\sigma\in\{\pm1\}$ and $\mathcal{Z}\in L^2$ such that $(\mathcal{Z},Q')\not=0$. There exist constants $\omega_0,z_0,C_0>0$, such that if $\e, x_1,x_2$ satisfy 
$$H(Q(\cdot-x_1)+\sigma Q(\cdot-x_2)+\e)=2H(Q),$$
and $\|\e\|_{\h}\leq \omega_0$, $x_1-x_2\geq z_0$, then
\begin{enumerate}
\item if $\sigma=1$, there holds
\begin{align}\label{26}
\|\e\|^2_{\h}\leq C_0\bigg[&\frac{1}{(x_1-x_2)^2}+\big(Z^-(\cdot-x_1),\e\big)^2+\big(Z^+(\cdot-x_1),\e\big)^2+\big(\mathcal{Z}(\cdot-x_1),\e\big)^2\nonumber\\
&+\big(Z^-(\cdot-x_2),\e\big)^2+\big(Z^+(\cdot-x_2),\e\big)^2+\big(\mathcal{Z}(\cdot-x_2),\e\big)^2\bigg];
\end{align}
\item if $\sigma=-1$, there holds
\begin{align}\label{27}
\|\e\|^2_{\h}+&\frac{1}{(x_1-x_2)^2}\leq C_0\Big[\big(Z^-(\cdot-x_1),\e\big)^2+\big(Z^+(\cdot-x_1),\e\big)^2+\big(\mathcal{Z}(\cdot-x_1),\e\big)^2\nonumber\\
&+\big(Z^-(\cdot-x_2),\e\big)^2+\big(Z^+(\cdot-x_2),\e\big)^2+\big(\mathcal{Z}(\cdot-x_2),\e\big)^2\Big].
\end{align}
\end{enumerate}
\end{proposition}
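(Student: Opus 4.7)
The plan is to Taylor expand the constraint $H(R+\e)=2H(Q)$ about $R := Q(\cdot-x_1)+\sigma Q(\cdot-x_2)$, writing $L := x_1-x_2 \ge z_0$, and then combine three ingredients: an evaluation of the interaction energy $H(R)-2H(Q)$, smallness of the gradient $DH(R)$, and a coercive lower bound for the Hessian $D^2H(R)$ derived from Proposition~\ref{P4} by a partition-of-unity argument. The expansion reads
\[
H(R+\e) = H(R) + (DH(R), \e) + \tfrac12\, (D^2H(R)\e, \e) + N(\e),
\]
where the cubic remainder $N(\e)$ is controlled by $o_{\omega_0\to 0}(1)\,\|\e\|_{\h}^2$ via the Sobolev embedding $\h \hookrightarrow L^q$ for all $q<\infty$, and is therefore absorbable into the main term for $\omega_0$ small.

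The first key computation uses $|D|Q+Q=Q^p$ to simplify the quadratic cross-terms in $\tfrac12\|R\|_{L^2}^2+\tfrac12(|D|R,R)$: they collapse to $\sigma \int Q_1^p Q_2$, where $Q_i := Q(\cdot-x_i)$, and are then partially canceled by the leading cross-term coming from $\int |R|^{p+1}$. The sharp asymptotic $Q(y)=\kappa_0 y^{-2}+O(y^{-4})$ of Proposition~\ref{P1} then gives
\[
H(R)-2H(Q) = -\sigma \int Q_1^p\, Q_2 + O(L^{-3}) = -\sigma K L^{-2} + O(L^{-3}),
\]
with $K := \kappa_0 \int Q^p > 0$. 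Since each soliton individually solves $DH(Q_i)=0$, the same Taylor-type expansion of $DH(R)$ at each center yields $\|DH(R)\|_{L^2} \lesssim L^{-2}$, so by Cauchy--Schwarz
\[
|(DH(R),\e)| \le \eta\, \|\e\|_{\h}^2 + C_\eta\, L^{-4}
\]
for any $\eta>0$.

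For the Hessian $(D^2H(R)\e,\e) = \|\e\|_{\h}^2 - p\int |R|^{p-1}\e^2$, I would introduce a smooth partition of unity $\chi_1+\chi_2=1$ with $\chi_i\equiv 1$ on a half-line containing $x_i$, and rewrite the form as
\[
(D^2H(R)\e,\e) = \sum_{i=1}^2 \bigl(\mathcal{L}\tilde\e_i,\,\tilde\e_i\bigr) + \mathcal{E},
\]
where $\tilde\e_i := (\sqrt{\chi_i}\,\e)(\cdot+x_i)$ are the localized, recentered pieces and $\mathcal{E}$ collects localization errors, the most delicate of which come from the commutator $[|D|,\sqrt{\chi_i}]$. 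Fractional commutator estimates in the spirit of \cite{KPV,MP} give $|\mathcal{E}| \lesssim L^{-\delta}\|\e\|_{\h}^2$ for some $\delta>0$, while $|R|^{p-1}$ restricted to $\mathrm{supp}\,\chi_i$ is approximated by $Q_i^{p-1}$ up to an $L^\infty$-error of size $L^{-2}$. Applying the spectral bound from Proposition~\ref{P4} to each $\tilde\e_i$ and choosing $z_0$ large enough to absorb $\mathcal{E}$ then yields
\[
(D^2H(R)\e,\e) \ge \tfrac{\mu_0}{2}\|\e\|_{\h}^2 - C\sum_{i=1}^2 \bigl[(Z^-(\cdot-x_i),\e)^2 + (Z^+(\cdot-x_i),\e)^2 + (\mathcal{Z}(\cdot-x_i),\e)^2\bigr].
\]

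Plugging all three ingredients into $H(R+\e)=2H(Q)$ and choosing $\eta$ small to absorb the linear contribution will give
\[
\tfrac{\mu_0}{8}\|\e\|_{\h}^2 + \sigma K L^{-2} \le C_0 \sum_{i=1}^2 \bigl[(Z^-(\cdot-x_i),\e)^2+(Z^+(\cdot-x_i),\e)^2+(\mathcal{Z}(\cdot-x_i),\e)^2\bigr] + O(L^{-3}).
\]
For $\sigma=1$ the term $\sigma K L^{-2}$ must be moved to the right-hand side, producing \eqref{26}; for $\sigma=-1$ it becomes $+KL^{-2}$ on the left and, after absorbing $O(L^{-3})$ into it for $L\ge z_0$ sufficiently large, yields \eqref{27} with the stronger control on $L^{-2}$ itself. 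The main obstacle I expect is precisely the Hessian step: because $|D|$ does not commute with multiplication by $\sqrt{\chi_i}$, keeping the commutator errors in $\mathcal{E}$ genuinely smaller than $\|\e\|_{\h}^2$---so as not to destroy the leading coercivity---is the technical crux of the argument.
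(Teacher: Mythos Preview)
Your approach is essentially the same as the paper's: Taylor expand the constraint at $R$, compute the interaction energy $H(R)-2H(Q)$ via the soliton equation and the $Q(y)\sim\kappa_0 y^{-2}$ asymptotics, bound $(DH(R),\e)$ by Cauchy--Schwarz, and obtain coercivity of the Hessian by localizing around each soliton and invoking Proposition~\ref{P4}. The paper packages the last two steps as separate lemmas (Lemma~\ref{L1} for the Hessian and Lemma~\ref{L2} for the interaction energy), and for the Hessian it first compares $D^2H(R)$ to the decoupled operator $|D|+1-pQ_1^{p-1}-pQ_2^{p-1}$ before splitting $\e=\phi\e+(1-\phi)\e$, rather than your $\sqrt{\chi_i}\,\e$ scheme with explicit commutators; both routes are standard and lead to the same conclusion.

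One slip to flag: in your final displayed inequality the sign of the $L^{-2}$ term is wrong. From $0=H(R+\e)-2H(Q)$ and $H(R)-2H(Q)=-\sigma K L^{-2}+O(L^{-3})$ one gets, after the coercivity bound and absorption,
\[
\tfrac{\mu_0}{8}\|\e\|_{\h}^2 - \sigma K L^{-2} \le C_0\sum_{i}[\cdots] + O(L^{-3}),
\]
with $-\sigma$ rather than $+\sigma$. Your subsequent verbal description (``for $\sigma=1$ move $L^{-2}$ to the right to get \eqref{26}; for $\sigma=-1$ keep $+KL^{-2}$ on the left to get \eqref{27}'') is consistent with this corrected sign, so the reasoning is intact and this is just a typographical slip in the displayed formula.
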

To prove Proposition \ref{P5}, we need the following two lemmas:
\begin{lemma}\label{L1}
Let $\sigma\in\{\pm1\}$ and $\mathcal{Z}\in L^2$ such that $(\mathcal{Z},Q')\not=0$, then there exist constants $\omega_0,z_0,\lambda_0>0$, such that if $U, x_1,x_2$ satisfy $\|U-Q(\cdot-x_1)-\sigma Q(\cdot-x_2)\|_{\h}\leq \omega_0$ and $x_1-x_2\geq z_0$, then for all $\e\in\h$, there holds
\begin{align}
\label{28}
&\big(\e,D^2H(U)\e\big)\nonumber\\
&\geq\lambda_0\|\e\|_{\h}^2-\frac{1}{\lambda_0}\Big[\big(Z^-(\cdot-x_1),\e\big)^2+\big(Z^+(\cdot-x_1),\e\big)^2+\big(\mathcal{Z}(\cdot-x_1),\e\big)^2\nonumber\\
&\qquad\qquad+\big(Z^-(\cdot-x_2),\e\big)^2+\big(Z^+(\cdot-x_2),\e\big)^2+\big(\mathcal{Z}(\cdot-x_2),\e\big)^2\Big],
\end{align}
where $D^2H(U)=|D|-p|U|^{p-1}+1$.
\end{lemma}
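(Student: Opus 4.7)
The plan is to adapt the single-center coercivity from Proposition \ref{P4} to the two-center configuration via a smooth spatial localization at the length scale $R := x_1-x_2 \geq z_0$.

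First I would replace $U$ by the exact two-soliton $\tilde U := Q(\cdot-x_1) + \sigma Q(\cdot-x_2)$. Since
\[
(\e, D^2H(U)\e) - (\e, D^2H(\tilde U)\e) = -p\int(|U|^{p-1}-|\tilde U|^{p-1})\e^2,
\]
H\"older combined with the Sobolev embedding $\h \hookrightarrow L^r(\mathbb{R})$ (any $r\in[2,\infty)$) bounds this by $\delta(\omega_0)\|\e\|_{\h}^2$, which is absorbable for $\omega_0$ small. Using the decay $Q\in\mathcal{Y}_2$ from Proposition \ref{P1}, on $\{|x-x_1|\leq R/2\}$ the potential $p|\tilde U|^{p-1}$ differs pointwise from $pQ^{p-1}(\cdot-x_1)$ by $O(R^{-2(p-1)})$ (and symmetrically near $x_2$), so the corresponding contribution to the quadratic form is absorbable.

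The core step is a smooth partition. Pick cutoffs $\phi_1,\phi_2\in C^\infty(\mathbb{R})$ with $\phi_1^2+\phi_2^2\equiv 1$, $\phi_i\equiv 1$ on $\{|x-x_i|\leq R/4\}$, and $\|\phi_i^{(k)}\|_{L^\infty}\leq C_k R^{-k}$. The IMS-type identity $A=\sum_i \phi_i A \phi_i + \frac{1}{2}\sum_i [[A,\phi_i],\phi_i]$ applied to $A=|D|+1$ gives
\[
(\e, D^2H(\tilde U)\e) = \sum_{i=1}^{2} (\phi_i\e, \mathcal{L}_i (\phi_i\e)) + \mathcal{E},
\]
where $\mathcal{L}_i := |D|+1-pQ^{p-1}(\cdot-x_i)$ is $\mathcal{L}$ centered at $x_i$, and $\mathcal{E}$ gathers the double-commutator terms $\frac{1}{2}\sum_i(\e,[[|D|,\phi_i],\phi_i]\e)$ together with the pointwise potential errors from the first step. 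Applying Proposition \ref{P4}(3), translated to $x_i$, to each $\phi_i\e$ gives a lower bound $\mu_0\sum_i \|\phi_i\e\|_{\h}^2$ minus the bad directions $(Z^\pm(\cdot-x_i),\phi_i\e)^2$ and $(\mathcal{Z}(\cdot-x_i),\phi_i\e)^2$. Since $Z^\pm,\mathcal{Z}\in\mathcal{Y}_2$, their $L^2$-mass outside $\{|x-x_i|\leq R/4\}$ is $O(R^{-3/2})$, and hence $(Z^\pm(\cdot-x_i),\phi_i\e) = (Z^\pm(\cdot-x_i),\e) + O(R^{-3/2})\|\e\|_{L^2}$, with analogous control for $\mathcal{Z}$. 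Reassembling $\sum_i\|\phi_i\e\|_{\h}^2 = \|\e\|_{\h}^2 + O(R^{-1})\|\e\|_{L^2}^2$ via one more application of IMS yields the claim.

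The principal technical obstacle is the quantitative bound $\|[[|D|,\phi_i],\phi_i]\|_{L^2\to L^2}\lesssim R^{-1}$: unlike the local KdV case, where integration by parts suffices, here $|D|$ is a nonlocal Fourier multiplier and one must invoke a pseudodifferential commutator estimate for slowly varying symbols, or equivalently compute via the Riesz-type kernel representation of $|D|$ and exploit the two cancellations carried by $\phi_i(x)-\phi_i(y)$. Once this estimate is secured, all cumulative errors are of the form $\delta(\omega_0)\|\e\|_{\h}^2 + o_{z_0\to\infty}(1)\|\e\|_{\h}^2$, which are absorbed into $\mu_0/2$ by taking $\omega_0$ small and $z_0$ large, producing \eqref{28} with any fixed $\lambda_0\in(0,\mu_0)$.
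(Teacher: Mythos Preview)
Your proof is correct and follows the same overall strategy as the paper: reduce $D^2H(U)$ to a sum of two single-soliton linearized operators via spatial localization, then invoke Proposition~\ref{P4}(3) at each center. The implementations differ in a useful way. The paper works with a \emph{linear} partition $\e=\phi\e+(1-\phi)\e$ with overlapping supports and an intermediate operator $T'=|D|+1-pQ^{p-1}-pQ^{p-1}(\cdot-x_1)$, bounding $|(\e,D^2H(U)\e)-(\e,T'\e)|$ directly and then applying the single-center estimate to each piece; the cross-term control is left largely implicit. Your IMS route with $\phi_1^2+\phi_2^2\equiv 1$ is cleaner in that the localization error is explicitly identified as the double commutator $[[|D|,\phi_i],\phi_i]$, whose $L^2\to L^2$ norm is $O(R^{-1})$ by a Schur-test argument on the kernel $(\phi_i(x)-\phi_i(y))^2/|x-y|^2$; this makes the nonlocal structure transparent and packages all the cross terms in one place.

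One small correction: the hypothesis is only $\mathcal{Z}\in L^2$, not $\mathcal{Z}\in\mathcal{Y}_2$ as you wrote. Your estimate $(\mathcal{Z}(\cdot-x_i),\phi_i\e)=(\mathcal{Z}(\cdot-x_i),\e)+O(R^{-3/2})\|\e\|_{L^2}$ should therefore be replaced by the qualitative bound $o_{R\to\infty}(1)\|\e\|_{L^2}$, coming from $\|\mathcal{Z}\,\mathbf{1}_{\{|y|>R/4\}}\|_{L^2}\to 0$ by dominated convergence. This is harmless since you only need the error absorbed for $z_0$ large, not a quantitative rate.
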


\begin{lemma}\label{L2}
There exists a positive constant $\kappa_0>0$ such that
\begin{equation}
\label{29}
H(Q(\cdot-x_1)+\sigma Q(\cdot-x_2))=2H(Q)-\frac{\sigma\kappa_0}{(x_1-x_2)^2}+O\bigg(\frac{1}{|x_1-x_2|^3}\bigg),
\end{equation}
as $x_1-x_2\rightarrow+\infty$.
\end{lemma}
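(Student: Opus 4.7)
The plan is to expand $H(u)$ with $u := Q(\cdot-x_1) + \sigma Q(\cdot-x_2)$, separating the quadratic part $\tfrac12\|u\|_{L^2}^2 + \tfrac12\|\D u\|_{L^2}^2$ from the nonlinear part $-\tfrac{1}{p+1}\int|u|^{p+1}$, and letting $z := x_1-x_2 \to +\infty$. By bilinearity and the self-adjointness of $|D|$, the cross terms in the quadratic part combine via the ground-state equation \eqref{15} into a single interaction integral: the quadratic part equals
\[
\|Q\|_{L^2}^2 + \|\D Q\|_{L^2}^2 + \sigma\bigl(Q(\cdot-x_1),\,(|D|+1)Q(\cdot-x_2)\bigr) = \|Q\|_{L^2}^2 + \|\D Q\|_{L^2}^2 + \sigma I,
\]
where $I := \int Q(y-x_1)\, Q^p(y-x_2)\,dy$.

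The nonlinear part I handle by splitting $\mathbb{R}$ into $R_1 := \{|y-x_1|\le z/4\}$, $R_2 := \{|y-x_2|\le z/4\}$, and the middle region $M$. On $R_1$ one bubble dominates and the other is $O(z^{-2})$, so the pointwise Taylor bound
\[
\bigl||a+\sigma b|^{p+1} - a^{p+1} - b^{p+1} - \sigma(p+1)(a^p b + a b^p)\bigr| \lesssim a^{p-1} b^2 + a^2 b^{p-1}
\]
for $a,b\ge 0$ (proved by separately considering $\{b\le a\}$ and $\{b>a\}$, using $p>2$ to absorb $ab^p$ and $b^{p+1}$ into the quadratic error when $b\le a$) integrates to $O(z^{-4})$ after invoking $\int Q^{p-1}<\infty$ (again since $p>2$), and symmetrically on $R_2$. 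On $M$ both bubbles are $O(z^{-2})$ and the integrand is pointwise $O(z^{-2(p+1)})$ on a set of length $\lesssim z$, giving $O(z^{-2p-1}) = o(z^{-3})$. Since $\int Q(\cdot-x_1)^p Q(\cdot-x_2)\,dy = \int Q(\cdot-x_1) Q(\cdot-x_2)^p\,dy = I$ by translation and the evenness of $Q$, this yields
\[
\int|u|^{p+1} = 2\int Q^{p+1} + 2\sigma(p+1)\, I + O(z^{-3}).
\]

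Summing the two parts, the cross contributions combine as $\sigma I - 2\sigma I = -\sigma I$, so $H(u) = 2H(Q) - \sigma I + O(z^{-3})$. Finally, substituting $y\mapsto y+x_1$ gives $I = \int Q^p(y)\,Q(y+z)\,dy$; the expansion $Q(y+z) = \kappa_0/z^2 + O(|y|/z^3) + O(1/z^4)$ for $|y|\le z/2$ (from Proposition~\ref{P1}), together with the evenness of $Q^p$ (which kills the $O(|y|/z^3)$ contribution), the integrability of $y^2 Q^p$ (which needs $p>3/2$), and the tail bound $\int_{|y|>z/2}Q^p = O(z^{-(2p-1)}) = o(z^{-3})$ for $p>2$, give $I = \kappa_0 \bigl(\int Q^p\bigr)z^{-2} + O(z^{-3})$. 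The lemma's constant is then $\kappa_0 \int Q^p > 0$.

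The main obstacle is the case $\sigma=-1$, where $u$ vanishes near $y = (x_1+x_2)/2$ and no single Taylor expansion in one small variable is valid globally; this is exactly what forces the region-splitting above. The hypothesis $p>2$ enters repeatedly --- integrability of $Q^{p-1}$ and $y^2 Q^p$, the pointwise comparison $ab^p \lesssim a^{p-1}b^2$ on $\{b\le a\}$, and the tail bound $O(z^{-(2p-1)})=o(z^{-3})$ --- which is consistent with the exponent range in Theorem~\ref{MT1}.
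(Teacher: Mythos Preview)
Your proof follows essentially the same route as the paper's: expand $H$, use the ground state equation to reduce the quadratic cross terms to $I = \int R_1 R_2^p$, bound the nonlinear remainder by the pointwise estimate $R_1^{p-1}R_2^2 + R_1^2 R_2^{p-1}$, and compute $I$ from the asymptotic expansion of $Q$. Two small slips are worth noting. First, your set $M = \mathbb{R}\setminus(R_1\cup R_2)$ has infinite measure --- it contains the tails $y > x_1 + z/4$ and $y < x_2 - z/4$ --- so the ``length $\lesssim z$'' argument fails there; on the tails one instead uses that $a^{p-1}b^2 + a^2b^{p-1} \lesssim |y-x_i|^{-2(p+1)}$ decays, and integrating this still gives $O(z^{-2p-1})$, so the conclusion survives once you treat the tails separately. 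Second, on $R_1$ you invoke only $\int Q^{p-1} < \infty$ to claim $O(z^{-4})$, but the companion term $\int_{R_1} a^2 b^{p-1}$ is only $O(z^{-2(p-1)})$, which equals $O(z^{-4})$ precisely when $p\ge 3$. Since this lemma lives inside the supercritical setup ($p>3$, needed for the $Z^\pm$ in Proposition~\ref{P5}) this is harmless, but the repeated emphasis on $p>2$ alone overstates the range --- a point on which the paper's own estimate \eqref{224} is equally loose.
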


\begin{proof}[Proof of Lemma \ref{L1}]
Without loss of generality, we can assume that $x_2=0$ and $x_1\geq z_0$. Consider the following operator:
$$T'=|D|+1-pQ^{p-1}-pQ^{p-1}(\cdot-x_1).$$
Then we have
\begin{equation}
\label{210}
\big(\e,D^2H(U)\e\big)-(\e,T'\e)=-p\int\big[|U|^{p-1}-Q^{p-1}-Q^{p-1}(\cdot-x_1)\big]\e^2.
\end{equation}
By H\"older's inequality, we have
\begin{align}
\label{211}
&\bigg|\int\big[|U|^{p-1}-Q^{p-1}-Q^{p-1}(\cdot-x_1)\big]\e^2\bigg|\lesssim\|\e\|_{L^4}\Big\||U|^{p-1}-\big|Q+Q(\cdot-x_1)\big|^{p-1}\Big\|_{L^2}\nonumber\\
&\qquad+\|\e\|_{L^4}\Big\|\big|Q+Q(\cdot-x_1)\big|^{p-1}-Q^{p-1}-Q^{p-1}(\cdot-x_1)\Big\|_{L^2}.
\end{align}
By Gagliardo-Nirenberg's inequality, we know that 
\begin{align}
\label{212}
&\Big\||U|^{p-1}-\big|Q+Q(\cdot-x_1)\big|^{p-1}\Big\|_{L^2}\nonumber\\
&\lesssim \big\|U-Q-\sigma Q(\cdot-x_1)\big\|_{L^{2(p-1)}}\Big(\|U\|^{p-2}_{L^{2(p-1)}}+\|Q+Q(\cdot-x_1)\|^{p-2}_{L^{2(p-1)}}\Big)\lesssim \delta(\alpha).
\end{align}
On the other hand, by integrating on the regions $x\leq x_1/2$ and $x> x_1/2$ separately, we know that
\begin{equation}
\label{213}
\Big\|\big|Q+Q(\cdot-x_1)\big|^{p-1}-Q^{p-1}-Q^{p-1}(\cdot-x_1)\Big\|_{L^2}\leq \frac{1}{1000},
\end{equation}
for $x_1=x_1-x_2$ sufficiently large. Combining \eqref{210}, \eqref{211}, \eqref{212} and \eqref{213}, we have 
\begin{equation}
\label{214}
\big|\big(\e,D^2H(U)\e\big)-(\e,T'\e)\big|\leq\frac{1}{100}\|\e\|^2_{\h}.
\end{equation}

Finally, we consider a decomposition $\e=\e_1+\e_2$, such that 
$$\text{Supp }\e_1\subset (-\infty,2x_1/3],\quad\text{Supp }\e_2\subset [x_1/3,+\infty)$$
and $\|\e\|_{\h}\sim \|\e_1\|_{\h}+\|\e_2\|_{\h}$. This can be easily done by letting $\e_1=\phi\e$, $\e_2=(1-\phi)\e$ for some suitable smooth function $\phi$. When $x_1$ is large enough, we can easily show that
\begin{align}
\label{215}&\big(Z^-(\cdot-x_1),\e_1\big)^2+\big(Z^+(\cdot-x_1),\e_1\big)^2+\big(\mathcal{Z}(\cdot-x_1),\e_1\big)^2\leq \frac{1}{100}\|\e\|^2_{L^2}\\
\label{216}&\big(Z^-(\cdot-x_2),\e_2\big)^2+\big(Z^+(\cdot-x_2),\e_2\big)^2+\big(\mathcal{Z}(\cdot-x_2),\e_2\big)^2\leq \frac{1}{100}\|\e\|^2_{L^2}.
\end{align}
Combining \eqref{25}, \eqref{214}, \eqref{215} and \eqref{216}, we conclude the proof of Lemma \ref{L1}.
\end{proof}

\begin{proof}[Proof of Lemma \ref{L2}]
We introduce the following notation:
$$R_1(y)=Q(y-x_1),\quad R_2(y)=Q(y-x_2).$$
Then we have:
\begin{align}
\label{217}
&H(R_1+\sigma R_2)\nonumber\\
=&2H(Q)+\int\bigg[\sigma (\D R_1)(\D R_2)+\sigma R_1R_2-\sigma R_1R_2^p-\sigma R_2R_1^p\nonumber\\
&\quad-\frac{1}{p+1}\big(|R_1+\sigma R_2|^{p+1}-R_1^{p+1}-R_2^{p+1}-(p+1)\sigma R_1R_2^p-(p+1)\sigma R_2R_1^p\big)\bigg]\nonumber\\
=&2H(Q)-\sigma\int R_1R_2^p\nonumber\\
&\quad-\frac{1}{p+1}\int\big(|R_1+\sigma R_2|^{p+1}-R_1^{p+1}-R_2^{p+1}-(p+1)\sigma R_1R_2^p-(p+1)\sigma R_2R_1^p\big).
\end{align}

We first estimate the term $\int R_1R_2^p$. For $y>(x_1+x_2)/2$, we have:
\begin{equation}
\label{218}
\int_{y>(x_1+x_2)/2} R_1R_2^p\lesssim\frac{1}{|x_1-x_2|^{2p}}\int R_1=O\bigg(\frac{1}{|x_1-x_2|^3}\bigg),
\end{equation}
as $x_1-x_2\rightarrow+\infty$. Recall that the ground state $Q$ satisfies $Q(y)\sim 1/y^2$. For $y<(x_1+x_2)/2$, using \eqref{21}, we have:
\begin{align}
\label{219}
&\int_{y<(x_1+x_2)/2} R_1R_2^p=\kappa_0\int_{y<(x_1+x_2)/2}  \frac{Q^p(y-x_2)}{(y-x_1)^2}\,\dd y+O\bigg(\int_{y<(x_1+x_2)/2}\frac{Q^p(y-x_2)}{(y-x_1)^4}\,\dd y\bigg)\nonumber\\
&=\kappa_0\int_{y<(x_1+x_2)/2}  \frac{Q^p(y-x_2)}{(y-x_1)^2}\,\dd y+O\bigg(\frac{1}{|x_1-x_2|^3}\bigg).
\end{align}
Since $y-x_1=y-x_2+x_2-x_1$, we have
\begin{align}
\label{220}
&\int_{y<(x_1+x_2)/2}  \frac{Q^p(y-x_2)}{(y-x_1)^2}\,\dd y\nonumber\\
&=\frac{\kappa_0}{(x_2-x_1)^2}\int_{y<(x_1+x_2)/2}  {Q^p(y-x_2)}\frac{1}{[1+(y-x_2)/(x_2-x_1)]^2}\,\dd y\nonumber\\
&=\frac{\kappa_0}{(x_2-x_1)^2}\int_{y<(x_1+x_2)/2}  {Q^p(y-x_2)}\bigg[1+O\bigg(\frac{|y-x_2|}{|x_2-x_1|}\bigg)\bigg]\,\dd y \nonumber\\
&=\frac{\kappa_0}{(x_2-x_1)^2}\int{Q^p(y-x_2)}\,\dd y+O\bigg(\frac{1}{|x_1-x_2|^3}\bigg).
\end{align}
Combining \eqref{218}, \eqref{219} and \eqref{220}, we have
\begin{equation}
\label{221}
\int R_1R_2^p=\frac{\kappa_0}{(x_2-x_1)^2}\int Q^p+O\bigg(\frac{1}{|x_1-x_2|^3}\bigg).
\end{equation}

Next, we estimate the nonlinear term. We consider a function 
$$F(x)=|1+x|^{p+1}-1-|x|^{p+1}-(p+1)x-(p+1)x|x|^{p-1}.$$
Since $p>2$, it is easy to verify that $F\in C^2([-2,2])$ and $F(0)=F'(0)=0$. Hence, there exists a constant $C_0>0$, such that 
\begin{equation}
\label{222}
\forall x\in[-2,2],\quad|F(x)|\leq C_0|x|^2.
\end{equation}
For any fixed $y\in\mathbb{R}$, if $R_1(y)\leq |R_2(y)|$, then we choose%
\footnote{Here we use the fact that for all $y\in\mathbb{R}$, $R_2(y)\not=0$.}
 $x=R_1(y)/[\sigma R_2(y)]$, by \eqref{222}, we have
\begin{equation}
\label{223}
\big|\big(|R_1+\sigma R_2|^{p+1}-R_1^{p+1}-R_2^{p+1}-(p+1)\sigma R_1R_2^p-(p+1)\sigma R_2R_1^p\big)\big|\lesssim R_1^2R_2^{p-1}+R_2^2R_1^{p-1}.
\end{equation}
If $R_1(y)> |R_2(y)|$, by choosing $x=\sigma R_2(y)/R_1(y)$, we can also obtain \eqref{223}. Hence, by separating the integral on regions $y>(x_1+x_2)/2$ and $y\leq (x_1+x_2)/2$, we have:
\begin{align}
\label{224}
&\frac{1}{p+1}\bigg|\int\big(|R_1+\sigma R_2|^{p+1}-R_1^{p+1}-R_2^{p+1}-(p+1)\sigma R_1R_2^p-(p+1)\sigma R_2R_1^p\big)\bigg|\nonumber\\
&\lesssim \int R_1^2R_2^{p-1}+R_2^2R_1^{p-1}=O\bigg(\frac{1}{|x_1-x_2|^3}\bigg).
\end{align}
Collecting \eqref{221} and \eqref{224}, we conclude the proof of Lemma \ref{L2}.
\end{proof}

Now, we are able to prove Proposition \ref{P5}:
\begin{proof}[Proof of Proposition \ref{P5}]
Let $U=R_1+\sigma R_2$, where 
$$R_1(y)=Q(y-x_1),\quad R_2(y)=Q(y-x_2).$$
By Taylor's expansion, we have:
\begin{equation*}
H(U+\e)=H(U)+(DH(U),\e)+\frac{1}{2}(D^2H(U)\e,\e)+O\big(\|\e\|^3_{\h}\big).
\end{equation*}
From Lemma \ref{L1} and Lemma \ref{L2}, we have
\begin{align*}
&\frac{\lambda_0}{2}\|\e\|^2_{\h}-\frac{\sigma\kappa_0}{(x_1-x_2)^2}+(DH(U),\e)\nonumber\\
&\leq \frac{1}{2\lambda_0}\bigg[\big(Z^-(\cdot-x_1),\e\big)^2+\big(Z^+(\cdot-x_1),\e\big)^2+\big(\mathcal{Z}(\cdot-x_1),\e\big)^2\nonumber\\
&\quad+\big(Z^-(\cdot-x_2),\e\big)^2+\big(Z^+(\cdot-x_2),\e\big)^2+\big(\mathcal{Z}(\cdot-x_2),\e\big)^2\bigg]+O\bigg(\frac{1}{|x_1-x_2|^3}\bigg).
\end{align*}
Now it suffices to prove that
\begin{equation*}
|(DH(U),\e)|\ll \|\e\|^2_{\h}+\frac{1}{(x_1-x_2)^3}.
\end{equation*}
By Cauchy-Schwarz inequality, we only need to prove that
\begin{equation}
\label{225}
\big\|(R_1+\sigma R_2)|R_1+\sigma R_2|^{p-1}-R_1^p-\sigma R_2^p\big\|_{L^2}\lesssim \frac{1}{|x_1-x_2|^2}.
\end{equation}
This can be done by a similar argument as we did for \eqref{224}. Hence, we conclude the proof of Proposition \ref{P5}.
\end{proof}

\section{Modulation argument}\label{S3}
In this section, we will construct a strongly interacting multi-bubble, and introduce the geometrical decomposition for solutions near this multi-bubble. Some suitable orthogonality conditions will be chosen, which will lead to some important modulation estimates on the parameters.
\subsection{Strongly interacting multi-bubble}
We first introduce the following change of variables:
$$w(t,y)=u(t,y+t),$$
where $u(t,x)$ is a solution of \eqref{CP}. We can easily see that $w$ satisfies:
\begin{equation}
\label{316}
w_t-\partial_y(|D|w+w-|w|^{p-1}w)=0.
\end{equation}

We will construct a multi-soliton of the following form:
$$w(t,y)\sim\sum_{i=1}^n\sigma_iQ_{1+\mu_i(t)}(y-x_i(t))+r(t,y),$$
where $r(t,y)$ is the strongly interacting term and $\sigma_i\in\{\pm1\}$. 

More precisely, let $I=[t_0,t_1]$ be an interval, and $\{x_{i}(t)\}_{i=1}^n$, $\{\mu_{i}(t)\}_{i=1}^n$ be $C^1$-functions defined on $I$ and $\sigma_i\in\{\pm1\}$. We \emph{a priorily} assume that for all $t\in I$
\begin{equation}
\label{36}
|\mu_{i}(t)|\leq \omega_0,\quad d(t):=\min_{i\in\{1,\ldots,n-1\}}[x_{i}(t)-x_{i+1}(t)]\geq \frac{1}{\omega_0}>0,
\end{equation}
where $\omega_0\ll1$ is a small constant. Let $\psi$ be a smooth function such that $\psi(y)=1$ if $y>-1$; $\psi(y)=0$ if $y<-2$. For all $i\not=j$, we denote by
\begin{equation}
\label{314}
\varphi_{ij}(t,y):=\psi\bigg(\frac{y-x_i(t)}{x_{ij}^3(t)}\bigg),
\end{equation}
where $x_{ij}(t)=x_{i}(t)-x_{j}(t)$. We are searching for a multi-soliton of the following form
\begin{align}
\label{35}
V(t,y)&=\Theta(\vec{x}(t),\vec{\mu}(t),y)=\sum_{i=1}^n\sigma_iQ_{1+\mu_{i}(t)}(y-x_{i}(t))+\sum_{\substack{i,j=1\\ j\not=i}}^n\frac{A_{ij}(t,y-x_{i}(t))}{x^2_{ij}(t)}\nonumber\\
&+\sum_{\substack{i,j=1\\ j\not=i}}^n\frac{B_{ij}(t,y-x_{i}(t))}{x_{ij}^3(t)}\varphi_{ij}(t,y),
\end{align}
where $\vec{x}(t)=(x_{1}(t),\ldots,x_{n}(t))$, and  $\vec{\mu}(t)=(\mu_{1}(t),\ldots,\mu_{n}(t))$. 
\begin{remark}
We will choice functions $A_{ij}(t), B_{ij}(t)$ suitably such that $V(t,y)$ is an approximate solution to \eqref{316} with an error term of order $1/d^{9/2}(t)$. 
\end{remark}
\begin{remark}
We will see from the construction of $A_{ij}, B_{ij}$ that the functions $A_{ij},B_{ij}$ are time dependent, but they actually depend only on the parameters $\vec{x}(t)$ and $\vec{\mu}(t)$. They don't explicitly depend on the time $t$. 
\end{remark}
More precisely, we have:

\begin{proposition}\label{P6}
Let $p>2$, $p\not=3$, $n\geq 2$ and $\sigma_i\in\{\pm1\}$. We assume that $\{x_{i}(t)\}_{i=1}^n$, $\{\mu_{i}(t)\}_{i=1}^n$ are $C^1$-functions defined on $I=[t_0,t_1]$ satisfying \eqref{36}. Then for all $i\not=j$, $k=1,\ldots,n$ there exist smooth bounded functions $A_{ij}(t), B_{ij}(t)$ and constants $a_{ij}, b_{ijk}\in\mathbb{R}$, such that the following properties hold. Let $\Psi_V=\partial_tV-\partial_y(|D|V+V-|V|^{p-1}V)$, and 
\begin{equation}
\R_i(t,y)=Q_{1+\mu_i(t)}(y-x_i(t)).
\end{equation}
Then we have, for all $t\in [t_0,t_1]$,
\begin{align}
\label{322}
\Psi_V=&E_V-\sum_{i=1}^n(\dot{x}_i-\mu_i)\sigma_i\partial_y\widetilde{R}_i+\sum_{i=1}^n\bigg(\dot{\mu}_i+\sum^n_{\substack{j=1,\\j\not=i}}\frac{a_{ij}}{x^3_{ij}}+\sum^n_{\substack{k,j=1,\\j\not=i}}\frac{b_{ijk}\mu_k}{x^3_{ij}}\bigg)\frac{\sigma_i\Lambda\widetilde{R}_i}{1+\mu_i},
\end{align}
where $a_{ij}$ are given by
\begin{equation}
\label{320}
a_{ij}=\frac{4\sigma_i\sigma_j\kappa_0(p-1)\int Q^p}{(p-3)\int Q^2}.
\end{equation}
Moreover, $E_V$ satisfies for all $t\in[t_0,t_1]$,
\begin{align}\label{323}
\|E_V(t)\|_{\h}\lesssim& \frac{1}{d^{9/2}(t)}+\frac{M^{3/2}_1(t)}{d^2(t)}+\frac{M_2(t)}{d^3(t)}+\frac{M_3(t)}{d^2(t)}.
\end{align}
where
\begin{align}
&M_1(t)=\sum_{i=1}^n|\mu_i(t)|^2,\quad M_2(t)=\sup_{1\leq i\leq n}|\dot{x}_i(t)-\mu_i(t)|,\\
&M_3(t)=\sup_{1\leq i\leq n}\bigg|\dot{\mu}_i(t)+\sum^n_{j=1,j\not=i}\frac{a_{ij}}{x^3_{ij}(t)}+\sum^n_{\substack{k,j=1,\\j\not=i}}\frac{b_{ijk}\mu_k(t)}{x^3_{ij}(t)}\bigg|.
\end{align}
\end{proposition}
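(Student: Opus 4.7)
The plan is a careful expansion of $\Psi_V=\partial_t V-\partial_y(|D|V+V-|V|^{p-1}V)$, organized by powers of $1/d$ and $\vec\mu$. Two identities drive the computation: the soliton equation $|D|\widetilde R_i+(1+\mu_i)\widetilde R_i-\widetilde R_i^{p}=0$ and the scaling relation $\partial_{\mu_i}\widetilde R_i=\Lambda\widetilde R_i/(1+\mu_i)$ coming from \eqref{113}--\eqref{114}; together with the asymptotic expansion $Q(y)=\kappa_0/y^2+\kappa_1/y^4+O(|y|^{-6})$ from Proposition \ref{P1}, they permit Taylor-expanding $\widetilde R_j(y)$ around $y=x_i$ in powers of $(y-x_i)/x_{ij}$ and $\mu_k$.

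I first treat the pure-bubble contribution of $V_0:=\sum_i\sigma_i\widetilde R_i$. Time differentiation together with the two identities above produces exactly the modulation combination $\sum_i[-\sigma_i(\dot x_i-\mu_i)\partial_y\widetilde R_i+\sigma_i\dot\mu_i\Lambda\widetilde R_i/(1+\mu_i)]$ appearing in \eqref{322}, leaving the nonlinear cross-term $\partial_y(|V_0|^{p-1}V_0-\sum_i\sigma_i\widetilde R_i^{p})$. Taylor-expanding $f(u)=|u|^{p-1}u$ near $u_0=\sigma_i\widetilde R_i$ by $f(u_0+\rho)=f(u_0)+p|u_0|^{p-1}\rho+\tfrac{1}{2}p(p-1)|u_0|^{p-3}u_0\rho^2+O(\rho^3)$, the leading interaction source near bubble $i$ is $\partial_y[p\,\sigma_j\kappa_0\,\widetilde R_i^{p-1}]/x_{ij}^2$. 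Cancellation by $V_1=\sum_{i\neq j}A_{ij}(\cdot-x_i)/x_{ij}^2$ demands, at leading order in $\mu$, the equation
\[
\mathcal L A_{ij}=p\,\sigma_j\kappa_0\,Q^{p-1}.
\]
Since the right-hand side is even while $\ker\mathcal L=\mathrm{span}\{Q'\}$ is odd, Proposition \ref{P2}(3)--(4) yields a unique even $A_{ij}\in\mathcal Y_1$ with $(A_{ij},Q')=0$.

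For the order-$1/x_{ij}^3$ cancellation, I collect three families of corrections: (i) the next Taylor tail $-2\sigma_j\kappa_0 z/x_{ij}^3$ of $\widetilde R_j$ (with $z=y-x_i$); (ii) the $\mu_k$-dependent pieces of $\widetilde R_j$ and $\widetilde R_i^{p-1}$ produced by the scaling parameter; (iii) the time derivatives $\partial_t[A_{ij}/x_{ij}^2]$, in which $\dot x_\ell$ and $\dot\mu_\ell$ are replaced by their leading ODE values $\mu_\ell$ and $-\sum_m a_{\ell m}/x_{\ell m}^3$ up to errors controlled by $M_2,M_3$. These assemble into explicit sources $S_{ij}(z)/x_{ij}^3$ and $\mu_k T_{ijk}(z)/x_{ij}^3$. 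A parity check shows that $(S_{ij},Q')=(T_{ijk},Q')=0$, but the $Q$-projections are in general nonzero; the latter cannot be absorbed by $B_{ij}$ and instead correspond---via $\mathcal L\Lambda Q=-Q$ from Proposition \ref{P2}(2)---to multiples of $\Lambda\widetilde R_i$ that must be carried by the $\dot\mu_i$-modulation. Matching the $Q(\cdot-x_i)$-projection of $\Psi_V$ with that of the modulation structure \eqref{322} then forces
\[
\sigma_i a_{ij}(\Lambda Q,Q)=(S_{ij},Q),\qquad \sigma_i b_{ijk}(\Lambda Q,Q)=(T_{ijk},Q),
\]
and an integration by parts using $(\Lambda Q,Q)=\tfrac{3-p}{2(p-1)}\int Q^2$ together with $\int zQ^{p-1}Q'=-\tfrac1p\int Q^p$ produces the closed form \eqref{320} with its $\sigma_i\sigma_j$ prefactor. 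Once the $Q$-components are subtracted, Proposition \ref{P2}(3)--(4) yields $B_{ij}\in\mathcal Y_1$, and the cut-off $\varphi_{ij}$ is introduced only to guarantee the correct localization and integrability of $V_2$.

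The residual $E_V$ then collects: (a) tails of order $1/d^{9/2}$ from truncating the Taylor expansions and from commutator errors between $|D|$ and $\varphi_{ij}$; (b) quadratic-and-higher nonlinear contributions in $V_1,V_2$, bounded by $1/d^4$ or smaller; (c) modulation-factor errors carrying the prefactors $M_1^{3/2}$, $M_2$, $M_3$ arising from step (iii) and from higher-$\mu$ Taylor remainders. Bounding each in $\h$ by combining the $\mathcal Y_1$-decay of $A_{ij},B_{ij}$, Sobolev embeddings, and the commutator estimates for $|D|$ against $\varphi_{ij}$ from \cite{C,DMP,KPV,MP} produces the announced bound \eqref{323}. \emph{The main obstacle is the organizational bookkeeping}: in contrast to the KdV case, the algebraic decay of $Q$ combined with the nonlocality of $|D|$ generates a proliferation of intermediate-order interactions at every power of $1/d$ between $1/d^2$ and $1/d^{9/2}$ (and analogously in $\vec\mu$), and one must verify systematically that each one collapses into the clean decomposition \eqref{322}--\eqref{323}.
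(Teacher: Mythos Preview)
Your overall strategy---expand $\Psi_V$, peel off the modulation vector fields, use the tail expansion of $Q$ to identify interaction sources at orders $1/x_{ij}^2$ and $1/x_{ij}^3$, and determine $a_{ij},b_{ijk}$ by projecting onto $Q$---is exactly what the paper does, including your computation of $a_{ij}$ via $(\Lambda Q,Q)=\tfrac{3-p}{2(p-1)}\int Q^2$ and $\int zQ^{p-1}Q'=-\tfrac1p\int Q^p$.

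There is, however, a genuine error in your treatment of $B_{ij}$. You invoke Proposition~\ref{P2}(3)--(4) to obtain $B_{ij}\in\mathcal Y_1$, but those statements concern $\mathcal L f=g$, whereas the equation you actually need to solve is $(\mathcal L B_{ij})'=g$. After one antiderivative the right-hand side is merely bounded (its limit at $-\infty$ equals $-\int g$, and $\int\Lambda Q=\tfrac{2-p}{p-1}\int Q\neq0$), so $\mathcal L B_{ij}$ is not in $L^2$ and Proposition~\ref{P2} does not apply. The paper handles this with Lemma~\ref{L5}: the solution $B_{ij,0}$ is only \emph{bounded}, with a nonzero limit at $-\infty$ (equation~\eqref{313}). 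This is not cosmetic: it is the sole reason the cutoff $\varphi_{ij}$ is mandatory rather than optional, and the commutator $[|D|,\varphi_{ij}]$ acting on the non-decaying tail of $B_{ij}$ is precisely what caps the error at $1/d^{9/2}$ instead of $1/d^5$, as the paper remarks explicitly after Step~7.

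A second point where your outline understates the work: the paper's corrections are two-tiered, $A_{ij}=A_{ij,0}+\mathfrak A_{ij}(\vec x,\vec\mu,\cdot)$ and $B_{ij}=B_{ij,0}+\sum_k\mu_k\mathfrak B_{ijk}$ (Proposition~\ref{P11}). The sources you label $\mu_kT_{ijk}$ in fact split: some live at order $\mu_k/x_{ij}^2$ and are absorbed by $\mathfrak A_{ij}$ solving $\mathcal L\mathfrak A_{ij}=\sum_k\mu_kE_{ijk}+F_{ij}$ (with $E_{ijk}$ even so that $(\cdot,Q')=0$), while others sit at $\mu_k/x_{ij}^3$ and feed $\mathfrak B_{ijk}$ via another application of Lemma~\ref{L5}, with $b_{ijk}$ fixed by the $Q$-projection. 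Your parity check disposes of the leading sources, but several of the secondary ones (arising from $\partial_t A_{ij}$, from $K$, and from $[|D|,\varphi_{ij}]B_{ij,0}$) have no definite parity; the paper verifies $(\cdot,Q')=0$ for them by direct estimates such as \eqref{371}, \eqref{375}, \eqref{382}.
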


\begin{remark}
We will see that under suitable assumptions the right hand side of \eqref{323} can be controlled by $1/d^{9/2}(t)$.
\end{remark}

We will see that the functions  $A_{ij}(t), B_{ij}(t)$ and constants $a_{ij}, b_{ijk}\in\mathbb{R}$ satisfy the following:
\begin{proposition}\label{P11}
Let $A_{ij}(t), B_{ij}(t), a_{ij}, b_{ijk}$ be the functions and constants introduced in Proposition \ref{P6}. Then the following properties hold.
\begin{enumerate}
\item Equation of $A_{ij}$: for all $i\not=j$, there exists even functions $A_{ij,0}\in\mathcal{Y}_2$ and smooth maps 
$$\mathfrak{A}_{ij}:\quad (\vec{x},\vec{\mu},y)\mapsto  \mathfrak{A}_{ij}(\vec{x},\vec{\mu},y)\in\mathbb{R}$$ satisfying:
\begin{equation}
\label{333}
A_{ij}(t,y)=A_{ij,0}(y)+\mathfrak{A}_{ij}(\vec{x}(t),\vec{\mu}(t),y).
\end{equation}
with
\begin{align}
\mathcal{L}A_{ij,0}=p\kappa_0\sigma_jQ^{p-1},\label{31}
\end{align}

Moreover, the smooth maps $\mathfrak{A}_{ij}$ satisfy
\begin{align}
\mathcal{L}\mathfrak{A}_{ij}=\sum^n_{k=1}\mu_kE_{ijk}+F_{ij},\label{355}
\end{align}
where $E_{ijk}\in\mathcal{Y}_2$ are even functions independent of time, while $F_{ij}$ are smooth maps 
$$F_{ij}:\quad \mathbb{R}^n\times\mathbb{R}^n\times\mathbb{R}\ni(\vec{x},\vec{\mu},y)\mapsto  F_{ij}(\vec{x},\vec{\mu},y)\in\mathbb{R}$$ satisfying
\begin{align}
&F_{ij}(\vec{x}(t),\vec{\mu}(t),\cdot)\in\mathcal{Y}_2,\quad\big(F_{ij}(\vec{x}(t),\vec{\mu}(t),\cdot),Q'(\cdot)\big)=0,\label{337}\\
&\|F_{ij}(\vec{x}(t),\vec{\mu}(t),\cdot)\|_{\h}\leq C_0\bigg(\frac{1}{d^{3/2}(t)}+M_1(t)\bigg),\label{360}\\
&\big\|\nabla_{\vec{\mu}}[F_{ij}(\vec{x}(t),\vec{\mu}(t),\cdot)]\big\|_{\h}+d^3(t)\big\|\nabla_{\vec{x}}[F_{ij}(\vec{x}(t),\vec{\mu}(t),\cdot)]\big\|_{\h}\leq C_0,\label{338}
\end{align}
for some universal constant $C_0>0$.

\item Equation of $B_{ij}$: for all $i,j,k=1,\ldots,n$ and $i\not=j$, there exist smooth bounded functions $B_{ij,0}$ and $\mathfrak{B}_{ijk}$ (independent of time) such that
\begin{equation}
\label{33}
B_{ij}(t,y)=B_{ij,0}(y)+\sum_{k=1}^n\mu_k(t)\mathfrak{B}_{ijk}(y).
\end{equation}
Here the functions $B_{ij,0}$ satisfy
\begin{align}
&(-\mathcal{L}B_{ij,0})'=-a_{ij}\sigma_i\Lambda Q+2p\kappa_0\sigma_j(yQ^{p-1})',\label{32}\\
&\int B_{ij,0}Q'=0,\quad\lim_{y\rightarrow+\infty}B_{ij,0}(y)=0,\label{328}\\
&\lim_{y\rightarrow-\infty}B_{ij,0}(y)=-\frac{a_{ij}\sigma_i(p-2)}{p-1}\int Q\label{313},
\end{align}
where $a_{ij}$ are given by \eqref{320}. Moreover, we have
\begin{align}
&\partial_y(\mathcal{L}\mathfrak{B}_{ijk})=-b_{ijk}\sigma_i\Lambda Q+G_{ijk},\label{336}
\end{align}
where $G_{ijk}\in\mathcal{Y}_2$ are functions independent of time.
\end{enumerate}
\end{proposition}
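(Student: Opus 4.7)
The plan is to derive the four equations \eqref{31}, \eqref{355}, \eqref{32}, \eqref{336} as the cancellation conditions appearing order by order in the expansion of
$$\Psi_V = \partial_t V - \partial_y\bigl(|D|V + V - |V|^{p-1}V\bigr)$$
in powers of $1/d(t)$ and of $\vec\mu(t)$. Writing $V = V_0 + r$ with $V_0 = \sum_i \sigma_i\widetilde R_i$, each bubble $\sigma_i\widetilde R_i$ individually annihilates its own stationary equation, so $\Psi_V$ is generated by (i) the nonlinear cross-terms $|V_0|^{p-1}V_0 - \sum_i\sigma_i \widetilde R_i^p$, expanded using the asymptotic $Q(y) = \kappa_0/y^2 + \kappa_1/y^4 + O(|y|^{-6})$ from Proposition \ref{P1}; (ii) the linearized action of $\mathcal L$ around $\sigma_i\widetilde R_i$ on the correction pieces $A_{ij}/x_{ij}^2$ and $B_{ij}\varphi_{ij}/x_{ij}^3$; and (iii) the time derivatives of $V$ through $\dot x_i$ and $\dot\mu_i$. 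Localizing each contribution near the bubble at $x_i$ (by the change of variables $y \mapsto y - x_i$) reduces the problem to a hierarchy of elliptic-type equations on $\mathbb R$ for the unknown profiles, which I plan to solve sequentially starting from the most singular order.

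At order $1/x_{ij}^2$, the linearization of $|V_0|^{p-1}V_0$ around $\sigma_i\widetilde R_i$ combined with the pointwise estimate $\widetilde R_j(y) \sim \sigma_j\kappa_0/x_{ij}^2$ valid uniformly for $|y - x_i| \ll |x_{ij}|$ produces the leading source $p\kappa_0\sigma_j Q^{p-1}(y-x_i)/x_{ij}^2$, which must be absorbed by $\partial_y \mathcal L[A_{ij}(y-x_i)]/x_{ij}^2$; this is precisely \eqref{31}. The right-hand side is even and lies in $\mathcal Y_{2(p-1)} \subset \mathcal Y_2$ (using $p > 2$); the solvability condition $(Q^{p-1}, Q') = 0$ holds by parity, so Proposition \ref{P2}(3)--(4) combined with a standard decay bootstrap produces a unique even $A_{ij,0} \in \mathcal Y_2$. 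Refining the expansion of $V$ to linear order in $\vec\mu$ yields additional $1/x_{ij}^2$-sources, arising from replacing $Q$ by $Q_{1+\mu_j}$ in the neighbour bubble, from the $\mu_k$-variation of the linearization hitting $A_{ij}$, and from the scaling correction; these assemble into the right-hand side of \eqref{355} with $E_{ijk}$ time-independent, the $\vec x$-dependence of $F_{ij}$ decaying like $1/d^3$, and the $\vec\mu$-dependence smooth with $O(1)$ derivatives, yielding \eqref{360}--\eqref{338} by direct inspection of the explicit expansion.

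At order $1/x_{ij}^3$ three new sources appear simultaneously: the next Taylor coefficient of $\widetilde R_j(y)$ about $y = x_i$ produces $-2p\kappa_0\sigma_j (y-x_i)Q^{p-1}(y-x_i)/x_{ij}^3$; the time derivative $\partial_t\widetilde R_i$ contributes $\sigma_i\dot\mu_i\Lambda\widetilde R_i/(1+\mu_i) \approx \sigma_i\dot\mu_i\Lambda Q$, and under the ansatz $\dot\mu_i + \sum_j a_{ij}/x_{ij}^3 \sim 0$ the pair $(i,j)$ gives $-a_{ij}\sigma_i\Lambda Q/x_{ij}^3$; balancing all these via $\partial_y\mathcal L$ acting on $B_{ij,0}(y-x_i)$ yields \eqref{32}. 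The scalar $a_{ij}$ is then determined by the Fredholm-type solvability obtained after integrating \eqref{32} once: pairing the right-hand side with $Q$ and using that $(Q, \Lambda Q)$ is a nonzero multiple of $\int Q^2$ for $p \neq 3$ together with the nonzero value of $(Q, (yQ^{p-1})')$ uniquely forces the value \eqref{320}. With this choice, \eqref{32} admits a smooth solution whose limits at $\pm\infty$ need not vanish: one normalizes $B_{ij,0}(+\infty) = 0$ to obtain \eqref{328}, imposes $\int B_{ij,0} Q' = 0$ to remove the ambiguity along $\ker\mathcal L$, and reads off $B_{ij,0}(-\infty)$ by explicitly integrating the equation, yielding \eqref{313}. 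The $\mu_k$-linear profile $\mathfrak B_{ijk}$ and constants $b_{ijk}$ are produced by the same mechanism applied to the $\mu_k$-linear sources at this order, giving \eqref{336}.

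The main technical obstacle is the algebraic (rather than exponential) decay of the ground state $Q$. It is precisely the nonzero limit $B_{ij,0}(-\infty) \neq 0$ recorded in \eqref{313} that forces the cut-off $\varphi_{ij}$ in the ansatz \eqref{35}, and the action of the nonlocal operator $|D|$ on the truncated but non-Schwartz profile $B_{ij}\varphi_{ij}$ generates a large number of additional ``lower-order terms'' in $\Psi_V$ with no analogue in the local KdV framework of \cite{V1}. Carefully tracking all of these, showing that everything not captured by the parameter-ODE from Proposition \ref{P6} can be absorbed into $E_V$ with the $1/d^{9/2}$-decay required by \eqref{323}, together with verifying the smoothness of the maps $\mathfrak A_{ij}:(\vec x, \vec\mu) \mapsto \mathfrak A_{ij}(\vec x, \vec\mu, \cdot)$ and the precise weighted bounds \eqref{337}--\eqref{338}, is the principal technical difficulty of the construction.
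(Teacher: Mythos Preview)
Your proposal is correct and follows essentially the same route as the paper, which proves Propositions \ref{P6} and \ref{P11} jointly by expanding $\Psi_V$ into the four pieces $I$ (bubble interactions), $J$ (linearized action on $r$), $K$ (higher nonlinear in $r$), $L$ (time derivatives of $r$), then matching terms order by order in $1/x_{ij}$ and $\mu_k$ exactly as you outline. Your identification of the sources for \eqref{31} and \eqref{32}, the solvability mechanism fixing $a_{ij}$ and $b_{ijk}$, and the role of the cut-off $\varphi_{ij}$ forced by $B_{ij,0}(-\infty)\neq 0$ all match the paper; the long Step~3--Step~7 computation there is precisely the ``careful tracking'' you flag as the main technical burden, and your sketch would need to be filled in along those lines.
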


Before proving Proposition \ref{P6}, we need the following lemmas:
\begin{lemma}\label{L5}
Let $g\in\mathcal{Y}_2$ such that $(g,Q)=0$, then there exists a bounded smooth function $f$ such that $f'\in\mathcal{Y}_2$, $f\perp Q'$ and
\begin{equation}
\label{321}
(\mathcal{L}f)'=g,\quad \lim_{y\rightarrow+\infty}f(y)=0,\quad \lim_{y\rightarrow-\infty}f(y)=-\int g.
\end{equation}
\end{lemma}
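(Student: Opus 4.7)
The plan is to solve $(\mathcal{L}f)' = g$ by integrating the equation once and then inverting $\mathcal{L}$ on the resulting antiderivative; the only complication is that this antiderivative does not lie in $L^2$, which I would overcome by an explicit subtraction that captures its constant behavior at $-\infty$.

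Concretely, set $\eta = \int g$ and define
\begin{equation*}
H(y) = -\int_y^{+\infty} g(s)\,\dd s,
\end{equation*}
so that $H' = g$, $H(+\infty) = 0$, $H(-\infty) = -\eta$, and the decay $g\in\mathcal{Y}_2$ gives $H\in\mathcal{Y}_1$ near $+\infty$ together with $H+\eta\in\mathcal{Y}_1$ near $-\infty$. The goal is reduced to finding $f$ with $\mathcal{L}f = H$, the prescribed limits, $f\perp Q'$, and $f'\in\mathcal{Y}_2$. To handle the nonzero limit at $-\infty$ I would fix a smooth cutoff $\phi$ with $\phi\equiv 1$ on $(-\infty,-N]$ and $\phi\equiv 0$ on $[N,+\infty)$, and look for
\begin{equation*}
f = -\eta\,\phi + \tilde f, \qquad \mathcal{L}\tilde f = H + \eta\,\mathcal{L}\phi =: \tilde H,
\end{equation*}
with $\tilde f\in L^2$.

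The main technical step is to check that $\tilde H\in\mathcal{Y}_1\subset L^2$ and $(\tilde H, Q') = 0$. For the decay, $\mathcal{L}\phi = |D|\phi + \phi - pQ^{p-1}\phi$; since $\phi'\in C_c^\infty$, writing $|D|\phi = \mathcal{H}\phi'$ (with $\mathcal{H}$ the Hilbert transform) and expanding the Cauchy kernel at infinity shows $|D|\phi\in\mathcal{Y}_1$. On $(-\infty,-N]$ the constant pieces of $H$ and $\eta\mathcal{L}\phi$ combine as $-\eta + \eta = 0$, while on $[N,+\infty)$ both $H$ and $\phi$ decay, so $\tilde H\in\mathcal{Y}_1$ globally. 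For the orthogonality, integration by parts with $Q(\pm\infty)=0$ gives $(H,Q') = -(g,Q) = 0$ from the hypothesis, while a careful computation using $\mathcal{H}^* = -\mathcal{H}$, the identity $|D| = \mathcal{H}\partial_y$, and the ground state equation \eqref{15} yields
\begin{equation*}
(\mathcal{L}\phi, Q') = -\int \phi'\,(|D|Q + Q - Q^p)\,\dd y = 0,
\end{equation*}
the integrations by parts being legitimate because each integrand is absolutely integrable.

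Armed with these facts, Proposition~\ref{P2}(4) produces a unique $\tilde f\in L^2$ with $\mathcal{L}\tilde f = \tilde H$ and $(\tilde f,Q') = 0$, and Proposition~\ref{P2}(3) upgrades $\tilde f$ to $\mathcal{Y}_1$. Setting $f := -\eta\phi + \tilde f$, the limits at $\pm\infty$ are inherited from $-\eta\phi$, the derivative $f' = -\eta\phi' + \tilde f'$ lies in $\mathcal{Y}_2$, and $(\mathcal{L}f)' = H' = g$. A final correction $f \mapsto f - \lambda Q'$ with $\lambda = (f,Q')/\|Q'\|_{L^2}^2$ enforces $f\perp Q'$ without affecting any of the other properties, since $Q' \in \ker\mathcal{L}\cap\mathcal{Y}_3$ vanishes at $\pm\infty$. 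The main obstacle, in my view, is the nonlocal analysis: justifying that $|D|\phi$ belongs to $\mathcal{Y}_1$ and that the orthogonality $(\mathcal{L}\phi,Q') = 0$ survives despite $\phi\notin L^2$. Once these points are settled, the spectral theory of $\mathcal{L}$ recorded in Proposition~\ref{P2} closes the argument.
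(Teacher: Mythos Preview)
Your proof is correct and follows the same overall strategy as the paper: isolate an explicit piece of $f$ carrying the nonzero limit at $-\infty$, and solve for a $\mathcal{Y}_1$ correction via Proposition~\ref{P2}(3)--(4). The difference is only in which explicit piece you subtract. The paper takes the antiderivative $H(y)=-\int_y^{+\infty}g$ itself as that piece, writing $f=f_0+H$ with $f_0\in\mathcal{Y}_1$; the resulting equation is $\mathcal{L}f_0=R$ with $R=-\mathcal{H}g-pQ^{p-1}\int_y^{+\infty}g$, whose membership in $\mathcal{Y}_1$ follows in one line from $\mathcal{H}g\in\mathcal{Y}_1$ (citing \cite[Lemma~2.5]{MP}) and the decay of $Q^{p-1}$, and whose orthogonality to $Q'$ is a short computation using $(g,Q)=0$ and the ground state equation. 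Your choice of the cutoff $-\eta\phi$ is more generic and forces you to analyze $|D|\phi$ and $(\mathcal{L}\phi,Q')$ separately, but the work is comparable and the nonlocal issues you flag (that $|D|\phi\in\mathcal{Y}_1$ and that integration by parts is justified despite $\phi\notin L^2$) are handled correctly. Either subtraction reduces the problem to inverting $\mathcal{L}$ on a $\mathcal{Y}_1$ right-hand side orthogonal to $Q'$, which is exactly what Proposition~\ref{P2} provides.
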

\begin{proof}
We are seeking for $f$ of the following form: $f=f_0-\int_y^{+\infty}g$ where $f_0\in\mathcal{Y}_1$. By direct computation, we know that $f_0$ must satisfy
$$(\mathcal{L}f_0)'=g+\bigg(\mathcal{L}\int_y^{+\infty}g(y')\,\dd y'\bigg)':=R',$$
where $R=-\mathcal{H}g-pQ^{p-1}\int_y^{+\infty}g$ and $\mathcal{H}=-\partial_y|D|^{-1}$ denotes the Hilbert transform. From \cite[Lemma 2.5]{MP}, we know that $\mathcal{H}g\in\mathcal{Y}_1$, hence $R\in\mathcal{Y}_1$. From Proposition \ref{P1}, the existence of $f_0\in\mathcal{Y}_1$ is equivalent to $(R,Q')=0$. This is a direct consequence of $(g,Q)=0$ and $\mathcal{L}Q'=0$.
\end{proof}
\begin{lemma}\label{L6}
Let $f,g\in\mathcal{Y}_2$ and $|x_i-x_j|\gg1$. Then there exists constant $C_0=C_0(f,g)$ independent of $x_i,x_j$ such that
$$\|f(\cdot-x_i)g(\cdot-x_j)\|_{\h}\leq \frac{C_0}{|x_i-x_j|^2}.$$
\end{lemma}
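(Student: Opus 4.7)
The plan is to prove the stronger bound $\|f(\cdot-x_i)g(\cdot-x_j)\|_{H^1}\leq C_0/|x_i-x_j|^2$, which immediately yields the claim since $\|\cdot\|_{\h}\leq\|\cdot\|_{H^1}$. Without loss of generality I set $d=x_i-x_j>0$ (sufficiently large by hypothesis) and split the line at the midpoint $m=(x_i+x_j)/2$ into $\Omega_-=\{y<m\}$ and $\Omega_+=\{y>m\}$.

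On $\Omega_+$, $|y-x_j|\geq d/2$, so the definition of $\mathcal{Y}_2$ gives $|g(y-x_j)|\lesssim d^{-2}$ and $|g'(y-x_j)|\lesssim d^{-3}$ uniformly in $y\in\Omega_+$, while $f(\cdot-x_i)\in L^2$ with norm independent of $x_i$. Thus
\[
\int_{\Omega_+}|f(y-x_i)g(y-x_j)|^2\,\dd y\leq \frac{C}{d^4}\|f\|_{L^2}^2.
\]
The symmetric argument on $\Omega_-$, using $|f(y-x_i)|\lesssim d^{-2}$ there, gives the analogous bound with $\|g\|_{L^2}$, and altogether $\|f(\cdot-x_i)g(\cdot-x_j)\|_{L^2}\lesssim d^{-2}$.

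For the derivative, I apply the product rule $(f(\cdot-x_i)g(\cdot-x_j))'=f'(\cdot-x_i)g(\cdot-x_j)+f(\cdot-x_i)g'(\cdot-x_j)$ and repeat the split. On $\Omega_+$ the pointwise bounds $|g|\lesssim d^{-2}$, $|g'|\lesssim d^{-3}$ combined with $f',f\in L^2$ (since $f\in\mathcal{Y}_2$) control both terms by $Cd^{-2}\|f'\|_{L^2}+Cd^{-3}\|f\|_{L^2}\lesssim d^{-2}$ in $L^2(\Omega_+)$; symmetrically on $\Omega_-$. Summing the pieces yields $\|(f(\cdot-x_i)g(\cdot-x_j))'\|_{L^2}\lesssim d^{-2}$, and combining with the $L^2$ estimate gives the desired $H^1$ bound.

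This is a routine estimate and I do not anticipate a real obstacle; the only minor bookkeeping is to ensure the constants depend only on the $\mathcal{Y}_2$ constants of $f$ and $g$ and on $\|f\|_{L^2},\|f'\|_{L^2},\|g\|_{L^2},\|g'\|_{L^2}$, all of which are finite and independent of the translation parameters. Alternatively one could prove the $L^2$ and $\dot H^1$ bounds first and then interpolate to the $\h$ space via $\|h\|_{\h}\lesssim\|h\|_{L^2}^{1/2}\|h\|_{\dot H^1}^{1/2}$, but the direct embedding $H^1\hookrightarrow\h$ is the cleanest route.
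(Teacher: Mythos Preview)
Your proof is correct and follows exactly the same midpoint-splitting idea as the paper's proof. The only difference is cosmetic: the paper states that it suffices to prove the $L^2$ bound (leaving implicit that the same argument applied to $f'g+fg'$ gives the $H^1$ bound, since $f',g'$ inherit $\mathcal{Y}_2$-type decay), whereas you write this step out explicitly.
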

\begin{proof}
Without loss of generality, we assume that $x_i>x_j$. We only need to prove
$$\|f(\cdot-x_i)g(\cdot-x_j)\|_{L^2}\leq \frac{C_0}{|x_i-x_j|^2}.$$
But this can be done by splitting the integral on regions $y<(x_i+x_j)/2$ and $y>(x_i+x_j)/2$ and using the fact that $f,g\in L^2$.
\end{proof}

We also need the following notations:
\begin{enumerate}
\item for any function $f$, we denote by
\begin{equation}\label{339}
\tau_if(t,y)=f(t,y-x_i(t));
\end{equation}
\item we denote by
\begin{align}
\label{342}
r(t,y)=&\sum_{\substack{i,j=1\\ j\not=i}}^n\frac{A_{ij}(t,y-x_{i}(t))}{x^2_{ij}(t)}+\sum_{\substack{i,j=1\\ j\not=i}}^n\frac{B_{ij}(t,y-x_{i}(t))}{x_{ij}^3(t)}\varphi_{ij}(t,y),
\end{align}
the interaction term in $V(t,y)$;
\item for all $C^1$ functions $(\vec{x}(t),\vec{\mu}(t))$ satisfying \eqref{36} we denote by $S$ the set consisting of all smooth maps $F$: $\mathbb{R}^n\times\mathbb{R}^n\times\mathbb{R}\mapsto\mathbb{R}$ 
\begin{align}
&F(\vec{x}(t),\vec{\mu}(t),\cdot)\in\mathcal{Y}_2,\quad\big(F(\vec{x}(t),\vec{\mu}(t),\cdot),Q'(\cdot)\big)=0,\label{340}\\
&\|F(\vec{x}(t),\vec{\mu}(t),\cdot)\|_{\h}\leq C_0\bigg(\frac{1}{d^{3/2}(t)}+M_1(t)\bigg),\label{341}\\
&\big\|\nabla_{\vec{\mu}}[F(\vec{x}(t),\vec{\mu}(t),\cdot)]\big\|_{\h}+d^3(t)\big\|\nabla_{\vec{x}}[F(\vec{x}(t),\vec{\mu}(t),\cdot)]\big\|_{\h}\leq C_0,\label{329}
\end{align}
for some universal constant $C_0>0$ (independent of $\vec{x}$ and $\vec{\mu}$).
\item for all $t\in I_0$, we denote by
\begin{align}\label{311}
&\Gamma(t)=\frac{1}{d^{9/2}(t)}+\frac{M^{3/2}_1(t)}{d^2(t)}+\frac{M_2(t)}{d^3(t)}+\frac{M_3(t)}{d^2(t)}+\sup_{i\not=j}\bigg(\frac{\|\mathfrak{A}_{ij}\|_{\h}}{d^4(t)}+\frac{\|\mathfrak{A}_{ij}\|_{\h}^2}{d^3(t)}\bigg)\nonumber\\
&\quad+\sup_{\substack{i,j,k=1\\i\not=j}}\bigg(\frac{M_1(t)\|\mathfrak{B}_{ijk}\|_{L^\infty}}{d^{3}(t)}+\frac{\|\mathfrak{B}_{ijk}\|_{L^\infty}+\|\mathfrak{B}_{ijk}\|_{L^\infty}^2}{d^5(t)}\bigg)\nonumber\\
&\quad+\sup_{i\not=j}\bigg(\|\nabla_{\vec{x}}\mathfrak{A}_{ij}\|_{\h}\frac{1+\sqrt{M_1(t)}d(t)}{d^3(t)}\bigg)+\sup_{i\not=j}\bigg(\|\nabla_{\vec{\mu}}\mathfrak{A}_{ij}\|_{\h}\frac{1+M_2(t)d^3(t)}{d^5(t)}\bigg).
\end{align}
\end{enumerate}

Now we can start the proof of Proposition \ref{P6} and Proposition \ref{P11}. The idea of the proof is to expand $\Psi_V$ and keep track of terms of order $1/x_{ij}^k$, for $k=2,3,4$,  and finally find suitable $A_{ij}(t)$, $B_{ij}(t)$ to cancel all lower order terms. We \emph{a priorily} assume that $A_{ij}(t)$, $B_{ij}(t)$, has the form \eqref{333} and \eqref{33}. More precisely, we first choose $A_{ij,0}$, $B_{ij,0}$ such that \eqref{31} and \eqref{32}--\eqref{313} are satisfied. Then we choose $\mathfrak{A}_{ij}$ and $\mathfrak{B}_{ij}$ suitably such that all lower order terms are canceled. We will see that lower order terms can be divided into the following categories:
\begin{itemize}
\item Simple terms that can be written down explicitly, for example:
$$\frac{\partial_y[\tau_i(Q^{p-1})]}{x^2_{ij}},\quad \frac{\partial_y[\tau_i(yQ^{p-1})]}{x^3_{ij}},$$
which are canceled due to the choice of $A_{ij,0}$, $B_{ij,0}$.
\item Terms of the form 
$$\frac{\partial_y[\tau_i(f_{ij})]}{x^2_{ij}},\quad \frac{\mu_k\partial_y[\tau_i(e_{ijk})]}{x^2_{ij}},$$
where $f_{ij}\in S$ and $e_{ijk}\in\mathcal{Y}_2$ are even functions independent of time. Those terms are canceled due to the choice of $\mathfrak{A}_{ij}$.
\item  Terms of the form 
$$\frac{\mu_k\tau_i(g_{ijk})}{x^3_{ij}}, $$ 
where $g_{ijk}\in\mathcal{Y}_2$ are functions independent of time. Those terms will be canceled by the choice of $\mathfrak{B}_{ijk}$.
\end{itemize}

\begin{proof}[Proof of Proposition \ref{P6} and \ref{P11}]

We divide the proof into several steps.

\noindent\textbf{Step 1:} Expansion of $\Psi_V$.

From the fact that:
$$|D|\R_i+(1+\mu_i)\R_i-\R_i^p=0,$$
we have the following expansion of $\Psi_V$: 
\begin{align}
\Psi_V=&\sum_{i=1}^n\big[\dot{\mu}_i\sigma_i\Lambda \R_i-(\dot{x}_i-\mu_i)\sigma_i\partial_y\R_i\big]+I+J+K+L,\label{343}
\end{align}
where
\begin{align}
&I=\partial_y\Bigg[\bigg|\sum_{i=1}^n\sigma_i\R_i\bigg|^{p-1}\bigg(\sum_{i=1}^n\sigma_i\R_i\bigg)-\sum_{i=1}^n\sigma_i|\R_i|^{p-1}\R_i\Bigg],\label{345}\\
&J=-\partial_y\Bigg[|D|r+r-p\bigg|\sum_{i=1}^n\sigma_i\R_i\bigg|^{p-1}r\Bigg],\label{344}\\
&K=\partial_y\Bigg[\bigg|\sum_{i=1}^n\sigma_i\R_i+r\bigg|^{p-1}\bigg(\sum_{i=1}^n\sigma_i\R_i+r\bigg)\nonumber\\
&\qquad\qquad-\sum_{i=1}^n\sigma_i|\R_i|^{p-1}\R_i-p\bigg|\sum_{i=1}^n\sigma_i\R_i\bigg|^{p-1}r\Bigg],\label{346}\\
&L=\partial_tr=L_A+L_B,\label{356}.
\end{align}
Here $L_A$ is defined as follows:
\begin{align}
L_A(t,y)=&-\sum_{\substack{i,j=1\\ j\not=i}}^n\bigg(\frac{\dot{x}_i(t)\partial_yA_{ij}(t,y-x_i(t))}{x^2_{ij}(t)}+\frac{2\dot{x}_{ij}(t)A_{ij}(t,y-x_{i}(t))}{x^3_{ij}(t)}\bigg),\nonumber\\
&+\sum_{\substack{i,j=1\\ j\not=i}}^n\frac{(\partial_t\vec{x}(t),\partial_t\vec{\mu}(t))\cdot\nabla_{\vec{x},\vec{\mu}}\mathfrak{A}_{ij}(\vec{x}(t),\vec{\mu}(t),y-x_i(t))}{x^2_{ij}(t)},\label{347}
\end{align}
while $L_B$ is given by
\begin{align}
&L_B(t,y)=-\sum_{\substack{i,j=1\\ j\not=i}}^n\bigg(\frac{\dot{x}_i\partial_yB_{ij}(t,y-x_i(t))}{x^3_{ij}(t)}+\frac{3\dot{x}_{ij}B_{ij}(t,y-x_{i}(t))}{x^4_{ij}(t)}\bigg)\varphi_{ij}(t,y)\nonumber\\
&\qquad+\sum_{\substack{i,j,k=1\\ j\not=i}}^n\frac{\dot{\mu}_k\mathfrak{B}_{ijk}(y-x_i(t))}{x^3_{ij}(t)}\varphi_{ij}(t,y)+\sum_{\substack{i,j=1\\ j\not=i}}^n\frac{B_{ij}(t,y-x_{i}(t))}{x^3_{ij}(t)}\partial_t\varphi_{ij}(t,y),\label{348}
\end{align}
for all $(t,y)\in I\times \mathbb{R}$.

\noindent\textbf{Step 2:} Existence of $A_{ij,0}$, $B_{ij,0}$.

First, for $A_{ij,0}$, from Proposition \ref{P1} and the fact that $Q$ is an even function, we obtain the existence of $A_{ij,0}$ immediately.

Existence of $B_{ij,0}$ satisfying \eqref{32}--\eqref{313} follows from Lemma \ref{L5} directly. Here we need to choose suitable $a_{ij}\in\mathbb{R}$ so that the right hand side of \eqref{32} is orthogonal to $Q$, which coincides with \eqref{320}.

\noindent\textbf{Step 3:} Estimates of $I$. 

We now estimate the interaction term generated by different bubbles. We denote by
$$\R=\bigg|\sum_{j=1}^n\sigma_j\R_j\bigg|^{p-1}\bigg(\sum_{j=1}^n\sigma_j\R_j\bigg)
.$$

From partition of unity and the assumption that $x_1(t)>\cdots>x_n(t)$, there exists smooth functions $\{\eta_{i}(t,y)\}_{i=0}^{n+1}$ such that
\begin{itemize}
\item $0\leq \eta_i\leq 1$, $\sum_{i=0}^{n+1}\eta_i\equiv1$;
\item $|\partial_y\eta_i(t)|\lesssim 1/d(t)$;
\item for all $t\in I_0$, there holds 
$$\begin{cases}
&\text{Supp }\eta_0(t)\subset\big(x(t)+d(t),+\infty\big);\\
&\text{Supp }\eta_1(t)\subset\big([2x_2(t)+x_1(t)]/3,x_1(t)+2d(t)\big);\\
&\text{Supp }\eta_i(t)\subset\big([2x_{i+1}(t)+x_i(t)]/3,[x_i(t)+2x_{i-1}(t)]/3\big),\; \forall 1<i<n;\\
&\text{Supp }\eta_n(t)\subset\big(x_n(t)-2d(t),[2x_{n-1}(t)+x_n(t)]/3\big);\\
&\text{Supp }\eta_{n+1}(t)\subset\big(-\infty,x_n(t)-d(t)\big).
\end{cases}$$
\end{itemize}
For all $i\in\{1,\ldots,n\}$, we consider $y\in \text{Supp }\eta_i(t)$, then using \eqref{21} and \eqref{233}, we have for all $j\not=i$
\begin{align}\label{361}
\tau_jQ(y)&=\frac{\kappa_0}{(y-x_j)^2}+\frac{g(y-x_j)}{(y-x_j)^4}+O\bigg(\frac{1}{1+(y-x_j)^6}\bigg)\nonumber\\
&=\frac{\kappa_0}{x^2_{ij}}\frac{1}{[1+(y-x_i)/x_{ij}]^2}+\frac{1}{x^4_{ij}}\frac{g(y-x_j)}{[1+(y-x_i)/x_{ij}]^3}+O\bigg(\frac{1}{1+(y-x_j)^6}\bigg)\nonumber\\
&=\frac{\kappa_0}{x_{ij}^2}-\frac{2\kappa_0(y-x_i)}{x_{ij}^3}+\frac{\kappa_0}{x^4_{ij}}\int_0^1\frac{(y-x_i)^2}{[1+s(y-x_i)/x_{ij}]^4}\,\dd s+\frac{g(y-x_j)}{x_{ij}^4}\nonumber\\
& \qquad+O\bigg(\frac{1+|y-x_i|}{|x_{ij}|^5}\bigg).
\end{align}
On the other hand, using the fact that $\R_i(y)>0$, we have
\begin{align}\label{363}
\partial_y(\R\eta_i)=&\partial_y\Bigg[\bigg|1+\sum_{\substack{j=1,\\ j\not=i}}^n\frac{\sigma_i\sigma_j\R_j}{\R_i}\bigg|^{p-1}\bigg(1+\sum_{\substack{j=1,\\ j\not=i}}^n\frac{\sigma_i\sigma_j\R_j}{\R_i}\bigg)\sigma_i\R_i^p\eta_i\Bigg]\nonumber\\
=&\partial_y\bigg(\sigma_i\R_i^p\eta_i+\sum_{\substack{j=1,\\j\not=i}}^np\sigma_j\R_i^{p-1}\R_j\eta_i\bigg)+C,
%
\end{align}
where
\begin{align*}
C=&\partial_y\Bigg[\eta_i\int_0^1\frac{p(p-1)}{2}\bigg|1+s\sum_{\substack{j=1,\\ j\not=i}}^n\frac{\sigma_i\sigma_j\R_j}{\R_i}\bigg|^{p-3}\bigg(1+s\sum_{\substack{j=1,\\ j\not=i}}^n\frac{\sigma_i\sigma_j\R_j}{\R_i}\bigg)\times\sigma_i\R_i^p\bigg(\sum_{\substack{j=1,\\ j\not=i}}^n\frac{\sigma_i\sigma_j\R_j}{\R_i}\bigg)^2\,\dd s\Bigg].
\end{align*}
Recall from Taylor's formula, we have for all regular enough function $f$, there holds
\begin{align}\label{370}
f_{1+\mu_j}&=f+\int_0^1\frac{\mu_j}{1+s\mu_j}(\Lambda f)_{1+s\mu_j}=f+\mu_j(\Lambda f)+\frac{\mu_j^2}{2}\int_0^1\frac{\big[(\Lambda^2 f)_{1+s\mu_j}-(\Lambda f)_{1+s\mu_j}\big]}{(1+s\mu_j)^2}\,\dd s.
\end{align}
Since for $y\in \text{Supp }\eta_i$, we have $|\R_j(y)|\lesssim \frac{1}{d^2}$, combining with \eqref{361}, we have
\begin{align*}
&C=\partial_y\Bigg[\eta_i\frac{p(p-1)}{2}\sigma_i(\tau_iQ)^p\bigg(\sum_{\substack{j=1,\\ j\not=i}}^n\frac{\sigma_i\sigma_j\tau_jQ}{\tau_iQ}\bigg)^2\Bigg]+O_{\h}\bigg(\frac{1+\sqrt{M_1}d}{d^5}\bigg)\nonumber\\
&=\partial_y\bigg(\eta_i\frac{p(p-1)}{2}\sigma_i(\tau_iQ)^{p-2}\sum_{\substack{k,j=1,\\ k,j\not=i}}^n\frac{\kappa_0^2\sigma_k\sigma_j}{x^2_{ij}x^2_{ik}}\bigg)+O_{\h}\bigg(\frac{1+\sqrt{M_1}d}{d^5}\bigg)\nonumber\\
&=\sum_{\substack{j=1,\\i\not=j}}^n\frac{\partial_y(\tau_iH_{ij,1})}{x_{ij}^2}+O_{\h}\bigg(\frac{1+\sqrt{M_1}d}{d^5} \bigg),
\end{align*}
where
 $$H_{ij,1}=\frac{\kappa_0^2p(p-1)\sigma_i\sigma_jQ^{p-2}\tau_i^{-1}\eta_i}{2}\sum_{\substack{k=1,\\ k\not=i}}^n\frac{\sigma_k}{x^2_{ik}}.$$
Let $f_{ij,1}=H_{ij,1}-\frac{(H_{ij,1},Q')}{(Q',Q')}Q'$ be the orthogonal projection of $H_{ij,1}$ onto $\{Q'\}^\perp$. It is easy to verify that%
\footnote{Recall that the set $S$ is defined by \eqref{340}--\eqref{329}.}
 $f_{ij,1}\in S$ using the definition of $\eta_i$. We also have:
 \begin{align}\label{371}
|(H_{ij,1},Q')|&\lesssim \frac{1}{d^2}\big|(\tau_i^{-1}\eta_i,Q'Q^{p-2})\big|\lesssim \frac{1}{d^2}\big|(\tau_i^{-1}\eta'_i,Q^{p-1})\big|\lesssim \frac{1}{d^5}.
\end{align}
Hence, 
\begin{align}\label{372}
C=\sum_{\substack{i,j=1,\\i\not=j}}^n\frac{\partial_y(\tau_if_{ij,1})}{x_{ij}^2}+O_{\h}\bigg(\frac{1+\sqrt{M_1}d}{d^5} \bigg),
\end{align}
for some $f_{ij,1}\in S$.

We also have for all $j\not=i$,
\begin{align}
&\partial_y(\R_{j}^p\eta_i)=O_{\h}\bigg(\frac{1}{|x_{ij}|^{2p+\frac{1}{2}}}\bigg)=O_{\h}\bigg(\frac{1}{d^{9/2}}\bigg),\label{362}\\
&\sum_{\substack{j=1,\\j\not=i}}^n\partial_y\Big[\eta_i\big(p\sigma_j\R_i^{p-1}\R_j-p\sigma_j(\tau_iQ)^{p-1}\tau_jQ\big)\Big]\nonumber\\
&=\sum_{\substack{j=1,\\j\not=i}}^np\sigma_j\partial_y\bigg(\eta_i\int_0^1\frac{\mu_j\tau_j(\Lambda Q)_{1+s\mu_j}(\tau_iQ)^{p-1}_{1+s\mu_i}}{1+s\mu_j}+\frac{(p-1)\mu_i\tau_i(\Lambda Q)_{1+s\mu_i}(\tau_iQ)^{p-2}_{1+s\mu_i}(\tau_jQ)_{1+s\mu_j}}{1+s\mu_i}\,\dd s\bigg).
\end{align}
Together with \eqref{361}, we have
\begin{align}\label{373}
&\sum_{\substack{j=1,\\j\not=i}}^n\partial_y\big(p\sigma_j\R_i^{p-1}\R_j-p\sigma_j(\tau_iQ)^{p-1}\tau_jQ\big)=\sum_{\substack{j,k=1\\j\not=i}}^n\frac{\mu_k\partial_y(\tau_iE_{ijk,1})}{x^2_{ij}}\nonumber\\
&\qquad\quad+\sum_{ \substack{j=1\\ j\not=i}}^n\frac{\partial_y(\tau_if_{ij,2})}{x_{ij}^2}+\sum_{\substack{j,k=1,\\i\not=j}}^n\frac{\mu_k\tau_ig_{ijk,1}}{x_{ij}^3}+O_{\h}\bigg(\frac{1+M_1d^2}{d^5}\bigg),
\end{align}
where $f_{ij,2}\in S$, $g_{ijk,1},E_{ijk,1}\in\mathcal{Y}_2$ and $E_{ijk,1}$ are even functions.

Hence, combining \eqref{361}, \eqref{372} and \eqref{373}, we have for all $j\not=i$, and for all $y\in \text{Supp }\eta_i$,
\begin{align*}
&\partial_y(\R\eta_i)=\partial_y\bigg(\sigma_i\R_i^p\eta_i+\sum_{\substack{j=1,\\j\not=i}}^np\sigma_j(\tau_iQ)^{p-1}\tau_jQ\eta_i\bigg)+\sum_{\substack{j,k=1,\\i\not=j}}^n\frac{\mu_k\partial_y(\tau_iE_{ijk,1})}{x^2_{ij}}\nonumber\\
&\qquad+\sum_{ \substack{j=1\\ j\not=i}}^n\frac{\partial_y[\tau_i(f_{ij,1}+f_{ij,2})]}{x_{ij}^2}+O_{\h}\bigg(\frac{1+M_1d^2}{d^5}\bigg)\nonumber\\
&=\partial_y\bigg(\eta_i\sum_{j=1}^n\sigma_j\R_j^p\bigg)+\partial_y\Bigg[\sum_{\substack{j=1,\\j\not=i}}^np\sigma_j\bigg(\frac{\kappa_0}{x_{ij}^2}-\frac{2\kappa_0(y-x_i)}{x_{ij}^3}\bigg)(\tau_iQ)^{p-1}\Bigg]\nonumber\\
&\qquad+\partial_y\Bigg[\eta_i\sum_{\substack{j=1,\\j\not=i}}^np\sigma_j\bigg(\frac{\tau_jg}{x^4_{ij}}+\frac{\kappa_0}{x^4_{ij}}\int_0^1\frac{(y-x_i)^2}{[1+s(y-x_i)/x_{ij}]^4}\,\dd s\bigg)(\tau_iQ)^{p-1}\Bigg]\nonumber\\
&\qquad+\sum_{\substack{j,k=1,\\i\not=j}}^n\frac{\mu_k\partial_y(\tau_iE_{ijk,1})}{x^2_{ij}}+\sum_{ \substack{j=1\\ j\not=i}}^n\frac{\partial_y[\tau_i(f_{ij,1}+f_{ij,2})]}{x_{ij}^2}+O_{\h}\bigg(\frac{1+M_1d^2}{d^5}\bigg),
\end{align*}
where $f_{ij,1}, f_{ij,2}\in S$ and $E_{ijk,1}\in\mathcal{Y}_2$ are even functions.

Let
$$H_{ij,2}=(\tau_i^{-1}\eta_{i})p\sigma_j\bigg(\frac{\tau_i^{-1}\tau_jg}{x^2_{ij}}+\frac{\kappa_0}{x^2_{ij}}\int_0^1\frac{y^2}{[1+(sy)/x_{ij}]^4}\,\dd s\bigg)Q^{p-1},$$
and $f_{ij,3}=H_{ij,2}-\frac{(H_{ij,2},Q')}{(Q',Q')}Q'$ be the orthogonal projection of $H_{ij,2}$ onto $\{Q'\}^\perp$. We can easily check that $f_{ij,3}\in S$ and%
\footnote{Here we use \eqref{234} and the fact that $\text{Supp }\partial_y(\tau_i^{-1}\eta_{i})\subset\{y:\,|y|\gtrsim \frac{1}{d}\}$ for the last inequality.}
\begin{align*}
&|(H_{ij,2},Q')|\lesssim \frac{1}{d^2}\bigg|\bigg((\tau_i^{-1}\eta_i)\int_0^1\frac{y^2}{[1+(sy)/x_{ij}]^4}\,\dd s+\tau_i^{-1}[\eta_i(\tau_jg)],Q'Q^{p-1}\bigg)\bigg|\\
&\lesssim\frac{1}{d^2}\big|\big((\partial_y\eta_i)(\tau_jg),\tau_i(Q^p)\big)\big|+\frac{1}{d^2}\big|\big(\eta_i\partial_y(\tau_jg),\tau_i(Q^p)\big)\big|\\
&\qquad+ \frac{1}{d^2}\Bigg|\Bigg(\tau_i^{-1}\eta_i\int_0^1\bigg[y^2+\frac{1}{x_{ij}}\int_0^1\frac{sy^3}{[1+(sty)/x_{ij}]^4}\,\dd t\bigg]\,\dd s,\partial_y(Q^p)\Bigg)\Bigg|=O\bigg(\frac{1}{d^3}\bigg).
\end{align*}
Combining all estimates above, we have%
\footnote{Recall that $\Gamma$ is defined by \eqref{311}.}
\begin{align}\label{326}
I=&\partial_y\Bigg[\sum_{\substack{i,j=1,\\j\not=i}}^np\kappa_0\sigma_j\bigg(\frac{1}{x_{ij}^2}-\frac{2(y-x_i)}{x_{ij}^3}\bigg)(\tau_iQ)^{p-1}\Bigg]+\sum_{\substack{j,k=1,\\i\not=j}}^n\frac{\mu_k\partial_y(\tau_iE_{ijk,1})}{x^2_{ij}}\nonumber\\
&\qquad+\sum_{\substack{i,j=1\\ j\not=i}}^n\frac{\partial_y(\tau_iF_{ij,1})}{x^2_{ij}}+\sum_{\substack{j,k=1,\\i\not=j}}^n\frac{\mu_k\tau_ig_{ijk,1}}{x_{ij}^3}++O_{\h}(\Gamma),
\end{align}
where $E_{ijk,1}\in\mathcal{Y}_2$ are even functions and $F_{ij,1}=f_{ij,1}+f_{ij,2}+f_{ij,3}\in S$, $g_{ijk,1}\in\mathcal{Y}_2$.

\noindent\textbf{Step 4:} Estimates of $J$. 

By direct computation, we have 
$$J=J_1+J_2+J_3+J_4+J_5,$$
where%
\footnote{Here we use the notation $[|D|,\varphi_{ij}]f=|D|(\varphi_{ij}f)-\varphi_{ij}|D|f$, for suitable $f$.}
\begin{align}
&J_1=-\partial_y\bigg(\sum_{\substack{i,j=1,\\i\not=j}}^n\frac{\tau_i(\mathcal{L}A_{ij,0})}{x_{ij}^2}+\sum_{\substack{i,j=1,\\i\not=j}}^n\frac{\tau_i(\mathcal{L}B_{ij,0})}{x_{ij}^3}\bigg),\label{350}\\
&J_2=-\partial_y\bigg(\sum_{\substack{i,j=1,\\i\not=j}}^n\frac{\tau_i(\mathcal{L}\mathfrak{A}_{ij})}{x_{ij}^2}+\sum_{\substack{i,j,k=1\\i\not=j}}^n\frac{\mu_{k}\tau_i(\mathcal{L}\mathfrak{B}_{ijk})}{x_{ij}^3}\bigg),\label{357}\\
&J_3=p\partial_y\bigg(\bigg|\sum_{i=1}^n\sigma_i\R_i\bigg|^{p-1}r-\sum_{i=1}^n\R_i^{p-1}r+\sum_{i=1}^n\R_i^{p-1}r-\sum_{i=1}^n(\tau_i Q)^{p-1}r\bigg),\label{358}\\
&J_4=p\partial_y\Bigg[\sum_{\substack{i,j,k=1,\\j,k\not=i}}^n(\tau_kQ)^{p-1}\tau_i\bigg(\frac{A_{ij,0}}{x^2_{ij}}+\frac{B_{ij,0}}{x^3_{ij}}\bigg)\Bigg],\label{374}\\
&J_5=-\sum_{\substack{i,j=1,\\i\not=j}}^n\partial_y\Bigg[\bigg(\frac{\tau_i(\mathcal{L}B_{ij})}{x_{ij}^3}\bigg)(\varphi_{ij}-1)\Bigg]-\sum_{\substack{i,j=1,\\i\not=j}}^n\partial_y\bigg(\frac{\tau_i([|D|,\varphi_{ij}]B_{ij})}{x_{ij}^3}\bigg),\label{359}
\end{align}
We will leave $J_2$ invariant and estimate $J_1$, $J_3$, $J_4$ and $J_5$ separately.

First for $J_1$, using the equation of  $A_{ij,0}$, $B_{ij,0}$, $C_{ij}$, we have
\begin{align}\label{351}
J_1=&-\sum_{\substack{i,j=1,\\i\not=j}}^n\Bigg[\partial_y\bigg(\frac{p\sigma_j (\tau_iQ)^{p-1}}{x_{ij}^2}-\frac{2p\kappa_0\sigma_j \partial_y[\tau_i(yQ^{p-1})]}{x_{ij}^3}\bigg)+\frac{a_{ij}\sigma_i\tau_i(\Lambda Q)}{x_{ij}^3}\Bigg].
\end{align}
Using \eqref{370} again, we have
we have 
\begin{align}
\label{353}
&\sum_{\substack{i,j=1,\\i\not=j}}^n\frac{a_{ij}\sigma_i[\tau_i\Lambda Q-(\Lambda\R_i)/(1+\mu_i)]}{x^3_{ij}}=\sum_{\substack{i,j,k=1\\i\not=j}}^n\frac{\mu_k\tau_ig_{ijk,2}}{x^3_{ij}}+O_{\h}\bigg(\frac{M_1}{d^3}\bigg),
\end{align}
where $g_{ijk,2}\in\mathcal{Y}_2$. Combining \eqref{351}--\eqref{353}, we conclude that
\begin{align}\label{327}
J_1=&\sum_{\substack{i,j=1,\\i\not=j}}^n\frac{a_{ij}\sigma_i\Lambda\R_i}{x^3_{ij}}-\partial_y\Bigg[\sum_{\substack{i,j=1,\\i\not=j}}^n\bigg(\frac{p\kappa_0\sigma_j (\tau_iQ)^{p-1}}{x_{ij}^2}-\frac{2p\kappa_0\sigma_j \tau_i(yQ^{p-1})}{x_{ij}^3}\bigg)\Bigg]\nonumber\\
&+\sum_{\substack{i,j,k=1\\i\not=j}}^n\frac{\mu_k\tau_ig_{ijk,2}}{x^3_{ij}}+O_{\h}(\Gamma).
\end{align}

Then we estimate $J_3$. Using again the Taylor's expansion of $\R_i$ with respect to $\mu_i$ as well as the methods of splitting region of the integral,  we have
\begin{align*}
&\bigg|\sum_{i=1}^n\R_i^{p-1}-\sum_{i=1}^n(\tau_i Q)^{p-1}\bigg|=O_{\h}(|\vec{\mu}|)=O_{\h}\big(\sqrt{M_1}\big)\\
&\Bigg|\bigg|\sum_{i=1}^n\sigma_i\R_i\bigg|^{p-1}-\sum_{i=1}^n\R_i^{p-1}\Bigg|=O_{\h}\bigg(\frac{1}{d^2}\bigg).
\end{align*}
Using the fact that $A_{ij,0}$ are even functions, we have
\begin{align}\label{354}
&\partial_y\bigg(\sum_{k=1}^n\big(\R_k^{p-1}-(\tau_k Q)^{p-1}\big)\sum_{\substack{i,j=1\\i\not=j}}^n\frac{\tau_iA_{ij,0}}{x^2_{ij}}\bigg)=\sum_{\substack{i,j=1,\\i\not=j}}^n\frac{\mu_k\partial_y(\tau_iE_{ijk,2})}{x_{ij}^2}+O_{\h}(\Gamma),
\end{align}
where $E_{ijk,2}\in \mathcal{Y}_2$ are even functions. We also have
\begin{align}\label{364}
&\partial_y\Bigg[\sum_{k=1}^n\big(\R_k^{p-1}-(\tau_k Q)^{p-1}\big)\times\sum_{\substack{i,j=1\\i\not=j}}^n\frac{\tau_iB_{ij,0}}{x^3_{ij}}\varphi_{ij}\Bigg]=\sum_{\substack{i,j,k=1\\i\not=j}}^n\frac{\mu_k\tau_ig_{ijk,3}}{x^3_{ij}}+O_{\h}\bigg(\frac{M_1}{d^3}\bigg).
\end{align}
where $g_{ijk,3}\in\mathcal{Y}_2$ are functions independent of time. On the other hand, from \eqref{370} we know that
\begin{align}\label{365}
&\partial_y\Bigg[\sum_{k=1}^n\big(\R_k^{p-1}-(\tau_k Q)^{p-1}\big)\times\bigg(r-\sum_{\substack{i,j=1\\i\not=j}}^n\frac{\tau_iA_{ij,0}}{x^2_{ij}}-\sum_{\substack{i,j=1\\i\not=j}}^n\frac{\tau_iB_{ij,0}}{x^3_{ij}}\varphi_{ij}\bigg)\Bigg]\nonumber\\
&=\sum_{\substack{i,j,k=1\\i\not=j}}^n\frac{\mu_k\partial_y[\tau_i((p-1)\Lambda Q Q^{p-2}\mathfrak{A}_{ij})]}{x_{ij}^2}+O_{\h}(\Gamma).
\end{align}
Now we estimate the term 
$$p\partial_y\bigg(\bigg|\sum_{i=1}^n\sigma_i\R_i\bigg|^{p-1}r-\sum_{i=1}^n\R_i^{p-1}r\bigg).$$
We consider the smooth functions $\{\eta_i\}_{i=0}^{n+1}$ again to obtain
\begin{align}\label{366}
&\partial_y\Bigg[\bigg(\bigg|\sum_{k=1}^n\sigma_k\R_k\bigg|^{p-1}-\sum_{k=1}^n\R_k^{p-1}\bigg)\eta_i\frac{\tau_i(A_{ij,0})}{x_{ij}^2}\Bigg]\nonumber\\
&=\partial_y\Bigg[\bigg(\sum^n_{\substack{k=1,\\ k\not=i}}\frac{(p-1)\kappa_0\sigma_k\tau_i(Q^{p-2}A_{ij,0})}{x^2_{ik}x^2_{ij}}\bigg)\eta_i\Bigg]+O(\Gamma)=\frac{\partial_y(\tau_if_{ij,4})}{x^2_{ij}}+O(\Gamma),
\end{align}
where $f_{ij,4}\in S$. Similar as \eqref{365}, we have:
\begin{align}\label{367}
\partial_y\Bigg[\bigg(\bigg|\sum_{k=1}^n\sigma_k\R_k\bigg|^{p-1}-\sum_{k=1}^n\R_k^{p-1}\bigg)\times\bigg(r-\sum_{\substack{i,j=1,\\i\not=j}}^n\frac{\tau_i(A_{ij,0})}{x_{ij}^2}\bigg)\Bigg]=O_{\h}(\Gamma).
\end{align}
Combining \eqref{354}--\eqref{367}, we have
\begin{align}\label{335}
J_3=&\sum_{\substack{i,j=1,\\i\not=j}}^n\frac{\partial_y(\tau_iF_{ij,2})}{x_{ij}^2}+\sum_{\substack{i,j,k=1,\\i\not=j}}^n\frac{\mu_k\partial_y(\tau_iE_{ijk,2})}{x_{ij}^2}+\sum_{\substack{i,j,k=1\\i\not=j}}^n\frac{\mu_k\tau_ig_{ijk,3}}{x^3_{ij}}\nonumber\\
&+\sum_{\substack{i,j,k=1\\i\not=j}}^n\frac{\mu_k\partial_y[\tau_i((p-1)\Lambda Q Q^{p-2}\mathfrak{A}_{ij})]}{x_{ij}^2}+O_{\h}(\Gamma),
\end{align}
where $F_{ij,2}=f_{ij,4}\in S$ and $E_{ijk,2}\in\mathcal{Y}_2$ are even functions.

Next, we estimate $J_4$. We let
$$H_{ij,3}=p\sum_{\substack{k=1,\\k\not=i}}^n(\tau_i^{-1}\tau_kQ)^{p-1}\bigg(A_{ij,0}+\frac{B_{ij,0}}{x_{ij}}\bigg).$$
Let $f_{ij,5}=H_{ij,3}-\frac{(H_{ij,3},Q')}{(Q',Q')}Q'$. By a similar argument, we have $f_{ij,5}\in S$. We claim that
\begin{equation}
\label{375}
|(H_{ij,3},Q')|\lesssim \frac{1}{d^3}.
\end{equation}
By Lemma \ref{L6} and direct computation, we have
$$\bigg|\bigg(Q',p\sum_{\substack{k=1,\\k\not=i}}^n(\tau_i^{-1}\tau_kQ)^{p-1}\frac{B_{ij,0}}{x_{ij}}\bigg)\bigg|\lesssim \frac{1}{d^3}.$$
While for $\big(Q',(\tau_i^{-1}\tau_kQ)^{p-1}A_{ij,0}\big)$, using the fact that $Q'\in\mathcal{Y}_3$ and $A_{ij,0}\in\mathcal{Y}_2$, we have
$$\int_{|y|>|1/(2x_i-x_k|)}Q'(\tau_i^{-1}\tau_kQ)^{p-1}A_{ij,0}=O\bigg(\frac{1}{d^3}\bigg).$$
From Proposition \ref{P1}, have
\begin{align*}
&\int_{|y|<\frac{1}{2|x_i-x_k|}}Q'(\tau_i^{-1}\tau_kQ)^{p-1}A_{ij,0}=\int_{|y|<\frac{1}{2|x_i-x_k|}}Q'A_{ij}\bigg(\frac{\kappa_0}{[y-(x_k-x_i)]^2}\bigg)^{p-1}+O\bigg(\frac{1}{d^3}\bigg).
\end{align*}
Together with \eqref{361}, we have
\begin{align*}
&\int_{|y|<\frac{1}{2|x_i-x_k|}}Q'(\tau_i^{-1}\tau_kQ)^{p-1}A_{ij,0}=\int_{|y|<\frac{1}{2|x_i-x_k|}}\bigg(\frac{\kappa_0}{(x_k-x_i)^2}\bigg)^{p-1}+O\bigg(\frac{1}{d^3}\bigg)\\
&=\int Q'A_{ij}\bigg(\frac{c_0}{(x_k-x_i)^2}\bigg)^{p-1}+O\bigg(\frac{1}{d^3}\bigg)=O\bigg(\frac{1}{d^3}\bigg),
\end{align*}
where we use the fact that both $A_{ij,0}$ and $Q$ are even functions in the last identity, which concludes the proof of \eqref{375}. Hence, we have
\begin{equation}\label{376}
J_4=\sum_{\substack{i,j=1,\\i\not=j}}^n\frac{\partial_y(\tau_iF_{ij,3})}{x_{ij}^2}+O(\Gamma),
\end{equation}
where $F_{ij,3}=f_{ij,5}\in S$.

Finally, we estimate $J_5$. We have
\begin{align*}
J_5=D-\sum_{\substack{i,j,k=1,\\i\not=j}}^n\frac{\mu_k\partial_y[\tau_i(\mathcal{L}\mathfrak{B}_{ijk})(\varphi_{ij}-1)]}{x^3_{ij}}-\sum_{\substack{i,j,k=1,\\i\not=j}}^n\partial_y\bigg(\frac{\mu_k\tau_i([|D|,\varphi_{ij}]\mathfrak{B}_{ijk})}{x_{ij}^3}\bigg),
\end{align*} 
where
\begin{align*}
D=-\sum_{\substack{i,j=1,\\i\not=j}}^n\partial_y\Bigg[\bigg(\frac{\tau_i(\mathcal{L}B_{ij,0})}{x_{ij}^3}\bigg)(\varphi_{ij}-1)\Bigg]-\sum_{\substack{i,j=1,\\i\not=j}}^n\partial_y\bigg(\frac{\tau_i([|D|,\varphi_{ij}]B_{ij,0})}{x_{ij}^3}\bigg).
\end{align*}
By direct computation, we have
\begin{align*}
&\|D\|_{\h}\leq \|D\|_{H^1}\lesssim \sup_{\substack{i\not=j,\\1\leq\ell\leq 3}}\frac{1}{d^3}\Big(\big\|(\partial^\ell_yB_{ij,0})(\varphi_{ij}-1)\big\|_{L^2}+\|B_{ij,0}\varphi'_{ij}\|_{L^2}\Big)\\
&+\sup_{i\not=j}\frac{1}{d^3}\big(\||D|(\varphi'_{ij}B_{ij,0})\|_{H^1}+\|\varphi'_{ij}(|D|B_{ij,0})\|_{H^1}\big)+\sup_{i\not=j}\frac{1}{d^3}\big\|[|D|,\varphi_{ij}](\partial_yB_{ij,0})\big\|_{H^1}+\frac{1}{d^5}.
\end{align*}
From the definition of $\varphi_{ij}$ as well as%
\footnote{Here we also use the fact that $|D|=\partial_y\mathcal{H}$, and $\|\mathcal{H}f\|_{L^p}\lesssim \|f\|_{L^p}$ for all $1<p<+\infty$, where $\mathcal{H}$ is the Hilbert transform.}
 \cite[Lemma 2.15]{MP}, we have
\begin{align*}
\|D\|_{H^1}\lesssim&\sup_{\substack{i\not=j,k\geq 1\\2\leq k+\ell\leq 3}}\frac{1}{d^3}\Big(\big\|(\partial_y^k\varphi_{ij})(\partial_y^\ell B_{ij,0})\big\|_{L^2}\Big)+\frac{1}{d^5}\lesssim\frac{1}{d^5}.
\end{align*}
Hence, we have
\begin{align}
\label{368}
J_5=&-\sum_{\substack{i,j,k=1,\\i\not=j}}^n\frac{\mu_k\partial_y\{\tau_i(\mathcal{L}\mathfrak{B}_{ijk})(\varphi_{ij}-1)+\tau_i([|D|,\varphi_{ij}]\mathfrak{B}_{ijk})\}}{x^3_{ij}}+O_{\h}\bigg(\frac{1}{d^5}\bigg).
\end{align}

\noindent\textbf{Step 5:} Estimates of $K$.

We let 
$$r_0(t,y)=\sum_{\substack{i,j=1,\\i\not=j}}^n\frac{A_{ij,0}(y-x_i)}{x^2_{ij}},\quad r_1(t,y)=r(t,y)-r_{0}(t,y).$$ 
Since $p>2$, we have
\begin{align}
\label{369}
K=&\partial_y\Bigg[\frac{p(p-1)}{2}r^2\int_0^1\bigg|\sum_{i=1}^n\sigma_i\R_i+sr\bigg|^{p-3}\bigg(\sum_{i=1}^n\sigma_i\R_i+sr\bigg)\dd s\Bigg]\nonumber\\
=&\partial_y\Bigg[\frac{p(p-1)}{2}r_0^2\int_0^1\bigg|\sum_{i=1}^n\sigma_i\R_i+sr\bigg|^{p-3}\bigg(\sum_{i=1}^n\sigma_i\R_i+sr\bigg)\dd s\Bigg]\nonumber\\
&+\partial_y\Bigg[\frac{p(p-1)}{2}(2r_0r_1+r_1^2)\int_0^1\bigg|\sum_{i=1}^n\sigma_i\R_i+sr\bigg|^{p-3}\bigg(\sum_{i=1}^n\sigma_i\R_i+sr\bigg)\dd s\Bigg]\nonumber\\
=&\partial_y\Bigg[\frac{p(p-1)}{2}r_0^2\int_0^1\bigg|\sum_{i=1}^n\sigma_i(\tau_iQ)+sr\bigg|^{p-3}\bigg(\sum_{i=1}^n\sigma_i(\tau_iQ)+sr\bigg)\dd s\Bigg]+O_{\h}(\Gamma).
\end{align}
Since 
$$r_0^2=\sum_{\substack{i,j,k,\ell=1,\\i\not=j,k\not=\ell}}^n\frac{(\tau_iA_{ij,0})(\tau_kA_{k\ell,0})}{x^2_{ij}x^2_{k\ell}},$$
by keeping track of all terms that are of order $1/d^4$, we see that 
\begin{align*}
&\partial_y\Bigg[r_0^2\int_0^1\bigg|\sum_{i=1}^n\sigma_i(\tau_iQ)+sr\bigg|^{p-3}\bigg(\sum_{i=1}^n\sigma_i(\tau_iQ)+sr\bigg)\dd s\Bigg]=\sum_{\substack{i,j=1,\\i\not=j}}^n\frac{\partial_y[(\tau_iH_{ij,4})]}{x^2_{ij}}+O_{\h}(\Gamma).
\end{align*}
where
$$H_{ij,4}=\frac{p(p-1)\sigma_iQ^{p-1}}{2}\sum_{\substack{k,\ell=1,\\k\not=\ell}}^n\frac{A_{ij,0}(\tau_i^{-1}\tau_kA_{k\ell,0})}{x^2_{k\ell}}.$$
A similar argument implies that $|(H_{ij,4},Q')|\lesssim 1/d^3$. Hence, by letting $F_{ij,4}=H_{ij,4}-\frac{(H_{ij,4}.Q')}{(Q',Q')}Q'$, we have
\begin{equation}\label{377}
K=\sum_{\substack{i,j=1,\\i\not=j}}^n\frac{\partial_y[(\tau_iF_{ij,4})]}{x^2_{ij}}+O_{\h}(\Gamma),
\end{equation}
where $F_{ij,4}\in S$.

\noindent\textbf{Step 6:} Estimates of $L$.

Recall that $L=L_A+L_B$, where $L_A$, $L_B$, are given by \eqref{347} and \eqref{348}. By direct computation, we have:
\begin{align}\label{324}
L_A=&-\sum_{\substack{i,j=1\\ j\not=i}}^n\bigg(\frac{\dot{x}_i\partial_y(\tau_iA_{ij})}{x^2_{ij}}+\frac{2\dot{x}_{ij}\tau_iA_{ij}}{x^3_{ij}}\bigg)+\sum_{\substack{i,j=1\\ j\not=i}}^n\frac{(\partial_t\vec{x},\partial_t\vec{\mu})\cdot\nabla_{\vec{x},\vec{\mu}}(\tau_i\mathfrak{A}_{ij})}{x^2_{ij}}\nonumber\\
=&-\sum_{\substack{i,j=1\\ j\not=i}}^n\bigg(\frac{{\mu}_i\partial_y\tau_iA_{ij,0}}{x^2_{ij}}+\frac{2{\mu}_{ij}A_{ij,0}}{x^3_{ij}}\bigg)+O_{\h}(\Gamma)\nonumber\\
=&\sum_{\substack{i,j,k=1,\\i\not=j}}^n\frac{\mu_k\partial_y(\tau_iE_{ijk,3})}{x_{ij}^2}+\sum_{\substack{i,j,k=1,\\i\not=j}}^n\frac{\mu_k\tau_ig_{ijk,4}}{x^3_{ij}}+O_{\h}(\Gamma),
\end{align}
where $E_{ijk,3}, g_{ijk,4}\in\mathcal{Y}_2$ and $E_{ijk,3}$ are even functions. Similarly, we have
\begin{align}\label{325}
L_B=\sum_{\substack{i,j,k=1\\i\not=j}}^n\frac{\mu_k\tau_ig_{ijk,5}}{x^3_{ij}}+O_{\h}(\Gamma),
\end{align}
where $g_{ijk,5}\in\mathcal{Y}_2$.

\noindent\textbf{Step 7:} Closing of the proof.
Now, we combine the estimates of \eqref{326}, \eqref{357}, \eqref{327}, \eqref{335}, \eqref{376}, \eqref{368}, \eqref{377}--\eqref{325}, and choose
$$E_{ijk}=\sum_{\ell=1}^3E_{ijk,\ell},\quad G_{ijk}=\sum_{\ell=1}^5g_{ijk,\ell}$$
to obtain that
\begin{align}\label{381}
\Psi_V=&-\sum_{i=1}^n(\dot{x}_i-\mu_i)\sigma_i\partial_y\widetilde{R}_i+\sum_{i=1}^n\bigg(\dot{\mu}_i+\sum^n_{j=1,j\not=i}\frac{a_{ij}}{x^3_{ij}}+\sum^n_{\substack{k,j=1,\\j\not=i}}\frac{b_{ijk}\mu_k}{x^3_{ij}}\bigg)\frac{\sigma_i\Lambda\widetilde{R}_i}{1+\mu_i}\nonumber\\
&+\sum_{\substack{i,j=1,\\i\not=j}}^n\bigg(-\frac{\partial_y[\tau_i(\mathcal{L}\mathfrak{A}_{ij})]}{x_{ij}^2}+\sum_{\ell=1}^4\frac{\partial_y(\tau_iF_{ij,\ell})}{x^2_{ij}}+\sum_{k=1}^n\frac{\mu_k\partial_y(\tau_iE_{ijk})}{x^2_{ij}}\bigg)\nonumber\\
&+\sum_{\substack{i,j,k=1,\\i\not=j}}^n\mu_k\bigg(-\frac{\partial_y[\tau_i(\mathcal{L}\mathfrak{B}_{ijk})]}{x_{ij}^3}+\frac{\tau_iG_{ijk}}{x^3_{ij}}-\frac{b_{ijk}\sigma_i}{x^3_{ij}}\tau_i(\Lambda Q)\bigg)\nonumber\\
&-\sum_{\substack{i,j,k=1,\\i\not=j}}^n\frac{\mu_k\partial_y\{\tau_i(\mathcal{L}\mathfrak{B}_{ijk})(\varphi_{ij}-1)+\tau_i([|D|,\varphi_{ij}]\mathfrak{B}_{ijk})\}}{x^3_{ij}}\nonumber\\
&+\sum_{\substack{i,j,k=1\\i\not=j}}^n\frac{\mu_k\partial_y[\tau_i((p-1)\Lambda Q Q^{p-2}\mathfrak{A}_{ij})]}{x_{ij}^2}+O_{\h}(\Gamma),
\end{align}
where $E_{ijk},G_{ijk}\in\mathcal{Y}_2$ and $E_{ijk}$ are even functions.

Then we choose bounded functions (with respect to $y$) $\mathfrak{B}_{ijk}$, such that
\begin{equation}\label{380}
\partial_y(\mathcal{L}\mathfrak{B}_{ijk})=G_{ijk}-b_{ijk}\sigma_i\Lambda Q.
\end{equation}
From Lemma \ref{L5}, we know that the existence of such $\mathfrak{B}_{ijk}$ is equivalent to $(Q, G_{ijk})=b_{ijk}\sigma_i(Q,\Lambda Q)$. Since $(Q,\Lambda Q)\not=0$ (due to $p\not=3$), we can choose suitable constants $b_{ijk}$ to ensure the existence of $\mathfrak{B}_{ijk}$. We claim that
\begin{align}\label{382}
|(H_{ij,5},Q')|\lesssim \frac{\sqrt{M_1}}{d^3},
\end{align}
where
$$H_{ij,5}=-\sum_{k=1}^n\frac{\mu_k\{(\mathcal{L}\mathfrak{B}_{ijk})(\tau_i^{-1}\varphi_{ij}-1)+[|D|,\tau_i^{-1}\varphi_{ij}]\mathfrak{B}_{ijk}\}}{x_{ij}}.$$
Indeed, from the fact that $\mathcal{L}Q'=0$, $Q'\in\mathcal{Y}_3$ and $\text{Supp }(\tau_i^{-1}\varphi_{ij}-1)\in(-\infty,d^{4}/4)$, we have
$$\big|\big(Q',(\mathcal{L}\mathfrak{B}_{ijk})(\tau_i^{-1}\varphi_{ij}-1)\big)\big|\lesssim \frac{1}{d^3}.$$
On the other hand, from \cite[Lemma 2.15]{MP}, we have
$$\big([|D|,\tau_i^{-1}\varphi_{ij}]\mathfrak{B}_{ijk},Q'\big)=-\big([|D|,\tau_i^{-1}\varphi_{ij}]\mathfrak{B}'_{ijk},Q\big)-\big([|D|,\tau_i^{-1}\varphi'_{ij}]\mathfrak{B}_{ijk},Q\big),$$
and
\begin{align*}
&\big|\big([|D|,\tau_i^{-1}\varphi_{ij}]\mathfrak{B}'_{ijk},Q\big)\big|\lesssim \|Q\|_{L^2}\|\mathfrak{B}_{ijk}'\|_{L^4}\|\varphi_{ij}'\|_{L^4}\lesssim \frac{1}{d^3},\\
&\big|\big([|D|,\tau_i^{-1}\varphi'_{ij}]\mathfrak{B}_{ijk},Q\big)\big|\lesssim \||D|(\mathfrak{B}_{ijk}\varphi_{ij}')\|_{L^2}+\|\varphi_{ij}'\|_{L^4}\|\mathfrak{B}'_{ijk}\|_{L^4}\lesssim \frac{1}{d^3},
\end{align*}
which concludes the proof of \eqref{382}. Using \eqref{382} and a similar arguments as before, we have
\begin{align*}
&\sum_{\substack{i,j,k=1,\\i\not=j}}^n\frac{\mu_k\partial_y\{\tau_i(\mathcal{L}\mathfrak{B}_{ijk})(\varphi_{ij}-1)+\tau_i([|D|,\varphi_{ij}]\mathfrak{B}_{ijk})\}}{x^3_{ij}}=\sum_{\substack{i,j,k=1,\\i\not=j}}^n\frac{\partial_y(\tau_iF_{ij,5})}{x_{ij}^2}+O_{\h}(\Gamma),
\end{align*}
for some $F_{ij,5}\in S$.

Next, we let
$$F_{ij}=\sum_{\ell=1}^5F_{ij,\ell}$$
and choose 
$$\mathfrak{A}_{ij}=\sum_{k=1}^n\mathfrak{A}_{ijk,1}+\mathfrak{A}_{ij,2}$$
for all $i\not=j$ such that
\begin{align}
&\mathcal{L}\mathfrak{A}_{ijk,1}=\mu_kE_{ijk},\label{378}\\
& \mathcal{L}\mathfrak{A}_{ij,2}=F_{ij}+\sum_{k=1}^n\mu_k(p-1)\Lambda Q Q^{p-2}\mathfrak{A}_{ijk,1}.\label{379}
\end{align}
From Proposition \ref{P1} and the fact that $E_{ijk}\in\mathcal{Y}_2$ are even functions, we know that there exist even functions $\mathfrak{A}_{ijk,1}\in\mathcal{Y}_2$ such that \eqref{378} is satisfied. To obtain the existence of $\mathfrak{A}_{ij,2}$ satisfying \eqref{379}, we just need to show that the right hand side of \eqref{379} are orthogonal to $Q'$, which is provided by the fact that $F_{ij}\in S$ and $\mathfrak{A}_{ijk,1}\in\mathcal{Y}_2$ are even functions.

Finally, injecting \eqref{380}--\eqref{379} into \eqref{381}, we have for all $t\in[t_0,t_1]$, 
$$\Gamma(t)\lesssim \frac{1}{d^{9/2}(t)}+\frac{M^{3/2}_1(t)}{d^2(t)}+\frac{M_2(t)}{d^3(t)}+\frac{M_3(t)}{d^2(t)},$$
and
$$\|E_V(t)\|_{\h}\lesssim \Gamma(t)\lesssim\frac{1}{d^{9/2}(t)}+\frac{M^{3/2}_1(t)}{d^2(t)}+\frac{M_2(t)}{d^3(t)}+\frac{M_3(t)}{d^2(t)}.$$
\end{proof}

\subsection{Geometrical decomposition}
With this strongly interacting multi-bubble, we can introduce the following geometrical decomposition for solutions near this multi-bubble:
\begin{proposition}\label{P7}
Let $u_0\in H^{\frac{1}{2}}$ such that there exist $C^1$-function $\vec{x}_0(t),\vec{\mu}_0(t)$ defined on $I=[t_0,t_1]$ satisfying \eqref{36} and
\footnote{Recall that $V(t,y)=\Theta(\vec{x}(t),\vec{\mu}(t),y)$ is the approximate multi-soliton.}
\begin{equation}
\label{37}
u_0(y)=\Theta(\vec{x}_0(t_1),\vec{\mu}_0(t_1),y)+\e_0(y),
\end{equation}
where $\|\e_0\|_{\h}\leq \omega_1$.
If $\omega_1$ is small enough, then there exist $t_0\leq t^*<t_1$, and unique $C^1$-functions
$$\vec{x}(t)=(x_1(t),\ldots,x_n(t)),\quad \vec{\mu}(t)=(\mu_1(t),\ldots,\mu_n(t))$$ and error term $\e(t)$
defined on $[t^*,t_1]$, such that the corresponding solution $u(t)$ to \eqref{CP} with initial data $u(t_1)=u_0$ exists on $[t^*,t_1]$, and satisfies the following conditions.
\begin{enumerate}
\item Geometrical decomposition:
\begin{equation}
\label{GD}
u(t,y+t)=\Theta(\vec{x}(t),\vec{\mu}(t),y)+\e(t,y).
\end{equation}
\item Orthogonality conditions:
\begin{equation}
\label{38}
\big(\e(t), \R_i(t)\big)=\big(\e(t),\partial_y \R_i(t)\big),
\end{equation}
for all $t\in[t^*,t_1]$ and $i\in\{1,\ldots,n\}$.
\item Estimates on the parameters: for all $t\in[t^*,t_1]$, there holds:
\begin{align}
&|\vec{\mu}(t)-\vec{\mu}_0(t)|+|\vec{x}(t)-\vec{x}_0(t)|+\|\e(t)\|_{H^{\frac{1}{2}}}\lesssim \delta(\omega_1), \label{39}\\
&\min_{i\in\{1,\ldots,n-1\}}(x_i(t)-x_{i+1}(t))\geq \frac{1}{\omega_0}-\delta(\omega_1).\label{315}
\end{align}
\item Continuous dependence on the initial data: for all fixed time $t\in[t^*,t_1]$, the parameters $\mu_i(t)$, $x_i(t)$ and the error term $\varepsilon(t)$ depends continuously on the initial data $u_0\in\h$ with respect to the weak topology of $\h$.
\end{enumerate}
\end{proposition}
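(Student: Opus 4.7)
The plan is a standard modulation argument: apply the implicit function theorem (IFT) at $t=t_1$, then extend the decomposition backward using local well-posedness of \eqref{CP} together with a continuation argument.

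At $t=t_1$, I would define
$$\Phi(\vec x,\vec\mu):=\Big(\big(u_0-\Theta(\vec x,\vec\mu,\cdot),\R_i\big),\;\big(u_0-\Theta(\vec x,\vec\mu,\cdot),\partial_y\R_i\big)\Big)_{i=1}^n$$
on a neighborhood of $(\vec x_0(t_1),\vec\mu_0(t_1))$, where $\R_i=Q_{1+\mu_i}(\cdot-x_i)$, and seek a zero. At the reference point, $u_0-\Theta=\e_0$ with $\|\e_0\|_{\h}\leq\omega_0$, and by the structure \eqref{35} together with the estimates on $A_{ij},B_{ij}$ from Proposition \ref{P11},
$$\partial_{x_j}\Theta=-\sigma_j\partial_y\R_j+O_{\h}(1/d^2),\quad \partial_{\mu_j}\Theta=\frac{\sigma_j\Lambda\R_j}{1+\mu_j}+O_{\h}(1/d^2).$$
Using the parity of $Q$ (which gives $(Q,Q')=(\Lambda Q,Q')=0$), the algebraic decay of $Q$ (which makes inter-bubble inner products $O(1/d^2)$ by Lemma \ref{L6}), and the nondegeneracy
$$(\Lambda Q,Q)=\frac{3-p}{2(p-1)}\|Q\|_{L^2}^2\neq 0\quad\text{since }p\neq 3,$$
the Jacobian $D_{(\vec x,\vec\mu)}\Phi$ is block-diagonal to leading order with invertible $2\times 2$ blocks. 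The IFT then provides unique $C^1$ parameters $(\vec x(t_1),\vec\mu(t_1))$ with $|\vec x(t_1)-\vec x_0(t_1)|+|\vec\mu(t_1)-\vec\mu_0(t_1)|\lesssim\|\e_0\|_{\h}$ satisfying \eqref{38} at $t_1$.

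Next, I would extend the decomposition to an interval $[t^*,t_1]$. By local well-posedness, the solution $u(t)$ with $u(t_1)=u_0$ exists on an open neighborhood of $t_1$ in $\h$. Applying the same IFT pointwise to $u(t,\cdot+t)$ gives parameters $(\vec x(t),\vec\mu(t))$ and an error $\e(t,y):=u(t,y+t)-\Theta(\vec x(t),\vec\mu(t),y)$ satisfying \eqref{38}. Differentiating \eqref{38} in $t$ and using \eqref{316} together with the expansion \eqref{322} of $\Psi_V$ yields a linear system for $(\dot{\vec x},\dot{\vec\mu})$ whose coefficient matrix coincides with $D_{(\vec x,\vec\mu)}\Phi$ at leading order; hence it is invertible, giving $C^1$ regularity of the parameters. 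I would then set $t^*$ as the infimum of times in $[t_0,t_1]$ for which the solution exists, the decomposition is defined, and the bounds \eqref{36}, \eqref{39}, \eqref{315} remain valid. A standard bootstrap, exploiting the smallness of $\delta(\omega_0)$ and the continuity in $t$ of the parameters and $\e$, ensures $t^*<t_1$ after possibly further shrinking $\omega_0$.

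Continuous dependence on $u_0$ in the weak $\h$ topology follows directly: for each fixed $t\in[t^*,t_1]$, the solution map $u_0\mapsto u(t)$ is continuous in the strong $\h$ topology by well-posedness, and since $\R_i,\partial_y\R_i\in L^2$ while $\h\hookrightarrow L^2$, the linear functionals $v\mapsto(v,\R_i),(v,\partial_y\R_i)$ are weakly $\h$-continuous; the implicit function from $\Phi$ is therefore continuous in $u_0$, and so are $\e(t)$ and the parameters. The main obstacle in this program is not conceptual but technical bookkeeping: the ansatz $\Theta$ contains the nonlocal, parameter-dependent corrections $\mathfrak{A}_{ij},\mathfrak{B}_{ijk}$, whose derivatives in $(\vec x,\vec\mu)$ enter the Jacobian $D_{(\vec x,\vec\mu)}\Phi$. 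Fortunately these derivatives are controlled by \eqref{360}--\eqref{338}, so their total contribution to the Jacobian is $O(1/d^2+|\vec\mu|)$, absorbed into the leading block-diagonal part for $\omega_0$ small. The argument then closes exactly as in the classical soliton case, with the algebraic decay rate $1/d^2$ playing the role of the exponentially small parameter available in the (g)KdV analysis.
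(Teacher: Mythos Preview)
Your proposal is correct and follows essentially the same approach as the paper: the authors simply remark that the proof is a standard implicit function theorem argument (referring to \cite[Proposition 1]{MM}) and highlight exactly the Jacobian nondegeneracy you identified, namely that the $2\times 2$ block has determinant $(\Lambda Q,Q)(Q',Q')\neq 0$ because $p\neq 3$. Your outline is considerably more detailed than what the paper provides.

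One small point: your argument for item (4), continuous dependence with respect to the \emph{weak} $\h$ topology, does not quite close as written. Strong continuity of the solution map $u_0\mapsto u(t)$ combined with weak continuity of the linear functionals $v\mapsto (v,\R_i)$ does not by itself yield weak-to-weak continuity of $u_0\mapsto \e(t)$; you need the flow $u_0\mapsto u(t)$ itself to be continuous for the weak $\h$ topology (this is a separate fact, typically obtained for gBO by uniqueness plus a compactness argument on bounded sets). The paper does not spell this out either, so this is a minor bookkeeping issue rather than a conceptual gap.
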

The proof of Proposition \ref{P7} follows from standard arguments based on the implicit function theorem. We refer to \cite[Proposition 1]{MM} for an example of  such a proof. We mention here that the key point in the proof is the non-degeneracy of the following Jacobian matrix:
$$
\begin{vmatrix}
(\Lambda Q,Q) & (\Lambda Q, Q')\\
(Q',Q) & (Q',Q')
\end{vmatrix}
=(Q,\Lambda Q)(Q',Q)\not=0
$$ 
where we use the fact that $(Q,\Lambda Q)\not=0$ when $p\not=3$.

\subsection{Modulation estimates}
Using the orthogonality conditions \eqref{38}, we have the following modulation estimates:
\begin{proposition}\label{P8}
Let $I_0=[t^*,t_1]$, assume that the geometrical decomposition \eqref{GD} holds on $I_0$, then we have for all $i=1,\ldots,n$,
\begin{align}
&|\dot{x}_i-\mu_i|\lesssim\frac{1}{d^{9/2}}+ \|\e\|_{\h}+\frac{M^{3/2}_1}{d^2},\label{330}\\
&\bigg|\dot{\mu}_i+\sum^n_{\substack{j=1,\\j\not=i}}\frac{a_{ij}}{x^3_{ij}}+\sum^n_{\substack{k,j=1,\\j\not=i}}\frac{b_{ijk}\mu_k}{x^3_{ij}}\bigg|\lesssim \frac{1}{d^{9/2}}+\|\e\|^2_{\h}+\frac{\|\e\|_{\h}}{d^2}+\frac{M^{3/2}_1}{d^2}.\label{331}
\end{align}
\end{proposition}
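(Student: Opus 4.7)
The plan is the standard modulation argument: differentiate each orthogonality condition in \eqref{38} with respect to time and combine with the PDE satisfied by $\varepsilon$. Writing $u(t,y+t) = V(t,y) + \varepsilon(t,y)$ in \eqref{316} and recalling the definition of $\Psi_V = \partial_t V - \partial_y(|D|V+V-|V|^{p-1}V)$ yields
\begin{equation*}
\partial_t\varepsilon = -\Psi_V + \partial_y\bigl(|D|\varepsilon + \varepsilon - p|V|^{p-1}\varepsilon\bigr) - \partial_y N,
\end{equation*}
with $N := |V+\varepsilon|^{p-1}(V+\varepsilon) - |V|^{p-1}V - p|V|^{p-1}\varepsilon = O(\varepsilon^2)$. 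Since $\partial_t\R_i = -\dot x_i\,\partial_y\R_i + (\dot\mu_i/(1+\mu_i))\Lambda\R_i$, differentiating the orthogonality conditions and using \eqref{38} itself to kill one term in each identity gives
\begin{align*}
(\partial_t\varepsilon,\R_i) + \frac{\dot\mu_i}{1+\mu_i}(\varepsilon,\Lambda\R_i) &= 0,\\
(\partial_t\varepsilon,\partial_y\R_i) - \dot x_i(\varepsilon,\partial_y^2\R_i) + \frac{\dot\mu_i}{1+\mu_i}(\varepsilon,\partial_y\Lambda\R_i) &= 0.
\end{align*}

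Now substitute the expansion of $\Psi_V$ from Proposition \ref{P6}, and set $\mathrm{Mod}_i := \dot\mu_i + \sum_{j\neq i}a_{ij}/x_{ij}^3 + \sum_{j\neq i,k}b_{ijk}\mu_k/x_{ij}^3$. Pairing $\Psi_V$ with $\R_i$, respectively $\partial_y\R_i$, produces the diagonal contributions $\mathrm{Mod}_i\,\sigma_i(1+\mu_i)^{-1}(\Lambda\R_i,\R_i)$ and $-(\dot x_i-\mu_i)\sigma_i(\partial_y\R_i,\partial_y\R_i)$, whose coefficients are rescaled copies of $(\Lambda Q,Q)$ and $\|Q'\|_{L^2}^2$, both nonzero (the former precisely because $p\neq 3$). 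Off-diagonal pairings $(\Lambda\R_j,\R_i)$, $(\partial_y\R_j,\R_i)$ with $j\neq i$ are $O(1/d^2)$ by Lemma \ref{L6}, while $(E_V,\R_i)$ and $(E_V,\partial_y\R_i)$ are bounded by $\|E_V\|_{\h}$, which is already controlled by the right-hand sides of \eqref{330}--\eqref{331} via \eqref{323}.

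The linear-in-$\varepsilon$ piece is handled by integrating $\partial_y$ by parts and moving $|D|$ across by self-adjointness. With $c=1+\mu_i$ and $\mathcal L_i^{(c)} := |D| + c - p\R_i^{p-1}$, one writes $|D|+1-p|V|^{p-1} = \mathcal L_i^{(c)} - \mu_i - p(|V|^{p-1}-\R_i^{p-1})$. Using $\mathcal L_i^{(c)}(\partial_y\R_i)=0$ together with $(\varepsilon,\partial_y\R_i)=0$, the pairing with $\R_i$ reduces to $p(\varepsilon,(|V|^{p-1}-\R_i^{p-1})\partial_y\R_i)$, which is $O(\|\varepsilon\|_{\h}/d^2)$ since $|V|^{p-1}-\R_i^{p-1}=O(1/d^2)$ on the support of $\partial_y\R_i$. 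The analogous computation against $\partial_y\R_i$ exploits $\mathcal L_i^{(c)}(\partial_y^2\R_i) = p(p-1)\R_i^{p-2}(\partial_y\R_i)^2 \in \mathcal Y$ and yields $O(\|\varepsilon\|_{\h})$. Finally, $(\partial_y N,\R_i)$ and $(\partial_y N,\partial_y\R_i)$ are $O(\|\varepsilon\|_{\h}^2)$ via integration by parts and Gagliardo--Nirenberg.

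Assembling all estimates gives a linear system in the $2n$ unknowns $\{\mathrm{Mod}_i,\dot x_i-\mu_i\}$ whose diagonal entries are bounded below by universal positive constants and whose off-diagonal entries are $O(1/d^2)$. Provided $d$ is large enough (ensured by \eqref{315}), the system is invertible and inversion directly yields \eqref{330}--\eqref{331}. The one technical subtlety is the nonlocality of $|D|$: commuting $|D|$ with cut-offs or multiplication operators cannot be done by plain integration by parts and requires the Hilbert-transform and commutator estimates of \cite{MP}. In the present argument this is benign since the test functions $\R_i,\partial_y\R_i\in\mathcal Y_2$, so by Proposition \ref{P2}(3) every quantity of the form $|D|\partial_y\R_i$ or $|D|(Q^{p-1}\cdot)$ remains Schwartz-like, and all inner products with $\varepsilon$ are bounded directly in $L^2\subset\h^*$.
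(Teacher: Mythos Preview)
Your argument is correct and follows essentially the same route as the paper's proof: differentiate the orthogonality conditions, substitute the equation for $\partial_t\varepsilon$ together with the expansion \eqref{322} of $\Psi_V$, exploit $\mathcal L_i^{(1+\mu_i)}(\partial_y\R_i)=0$ and the orthogonality $(\varepsilon,\partial_y\R_i)=0$ to kill the main linear contribution against $\R_i$, and then invert the resulting diagonally dominant $2n\times 2n$ system.

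One point deserves a sharper statement. You write that $\|E_V\|_{\h}$ is ``already controlled by the right-hand sides of \eqref{330}--\eqref{331} via \eqref{323}'', but \eqref{323} contains the extra terms $M_2/d^3+M_3/d^2$, and $M_2,M_3$ are precisely the quantities you are trying to bound. So $E_V$ is not a pure source term: it feeds the unknowns back into the system with coefficients $O(1/d^2)$. The fix is exactly the absorption you already invoke for the off-diagonal pairings $(\Lambda\R_j,\R_i)$, $(\partial_y\R_j,\R_i)$: move $M_2/d^3+M_3/d^2$ to the left and absorb for $d$ large. The paper does the same thing implicitly when combining \eqref{323}, \eqref{386}, \eqref{387}. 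Your final paragraph on nonlocality is correct but unnecessary here, since the only use of $|D|$ is its self-adjointness on $L^2$ against the decaying test functions $\R_i,\partial_y\R_i$; no commutator estimates are needed for this proposition.
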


\begin{proof}
From the geometrical decomposition, we have 
\begin{align}\label{383}
&\partial_t\varepsilon-\partial_y(|D|\e+\e-p|V|^{p-1}\e)\nonumber\\
&\qquad\qquad\qquad+\partial_y\big[|V+\e|^{p-1}(V+\e)-|V|^{p-1}V-p|V|^{p-1}\e\big]+\Psi_V=0.
\end{align}
Using the orthogonality condition \eqref{38}, we have
\begin{align}\label{384}
0=\partial_t\big(\e,\R_i\big)&=(\partial_t\e,\R_i)+\frac{\dot{\mu}_i}{1+\mu_i}(\e,\Lambda \R_i)+\dot{x}_i(\e,\partial_y\R_i)\nonumber\\
&=(\partial_t\e,\R_i)+O\bigg(\|\e\|_{\h}(M_2+M_3)+\frac{\|\e\|_{\h}}{d^3}\bigg),
\end{align}
and
\begin{align}\label{385}
0=\partial_t\big(\e,\partial_y\R_i\big)&=(\partial_t\e,\partial_y\R_i)+\frac{\dot{\mu}_i}{1+\mu_i}(\e,\Lambda (\partial_y\R_i))+\dot{x}_i(\e,\partial_{yy}\R_i)\nonumber\\
&=(\partial_t\e,\partial_y\R_i)+O\big(\|\e\|_{\h}+\|\e\|_{\h}(M_2+M_3)\big).
\end{align}
Combining \eqref{322}, \eqref{38}, \eqref{383}--\eqref{385} and Lemma \ref{L6}, we have:
\begin{align}\label{386}
&\bigg(\dot{\mu}_i+\sum^n_{\substack{j=1,\\j\not=i}}\frac{a_{ij}}{x^3_{ij}}+\sum^n_{\substack{k,j=1,\\j\not=i}}\frac{b_{ijk}\mu_k}{x^3_{ij}}\bigg)\frac{\sigma_i(\Lambda \R_i,\R_i)}{1+\mu_i}-\big(\partial_y(|D|\e+\e-p|V|^{p-1}\e),\R_i\big)\nonumber\\
&=O\bigg(\|\e\|^2_{\h}+\|E_V\|_{\h}+\|\e\|_{\h}(M_2+M_3)+\frac{\|\e\|_{\h}}{d^3}+\frac{M_2+M_3}{d^2}\bigg),
\end{align}
and
\begin{align}
\label{387}
&(\dot{x}_i-\mu_i)\times(\sigma_i\partial_y\R_i,\partial_y\R_i)\nonumber\\
&=O\bigg(\|\e\|_{\h}+\|E_V\|_{\h}+\|\e\|_{\h}(M_2+M_3)+\frac{M_2+M_3}{d^2}\bigg).
\end{align}
From Lemma \ref{L6}, and the construction of $V$, we have
$$(p|V|^{p-1}-p\R_i^{p-1})\partial_y\R_i=O_{L^2}\bigg(\frac{1}{d^2}\bigg).$$
Hence, we have
\begin{align*}
&\big(\partial_y(|D|\e+\e-p|V|^{p-1}\e),\R_i\big)=\big(\e,[|D|+1-p\R_i^{p-1}](\partial_y\R_i)\big)+O\bigg(\frac{\|\e\|_{\h}}{d^2}\bigg)\\
&=\big(\e,[|D|+1+\mu_i-p\R_i^{p-1}](\partial_y\R_i)\big)+O\bigg(\frac{\|\e\|_{\h}}{d^2}\bigg)=O\bigg(\frac{\|\e\|_{\h}}{d^2}\bigg).
\end{align*}
Here we use \eqref{38} as well as the fact that
$$|D|\R_i+(1+\mu_i)\R_i-\R_i^p=0$$
for the last two equalities.

Finally, combining \eqref{323}, \eqref{386} and \eqref{387}, using the fact that $(Q,\Lambda Q)\not=0$, we obtain \eqref{330} and \eqref{331}.
\end{proof}

\section{Existence of strongly interacting multi-solitary waves}\label{S4}
This section is devoted to prove Theorem \ref{MT1} by constructing a solution satisfying the corresponding conditions. In this section, we always assume that $n\geq 2$ and $\sigma_i=1$, if $p>3$; $\sigma_i=(-1)^{i-1}$, if $2<p<3$.

We consider solutions of the form
$$u(t,t+y)=w(t,y)\sim V(t,y)+\e(t,y).$$
Here we expect that the parameters $x_i(t)$ and $\mu_i(t)$ asymptotically behave like
\begin{equation}
\label{410}
 x_i\sim \alpha_i\sqrt{t}+\beta_i\log t+\gamma_i,\quad \dot{x}_i\sim\mu_i\sim \frac{\alpha_i}{2\sqrt{t}}+\frac{\beta_i}{t},
\end{equation}
as $t\rightarrow+\infty$. Here $\beta_i, \gamma_i$ are universal constants to be chosen later, while $\alpha_i$ are given by the following lemma: 

\begin{lemma}\label{L4}
Let $p\in(2,3)\cup(3,+\infty)$, $n\geq 2$. We assume that $\sigma_i=1$, if $p>3$; $\sigma_i=(-1)^{i-1}$, if $2<p<3$. Then there exist real numbers $\alpha_1>\alpha_2>\cdots>\alpha_n$, such that for all $i=1,2,\ldots,n$,
\begin{equation}
\label{411}
\frac{\alpha_i}{4}=\sum_{ \substack{j=1\\ j\not=i}}^n\frac{a_{ij}}{(\alpha_i-\alpha_j)^3},
\end{equation}
where $a_{ij}$ are given by \eqref{320}. Moreover, we have:
for all $i=1,2,\ldots,n$,
\begin{align}
\alpha_i+\alpha_{n+1-i}=0.\label{412}
\end{align}
\end{lemma}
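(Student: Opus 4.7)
The system \eqref{411} is the Euler--Lagrange equation $\nabla\Phi=0$ of the functional
\[
\Phi(\alpha)=\frac{1}{8}\sum_{i=1}^{n}\alpha_i^2+\frac{1}{2}\sum_{1\le i<j\le n}\frac{a_{ij}}{(\alpha_i-\alpha_j)^2}
\]
defined on the open ordered chamber $C=\{\alpha\in\mathbb{R}^n:\alpha_1>\cdots>\alpha_n\}$: a direct computation using $a_{ij}=a_{ji}$ gives $\partial_{\alpha_k}\Phi=\alpha_k/4-\sum_{j\ne k}a_{kj}/(\alpha_k-\alpha_j)^3$. My plan is to produce a solution of \eqref{411} as a minimizer of $\Phi$ on the intersection of $C$ with the reflection-symmetric subspace $\mathcal{S}=\{\alpha:\alpha_i+\alpha_{n+1-i}=0\}$, and then invoke a symmetric-criticality argument to upgrade this to a critical point of $\Phi$ on all of $C$.

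First I analyse signs. From \eqref{320}, for $p>3$ with $\sigma_i\equiv 1$ all $a_{ij}>0$, while for $2<p<3$ with $\sigma_i=(-1)^{i-1}$ the sign of $a_{ij}$ equals $(-1)^{i+j+1}$; in particular every nearest-neighbour coefficient $a_{i,i+1}$ remains positive, and all $|a_{ij}|$ take a common value $c_p>0$. In both regimes $\sigma_i\sigma_j=\sigma_{n+1-i}\sigma_{n+1-j}$, so $a_{n+1-i,n+1-j}=a_{ij}$, and consequently the involution $\iota(\alpha)=(-\alpha_n,\ldots,-\alpha_1)$ is an orthogonal map preserving $C$, with $\Phi\circ\iota=\Phi$ and fixed-point set $\mathcal{S}$.

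The crux is coercivity of $\Phi|_{\mathcal{S}\cap C}$. As $|\alpha|\to\infty$ with all consecutive gaps bounded away from zero, the quadratic term $\tfrac18\sum\alpha_i^2$ dominates. The delicate regime is $\alpha\to\partial(\mathcal{S}\cap C)$, where some nearest-neighbour gap $\alpha_i-\alpha_{i+1}\to 0$. Because $a_{i,i+1}>0$ the contribution $a_{i,i+1}/[2(\alpha_i-\alpha_{i+1})^2]$ tends to $+\infty$; in the supercritical case every interaction term is non-negative so $\Phi\to+\infty$ immediately, while in the subcritical case one has to dominate the negative non-adjacent contributions $a_{k\ell}/(\alpha_k-\alpha_\ell)^2$ with $k+\ell$ even. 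Writing $\alpha_k-\alpha_\ell=\sum_{m=k}^{\ell-1}(\alpha_m-\alpha_{m+1})$ and using $|a_{ij}|=c_p$ together with the pairing imposed by the $\iota$-symmetry on $\mathcal{S}$, I expect an estimate of the form
\[
\sum_{1\le i<j\le n}\frac{a_{ij}}{(\alpha_i-\alpha_j)^2}\ \ge\ c\cdot\max_{1\le i\le n-1}\frac{1}{(\alpha_i-\alpha_{i+1})^2}-C
\]
on $\mathcal{S}\cap C$ for positive constants $c,C$ depending on $n,p$, which suffices for coercivity. This is the main technical obstacle of the lemma: the explicit verifications for $n=2,3,4$ are straightforward but a uniform combinatorial argument is needed for all $n$, exploiting the telescoping structure of the non-adjacent gaps and the alternating sign pattern.

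Granted coercivity, $\Phi|_{\mathcal{S}\cap C}$ attains its minimum at some $\alpha^{*}\in\mathcal{S}\cap C$. To conclude that $\alpha^{*}$ satisfies the full system \eqref{411} rather than merely its restriction to $\mathcal{S}$, I differentiate $\Phi\circ\iota=\Phi$ in an arbitrary direction $v$ at a point $\alpha\in\mathcal{S}$; since $\iota\alpha=\alpha$ and $\iota$ is orthogonal this yields $\nabla\Phi(\alpha)\cdot\iota v=\nabla\Phi(\alpha)\cdot v$, i.e.\ $\nabla\Phi(\alpha)\in\mathcal{S}$. Thus a critical point of $\Phi|_{\mathcal{S}\cap C}$ on the relatively open set $\mathcal{S}\cap C$ is automatically a critical point of $\Phi$ on $C$. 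The ordering $\alpha_1^{*}>\cdots>\alpha_n^{*}$ then follows because $\alpha^{*}$ lies in the interior of $C$, and the symmetry relation \eqref{412} is immediate from $\alpha^{*}\in\mathcal{S}$.
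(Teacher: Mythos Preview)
Your approach coincides with the paper's: both extremize the same functional (the paper writes $F=-4\Phi$ and seeks a maximizer) after restricting to the reflection-symmetric subspace, which the paper parametrizes by independent half-gaps $\theta_1,\dots,\theta_k$ rather than by the $\alpha_i$ themselves. Your symmetric-criticality step, showing $\nabla\Phi(\alpha)\in\mathcal S$ whenever $\iota\alpha=\alpha$, makes explicit what the paper brushes aside as ``it is easy to see that $(\alpha_1,\dots,\alpha_n)$ is also a local maximizer of $G$.''

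The one genuine gap is the one you yourself flag: subcritical coercivity. The paper fills it with a concrete telescoping inequality. Writing $S_m=\sum_{\ell}(\theta_\ell+\cdots+\theta_{\ell+m-1})^{-2}$ for the total inverse-square interaction at index-distance $m$, the interaction part of $\Phi$ on $\mathcal S$ is a constant multiple of $\sum_m(-1)^{m-1}S_m$. Pairing each term of $S_m$ with the term of $S_{m+1}$ having the same left endpoint and using
\[
\frac{1}{a^{2}}-\frac{1}{b^{2}}=(b-a)\Bigl(\frac{1}{a^{2}b}+\frac{1}{ab^{2}}\Bigr),\qquad a=\theta_\ell+\cdots+\theta_{\ell+m-1},\ b=a+\theta_{\ell+m},
\]
gives $S_m-S_{m+1}\ge 0$, with the difference bounded below by the unpaired last term of $S_m$ plus a sum of manifestly positive fractions containing factors $\theta_\ell^{-2}$. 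Grouping the alternating sum as $(S_1-S_2)+(S_3-S_4)+\cdots$ then yields a lower bound that diverges to $+\infty$ whenever any nearest-neighbour gap closes, which is precisely your missing estimate. With this inequality in hand your outline becomes a complete proof.
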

\begin{remark}
The proof of this lemma is similar to \cite[Lemma 3]{JM}.
\end{remark}
\begin{proof}
We first consider the case when $n$ is even. Let $n=2k$, $k\in\mathbb{N}^+$. We consider $\{\alpha_i\}_{i=1}^{2k}$ of the following form:
\begin{align}
&\alpha_i=\sum_{j=i}^k\theta_j,\quad \forall i=1,\ldots,k,\label{462}\\
&\alpha_{n+1-i}=-\alpha_i,\quad \forall i=1,\ldots,k.\label{463}
\end{align}
We define a smooth function $F$ on 
$$S=\{\vec{\theta}=(\theta_1,\ldots,\theta_k)\in\mathbb{R}^k:\, \theta_i>0,\,\forall i=1,\ldots,k\}\subset\mathbb{R}^k$$
as follows
$$F(\vec{\theta})=-\sum_{\substack{i,j=1,\\\i\not=j}}^n\frac{a_{ij}}{(\alpha_i-\alpha_j)^2}-\frac{1}{2}\sum_{i=1}^n\alpha_i^2=-\sum_{\substack{i,j=1,\\\i\not=j}}^na_{ij}\Bigg(\sum_{\ell=\min\{i,j\}}^{\max\{i,j\}-1}\theta_\ell\Bigg)^{-2}-\frac{1}{2}\sum_{i=1}^n\alpha_i^2,$$
where $\alpha_i$ are given by \eqref{462} and \eqref{463}. 

In case of $p>3$, we have $a_{ij}>0$ for all $i\not=j$. We observe that for all $\vec{\theta}_0\in\partial S$, there exits $i_0\in\{1,\ldots,k\}$ such that $\theta_{i_0}=0$. Hence, for all $\vec{\theta}_0\in\partial S$, and all $\vec{\theta}_m\in S$ such that $\vec{\theta}_m\rightarrow\vec{\theta}_0$ when $m\rightarrow+\infty$, we have
$$F(\vec{\theta}_m)\rightarrow-\infty.$$
On the other hand, we also have $F(\vec{\theta}_m)\rightarrow-\infty$ if $|\vec{\theta}_m|\rightarrow+\infty.$ Hence, there exits a local maximizer $\vec{\theta}_0=(\theta_{0,1},\ldots,\theta_{0,k})\in S$. We denote by  
\begin{align}
\alpha_{i}=\sum_{j=i}^k\theta_{0,j},\quad
\alpha_{n+1-i}=-\alpha_{i},\quad \forall i=1,\ldots,k.
\end{align}
It is easy to see that $(\alpha_{1},\ldots,\alpha_{n})$ is also a local maximizer of the following function: $G(\vec{\alpha})=F(\vec{\theta}),$ which implies that for all $i=1,\ldots,n$, there holds
$$\frac{\alpha_i}{4}=\sum_{ \substack{j=1\\ j\not=i}}^n\frac{a_{ij}}{(\alpha_i-\alpha_j)^3}.$$
We then conclude the proof of $p>3$ and $n$ even.

In case of $2<p<3$, we notice that $a_{ij}=a_{ji}$ for all $i\not=j$, and $a_{ij}>0$ if $i-j$ is odd; $a_{ij}<0$ if $i-j$ is even. Hence there exists constant $C_0>0$, such that
\begin{align}\label{464}
-\sum_{\substack{i,j=1,\\\i\not=j}}^n\frac{a_{ij}}{(\alpha_i-\alpha_j)^2}=-C_0\sum_{i=1}^{2k}\sum_{\ell=1}^{2k-i+1}\frac{(-1)^{i-1}}{(\theta_\ell+\cdots+\theta_{\ell+i-1})^2},
\end{align}
where $\theta_{2k+1-i}=\theta_i$ for all $i=1,\ldots,k$. For all $1\leq i<2k$, we have
\begin{align*}
&\sum_{\ell=1}^{2k-i+1}\frac{1}{(\theta_\ell+\cdots+\theta_{\ell+i-1})^2}-\sum_{\ell=1}^{2k-i}\frac{1}{(\theta_\ell+\cdots+\theta_{\ell+i})^2}\\
&\geq\frac{1}{(\theta_{2k-i+1}+\cdots+\theta_{2k})^2}+\sum_{\ell=1}^{2k-i}\frac{\theta_{\ell+i}}{(\theta_\ell+\cdots+\theta_{\ell+i-1})(\theta_\ell+\cdots+\theta_{\ell+i})^2}\\
&\qquad+\sum_{\ell=1}^{2k-i}\frac{\theta_{\ell+i}}{(\theta_\ell+\cdots+\theta_{\ell+i-1})^2(\theta_\ell+\cdots+\theta_{\ell+i})}.
\end{align*}
Using the above estimate, it is easy to see from \eqref{464} that 
$$F(\vec{\theta}_m)\rightarrow-\infty,$$
if $\vec{\theta}_m\rightarrow\vec{\theta}_0\in \partial S$ or $|\vec{\theta}_m|\rightarrow+\infty$, as $m$ $\rightarrow+\infty$. By a similar argument, we conclude the proof of Lemma \ref{L4} in case of $2<p<3$ and $n$ even.

Finally, if $n=2k+1$ is an odd integer. We can also consider $\{\alpha_i\}_{i=1}^{2k+1}$ of the following form:
\begin{align*}
&\alpha_i=\sum_{j=i}^k\theta_j,\quad \forall i=1,\ldots,k,\\
&\alpha_{k+1}=0,\quad\alpha_{n+1-i}=-\alpha_i,\quad \forall i=1,\ldots,k.
\end{align*}
Then, Lemma \ref{L4} in this case can be proved similarly.
\end{proof}

\subsection{Uniform backward estimate}
In this subsection, we will derive some crucial backward uniform estimates for solutions  satisfying the conditions introduced in Proposition \ref{P7}. More precisely, we have:

\begin{proposition}[Uniform backward estimates]\label{P9}
There exists $t_0\gg1$, such that for all initial time $\tin>t_0$, there exists a choice of initial data $u(\tin)=u_0$, such that the following \emph{uniform backward estimates: } for all $t\in[t_0,\tin]$, we have
\begin{align}
&|x_i(t)-\alpha_i\sqrt{t}-\beta_i\log t-\gamma_i|\leq \frac{1}{t^{1/4-2\delta_0}},\label{44}\\
& \bigg|\mu_i(t)-\frac{\alpha_i}{2\sqrt{t}}-\frac{\beta_i}{t}\bigg|\leq \frac{1}{t^{5/4-2\delta_0}},\label{45}\\
& \|\e(t)\|_{\h}\leq \frac{1}{t^{5/4-\delta_0}},\label{46}
\end{align}
for some small universal constants $0<\delta_0<\frac{1}{1000}$. Here $\alpha_i$ are given by \eqref{320}, and constants $\beta_i$, $\gamma_i$ will be chosen later%
\footnote{See Remark \ref{R2} for the detailed definition of these constants.}
.
\end{proposition}

\subsubsection{A bootstrap argument}
The main step of the proof of Proposition \ref{P9} is an argument of bootstrap as well as a topological argument. More precisely, for all given $\tin\gg1$, we consider an initial data of the form
$$u(\tin,y)=\Theta(\vec{x}_0(\tin),\vec{\mu}_0(\tin),y)+\e_0(y)$$
where $\vec{x}_0(t),\vec{\mu}_0(t)$ are $C^1$-functions on $[t_0,\tin]$ such that 
\begin{align}
&|x_{i,0}(\tin)-\alpha_i\sqrt{\tin}-\beta_i\log\tin-\gamma_i|\lesssim \tin^{-1/4+\delta_0},\label{453}\\
&\bigg|\mu_{i,0}(\tin)-\frac{\alpha_i}{2\sqrt{\tin}}-\frac{\beta_i}{\tin}\bigg|\lesssim \tin^{-5/4+\delta_0},\label{455}\\
&\|\e_0\|_{\h}\lesssim \tin^{-\frac{3}{2}}.\label{437}
\end{align}
From the (local) uniqueness of the geometrical decomposition (i.e. Proposition \ref{P7}), \emph{for all choice of $\vec{x}_0(\tin)$, $\vec{\mu}_0(\tin)$ and $\e_0$ satisfying \eqref{453}}, there exist%
\footnote{We mention here $t_*$ depends on the choice of $\vec{x}_0(\tin)$, $\vec{\mu}_0(\tin)$ and $\e_0$.}
 $t_*\in[t_0,\tin]$, $C^1$-parameters $\vec{x}(t),\vec{\mu}(t)$ and error term $\e(t,y)$, such that 
\begin{equation}\label{452}
\|\e(\tin)\|_{\h}\leq C \tin^{-\frac{3}{2}}
\end{equation}
and
\begin{align}
&x_{i}(\tin)=\alpha_i\sqrt{\tin}+\beta_i\log \tin+\gamma_i+O(\tin^{-1/4+\delta_0}),\label{465}\\
&\mu_{i}(\tin)=\frac{\alpha_i}{2\sqrt{\tin}}+\frac{\beta_i}{\tin}+O(\tin^{-5/4+\delta_0}).\label{466}
\end{align}
Moreover, the corresponding geometrical decomposition \eqref{GD} holds on $[t_*,\tin]$, for some $t_*\in[t_0, \tin)$. We still denote by $t_*$ the smallest time $\tau\in[t_0,\tin)$ such that the geometrical decomposition \eqref{GD} holds on $[\tau,\tin]$. We also introduce the following bootstrap assumptionss:
\begin{align}
&|x_i(t)-\alpha_i\sqrt{t}-\beta_i\log t-\gamma_i|\leq \frac{1}{t^{1/4-3\delta_0}},\label{415}\\
& \bigg|\mu_i(t)-\frac{\alpha_i}{2\sqrt{t}}-\frac{\beta_i}{t}\bigg|\leq \frac{1}{t^{5/4-3\delta_0}},\label{416}\\
& \|\e(t)\|_{\h}\leq \frac{1}{t^{5/4-\delta_0}},\label{417}
\end{align}
and define
\begin{equation}
\label{418}
T_0=\inf\{\tau\in[t_*,\tin):\,\text{\eqref{415}--\eqref{417} holds on $[\tau,\tin]$}\}.
\end{equation}
We will show that 
\begin{proposition}\label{P12}
There exists a suitable choice of $\vec{x}_0(\tin)$, $\vec{\mu}_0(\tin)$ and $\e_0$ (or equivalently, a suitable choice of initial data $u(\tin)$) such that%
\footnote{Here we use the fact that when $t_0$ is chosen large enough, then $T_0=t_*$ implies $T_0=t_*=t_0$.}
 $T_0=t_*=t_0$.
\end{proposition}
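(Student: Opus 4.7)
The plan is to argue by contradiction via a bootstrap/topological scheme. We parametrize the possible choices of $\e_0$ (subject to \eqref{453}) by a finite-dimensional ball whose coordinates are the projections onto the unstable directions of the linearized problem at each bubble, namely $(\e(\tin), Z^{\pm}(\cdot-x_i(\tin)))$ for $i=1,\ldots,n$ (in the supercritical case), together with $\e_0=0$ on the stable component. For each such choice, the geometrical decomposition of Proposition \ref{P7} yields some $T_0 = T_0(\e_0) \ge t_0$. The goal is to prove that $T_0 = t_0$ for at least one choice; the contradiction hypothesis is that for every $\e_0$ in the ball, $T_0 > t_0$, so that at least one of the strict inequalities \eqref{415}--\eqref{417} is saturated at $t = T_0$.

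The first step is to close the bootstrap on the parameters. Feeding the bound \eqref{417} on $\e$ into the modulation estimates \eqref{330}--\eqref{331} of Proposition \ref{P8}, one sees that $(\vec x, \vec\mu)$ satisfies the perturbed ODE system \eqref{121} with a forcing of size $t^{-9/2} + t^{-5/4-\delta_0}/d^2 \ll t^{-5/4}$, where $d(t) \sim \sqrt{t}$. A direct linearization of this system around the formal profile \eqref{410}, combined with the choice of $\beta_i,\gamma_i$ as those dictated by the cancellation of the logarithmic/constant corrections in Lemma \ref{L4}, shows that any solution of this ODE which matches the terminal conditions \eqref{455} at $t=\tin$ must satisfy improved bounds $|x_i(t)-\alpha_i\sqrt t -\beta_i\log t-\gamma_i|\lesssim t^{-1/4-c}$ and the analogous bound for $\mu_i$, strictly better than \eqref{415}--\eqref{416}. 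Thus the bootstrap inequalities on the parameters cannot be saturated at $T_0$, and the saturation can only happen in \eqref{417}. The second step is to close the bootstrap on $\|\e\|_{\h}$ via the localized energy functional $W$ from the introduction. Proposition \ref{P10} (the monotonicity formula announced there) will give $W'(t) \le C t^{-3-c}$, so integrating from $T_0$ up to $\tin$ and using $\|\e(\tin)\|_{\h}\le \tin^{-3/2}$ yields $W(T_0) \le C T_0^{-5/2 - c}$. Combined with the coercivity estimate (Proposition \ref{P4}(3) summed over bubbles, with the orthogonality conditions \eqref{38} absorbing the $\mathcal{Z}$-direction), this translates into
\begin{equation*}
\|\e(T_0)\|_{\h}^2 \lesssim T_0^{-5/2 - c} + \sum_{i,\pm} (\e(T_0), Z^{\pm}(\cdot-x_i(T_0)))^2.
\end{equation*}
In the subcritical case $2<p<3$, Proposition \ref{P2}(5) replaces the $Z^\pm$ terms by the controlled scalar products with $Q,Q'$, so the final sum drops out and \eqref{417} is strictly improved; picking $\e_0 = 0$ suffices, and no topological argument is needed.

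The main obstacle, and the only remaining case, is the supercritical regime $p>3$, where the unstable $Z^\pm$ inner products genuinely have to be killed. Here the plan is standard for this type of construction (following \cite{CMM,V1}): compute the differential inequalities satisfied by $z^\pm_i(t) := (\e(t), Z^\pm(\cdot-x_i(t)))$ using the equation \eqref{383} and the relations $\mathcal{L}\partial_y Z^\pm = \pm e_0 Z^\pm$ from Proposition \ref{P4}. The expected outcome is that $z^+_i$ is a dynamically unstable direction and $z^-_i$ a stable one (or vice-versa, depending on sign conventions), in the sense that if the saturated inequality is $\sum(z^\pm_i)^2 = T_0^{-5/2-c}$, then the map $\e_0 \mapsto \big(z^\pm_i(T_0)\big)_{i}$, restricted to the boundary of the parameter ball at time $\tin$, is a non-trivial continuous map from a sphere to itself with non-zero degree. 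Brouwer's fixed-point theorem (in its "no-retraction" form) then rules out the contradiction hypothesis, yielding a choice of $\e_0$ for which no saturation occurs at $T_0$; combined with a standard open/closed argument this forces $T_0 = t_*$, and then $t_* = t_0$ by choosing $t_0$ large enough so the a priori smallness in Proposition \ref{P7} propagates. The delicate point will be verifying that the out-going dynamics of $z^+_i$ dominates all coupling terms coming from the interaction between distinct bubbles and from $E_V$, so that the degree computation is actually valid; this is where the sharp decay $t^{-9/2}$ of $E_V$ from Proposition \ref{P6} is essential.
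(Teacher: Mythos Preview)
Your outline is essentially the paper's own argument: improve the parameter bounds by integrating the linearized ODE system (the paper diagonalizes the matrix $M=(m_{ij})$ from \eqref{413}--\eqref{414} to do this cleanly), then use the monotonicity of $W$ (Proposition~\ref{P10}) and its coercivity (Lemma~\ref{L7}) to improve \eqref{417}, with $\e_0=0$ in the subcritical case and a Brouwer/no-retraction argument in the supercritical case.

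One point deserves sharpening. You parametrize $\e_0$ by \emph{both} families $(\e,Z^\pm(\cdot-x_i))$ and speak of controlling $\sum_{i,\pm}(z_i^\pm)^2$ topologically. In fact only one of the two families is backward-unstable: with the convention $\mathcal{L}\partial_y Z^\pm=\pm e_0 Z^\pm$, the coefficients $a_i^+(t)=(\e,\widetilde Z_i^+)$ satisfy $\dot a_i^+\approx e_0 a_i^+$ and are therefore \emph{stable} when integrating backward from $\tin$; the paper bounds them directly by \eqref{457} without any choice of data. It is only the $a_i^-$ that grow backward, and the topological argument lives on the $n$-ball $\{\vec a_{\rm in}=(a_1^-(\tin),\ldots,a_n^-(\tin))\}$, not a $2n$-ball. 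The retraction map is $\vec v\mapsto T_1^{3/2}(a_1^-(T_1),\ldots,a_n^-(T_1))$ at the first backward time $T_1$ where $\mathcal N(t)=t^3\sum_i|a_i^-(t)|^2$ reaches $1$; the transversality $\partial_t\mathcal N(T_1)<0$ (from Lemma~\ref{L9}) is what makes this map continuous and equal to the identity on $\partial\mathbb B$. Your formulation ``non-zero degree on the sphere'' is equivalent, but the halving of the dimension matters for setting up the map correctly.
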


\begin{remark}
The key step of the proof of Proposition \ref{P12} is to show that with a suitable choice of initial data, we can improve the bootstrap assumptions \eqref{415}--\eqref{417} on $[T_0,\tin]$. We will see that in the proof of Proposition \ref{P12}  in the subcritical case, we can directly choose $\e_0=0$ and use a topological argument to choose suitable $\vec{x}_0(\tin)$ and $\vec{\mu}_0(\tin)$. While in the supercritical case, no explicit expression of $\e_0$ is known. We can only show its existence by a slightly different topological argument%
\footnote{See \cite{V1} for similar arguments.}
.
\end{remark}

\subsubsection{Linearization of the ODE system}
From the bootstrap assumptions \eqref{415}--\eqref{417}, we have the following \emph{a priori} estimates:
\begin{equation}
\label{419}
|x_{ij}(t)-(\alpha_i-\alpha_j)\sqrt{t}|\lesssim \log t,\quad |\mu_i(t)-\alpha_i/(2\sqrt{t})|\lesssim \frac{1}{t},
\end{equation}
for all $i\not=j$ and $t\in[T_0,\tin]$. Hence, from \eqref{330}, \eqref{331} and \eqref{419}, we have for all $i=1,\ldots,n$ and $t\in[T_0, \tin]$,
\begin{align}
&|\dot{x}_i(t)-\mu_i(t)|\lesssim\frac{1}{t^{5/4-\delta_0}},\label{420}\\
&\bigg|\dot{\mu}_i(t)+\sum^n_{\substack{j=1,\\j\not=i}}\frac{a_{ij}}{x^3_{ij}(t)}+\sum^n_{\substack{k,j=1,\\j\not=i}}\frac{b_{ijk}\mu_k(t)}{x^3_{ij}(t)}\bigg|\lesssim \frac{1}{t^{9/4-\delta_0}}.\label{421}
\end{align}
Using the \textit{a priori} estimates \eqref{419} again, we have 
\begin{align*}
\frac{a_{ij}}{x^3_{ij}(t)}&=\frac{a_{ij}}{(\alpha_i-\alpha_j)^3t^{\frac{3}{2}}}\frac{1}{(1+[x_{ij}(t)-(\alpha_i-\alpha_j)\sqrt{t}]/[(\alpha_i-\alpha_j)\sqrt{t}])^3}\\
&=\frac{a_{ij}}{(\alpha_i-\alpha_j)^3t^{\frac{3}{2}}}\Bigg[1-\frac{3[x_{ij}-(\alpha_i-\alpha_j)\sqrt{t}]}{(\alpha_i-\alpha_j)\sqrt{t}}+O\bigg(\frac{|x_{ij}-(\alpha_i-\alpha_j)\sqrt{t}|^2}{t}\bigg)\Bigg]\\
&=\frac{a_{ij}}{(\alpha_i-\alpha_j)^3t^{\frac{3}{2}}}\bigg(4-\frac{3x_{ij}}{(\alpha_i-\alpha_j)\sqrt{t}}\bigg)+O\bigg(\frac{1}{t^{9/4}}\bigg),
\end{align*}
and
\begin{align*}
\frac{b_{ijk}\mu_k(t)}{x^3_{ij}(t)}&=\frac{b_{ijk}\alpha_k}{(\alpha_i-\alpha_j)^3t^{2}}\frac{1+[\mu_k(t)-\alpha_k/\sqrt{t}]/(\alpha_{k}\sqrt{t})}{(1+[x_{ij}(t)-(\alpha_i-\alpha_j)\sqrt{t}]/[(\alpha_i-\alpha_j)\sqrt{t}])^3}\\
&=\frac{b_{ijk}\alpha_k}{(\alpha_i-\alpha_j)^3t^{2}}+O\bigg(\frac{1}{t^{9/4}}\bigg).
\end{align*}
Together with \eqref{421} and  \eqref{411}, we have for all $i=1,\ldots,n$ and $t\in[T_0, \tin]$,
\begin{align}
&|\dot{x}_i(t)-\mu_i(t)|\lesssim\frac{1}{t^{5/4-\delta_0}},\label{422}\\
&\bigg|\dot{\mu}_i(t)+\frac{\alpha_i}{t^{\frac{3}{2}}}-\sum^n_{j=1}\frac{m_{ij}x_j(t)}{t^2}+\sum^n_{\substack{k,j=1,\\j\not=i}}\frac{b_{ijk}\alpha_k}{(\alpha_i-\alpha_j)^3t^2}\bigg|\lesssim \frac{1}{t^{9/4-\delta_0}}\label{423},
\end{align}
where
\begin{align}
&m_{ij}=-\frac{3a_{ij}}{(\alpha_i-\alpha_j)^4},\quad \text {if } i\not=j,\label{413}\\
&m_{ii}=\sum^n_{\substack{j=1\\ j\not=i}}\frac{3a_{ij}}{(\alpha_i-\alpha_j)^4},\quad \forall i=1,\ldots,n.\label{414}
\end{align}
Let $M=(m_{ij})_{n\times n}$ be the symmetric matrix given by \eqref{413} and \eqref{414}. We also let $\lambda_1\leq\cdots\leq\lambda_n$ be all eigenvalues of $M$. We know that there exists $T\in SO(n)$ such that 
$$M=T^{-1}
\begin{bmatrix}
\lambda_1 & & \\
&  \ddots&\\
& &\lambda_n
\end{bmatrix}
T.$$
Let 
\begin{align}
&(\widetilde{x}_1(t),\ldots,\widetilde{x}_n(t))^\intercal=T(x_1(t),\ldots,x_n(t))^\intercal, \label{424}\\
&(\widetilde{\mu}_1(t),\ldots,\widetilde{\mu}_n(t))^\intercal=T(\mu_1(t),\ldots,\mu_n(t))^\intercal,\label{425}\\
&(\widetilde{\alpha}_1,\ldots,\widetilde{\alpha}_n)^\intercal=T(\alpha_1,\ldots,\alpha_n)^\intercal.\label{427}
\end{align}
Then from \eqref{420} and \eqref{422} --\eqref{414}, we have for all $k=1,\ldots,n$ and $t\in[T_0, \tin]$, there holds
\begin{align}
&|\partial_t\widetilde{x}_k(t)-\widetilde{\mu}_k(t)|\lesssim\frac{1}{t^{5/4-\delta_0}},\label{428}\\
&\bigg|\partial_t\widetilde{\mu}_k(t)+\frac{\widetilde{\alpha}_k}{t^{\frac{3}{2}}}-\frac{\lambda_k\widetilde{x}_k(t)}{t^2}+\frac{\widetilde{\beta}_k}{t^2}\bigg|\lesssim \frac{1}{t^{9/4-\delta_0}},\label{429}
\end{align}
where $\widetilde{\beta}_k$ are universal constants. For all $k=1,\ldots,n$ and $t\in[T_0, \tin]$, we let
\begin{align}
&\begin{cases}
\x_k(t)=\widetilde{x}_k(t)-\widetilde{\alpha}_k\sqrt{t}-\widetilde{\beta}_k\log t,\\
 \U_k(t)=\widetilde{\mu}_k(t)-\frac{\widetilde{\alpha}_k}{2\sqrt{t}}-\frac{\widetilde{\beta}_k}{t},\\
\end{cases}& \text{if }\lambda_k=0;\label{430}\\
&\begin{cases}
\x_k(t)=\widetilde{x}_k(t)-\widetilde{\alpha}_k\sqrt{t}-\frac{\widetilde{\beta}_k}{\lambda_k} ,\\
 \U_k(t)=\widetilde{\mu}_k(t)-\frac{\widetilde{\alpha}_k}{2\sqrt{t}},
\end{cases}&\text{if }\lambda_k\not=0,\label{426}
\end{align}
Combining \eqref{428}--\eqref{426}, we have
\begin{align*}
&|\partial_t\x_k(t)-\U_k(t)|\lesssim\frac{1}{t^{5/4-\delta_0}},\\
&\bigg|\partial_t\U_k(t)+\frac{\widetilde{\alpha}_k}{t^{\frac{3}{2}}}\bigg(\frac{3}{4}-\lambda_k\bigg)-\frac{\lambda_k\x_k(t)}{t^2}\bigg|\lesssim \frac{1}{t^{9/4-\delta_0}}.
\end{align*}
It is easy to verify that $(\alpha_1,\ldots,\alpha_n)^\intercal$ is an eigenvector of $M$, whose corresponding eigenvalue is $\frac{3}{4}$. From the definition of $(\widetilde{\alpha}_1,\ldots,\widetilde{\alpha}_n)^\intercal$, we have for all $k=1,\ldots,n$, $\widetilde{\alpha}_k(\frac{3}{4}-\lambda_k)=0$. Collecting all estimates above, we obtain that for all $k=1,\ldots,n$ and $t\in[T_0, \tin]$, there holds
\begin{align}
&|\partial_t\x_k(t)-\U_k(t)|\lesssim\frac{1}{t^{5/4-\delta_0}},\label{431}\\
&\big|\partial_t\U_k(t)-[\lambda_k\x_k(t)]/t^2\big|\lesssim \frac{1}{t^{9/4-\delta_0}}.\label{432}
\end{align}

\begin{remark}\label{R2}
We mention here that from the definition of $\x_k$ and $\U_k$, we know that with suitable choice of constants $\beta_i$ and $\gamma_i$, the bootstrap assumptions \eqref{415} and \eqref{416} are equivalent to 
\begin{equation}
\label{433}
|\x_k(t)|\lesssim \frac{1}{t^{1/4-3\delta_0}},\quad |\U_k(t)|\lesssim \frac{1}{t^{5/4-3\delta_0}},
\end{equation}
for all $t\in [T_0,\tin]$ and $k=1,\ldots,n$.
\end{remark}

Now, for all $k=1,\ldots,n$, we denote by $\omega_k^0,\ \omega_k^1$ the two roots of the polynomial $z^2+z-\lambda_k$ in $\mathbb{C}$. Obviously, we have $\omega_k^0+\omega_k^1=-1$. Without loss of generality, we assume that $\Re \omega_k^0\geq-\frac{1}{2}$ and $\Re\omega_k^1\leq -\frac{1}{2}$.
We let 
\begin{equation}\label{478}
y_k(t)=\omega_k^0\x_k(t)+ t\U_k(t),\quad z_k(t)=\omega_k^1\x_k(t)+ t\U_k(t).
\end{equation}
Then from \eqref{431} and \eqref{432}, we have
\begin{align}\label{434}
\partial_t y_k&=\omega_k^0\partial_t \x_k+\U_k+ [t\partial_t\U_k]=(1+\omega_k^0)\U_k+\frac{\lambda_k\x_k}{t}+O\bigg(\frac{1}{t^{5/4-\delta_0}}\bigg)\nonumber\\
&=\frac{(1+\omega_k^0)y_k}{t}+O\bigg(\frac{1}{t^{5/4-\delta_0}}\bigg),
\end{align}
and
\begin{equation}
\label{435}
\partial_t z_k=\frac{(1+\omega_k^1)z_k}{t}+O\bigg(\frac{1}{t^{5/4-\delta_0}}\bigg).
\end{equation}
\begin{remark}
We mention here,  \eqref{434} and \eqref{435} imply that 
$$|\partial_t(t^{-1-\omega_k^0}y_k)|=O\bigg(\frac{1}{t^{9/4+\omega_k^0-\delta_0}}\bigg),\;\; |\partial_t(t^{-1-\omega_k^1}z_k)|=O\bigg(\frac{1}{t^{9/4+\omega_k^1-\delta_0}}\bigg).$$
If we have $\Re\omega_k^0>-\frac{5}{4}$ and $\Re\omega_k^1>-\frac{5}{4}$ (or equivalently $\lambda_k<\frac{5}{16}$), then we
can arbitrarily choose initial data as long as $|\x_k(\tin)|\lesssim \tin^{-1/4+\delta_0}$ and $|\U_k(\tin)| \lesssim \tin^{-5/4+\delta_0}$ hold true. In this situation, we can improve the bootstrap assumptions \eqref{415} and \eqref{416} by directly integrating the above inequalities from $t$ to $\tin$. However, this condition does not always hold. We need to choose the initial data through a topological argument.
\end{remark}

\subsubsection{A monotonicity formula from the energy conservation law}
In this part, we will prove some monotonicity formula derived from the energy conservation law.

First of all, we recall that $\psi$ is a smooth function such that $\psi(y)=1$ if $y>-1$; $\psi(y)=0$ if $y<-2$, and $\psi'\geq0$. For all $i=1,\ldots,n$, we define $\phi_i^\pm$ as follows
\begin{align*}
&\phi_1^+(y)=\mathbf{1}_{[\alpha_1,+\infty)}(y),\quad\phi_n^-(y)=\mathbf{1}_{(-\infty,\alpha_n]}(y),\\
&\phi_i^-(y)=\psi\bigg(\frac{3(y-\alpha_i)}{\alpha_i-\alpha_{i+1}}\bigg)\mathbf{1}_{[\alpha_{i+1},\alpha_i]}(y),\quad \forall i=1,\ldots,n-1,\\
&\phi_i^+(y)=[1-\phi_{i-1}^-(y)]\mathbf{1}_{[\alpha_i,\alpha_{i-1}]}(y),\quad \forall i=2,\ldots,n.
\end{align*}
Let $\phi_i=\phi_i^++\phi_i^-$, then we can easily show that
\begin{itemize}
\item $\phi_i$ are smooth functions with $0\leq \phi_i\leq 1$ and $\sum_{i=1}^{n}\phi_i\equiv1$;
\item $|\partial_y\phi_i|\lesssim 1/[\min_{i\in\{1,\ldots,n-1\}}(\alpha_i-\alpha_{i+1})]$;
\item there holds 
$$\begin{cases}
&\text{Supp }\phi_1\subset\big([2\alpha_2+\alpha_1]/3,+\infty\big);\\
&\text{Supp }\phi_i\subset\big([2\alpha_{i+1}+\alpha_i]/3,[\alpha_i+2\alpha_{i-1}]/3\big),\; \forall 1<i<n;\\
&\text{Supp }\phi_n\subset\big(-\infty,[2\alpha_{n-1}+\alpha_n]/3\big);\\
\end{cases}$$
\end{itemize}
Then we define weight functions $\{\Phi_j(t,y)\}_{j=1,2}$ as follows:
\begin{align}
\Phi_1(t,y)=\sum_{i=1}^n\phi_i(y/\sqrt{t})[1-\mu_i(t)],\quad\Phi_2(t,y)=\sum_{i=1}^n\frac{\mu_i(t)\phi_i(y/\sqrt{t})}{1+[\mu_i(t)]^2}.
\end{align}
We have the following properties of these two weight functions $\Phi_1$ and $\Phi_2$:
\begin{lemma}\label{L3}
Let $\Phi_1$ and $\Phi_2$ be the weight functions defined as above, then the following properties hold provided that $t_0\gg1$.
\begin{enumerate}
\item If $|y-x_i(t)|\leq d(t)/4$ for some $i\in\{1,\ldots,n\}$, then
$$\Phi_1(t,y)=1-\mu_i,\quad\Phi_2(t,y)=\frac{\mu_i(t)}{1+[\mu_i(t)]^2}.$$
\item There holds
$$\text{Supp }\partial_y\Phi_1(t,\cdot)\subset \bigcup_{i=1}^{n-1}\bigg[\frac{3x_{i+1}+x_{i}}{4},\frac{3x_i+x_{i+1}}{4}\bigg].$$
\item  For all $y\in\mathbb{R}$, there holds
$$\partial_y\Phi_1(t,y)\leq 0,\quad |\partial_y\Phi_k(t,y)|\lesssim\frac{1}{t},\quad  |\partial_{yy}\Phi_k(t,y)|\lesssim\frac{1}{t^{\frac{3}{2}}},$$
for all $k=1,2$.
\item There holds 
$$\|\partial_y\Phi_k(t)\|_{L^2}\lesssim \frac{1}{t^{\frac{3}{4}}},\quad \|\partial_{yy}\Phi_k(t)\|_{L^2}\lesssim \frac{1}{t^{\frac{5}{4}}},$$
for all $k=1,2$.
\item For all $y\in\mathbb{R}$, there holds
$$|\partial_y\Phi_1(t,y)+\partial_y\Phi_2(t,y)|\lesssim \frac{1}{t^2}.$$
\item For all $y\in\mathbb{R}$, there holds
$$|\partial_t\Phi_1(t,y)|+|\partial_t\Phi_2(t,y)|\lesssim \frac{1}{t^{\frac{3}{2}}}.$$
\end{enumerate}
\end{lemma}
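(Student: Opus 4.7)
The plan is to reduce everything to explicit formulas on the transition regions between consecutive bubbles, combining the partition-of-unity structure of $\{\phi_i\}$ with the bootstrap bounds \eqref{415}--\eqref{416} (giving $x_i(t)/\sqrt{t}=\alpha_i+O(t^{-3/4+3\delta_0})$ and $|\mu_{i+1}(t)-\mu_i(t)|\sim t^{-1/2}$) and with the modulation estimate \eqref{331} (which yields $|\dot\mu_i|\lesssim t^{-3/2}$ once one substitutes $d\sim\sqrt{t}$, $\|\e\|_{\h}\lesssim t^{-5/4+\delta_0}$, and $M_1\sim 1/t$). The key structural fact I would verify first is that on each interval $[\alpha_{i+1},\alpha_i]$ only $\phi_i$ and $\phi_{i+1}$ are nonzero with $\phi_i=\phi_i^-$, $\phi_{i+1}=1-\phi_i^-$; then $(\phi_i^-)'\geq 0$ follows from $\psi'\geq 0$ and $\alpha_i-\alpha_{i+1}>0$, and the support of $(\phi_i^-)'$ sits strictly inside $[(\alpha_i+2\alpha_{i+1})/3,(2\alpha_i+\alpha_{i+1})/3]$.

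For items (1) and (2), since $|x_i(t)-\alpha_i\sqrt{t}|\ll\sqrt{t}$, the neighborhood $\{|y-x_i(t)|\leq d(t)/4\}$ maps under $y\mapsto y/\sqrt{t}$ into a small set around $\alpha_i$ that lies strictly inside the region where $\phi_i\equiv 1$ and $\phi_j\equiv 0$ for $j\neq i$, provided $t_0$ is large enough; this proves (1). For (2), writing $a_i=\alpha_i-\alpha_{i+1}$, the strict inclusion $[a_i/3,2a_i/3]\subsetneq[a_i/4,3a_i/4]$ together with $x_i\approx\alpha_i\sqrt{t}$ shows the support of $\partial_y\Phi_1$ fits inside $[(3x_{i+1}+x_i)/4,(3x_i+x_{i+1})/4]$.

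For items (3)--(5), the central identity is
\[
\Phi_1(t,y)=(1-\mu_{i+1})+(\mu_{i+1}-\mu_i)\phi_i^-(y/\sqrt{t}),\qquad y/\sqrt{t}\in[\alpha_{i+1},\alpha_i],
\]
and its analogue for $\Phi_2$ with coefficient $\mu_i/(1+\mu_i^2)-\mu_{i+1}/(1+\mu_{i+1}^2)$. Differentiating gives $\partial_y\Phi_1=(\mu_{i+1}-\mu_i)(\phi_i^-)'(y/\sqrt{t})/\sqrt{t}$, hence $\partial_y\Phi_1\leq 0$ since both factors have the right sign; the size bound $|\partial_y\Phi_k|\lesssim t^{-1}$ follows from $|\mu_{i+1}-\mu_i|\lesssim t^{-1/2}$ multiplied by the chain-rule factor $1/\sqrt{t}$, and one further $y$-derivative gives $|\partial_{yy}\Phi_k|\lesssim t^{-3/2}$. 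Item (4) then follows because the support of $\partial_y\Phi_k$ has measure $\lesssim\sqrt{t}$. For the cancellation in (5), I would use the algebraic identity
\[
\frac{\mu_i}{1+\mu_i^2}-\frac{\mu_{i+1}}{1+\mu_{i+1}^2}=\frac{(\mu_i-\mu_{i+1})(1-\mu_i\mu_{i+1})}{(1+\mu_i^2)(1+\mu_{i+1}^2)},
\]
so that the coefficient of $(\phi_i^-)'/\sqrt{t}$ in $\partial_y(\Phi_1+\Phi_2)$ factors as $(\mu_{i+1}-\mu_i)\bigl[1-(1-\mu_i\mu_{i+1})/((1+\mu_i^2)(1+\mu_{i+1}^2))\bigr]$, whose numerator expands to $\mu_i^2+\mu_{i+1}^2+\mu_i\mu_{i+1}+\mu_i^2\mu_{i+1}^2=O(1/t)$; the coefficient is thus $O(|\mu_i-\mu_{i+1}|\cdot(\mu_i^2+\mu_{i+1}^2))=O(t^{-3/2})$, and the extra $1/\sqrt{t}$ yields the claimed $O(t^{-2})$.

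For item (6), expanding
\[
\partial_t\Phi_1=-\sum_i\dot\mu_i\,\phi_i(y/\sqrt{t})+\sum_i(1-\mu_i)\,\partial_t[\phi_i(y/\sqrt{t})],
\]
the decisive observation is that $\sum_i\phi_i\equiv 1$ forces $\sum_i\partial_t[\phi_i(y/\sqrt{t})]\equiv 0$, so the second sum reduces to $-\sum_i\mu_i\,\partial_t[\phi_i(y/\sqrt{t})]$, of size $\lesssim t^{-1/2}\cdot t^{-1}=t^{-3/2}$; combined with $|\dot\mu_i|\lesssim t^{-3/2}$ this proves the bound, and the analogous computation handles $\Phi_2$ (with $\mu_i/(1+\mu_i^2)$ in place of $1-\mu_i$). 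The main obstacle in the whole lemma is locating these two cancellations---the algebraic one in (5) producing an extra $|\vec\mu|^2$ factor, and the partition-of-unity identity $\sum_i\partial_t\phi_i=0$ in (6)---because without them the naive estimates give only $O(t^{-1})$, which is insufficient for the subsequent energy argument.
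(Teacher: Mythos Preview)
Your proof is correct and follows essentially the same approach as the paper: both exploit the partition-of-unity structure of $\{\phi_i\}$ together with the bootstrap bounds (giving $\mu_i\sim\alpha_i/(2\sqrt t)$, hence $\mu_1>\cdots>\mu_n$) and the modulation estimate $|\dot\mu_i|\lesssim t^{-3/2}$. The paper's proof is much more terse---it only spells out the sign computation for $\partial_y\Phi_1\leq 0$ in (3) and declares the rest to follow ``directly from the construction of $\phi_i$ and the bootstrap assumptions''---whereas you have written out the two cancellations in (5) and (6) explicitly; your algebraic identity for (5) and the use of $\sum_i\partial_t[\phi_i(y/\sqrt t)]\equiv 0$ in (6) are exactly what is needed and are implicit in the paper's one-line justification.
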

\begin{proof}
Most conclusions of this lemma follow directly from the construction of $\phi_i$ and the bootstrap assumptions \eqref{415}--\eqref{417}. While for the first inequality of (3), we use the fact that $\sum_{i=1}^n\phi_i'\equiv0$ and (1) to obtain for all $i\in\{1,\ldots,n-1\}$ and 
$$y\in\bigg[\frac{3x_{i+1}+x_{i}}{4},\frac{3x_i+x_{i+1}}{4}\bigg],$$
we have
\begin{align*}
\partial_y\Phi_1(t,y)&=-\frac{2\mu_i}{\sqrt{t}}[\phi_i^-]'(y)-\frac{2\mu_{i+1}}{\sqrt{t}}[\phi_{i+1}^+]'(y)\\
&=-\frac{2\mu_i}{\alpha_i-\alpha_{i+1}}\psi'\bigg(\frac{3(y/\sqrt{t}-\alpha_i)}{\alpha_i-\alpha_{i+1}}\bigg)+\frac{2\mu_{i+1}}{\alpha_i-\alpha_{i+1}}\psi'\bigg(\frac{3(y/\sqrt{t}-\alpha_i)}{\alpha_i-\alpha_{i+1}}\bigg)
\end{align*}
Together with $\mu_1>\cdots>\mu_n$ and $\psi'\geq 0$, we obtain that $\partial_y\Phi_1(t,y)\leq 0$.
\end{proof}

We then define a localized nonlinear energy functional as follows
\begin{align}
\label{47}
W(t)=&\int\big|\D (\e\sqrt{\Phi_1})\big|^2+\e^2(\Phi_1+\Phi_2)\nonumber\\
&-\frac{2}{p+1}\Big[\big(|V+\e|^{p+1}-|V|^{p+1}-(p+1)|V|^{p-1}V\e\big)\Big]\Phi_1
\end{align}
We have the following monotonicity formula:
\begin{proposition}\label{P10}
For all $t\in[T_0,\tin]$, we have
\begin{equation}
\label{48}
\partial_t W(t)\gtrsim-\bigg(\frac{1}{t^{1+\delta_1}}\|\e(t)\|_{\h}^2+\frac{1}{t^{9/4}}\|\e(t)\|_{\h}\bigg),
\end{equation}
where $0<\delta_1<\min\{1/1000,p-2\}$ is a small universal constant. 
\end{proposition}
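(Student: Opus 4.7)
The plan is to differentiate $W(t)$ in time, substitute the equation satisfied by $\e$, and exploit the energy-conservation structure of \eqref{316} at the localized level. Writing $g = |V+\e|^{p-1}(V+\e) - |V|^{p-1}V$, the perturbation equation \eqref{383} reads
\begin{equation*}
\partial_t\e = \partial_y\bigl(|D|\e+\e-g\bigr) - \Psi_V.
\end{equation*}
Since the unweighted Hamiltonian $H(V+\e)-H(V)-\langle H'(V),\e\rangle$ is conserved up to $\Psi_V$-contributions and modulation, the leading-order part of $\partial_t W$ should be a boundary-type expression in which $\partial_y$ has been transferred onto $\Phi_1$.

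First I would compute $\partial_t W$ term by term. The quadratic pieces $\int[|D|^{\frac12}(\e\sqrt{\Phi_1})]^2$ and $\int\e^2(\Phi_1+\Phi_2)$, after substituting $\partial_t\e$, produce, upon integration by parts in $y$, a sign-definite term $\int(|D|\e+\e-g)^2\,\partial_y\Phi_1\leq 0$ which \emph{helps} (gives a favourable sign since $\partial_y\Phi_1\le 0$ by Lemma \ref{L3}(3)), plus commutator remainders of the form $\sqrt{\Phi_1}[|D|,\sqrt{\Phi_1}]\e$ and $[|D|^{\frac12},\sqrt{\Phi_1}]\e$. These commutators are controlled using the pseudodifferential estimates of \cite{MP} together with the bounds $\|\partial_y^k\Phi_1\|_{L^\infty}\lesssim t^{-k/2}$ from Lemma \ref{L3}, yielding contributions of size $O(t^{-1-\delta_1}\|\e\|_{\h}^2)$ after choosing $\delta_1$ small. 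The nonlinear piece is handled by writing
\begin{equation*}
F(V+\e)-F(V)-F'(V)\e = \tfrac12 F''(V)\e^2 + O(|\e|^3+|\e|^{p+1})
\end{equation*}
and differentiating: the $\partial_t V$-cross terms combine with the $\partial_t\e$-substitution so that the leading $\e^2$-contribution reduces to $\int \e^2 \partial_y(F''(V)\Phi_1)/2$-type expressions which, on the support of $\partial_y\Phi_1$, are negligible because $V$ is exponentially small there.

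Next I would treat the \emph{weight time-derivatives}. Naively $\partial_t\Phi_k = O(t^{-3/2})$ by Lemma \ref{L3}(6), which would only give $t^{-3/2}\|\e\|_{L^2}^2$. The gain to $t^{-1-\delta_1}$ comes from the specific design of $\Phi_2$: on each region $|y-x_i(t)|\le d(t)/4$ one has $\Phi_1+\Phi_2 = 1-\mu_i+\mu_i/(1+\mu_i^2) = 1+O(\mu_i^3)$, so that the modulation-induced terms proportional to $\dot\mu_i\Lambda\R_i$ and $(\dot x_i-\mu_i)\partial_y\R_i$ in $\partial_t V$ pair against $\e$ only through remainders compatible with the orthogonality conditions \eqref{38}; the first-order-in-$\mu_i$ contributions cancel between $\partial_t\Phi_1$ and $\partial_t\Phi_2$, leaving only $O(t^{-1-\delta_1})$-scale terms. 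The modulation estimates \eqref{330}--\eqref{331} and the \emph{a priori} bounds \eqref{420}--\eqref{421} then absorb the remaining parameter-dynamics contributions into the right-hand side. Finally the terms involving $\Psi_V$ are bounded by $\|\Psi_V\|_{\h}\|\e\|_{\h}\lesssim t^{-9/4}\|\e\|_{\h}$, using Proposition \ref{P6} together with the bootstrap scales $d(t)\sim\sqrt t$ and $M_1+M_2+M_3\lesssim t^{-1}$.

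The main obstacle is the interplay between three nonlocal features: the commutator $[|D|,\sqrt{\Phi_1}]$ must be estimated sharply enough to reach an exponent $1+\delta_1$ (not merely $1$); the $\partial_y\Phi_1\le 0$ sign from the virial-type identity must dominate the undesirable remainders coming from the cross term $\int g\,\partial_y(\Phi_1 Y)$ where $Y=|D|\e+\e$; and the $\Phi_2$-cancellation must be verified to first order in $\vec\mu$ uniformly on each soliton region. The commutator part follows the pattern of \cite[Lemma 2.15]{MP} as already invoked in the proof of Proposition \ref{P6}; the sign-plus-cross-term issue is resolved by writing $g = p|V|^{p-1}\e + O(\e^2)$ and noting $V$ is exponentially small on $\operatorname{Supp}\partial_y\Phi_1$ since the supports $\bigcup_i[(3x_{i+1}+x_i)/4,(3x_i+x_{i+1})/4]$ lie at distance $\gtrsim d(t)\sim\sqrt t$ from every $x_i(t)$; the $\Phi_2$-cancellation is a direct Taylor expansion in $\mu_i$. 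Combining all bounds yields $\partial_t W \gtrsim -t^{-1-\delta_1}\|\e\|_{\h}^2 - t^{-9/4}\|\e\|_{\h}$, as claimed.
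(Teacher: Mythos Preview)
Your overall scheme---differentiate $W$, substitute $\partial_t\e=\partial_y(|D|\e+\e-g)-\Psi_V$, extract the sign-favourable virial piece $-\tfrac12\int(|D|\e+\e-g)^2\partial_y\Phi_1\ge 0$, and control commutators $[|D|,\sqrt{\Phi_1}]$ via the Kenig--Ponce--Vega/Martel--Pilod estimates---is exactly the paper's. But three of your claimed mechanisms are off, and two of them would leave terms that are genuinely too large.

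\medskip
\textbf{(1) The $\Psi_V$ bound.} Proposition~\ref{P6} controls $E_V$, not $\Psi_V$: one has $\Psi_V=E_V-\sum_i(\dot x_i-\mu_i)\sigma_i\partial_y\R_i+\sum_i(\dot\mu_i+\cdots)\sigma_i\Lambda\R_i/(1+\mu_i)$, and under the bootstrap the translation piece is only $O(t^{-5/4+\delta_0})$, not $O(t^{-9/4})$. Pairing it crudely against $\e$ would leave $t^{-5/4+\delta_0}\|\e\|_{\h}$, which does not close. In the paper this term is killed structurally: after replacing $\Phi_1$ by its constant value $1-\mu_i$ near $x_i$, one uses $(|D|+(1+\mu_i)-p\R_i^{p-1})\partial_y\R_i=0$ together with the orthogonality $(\e,\partial_y\R_i)=0$ from \eqref{38}; see \eqref{440}.

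\medskip
\textbf{(2) The role of $\Phi_2$.} You locate the difficulty in $\partial_t\Phi_k$, but $|\partial_t\Phi_k|\lesssim t^{-3/2}$ is already $\le t^{-1-\delta_1}$ since $\delta_1<1/2$; the paper estimates $W_4$ by this naive bound without any cancellation. The actual function of $\Phi_2$ is different and appears in two places. First, differentiating $\int\e^2\Phi_2$ in time and substituting the equation produces (after one integration by parts in the nonlinear part) a term $-\tfrac{p(p-1)}{2}\sum_i\sigma_i\mu_i\int\e^2|V|^{p-3}V\partial_y\R_i$, of size $t^{-1/2}\|\e\|_{\h}^2$; this is designed to cancel the leading part of $W_3=-\int\partial_t V\,[|V+\e|^{p-1}(V+\e)-|V|^{p-1}V-p|V|^{p-1}\e]\Phi_1$, which lives near each $x_i$ (not on $\operatorname{Supp}\partial_y\Phi_1$). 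Without this cancellation a $t^{-1/2}\|\e\|_{\h}^2$ term survives. Second, the same $\Phi_2$-contribution also yields $-\int|\D\e|^2\partial_y\Phi_2-\tfrac12\int\e^2\partial_y\Phi_2$, which is $O(t^{-1}\|\e\|_{\h}^2)$ and has no sign; it is absorbed only after adding the analogous $\partial_y\Phi_1$ terms from $W_1$ and invoking $|\partial_y(\Phi_1+\Phi_2)|\lesssim t^{-2}$ from Lemma~\ref{L3}(5). Your Taylor identity $\Phi_1+\Phi_2=1+O(\mu_i^3)$ is the pointwise version of this, but applied to the spatial---not temporal---derivative.

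\medskip
\textbf{(3) Decay of $V$.} The ground state is not exponentially small: $Q(y)\sim\kappa_0|y|^{-2}$, so on the transition regions $|V|^{p-1}\lesssim d(t)^{-2(p-1)}\sim t^{-(p-1)}$. This is precisely why the hypothesis $\delta_1<p-2$ is needed, and it is what the paper uses to bound $\int g\,\partial_y\Phi_1$-type cross terms and the $W_{2,2}$ contribution.
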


\begin{proof}
\noindent\textbf{Step 1:} Expansion of $\partial_tW$.

By direct computation, we have:
\begin{align*}
\frac{1}{2}\partial_tW=W_1+W_2+W_3+W_4,
\end{align*}
where
\begin{align*}
&W_1=\int\partial_t\e\big[|D|\e+\e-(|V+\e|^{p-1}(V+\e)-|V|^{p-1}V)\big]\Phi_1,\\
&W_2=\int\partial_t\e\sqrt{\Phi_1}\big[|D|,\sqrt{\Phi_1}\big]\e+\int\partial_t\e\e\Phi_2,\\
&W_3=-\int\partial_tV\big(|V+\e|^{p+1}-|V|^{p+1}-(p+1)|V|^{p-1}V\e\big)\Phi_1,\\
&W_4=\frac{1}{2}\int\frac{\partial_t\Phi_1}{\sqrt{\Phi_1}}\e\D(\e\sqrt{\Phi_1})+\frac{1}{2}\int\e^2(\partial_t\Phi_1+\partial_t\Phi_2)\\
&\qquad\quad-\frac{1}{p+1}\int\big(|V+\e|^{p+1}-|V|^{p+1}-(p+1)|V|^{p-1}V\e\big)\partial_t\Phi_1
\end{align*}

\noindent\textbf{Step 2:} Estimate of $W_1$. 

From Proposition \ref{P6}, we have
\begin{align*}
\partial_t\e=&\partial_y\big[|D|\e+\e-|V+\e|^{p-1}(V+\e)+|V|^{p-1}V\big]-E_V+\sum_{i=1}^nP_{i,1}+\sum_{i=1}^nP_{i,2},
\end{align*}
where
\begin{align*}
&P_{i,1}=(\dot{x}_i-\mu_i)\sigma_i\partial_y\widetilde{R}_i\\
&P_{i,2}=-\bigg(\dot{\mu}_i+\sum^n_{\substack{j=1,\\j\not=i}}\frac{a_{ij}}{x^3_{ij}}+\sum^n_{\substack{k,j=1,\\j\not=i}}\frac{b_{ijk}\mu_k}{x^3_{ij}}\bigg)\frac{\sigma_i\Lambda\widetilde{R}_i}{1+\mu_i}.
\end{align*}
Hence, 
\begin{align*}
W_1=&-\frac{1}{2}\int\big[|D|\e+\e-(|V+\e|^{p-1}(V+\e)-|V|^{p-1}V)\big]^2\partial_y\Phi_1\\
&-\int E_V\big[|D|\e+\e-(|V+\e|^{p-1}(V+\e)-|V|^{p-1}V)\big]\Phi_1\\
&+\sum_{k=1}^2\sum_{i=1}^n\int P_{i,k}(|D|\e+\e-p\R^{p-1}_i\e)\Phi_1\\
&-\sum_{k=1}^2\sum_{i=1}^n\int P_{i,k}\big[|V+\e|^{p-1}(V+\e)-|V|^{p-1}V-p|V|^{p-1}\e\big]\Phi_1\\
&-\sum_{k=1}^2\sum_{i=1}^n\int P_{i,k}(p|V|^{p-1}\e-p\R_i^{p-1}\e)\Phi_1.
\end{align*}
From \eqref{323}, \eqref{330}, \eqref{331} and the bootstrap assumptions \eqref{415}--\eqref{417}, we have
\begin{align}
\label{438}
&\int E_V\big[|D|\e+\e-(|V+\e|^{p-1}(V+\e)-|V|^{p-1}V)\big]\Phi_1\nonumber\\
&=O\bigg(\frac{1}{t^{5/2}}+\frac{\|\e(t)\|_{\h}}{t^{3/2}}+\|\e(t)\|^2_{\h}\bigg)\|\e\|_{\h}=O\big(t^{-1-\delta_1}\|\e\|_{\h}^2+t^{-9/4}\|\e\|_{\h}\big),
\end{align}
and
\begin{align}
\label{439}
&\sum_{k=1}^2\sum_{i=1}^n\int P_{i,k}\big[|V+\e|^{p-1}(V+\e)-|V|^{p-1}V-(p-1)|V|^{p-1}\e\big]\Phi_1\nonumber\\
&=O\big(t^{-1-\delta_1}\|\e\|_{\h}^2+t^{-9/4}\|\e\|_{\h}\big).
\end{align}
Using the fact that%
\footnote{Here we use the fact that $\delta_1<p-2$.}
$$\||V|^{p-1}-\R_i^{p-1}\|_{L^\infty}\lesssim \frac{1}{d^{2p-2}(t)}\lesssim \frac{1}{t^{p-1}}\lesssim \frac{1}{t^{1+\delta_1}},$$
we also have
\begin{align}
&\sum_{k=1}^2\sum_{i=1}^n\int P_{i,k}(p|V|^{p-1}\e-p\R_i^{p-1}\e)\Phi_1\nonumber\\
&=O\bigg(\frac{1}{t^{5/2}}+\|\e(t)\|_{\h}\bigg)\frac{\|\e(t)\|_{\h}}{t^{1+\delta_1}}=O\big(t^{-1-\delta_1}\|\e\|_{\h}^2+t^{-9/4}\|\e\|_{\h}\big),
\end{align}
Meanwhile, by a similar argument and Lemma \ref{L3}, we have
\begin{align*}
&\sum_{k=1}^2\sum_{i=1}^n\int P_{i,k}(|D|\e+\e-p\R^{p-1}_i\e)(\Phi_1-1+\mu_i)\nonumber\\
&=O\bigg(\frac{1}{t^{5/2}}+\|\e(t)\|_{\h}\bigg)\sum_{i=1}^n\int_{|y-x_i(t)|\geq \frac{d(t)}{10}}\frac{|\e(y)|}{[y-x_i(t)]^3}\,\dd y+O\bigg(\frac{\|\e\|_{\h}^2}{t^{1+\delta_1}}+\frac{\|\e\|_{\h}}{t^{\frac{9}{4}}}\bigg)\nonumber\\
&=O\big(t^{-1-\delta_1}\|\e\|_{\h}^2+t^{-9/4}\|\e\|_{\h}\big).
\end{align*}
Together with the following identity:
$$|D|(\partial_y\R_i)+(1+\mu_i)\partial_y\R_i-p\R^{p-1}_i(\partial_y\R_i)=0,$$
we have
\begin{align}
\label{440}
&\sum_{k=1}^2\sum_{i=1}^n\int P_{i,k}(|D|\e+\e-p\R^{p-1}_i\e)\Phi_1\nonumber\\
&=\sum_{i=1}^n(\dot{x}_i-\mu_i)\int \sigma_i(1-\mu_i)\partial_y\R_i\big[|D|\e+\e-p\R^{p-1}_i\e)\big]+O\bigg(\frac{\|\e\|_{\h}^2}{t^{1+\delta_1}}+\frac{\|\e\|_{\h}}{t^{\frac{9}{4}}}\bigg)\nonumber\\
&=-\sum_{i=1}^n\sigma_i(1-\mu_i)\mu_i(\dot{x}_i-\mu_i)\int\R_i\e+O\bigg(\frac{\|\e\|_{\h}^2}{t^{1+\delta_1}}+\frac{\|\e\|_{\h}}{t^{\frac{9}{4}}}\bigg)\nonumber\\
&=O\bigg(\frac{\|\e\|_{\h}^2}{t^{1+\delta_1}}+\frac{\|\e\|_{\h}}{t^{\frac{9}{4}}}\bigg)
\end{align}
Combining \eqref{438}--\eqref{440}, we have
\begin{align*}
W_1=&-\frac{1}{2}\int(|D|\e)^2\partial_{y}\Phi_1-\frac{1}{2}\int\e^2\partial_y\Phi_1-\int|D|\e\e\partial_y\Phi_1+O\bigg(\frac{\|\e\|_{\h}^2}{t^{1+\delta_1}}+\frac{\|\e\|_{\h}}{t^{\frac{9}{4}}}\bigg).
\end{align*}
Using \cite[Lemma 2.16]{MP} and Lemma \ref{L3}, we have
\begin{align*}
&\bigg|\int|D|\e\e\partial_y\Phi_1-\int\big|\D\e\big|^2\partial_y\Phi_1\bigg|\lesssim\|\e\|_{\h}^2\|\partial_{yy}\Phi_1\|_{L^2}^{\frac{3}{4}}\|\partial_{y}\Phi_1\|_{L^2}^{\frac{1}{4}}\lesssim\frac{\|\e\|_{\h}^2}{t^{\frac{9}{8}}}.
\end{align*}
Using the fact that $\partial_y\Phi_1\leq 0$, we have
\begin{align}
\label{441}
W_1\geq -\int\big|\D\e\big|^2\partial_y\Phi_1-\frac{1}{2}\int\e^2\partial_{y}\Phi_1+O\big(t^{-1-\delta_1}\|\e\|_{\h}^2+t^{-\frac{9}{4}}\|\e\|_{\h}\big).
\end{align}

\noindent\textbf{Step 3:} Estimate of $W_2$. 

Using the equation of $\e$, we have
\begin{align*}
W_2=W_{2,1}+W_{2,2}+W_{2,3}+W_{2,4}+W_{2,5},
\end{align*}
where
\begin{align*}
&W_{2.1}=\int(|D|\e+\e)_y\sqrt{\Phi_1}\big[|D|,\sqrt{\Phi_1}\big]\e,\\
&W_{2,2}=-\int\big(|V+\e|^{p-1}(V+\e)-|V|^{p-1}V\big)_y\sqrt{\Phi_1}\big[|D|,\sqrt{\Phi_1}\big]\e,\\
&W_{2,3}=-\int \bigg(E_V-\sum_{k=1}^2\sum_{i=1}^nP_{i,k}\bigg)\sqrt{\Phi_1}\big[|D|,\sqrt{\Phi_1}\big]\e,\\
&W_{2,4}=\int\Big\{\partial_y\big[|D|\e+\e-|V+\e|^{p-1}(V+\e)+|V|^{p-1}V\big]\Big\}\e\Phi_2,\\
&W_{2,5}=-\int \bigg(E_V-\sum_{k=1}^2\sum_{i=1}^nP_{i,k}\bigg)\e\Phi_2.
\end{align*}
We estimate each term separately. First, for $W_{2,1}$, we have%
\footnote{Recall that $\mathcal{H}$ denotes the Hilbert transform on $\mathbb{R}$.}
\begin{align}
\label{442}
\sqrt{\Phi_1}\big[|D|,\sqrt{\Phi_1}\big]\e&=\sqrt{\Phi_1}\big[\mathcal{H}\partial_y,\sqrt{\Phi_1}\big]\e=\sqrt{\Phi_1}\mathcal{H}\big((\partial_y\sqrt{\Phi_1})\e\big)+\sqrt{\Phi_1}\big[\mathcal{H},\sqrt{\Phi_1}\big]\e_y\nonumber\\
&=\frac{1}{2}\partial_y\Phi_1(\mathcal{H}\e)+\sqrt{\Phi_1}\big[\mathcal{H},\partial_y\sqrt{\Phi_1}\big]\e+\sqrt{\Phi_1}\big[\mathcal{H},\sqrt{\Phi_1}\big]\e_y.
\end{align}
Hence, we have
\begin{align*}
W_{2,1}&=\int(|D|\e+\e)_y\sqrt{\Phi_1}\Big(\big[\mathcal{H},\partial_y\sqrt{\Phi_1}\big]\e+\big[\mathcal{H},\sqrt{\Phi_1}\big]\e_y\Big)\nonumber\\
&=\sum_{\ell=0}^2\binom{2}{\ell}\int(\mathcal{H}\e)\big(\partial_y^\ell\sqrt{\Phi_1}\big)\big(\partial_y^{3-\ell}\big[\mathcal{H},\sqrt{\Phi_1}\big]\e\big)\nonumber\\
&\quad-\int\e\big(\partial_y\sqrt{\Phi_1}\big)\big(\big[\mathcal{H},\sqrt{\Phi_1}\big]\e_y\big)-\int\e\sqrt{\Phi_1}\big(\partial_y\big[\mathcal{H},\sqrt{\Phi_1}\big]\e_y\big)\nonumber\\
&\quad+\int(|D|\e+\e)_y\sqrt{\Phi_1}\big[\mathcal{H},\partial_y\sqrt{\Phi_1}\big]\e+\frac{1}{2}\int(|D|\e+\e)_y(\partial_y\Phi_1)\mathcal{H}\e.
\end{align*}
Using \cite[Lemma 2.15, Lemma 2.16]{MP}, Lemma \ref{L3} and integration by parts, we have
\begin{align}
\label{443}
W_{2,1}&=\frac{1}{2}\int(|D|\e+\e)_y(\partial_y\Phi_1)\mathcal{H}\e+O\big(\|\e\|^2_{\h}[\|\partial_y\Phi_1\|^3_{L^\infty}+\|\partial_{yy}\Phi_1\|_{L^\infty}^{\frac{3}{2}}+\|\partial_{yyy}\Phi_1\|_{L^\infty}]\big)\nonumber\\
&=\frac{1}{2}\int(|D|\e+\e)_y(\partial_y\Phi_1)\mathcal{H}\e+O\big(t^{-1-\delta_1}\|\e\|^2_{\h}\big)\nonumber\\
&=-\frac{1}{2}\int\big||D|\e\big|^2\partial_y\Phi_1+\frac{1}{4}\int\big|\mathcal{H}\e\big|^2\partial^3_{y}\Phi_1-\frac{1}{2}\int\big[|D|(\mathcal{H}\e)\big]\mathcal{H}\e\partial_y\Phi_1+O\big(t^{-1-\delta_1}\|\e\|_{\h}\big)\nonumber\\
&=-\frac{1}{2}\int\big||D|\e\big|^2\partial_y\Phi_1-\frac{1}{2}\int\big|\D(\mathcal{H}\e)\big|^2\partial_y\Phi_1+O\big(t^{-1-\delta_1}\|\e\|^2_{\h}\big)\nonumber\\
&\geq O\big(t^{-1-\delta_1}\|\e\|^2_{\h}\big),
\end{align}
where we use the fact that $\partial_y\Phi_1\leq 0$ for the last inequality. For $W_{2,1}$, using the \cite[Lemma 2.15, Lemma 2.16]{MP} and \eqref{442} again, we have
\begin{align*}
W_{2,2}&=-\frac{p}{2}\int\Big(\e_y|V|^{p-1}\partial_{y}\Phi_1(\mathcal{H}\e)+\e\partial_y(|V|^{p-1})\partial_y\Phi_1(\mathcal{H}\e)\Big)+O(\|\e\|_{\h}^3).
\end{align*}
From Lemma \ref{L3}, we know that for all $y\in\text{Supp }\partial_y\Phi_1$, there holds
$$|V|^{p-1}(y)+\big|\partial_y(|V|^{p-1})(y)\big|\lesssim \frac{1}{t^{p-1}}\lesssim\frac{1}{t^{1+\delta_1}},$$
which implies that
\begin{align}
\label{444}
W_{2,2}=O\big(t^{-1-\delta_1}\|\e\|^2_{\h}\big).
\end{align}
Similarly, we have
\begin{align}
\label{445}
W_{2,3}=O\bigg(\frac{1}{t^{1+\delta_1}}\|\e\|^2_{\h}+\frac{1}{t^{\frac{9}{4}}}\|\e\|_{\h}\bigg).
\end{align}
Next, for $W_{2,4}$, using \cite[Lemma 2.16]{MP} and integration by parts, we have,
\begin{align*}
W_{2,4}&=-\int(|D|\e)\e_y\Phi_2-\int(|D|\e)\e\partial_y\Phi_2-\frac{1}{2}\int\e^2\partial_y\Phi_2\\
&\quad+p\int\e|V|^{p-1}\partial_y(\e\Phi_2)+O(\|\e\|^3_{\h})\\
&=-\int\big|\big|\D\e\big|^2\partial_y\Phi_2-\frac{1}{2}\int\e^2\partial_y\Phi_2-\frac{p(p-1)}{2}\int\e^2|V|^{p-3}V(\partial_yV\Phi_2)\\
&\quad+O\bigg(\frac{1}{t^{1+\delta_1}}\|\e\|^2_{\h}+\frac{1}{t^{\frac{9}{4}}}\|\e\|_{\h}\bigg).
\end{align*}
Recall from the definition of $V$ and $\Phi_2$, we have
\begin{align*}
\|V\partial_y\Phi_2\|_{L^\infty}\lesssim \frac{1}{t^2},\quad\bigg\|\Phi_2\partial_yV-\sum_{i=1}^n\sigma_i\mu_i\partial_y\R_i\bigg\|\lesssim \frac{1}{t^{\frac{3}{2}}}.
\end{align*}
Collecting all estimates above, we have
\begin{align}\label{446}
W_{2,4}&=-\int\big|\D\e\big|^2\partial_y\Phi_2-\frac{1}{2}\int\e^2\partial_y\Phi_2-\frac{p(p-1)}{2}\sum_{i=1}^n\sigma_i\mu_i\int\e^2|V|^{p-3}V(\partial_y\R_i)\nonumber\\
&\quad+O\bigg(\frac{1}{t^{1+\delta_1}}\|\e\|^2_{\h}+\frac{1}{t^{\frac{9}{4}}}\|\e\|_{\h}\bigg).
\end{align}
Finally, for $W_{2,5}$, by a similar argument as we did for $W_1$, we obtain that
\begin{equation}\label{447}
W_{2,5}=O\bigg(\frac{1}{t^{1+\delta_1}}\|\e\|^2_{\h}+\frac{1}{t^{\frac{9}{4}}}\|\e\|_{\h}\bigg).
\end{equation}
Combining \eqref{443}--\eqref{447}, we have
\begin{align}
\label{448}
W_2=&-\int\big|\D\e\big|^2\partial_y\Phi_2-\frac{1}{2}\int\e^2\partial_y\Phi_2-\frac{p(p-1)}{2}\sum_{i=1}^n\sigma_i\mu_i\int\e^2|V|^{p-3}V(\partial_y\R_i)\nonumber\\
&\quad+O\bigg(\frac{1}{t^{1+\delta_1}}\|\e\|^2_{\h}+\frac{1}{t^{\frac{9}{4}}}\|\e\|_{\h}\bigg).
\end{align}

\noindent\textbf{Step 4:} Estimate of $W_3$. 

Recall that
$$W_3=-\int\partial_tV\big(|V+\e|^{p-1}(V+\e)-|V|^{p-1}V-p|V|^{p-1}\e\big)\Phi_1.$$
From the bootstrap assumptions, we know that $\dot{x}_i\sim \frac{1}{\sqrt{t}}$, $\dot{\mu}_i\sim \frac{1}{t^{\frac{3}{2}}}$. Combining with the definition of $V$ and $\Phi_1$, we have
$$\bigg\|\partial_tV\Phi_1-\sum_{i=1}^n\dot{x}_i\sigma_i\partial_y\R_i\bigg\|_{L^\infty}\lesssim \frac{1}{t^{\frac{3}{2}}}.$$
From the fact that $\R_i>0$ and Taylor's expansion, we have
\begin{align*}
&|V+\e|^{p-1}(V+\e)-|V|^{p-1}V-p|V|^{p-1}\e\\
&=\sum_{i=1}^n\phi_i(y/\sqrt{t})\big(|V+\e|^{p-1}(V+\e)-|V|^{p-1}V-p|V|^{p-1}\e\big)\\
&=\frac{p(p-1)}{2}\sum_{i=1}^n\int_0^1\e^2\phi_i(y/\sqrt{t})|V+s\e|^{p-3}(V+s\e)\,\dd s\\
&=\frac{p(p-1)}{2}\sum_{i=1}^n\int_0^1\e^2\phi_i(y/\sqrt{t})\sigma_i\R_i^{p-2}\biggl|1+\frac{V+s\e-\sigma_i\R_i}{\sigma_i\R_i}\bigg|^{p-3}\bigg(1+\frac{V+s\e-\sigma_i\R_i}{\sigma_i\R_i}\bigg)\,\dd s\\
&=\frac{p(p-1)}{2}\sum_{i=1}^n\e^2\phi_i(y/\sqrt{t})\sigma_i\R_i^{p-2}+O\bigg(|\e|^3+\frac{\e^2}{t}\bigg).
\end{align*}
Combining all of the above estimates, using the definition of $\phi_i$ as well as \eqref{420}, we obtain that
\begin{align}\label{449}
W_3&=\int\bigg(\sum_{i=1}^n\dot{x}_i\sigma_i\partial_y\R_i\bigg)\times\bigg(\frac{p(p-1)}{2}\sum_{i=1}^n\e^2\phi_i(y/\sqrt{t})\sigma_i\R_i^{p-2}\bigg)+O\bigg(\frac{\|\e\|^2_{\h}}{t^{1+\delta_1}}\bigg)\nonumber\\
&=\frac{p(p-1)}{2}\sum_{i=1}^n\sigma_i\mu_i\int\e^2|V|^{p-3}V(\partial_y\R_i)+O\bigg(\frac{\|\e\|^2_{\h}}{t^{1+\delta_1}}\bigg).
\end{align}

\noindent\textbf{Step 5:} Estimate of $W_4$. 

From Lemma \ref{L3}, we have
$$|\partial_t\Phi_1(t,y)|+|\partial_t\Phi_2(t,y)|\lesssim \frac{1}{t^{\frac{3}{2}}},$$
which implies immediately that
\begin{align}
\label{450}
W_4=O\bigg(\frac{1}{t^{1+\delta_1}}\|\e\|^2_{\h}+\frac{1}{t^{\frac{9}{4}}}\|\e\|_{\h}\bigg).
\end{align}

\noindent\textbf{Step 6:} End of the proof of \eqref{48}

Combining \eqref{441} ,\eqref{448}--\eqref{450} with the last conclusion of Lemma \ref{L3}, we have
\begin{align*}
\partial_tW&\geq-\|\e\|^2_{\h}\big\|\partial_y\Phi_1+\partial_y\Phi_2\big\|_{L^\infty}+ O\bigg(\frac{1}{t^{1+\delta_1}}\|\e\|^2_{\h}+\frac{1}{t^{\frac{9}{4}}}\|\e\|_{\h}\bigg)\\
&\gtrsim -\frac{1}{t^{1+\delta_1}}\|\e\|^2_{\h}-\frac{1}{t^{\frac{9}{4}}}\|\e\|_{\h},
\end{align*}
which concludes the proof of \eqref{48}.
\end{proof}

\subsubsection{A topological argument} In this part, we will show that with a suitable choice of $\vec{x}_0(\tin)$, $\vec{\mu}_0(\tin)$ and $\e_0$, the following properties hold:
\begin{lemma}\label{L7}
There exist $\vec{x}_0(\tin)$, $\vec{\mu}_0(\tin)$ and $\e_0\in \h$ satisfying \eqref{453}--\eqref{437}, such that the following properties hold:
\begin{enumerate}
\item \textbf{the energy functional $W(t)$ is coercive: }
there exists $\lambda_0>0$ such that for all $t\in [T_0,\tin]$, we have
\begin{equation}
\label{49}
W(t)\geq \lambda_0\|\e(t)\|_{\h}^2-\frac{t^{-5/2}}{\lambda_0};
\end{equation}
\item \textbf{estimates on the geometrical parameters: }for all $t\in [T_0,\tin]$ and $k=1,\ldots,n$, there holds%
\footnote{Recall that $y_k$ and $z_k$ are defined by \eqref{478}.}
\begin{equation}\label{436}
|y_k(t)|+|z_k(t)|\lesssim \frac{1}{t^{1/4-\delta_0}}.
\end{equation}
\end{enumerate}
\end{lemma}
\begin{remark}
In subcritical and supercritical cases, the proof will be different. Since, in the subcritical case, the orthogonality condition is enough to control the unstable directions. However, in the supercritical cases, we need to control the directions $(\e,Z^\pm)$, which requires a different topological argument.
\end{remark}

\begin{proof}[Proof of Lemma \ref{L7} when $2<p<3$.]
In the subcritical case ($2<p<3$), we first denote by%
\footnote{Recall here $\{\lambda_k\}$ are eigenvalues of the matrix $M$ defined by \eqref{413} and \eqref{414}.} 
$$S_<=\big\{k\in\{1,\ldots,n\}:\lambda_k<5/16\big\},\quad S_>=\big\{k\in\{1,\ldots,n\}:\lambda_k\geq 5/16\big\}.$$
Since $\lambda_k$ is independent of the choice of the universal constant $\delta_0$,  we may easily see that there exists a small enough constant $\delta_0>0$ such that%
\footnote{Here $\omega_k^1$ is the root of $z^2+z-\lambda_k$ in $\mathbb{C}$ such that $\Re \omega_k^1\leq -\frac{1}{2}$.} 
$$S_<=\big\{k\in\{1,\ldots,n\}:\Re\omega_k^1>-\frac{5}{4}+\delta_0\big\},\quad S_>=\big\{k\in\{1,\ldots,n\}:\Re\omega_k^1<-\frac{5}{4}+\delta_0\big\}.$$
Let $S_>=\{k_1,\ldots,k_{n_0}\}$. From the definition of $\x_k, \U_k$ and an argument of implicit function theorem, we know that for all $\vec{v}\in \mathbb{B}=\{x\in \mathbb{R}^{n_0}:\,|x|\leq 1\}$, there exist $\vec{x}_0(\tin)$, $\vec{\mu}_0(\tin)$ and $\e_0$ satisfying \eqref{453}--\eqref{437} such that 
\begin{align}
&\forall k\in S_<,\; \x_k(\tin)=0,\,\U_k(\tin)=0,\label{467}\\
&\big(z_{k_1}(\tin),\ldots,z_{k_{n_0}}(\tin)\big)=\tin^{-\frac{1}{4}+\delta_0}\vec{v},\label{469}\\
&\big(y_{k_1}(\tin),\ldots,y_{k_{n_0}}(\tin)\big)=0,\quad\e(\tin)=0\label{471}.
\end{align}
Recalling from \eqref{434}, we have for all $t\in[T_0,\tin]$ and all $k=1,\ldots, n$
$$\partial_t[t^{-1-\omega^0_k}y_k]=O\bigg(\frac{1}{t^{9/4+\Re\omega_k^0-\delta_0}}\bigg).$$
Since $\Re \omega_k^0\geq-\frac{1}{2}$, we have $9/4+\Re\omega_k^0-\delta_0>1$. Integrating the above inequality from $t$ to $\tin$, using \eqref{467}--\eqref{471}, we have for all $t\in[T_0,\tin]$ and all $k=1,\ldots, n$
\begin{equation}
\label{468}
|y_k(t)|\lesssim t^{1+\Re\omega_k^0}\tin^{-\frac{1}{4}+\delta_0-1-\Re\omega_k^0}+t^{1+\Re\omega_k^0}\int_t^{\tin}\frac{1}{s^{9/4+\Re\omega_k^0-\delta_0}}\,\dd s\lesssim \frac{1}{t^{1/4-\delta_0}}.
\end{equation}
Using \eqref{435} and a similar argument, we can also show that for all $k\in S_<$ and $t\in[T_0,\tin]$, there holds:
\begin{equation}
\label{470}
|z_k(t)|\lesssim \frac{1}{t^{1/4-\delta_0}}.
\end{equation}
Now we denote by%
\footnote{Here we use the fact that $\Re\omega_{k_\ell}^1< -\frac{5}{4}+\delta_0$.}
$$\mathcal{Z}(t)=t^{\frac{1}{2}-2\delta_0}\sum_{\ell=1}^{n_0}|z_{k_\ell}(t)|^2,\quad \delta_2=-\max_{\ell=1,\ldots,n_0}(5/2-2\delta_0+2\Re\omega_{k_\ell}^1)>0.$$
Then from \eqref{435} and the bootstrap assumptions \eqref{415}--\eqref{416}, we have
\begin{align}\label{472}
\partial_t \mathcal{Z}(t)&=\sum_{\ell=1}^{n_0}\bigg(t^{\frac{1}{2}-2\delta_0}\frac{2\Re(1+\omega_{k_\ell}^1)}{t}|z_{k_\ell}(t)|^2+(\frac{1}{2}-2\delta_0)t^{-\frac{1}{2}-2\delta_0}|z_{k_\ell}(t)|^2\bigg)+O\bigg(\frac{1}{t^{\frac{3}{2}-4\delta_0}}\bigg)\nonumber\\
&=\sum_{\ell=1}^{n_0}\frac{5/2-2\delta_0+2\Re\omega_{k_\ell}^1}{t}[t^{1/2-2\delta_0}|z_{k_{\ell}}(t)|^2]+O\bigg(\frac{1}{t^{\frac{3}{2}-4\delta_0}}\bigg)\nonumber\\
&\leq -\frac{\delta_2\mathcal{Z}(t)}{t}+O\bigg(\frac{1}{t^{\frac{3}{2}-4\delta_0}}\bigg).
\end{align}
Let $T_1=T_1(\vec{v})\in[T_0,\tin)$ given by
$$T_1=\inf\{t\in[T_0,\tin):\,\forall\tau\in[t,\tin],\,\mathcal{Z}(\tau)\leq 1\}.$$
Assume that $T_1>T_0$,  then for $t=T_1$, we must have $\mathcal{Z}(T_1)=1$ and
$$\partial_t\mathcal{Z}(T_1)\leq -\frac{\delta_2}{T_1}+O(T_1^{-\frac{3}{2}+4\delta_0})<0,$$
provided that $T_1\geq T_0\gg1$. Since $\big(\vec{x}(t),\vec{\mu}(t),\e(t)\big)$ depends continuously on the initial data, we conclude that the following map
$$\mathcal{M}:\quad \mathbb{B}\ni\vec{v}\longmapsto T_1^{\frac{1}{4}-\delta_0}\big(z_{k_1}(T_1),\ldots,z_{k_{n_0}}(T_1)\big)\in\partial\mathbb{B}.$$
is continuous. Now, for all $\vec{v}\in\partial\mathbb{B}$, from \eqref{467}--\eqref{471} and \eqref{472}, we know that $\mathcal{Z}(\tin)=1$ and $\partial_t\mathcal{Z}(\tin)<0$, which implies that $T_1=\tin$. Then we have $\mathcal{M}\vec{v}=\vec{v}$, for all $\vec{v}\in\partial\mathbb{B}$. That means $\mathcal{M}$ is a continuous map from $\mathbb{B}$ to $\partial\mathbb{B}$, whose restriction on $\partial\mathbb{B}$ is the identity. From Brouwer's fixed point theorem, this is a contradiction. We then obtain that $T_0=T_1$. Together with \eqref{470} and \eqref{468}, we obtain \eqref{436}.

Now we only need to show that with above choice of initial data the estimate \eqref{49} holds. We choose $\widetilde{\phi}_i\in C_0^\infty(\mathbb{R})$ such that
\begin{itemize}
\item $0\leq \widetilde{\phi}_i\leq 1$, and for all $1<i<n$, $\widetilde{\phi_i}=\phi_i$;
\item $\widetilde{\phi}_1(y)=\phi_1$, if $y<(4\alpha_1-\alpha_{2})/3$ and $\widetilde{\phi}_1(y)=0$, if $y>(5\alpha_1-2\alpha_{2})/3$;
\item  $\widetilde{\phi}_n(y)=\phi_n$, if $y>(4\alpha_n-\alpha_{n-1})/3$ and $\widetilde{\phi}_1(y)=0$, if $y<(5\alpha_n-2\alpha_{n-1})/3$;
\end{itemize}
We may easily see that for all $y\in\big([4\alpha_n-\alpha_{n-1}]/3,[4\alpha_1-\alpha_{2}]/3\big)$ and $k\in\mathbb{N}$, we have
$$\sum_{i=1}^n\big(\widetilde{\phi}_i(y)\big)^k=1+o_{H^1}(1).$$
From \eqref{442} and \cite[Lemma 2.15]{MP}, we have%
\footnote{Recall that $\tau_i$ is defined by \eqref{339}.}
\begin{align}
\label{451}
W&=\int\big|\D\e\big|^2+\e^2-p|V|^{p-1}\e^2+o(\|\e\|^2_{\h})\nonumber\\
&=\int\Big(\big|\D\e\big|^2+\e^2\Big)\bigg(1-\sum_{i=1}^n\widetilde{\phi}^2_i\bigg)+\int\Big(\big|\D\e\big|^2+\e^2-p|V|^{p-1}\e^2\Big)\sum_{i=1}^n\widetilde{\phi}^2_i+o(\|\e\|^2_{\h})\nonumber\\
&=\int\Big(\big|\D\e\big|^2+\e^2\Big)\bigg(1-\sum_{i=1}^n\widetilde{\phi}^2_i\bigg)\nonumber\\
&\qquad+\sum_{i=1}^n\int\Big(\big|\D(\e\widetilde{\phi}_i)\big|^2+(\e\widetilde{\phi}_i)^2-p(\tau_iQ)^{p-1}(\e\widetilde{\phi}_i)^2\Big)+o(\|\e\|^2_{\h}).
\end{align}
Now, we can use the coercivity \eqref{226} to obtain that
\begin{align*}
W&\geq \int\Big(\big|\D\e\big|^2+\e^2\Big)\bigg(1-\sum_{i=1}^n\widetilde{\phi}^2_i\bigg)+\lambda_0\sum_{i=1}^n\int \Big(\big|\D(\e\widetilde{\phi}_i)\big|^2+(\e\widetilde{\phi}_i)^2\Big)\nonumber\\
&\quad-\sum_{i=1}^n\frac{1}{\lambda_0}\big[(\e\widetilde{\phi}_i,\tau_iQ)^2+(\e\widetilde{\phi}_i,\tau_iQ')^2\big]+o(\|\e\|^2_{\h})\\
&\geq\frac{\lambda_0}{2}\|\e\|^2_{\h}-\frac{1}{\lambda_0}\big[(\e\widetilde{\phi}_i,\tau_iQ)^2+(\e\widetilde{\phi}_i,\tau_iQ')^2\big].
\end{align*}
From the construction of $\widetilde{\phi}_i$ and the orthogonality condition \eqref{38}, we obtain \eqref{49} immediately.
\end{proof}

While for the supercritical case ($p>3$), we first consider $\e_0\in\h$ of the following form
:
\begin{align}
\e_0=&\sum_{i=1}^nb_i^\pm{Z}^\pm_{i,0}+\sum_{i=1}^nc_i\R_{i,0}+\sum_{i=1}^nd_i\partial_y\R_{i,0}
\end{align}
where%
\footnote{Recall that $Z^\pm$ are defined in Proposition \ref{P3} and \ref{P4}, while $x_{i,0}(\tin)$ and $\mu_{i,0}(\tin)$ are given by \eqref{455}}%
$${Z}^\pm_{i,0}(y)=Z^{\pm}_{1+\mu_{i,0}(\tin)}(y-x_{i,0}(\tin)),\quad \R_{i,0}(y)=Q_{1+\mu_{i,0}(\tin)}(y-x_{i,0}(\tin))$$
and $b_i^\pm, c_i,d_i$ are constants in a small neighborhood of $0$. For suitable choice of $\vec{x}_0(\tin)$ and $\vec{\mu}_0(\tin)$, we consider initial data of the following form:
$$u(\tin,y)=\Theta(\vec{x}_0(\tin),\vec{\mu}_0(\tin),y)+\sum_{i=1}^nb_i^\pm{Z}^\pm_{i,0}(y)+\sum_{i=1}^nc_i\R_{i,0}(y)+\sum_{i=1}^nd_i\partial_y\R_{i,0}(y).$$
Let $\big(\vec{x}(t),\vec{\mu}(t),\e(t)\big)$ be the corresponding geometrical parameters and error terms given by Proposition \ref{P7}. We denote by 
$$a_i^\pm(t)=\int\e(t,y)\widetilde{Z}_i^\pm(t,y)\,\dd y\quad \widetilde{Z}_i^\pm(t,y)=Z^\pm_{1+\mu_i(t)}(y-x_i(t)),$$
for all $i=1,\ldots,n$. 

Then we have the following two lemmas:
\begin{lemma}\label{L8}
There exists universal constants $C_0>0$ such that for all $\vec{a}_{\rm in},\ \vec{x}_{\rm in},\ \vec{\mu}_{\rm in}\in \mathbb{R}^n$ with 
$$|\vec{a}_{\rm in}|\leq \tin^{-3/2},\quad |\vec{x}_{\rm in}|\leq \tin^{-\frac{1}{4}+\delta_0},\quad |\vec{\mu}_{\rm in}|\leq \tin^{-\frac{5}{4}+\delta_0},$$
there exists a unique choice of $x_{i,0}(\tin)$, $\mu_{i,0}(\tin)$ and $b_i^\pm, c_i,d_i$ such that 
\begin{align*}
&\big(a_1^-(\tin),\ldots,a_n^-(\tin)\big)=\vec{a}_{\rm in}, \quad a_i^+(\tin)=0,\\
&\big(x_1(\tin),\ldots,x_n(\tin)\big)=\vec{x}_{\rm in},\quad \big(\mu_1(\tin),\ldots,\mu_n(\tin)\big)=\vec{\mu}_{\rm in},
\end{align*}
and
$$\sum_{i=1}^n\big(|b_i^\pm|^2+c^2_i+d^2_i\big)\leq C_0|\vec{a}_{\rm in}|^2.$$
\end{lemma}
\begin{proof}
Consider a continuous map $\Omega:\mathbb{R}^{6n}\rightarrow \mathbb{R}^{6n}$ as follows:
\begin{align*}
&\mathbb{R}^{6n}\ni\Big(\vec{x}_0(\tin),\ \vec{\mu}_0(\tin),\ \big\{(b_i^\pm,c_i,d_i)\big\}_{i=1}^n\Big)\longmapsto u(\tin)\\
&\quad\longmapsto \Big(\vec{x}(\tin),\ \vec{\mu}(\tin),\ \big\{(\e(\tin),\widetilde{Z}^\pm_{i}(\tin)),(\e(\tin),\R_{i}(\tin)),(\e(\tin),\partial_y\R_{i}(\tin))\big\}_{i=1}^n\Big)\in\mathbb{R}^{6n}.
\end{align*}
From Proposition \ref{P7}, we can easily see that
$$\Omega=
\begin{bmatrix}
{\rm I}_n&&&&\\
&{\rm I}_n&&&\\
&&N & & \\
&&&  \ddots&\\
&&& &N
\end{bmatrix}
+O\big(\tin^{-1/2}\big),$$
where ${\rm I}_n$ is the unit matrix of order $n$ and $N$ is the Gramian matrix of $Z^\pm,Q,Q'$. Since $Q,Z^\pm$ are eigenvectors of $\mathcal{L}\partial_y$ associated to different eigenvalues, we know that they are linearly independent. Moreover, we have $$(Q',Z^\pm)=\pm\frac{1}{e_0}(Q',L\partial_yZ^\pm)=\pm\frac{1}{e_0}(LQ',\partial_yZ^\pm)=0,$$
which implies that $Z^\pm$, $Q$ and $Q'$ are linear independent. Hence, $\Omega$ is invertible around $0$ and $\|\Omega\|\sim1$. We then conclude the proof of Lemma \ref{L8}.
\end{proof}

\begin{lemma}\label{L9}
For all $t\in[T_0,\tin]$, there holds
\begin{equation}
\bigg|\frac{\dd a_i^\pm}{\dd t}(t)\mp e_0(1+\mu_i)^2a_i^\pm(t)\bigg|\lesssim \frac{1}{t^{2}}.
\end{equation}
\end{lemma}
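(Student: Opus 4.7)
The plan is to differentiate $a_i^\pm(t)=(\e(t),\widetilde Z_i^\pm(t))$ in time, substitute \eqref{383} for $\partial_t\e$, and isolate the leading linear contribution via a rescaled version of the spectral identity in Proposition~\ref{P4}. Writing
\begin{equation*}
\frac{\dd a_i^\pm}{\dd t}=(\partial_t\e,\widetilde Z_i^\pm)+(\e,\partial_t\widetilde Z_i^\pm),
\end{equation*}
I first expand $\partial_t\widetilde Z_i^\pm=-\dot x_i\,\partial_y\widetilde Z_i^\pm+\frac{\dot\mu_i}{1+\mu_i}(\Lambda Z^\pm)_{1+\mu_i}(\cdot-x_i)$, so that the second summand reduces to $-\dot x_i(\e,\partial_y\widetilde Z_i^\pm)$ up to an $O(|\dot\mu_i|\|\e\|_{L^2})=O(t^{-11/4+\delta_0})$ remainder. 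Substituting \eqref{383} into the first summand, the dominant piece $(\partial_y[|D|\e+\e-p|V|^{p-1}\e],\widetilde Z_i^\pm)$ becomes, after one integration by parts, $-(\e,(|D|+1-p|V|^{p-1})\partial_y\widetilde Z_i^\pm)$, and I decompose using the splitting
\begin{equation*}
|D|+1-p|V|^{p-1}=\mathcal{L}_i-\mu_i-p(|V|^{p-1}-\R_i^{p-1}),\quad \mathcal{L}_i:=|D|+(1+\mu_i)-p\R_i^{p-1}.
\end{equation*}

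The rescaling of the identity $\mathcal{L}(\partial_y Z^\pm)=\pm e_0 Z^\pm$ from Proposition~\ref{P4} yields
\begin{equation*}
\mathcal{L}_i(\partial_y\widetilde Z_i^\pm)=\pm e_0(1+\mu_i)^2\widetilde Z_i^\pm,
\end{equation*}
which supplies the main term $\pm e_0(1+\mu_i)^2 a_i^\pm$ claimed in the lemma. The decisive cancellation is that the correction $+\mu_i(\e,\partial_y\widetilde Z_i^\pm)$ from this decomposition combines with the $-\dot x_i(\e,\partial_y\widetilde Z_i^\pm)$ piece coming from $(\e,\partial_t\widetilde Z_i^\pm)$ into $(\mu_i-\dot x_i)(\e,\partial_y\widetilde Z_i^\pm)$; by the modulation estimate \eqref{330} combined with the bootstrap \eqref{417}, this is bounded by $O(t^{-5/4+\delta_0})\cdot O(t^{-5/4+\delta_0})\ll t^{-2}$. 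The quadratic nonlinearity $-\partial_y[|V+\e|^{p-1}(V+\e)-|V|^{p-1}V-p|V|^{p-1}\e]$ contributes $O(\|\e\|_{L^\infty}\|\e\|_{L^2})=O(\|\e\|_{\h}^2)=O(t^{-5/2+2\delta_0})$ by Taylor expansion.

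The remaining pieces are routine. The bubble-interaction correction $p(|V|^{p-1}-\R_i^{p-1})\partial_y\widetilde Z_i^\pm$ is estimated by splitting $\{|y-x_i|\le d/4\}$ from its complement: in the inner region the pointwise bound $||V|^{p-1}-\R_i^{p-1}|\lesssim d^{-2}$ (from the decay of the remaining bubbles) combined with the $L^2$-boundedness of $\partial_y\widetilde Z_i^\pm$ yields $O(1/t)$, while in the outer region the $\mathcal{Y}_3$-decay of $\partial_y\widetilde Z_i^\pm$ supplies the gain, giving $\|(|V|^{p-1}-\R_i^{p-1})\partial_y\widetilde Z_i^\pm\|_{L^2}\lesssim 1/t$ and hence a contribution $O(\|\e\|_{L^2}/t)=O(t^{-9/4+\delta_0})$. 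The $\Psi_V$ contribution splits according to \eqref{322}: $(E_V,\widetilde Z_i^\pm)=O(\|E_V\|_{L^2})=O(t^{-9/4})$ by \eqref{323}; the $(\dot x_i-\mu_i)\sigma_i\partial_y\widetilde R_i$ piece pairs trivially because, after rescaling, $(\partial_y\widetilde R_i,\widetilde Z_i^\pm)$ reduces to $(Q',Z^\pm)=0$ from Proposition~\ref{P4}; and the $\dot\mu_i+\cdots$ piece is $O(M_3)=O(t^{-9/4+\delta_0})$. Summing all contributions yields the claimed $t^{-2}$-bound. The principal obstacle is the $(\mu_i-\dot x_i)$ cancellation: without the explicit separation of the $-\mu_i$ shift from $\mathcal{L}_i$ in the decomposition above, the naive bound $|\dot x_i|\|\e\|_{L^2}\sim t^{-7/4+\delta_0}$ would fall short of the required $t^{-2}$, so matching the constant ``$1+\mu_i$'' in $\mathcal{L}_i$ with the equation satisfied by $\widetilde R_i$ is essential.
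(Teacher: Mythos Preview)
Your proof is correct and follows essentially the same route as the paper: differentiate $a_i^\pm$, combine the $-\mu_i$ shift from the rescaled linearized operator $\mathcal L_i$ with the $-\dot x_i(\e,\partial_y\widetilde Z_i^\pm)$ term from $\partial_t\widetilde Z_i^\pm$ to produce the crucial $(\mu_i-\dot x_i)$ factor, invoke the rescaled spectral identity $\mathcal L_i(\partial_y\widetilde Z_i^\pm)=\pm e_0(1+\mu_i)^2\widetilde Z_i^\pm$, and bound all remaining pieces by $t^{-2}$ using the bootstrap and \eqref{323}. One minor point (glossed over in the paper as well): when you say ``the $(\dot x_i-\mu_i)\sigma_i\partial_y\widetilde R_i$ piece pairs trivially,'' you should also account for the off-diagonal terms $(\dot x_j-\mu_j)\sigma_j(\partial_y\widetilde R_j,\widetilde Z_i^\pm)$ with $j\ne i$; these are not exactly zero but are $O(M_2/d^2)=O(t^{-9/4+\delta_0})$ by Lemma~\ref{L6}, which is harmless.
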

\begin{proof}[Proof of Lemma \ref{L9}]
We recall the equation of $\e$:
\begin{align*}
\partial_t\e=&\partial_y\big[|D|\e+\e-|V+\e|^{p-1}(V+\e)+|V|^{p-1}V\big]-E_V+\sum_{i=1}^nP_{i,1}+\sum_{i=1}^nP_{i,2},
\end{align*}
where
\begin{align*}
&P_{i,1}=(\dot{x}_i-\mu_i)\sigma_i\partial_y\widetilde{R}_i,\\
&P_{i,2}=-\bigg(\dot{\mu}_i+\sum^n_{\substack{j=1,\\j\not=i}}\frac{a_{ij}}{x^3_{ij}}+\sum^n_{\substack{k,j=1,\\j\not=i}}\frac{b_{ijk}\mu_k}{x^3_{ij}}\bigg)\frac{\sigma_i\Lambda\widetilde{R}_i}{1+\mu_i}.
\end{align*}
Then, we have
\begin{align*}
\frac{\dd a_i^\pm}{\dd t}&=\int(\partial_t\e)\widetilde{Z}_i^\pm+\int\e \partial_t \widetilde{Z}_i^\pm\\
&=-\int\big(|D|\e+(1+\mu_i)\e-p|V|^{p-1}\e\big)\partial_y\widetilde{Z}_i^\pm\\
&\quad+\int \big[|V+\e|^{p-1}(V+\e)-|V|^{p-1}V-p|V|^{p-1}\e\big]\partial_y\widetilde{Z}_i^\pm\\
&\quad+\sum_{j=1}^n\sum_{k=1}^2\int P_{j,k}\widetilde{Z}_i^\pm-\int E_V\widetilde{Z}_i^\pm+\dot{\mu}_i\int \e\Lambda\widetilde{Z}_i^\pm-(\dot{x}_i-\mu_i)\int\e \partial_y\widetilde{Z}_i^\pm.
\end{align*}
Since $Z^\pm\in\mathcal{Y}_2$, we have
\begin{align*}
\int\big(|D|\e+(1+\mu_i)\e-p|V|^{p-1}\e\big)\partial_y\widetilde{Z}_i^\pm=\int\e \mathcal{L}_i(\partial_y\widetilde{Z}_i^\pm)+O\bigg(\frac{\|\e(t)\|_{\h}}{d^2(t)}\bigg),
\end{align*}
where $\mathcal{L}_i=|D|+(1+\mu_i)-p\R_i^{p-1}$. Using the fact that
$$\mathcal{L}\partial_yZ^\pm=\pm e_0Z^\pm,$$
we have
$$\int\e\mathcal{L}_i(\partial_y\widetilde{Z}_i^\pm)=\pm e_0(1+\mu_i)^2\int\e\widetilde{Z}_i^\pm=\pm e_0(1+\mu_i)^2a_i^{\pm}.$$
Next, using the bootstrap assumptions \eqref{415}--\eqref{417}, we have
\begin{align*}
&\int \big[|V+\e|^{p-1}(V+\e)-|V|^{p-1}V-p|V|^{p-1}\e\big]\partial_y\widetilde{Z}_i^\pm=O\bigg(\frac{1}{t^{2}}\bigg),\\
&\sum_{j=1}^n\int P_{j,2}\widetilde{Z}_i^\pm-\int E_V\widetilde{Z}_i^\pm+\dot{\mu}_i\int \e\Lambda\widetilde{Z}_i^\pm-(\dot{x}_i-\mu_i)\int\e \partial_y\widetilde{Z}_i^\pm=O\bigg(\frac{1}{t^{2}}\bigg).
\end{align*}
Finally, since $(Z^\pm,Q')=0.$
we have 
$$(P_{j,1},\widetilde{Z}_i^\pm)=O\bigg(\frac{1}{t^2}\bigg).$$
for all $i,j\in\{q,\ldots,n\}$. Now the proof is concluded.
\end{proof}

Now we can give the proof of Lemma \ref{L7} in the case of $p>3$.

\begin{proof}[Proof of Lemma \ref{L7} when $p>3$]
We claim that there exists suitable choice of $\vec{x}_0(\tin)$, $\vec{\mu}_0(\tin)$ and $\e_0$ such that \eqref{436} holds and  for all $t\in[T_0,\tin]$, $i\in\{1,\ldots,n\}$, we have
\begin{equation}
\label{454}
|a_i^\pm(t)|\leq t^{-\frac{3}{2}}.
\end{equation}

From Lemma \ref{L9}, we know that as long as the initial conditions \eqref{452} holds true, we have
\begin{align*}
&|e^{\int_t^{\tin}e_0(1+\mu_i(s))^2\,\dd s}a_i^+(t)|\lesssim \int_t^{\tin} e^{\int_s^{\tin}e_0(1+\mu_i(r))^2\,\dd r}s^{-2}\,\dd s+\tin^{-3/2}.
\end{align*}
Since $|\mu_i|\ll 1$, we then obtain that
\begin{align}\label{457}
&|a_i^+(t)|\lesssim t^{-\frac{3}{2}}+\int_t^{\tin} e^{\int_s^{t}e_0(1+\mu_i(r))^2\,\dd r}s^{-2}\,\dd s\lesssim\int_t^{\tin}e^{\frac{e_0}{2}(t-s)}s^{-2}\,\dd s+t^{-\frac{3}{2}}\nonumber\\
&=\frac{2e^{\frac{e_0t}{2}}}{e_0}\big(e^{-\frac{e_0t}{2}}t^{-2}-e^{-\frac{e_0\tin}{2}}\tin^{-2}\big)-\frac{4e^{\frac{e_0t}{2}}}{e_0}\int_t^{\tin}e^{-\frac{e_0s}{2}}s^{-3}\,\dd s+t^{-\frac{3}{2}}\lesssim t^{-\frac{3}{2}}.
\end{align}

Let $\vec{v}=(\vec{v}_0,\vec{v}_1)\in\mathbb{B}_0\times \mathbb{B}_1$, where
$$\mathbb{B}_0=\{\vec{v}_0\in\mathbb{R}^{n_0}:\,|\vec{v}_0|\leq 1\},\;\;\mathbb{B}_1=\{\vec{v}_1\in\mathbb{R}^{n}:\,|\vec{v}_1|\leq 1\}$$
From Lemma \ref{L8} and the the local uniqueness of the geometrical decomposition, we can choose constants $b_i^\pm, c_i ,d_i$ such that%
\footnote{Here the set $S_<$ and $S_>=\{k_1,\ldots,k_{n_0}\}$ are defined similarly as in the subcritical cases.}
\begin{itemize}
\item for all $k\in S_<$, there holds
\begin{equation}
\label{456}
\x_k(\tin)=0,\quad\U_k(\tin)=0,
\end{equation}
\item for all $\ell=1,\ldots,n_0$, we have
\begin{equation}
\label{473}
\big(y_{k_1}(\tin),\ldots,y_{k_{n_0}}(\tin)\big)=0,\quad\big(z_{k_1}(\tin),\ldots,z_{k_{n_0}}(\tin)\big)=\tin^{-\frac{1}{4}+\delta_0}\vec{v}_0;
\end{equation}
\item there holds 
\begin{equation}
\label{458}
\e(\tin)=\e_0,\;\;\big(a_1^-(\tin),\ldots,a_n^-(\tin)\big)=\vec{a}_{\rm in}=\tin^{-\frac{3}{2}}\vec{v}_1, \;\; a_i^+(\tin)=0;
\end{equation}
\item there holds 
$$\sum_{i=1}^n\big(|b_i^\pm|^2+c^2_i+d^2_i\big)\leq C|\vec{a}_{\rm in}|^2.$$
\end{itemize}
Similarly, we can show that with this choice of initial data, estimates \eqref{468} and \eqref{470} also hold true in the supercritical case. 

Now we denote by
$$\mathcal{N}(t)=t^3\sum_{i=1}^n|a_i^-(t)|^2,\quad \mathcal{Z}(t)=t^{\frac{1}{2}-2\delta_0}\sum_{\ell=1}^{n_0}|z_{k_\ell}(t)|^2.$$
Let $T_1=T_1(\vec{v})\in[T_0,\tin)$ be as follows:
$$T_1=\inf\{t\in[T_0,\tin):\,\text{$\forall\tau\in[t,\tin]$, $\mathcal{N}(\tau)\leq 1$ and $\mathcal{Z}(\tau)\leq 1$}\}.$$
From Lemma \ref{L9}, we have for all $t\in[T_1,\tin]$,
\begin{align*}
\partial_t\mathcal{N}(t)&\leq t^3(3t^{-1}-2e_0)\sum_{i=1}^n|a_i^-(t)|^2+O\Bigg(t\bigg(\sum_{i=1}^n|a_i^-(t)|^2\bigg)^{1/2}\Bigg).
\end{align*}
Assume that $T_1>T_0$,  then we have for $t=T_1$, either $\mathcal{N}(T_1)=1$ or $\mathcal{Z}(T_1)=1$ and
\begin{align}
&\partial_t\mathcal{N}(T_1)\leq -\frac{3e_0}{4}+O(T_1^{-1/2})\leq -e_0<0,\;\;\text{if $\mathcal{N}(T_1)=1$},\label{476}\\
&\partial_t\mathcal{Z}(T_1)\leq -\frac{\delta_2}{T_1}+O\bigg(\frac{1}{T_1^{\frac{3}{2}-4\delta_0}}\bigg)<0,\;\;\text{if $\mathcal{Z}(T_1)=1$}\label{477}.
\end{align}
We consider the following map
$$\mathcal{M}:\quad \mathbb{B}_0\times\mathbb{B}_1\ni\vec{v}\longmapsto (\vec{A},\vec{B})\in \mathbb{B}_0\times\mathbb{B}_1,$$
where 
\begin{align*}
\vec{A}=T_1^{\frac{1}{4}-\delta_0}\big(z_{k_1}(T_1),\ldots,z_{k_{n_0}}(T_1)\big)\in\mathbb{B}_0,\;\;\vec{B}=T_1^{\frac{3}{2}}\big(a_1^-(T_1),\ldots,a_n^-(T_1)\big)\in\mathbb{B}_1.
\end{align*}
Since  $\partial (\mathbb{B}_0\times\mathbb{B}_1)=(\partial\mathbb{B}_0)\times \mathbb{B}_1\cup \mathbb{B}_0\times (\partial\mathbb{B}_1)$, from \eqref{476} and \eqref{477}, we know that $\mathcal{M}$ is actually a continuous map from $\mathbb{B}_0\times\mathbb{B}_1$ to $\partial (\mathbb{B}_0\times\mathbb{B}_1)$. On the other hand, for all $\vec{v}\in\partial(\mathbb{B}_0\times\mathbb{B}_1)$, from \eqref{456}--\eqref{458}, \eqref{476} and \eqref{477}, we know that either $\mathcal{Z}(\tin)=1$, $\partial_t\mathcal{Z}(\tin)<0$ or $\mathcal{N}(\tin)=1$, $\partial_t\mathcal{N}(\tin)<0$, which implies that $T_1=\tin$. Then we have $\mathcal{M}\vec{v}=\vec{v}$, for all $\vec{v}\in\partial(\mathbb{B}_0\times\mathbb{B}_1)$. That means $\mathcal{M}$ is a continuous map from $\mathbb{B}_0\times\mathbb{B}_1$ to $\partial (\mathbb{B}_0\times\mathbb{B}_1)$, whose restriction on $\partial (\mathbb{B}_0\times\mathbb{B}_1)$ is the identity. From Brouwer's fixed point theorem, this is a contradiction. We then conclude that $T_0=T_1$, which implies \eqref{436} immediately. Together with \eqref{457}, we also have
\begin{equation}
\label{459}
|a_i^\pm(t)|\lesssim t^{-3/2}
\end{equation}
for all $i\in\{1,\ldots,n\}$ and $t\in[T_0,\tin]$.
On the other hand, using \eqref{27} and \eqref{451}, we have
\begin{align*}
W&\geq \int\Big(\big|\D\e\big|^2+\e^2\Big)\bigg(1-\sum_{i=1}^n\widetilde{\phi}^2_i\bigg)+\lambda_0\sum_{i=1}^n\int \Big(\big|\D(\e\widetilde{\phi}_i)\big|^2+(\e\widetilde{\phi}_i)^2\Big)\nonumber\\
&\quad-\frac{1}{\lambda_0}\sum_{i=1}^n\big[(\e\widetilde{\phi}_i,\tau_iZ^+)^2+(\e\widetilde{\phi}_i,\tau_iZ^-)^2+(\e\widetilde{\phi}_i,\tau_iQ')^2\big]+o(\|\e\|^2_{\h})\\
&\geq\frac{\lambda_0}{2}\|\e\|^2_{\h}-\frac{1}{\lambda_0}\sum_{i=1}^n\big[(\e\widetilde{\phi}_i,\tau_iZ^+)^2+(\e\widetilde{\phi}_i,\tau_iZ^-)^2+(\e\widetilde{\phi}_i,\tau_iQ')^2\big].
\end{align*}
Together with \eqref{459} and the orthogonality condition \eqref{38}, we obtain \eqref{49} immediately for the supercritical case.
\end{proof}

\subsubsection{End of the proof of Proposition \ref{P9}}
Now, we can conclude the proof of Proposition \ref{P9} by improving the bootstrap assumptions \eqref{415}--\eqref{417}. With our choice of initial data, we have already shown that $W(t)$ is coercive for all $t\in[T_0,\tin]$. Now, we integrating \eqref{48} from $t$ to $\tin$, using the initial condition \eqref{452}, the bootstrap assumptions \eqref{415}--\eqref{417} and the coercivity \eqref{49} to obtain
\begin{align}\label{475}
W(t)&\lesssim \tin^{-3}+t^{-\frac{5}{2}}+\int_t^{\tin}\big(s^{-1-\delta_1-\frac{5}{2}+2\delta_0}+s^{-\frac{9}{4}-\frac{5}{4}+\delta_0}\big)\,\dd s\nonumber\\
&\lesssim t^{-\frac{5}{2}}+t^{-\frac{5}{2}+2\delta_0+\delta_1}+t^{-\frac{5}{2}+\delta_0},
\end{align}
which improves \eqref{417} provided that $t_0$ is chosen large enough. While for the geometrical parameters, we recall from Remark \ref{R2} that we only need to show that for all $k=1,\ldots,n$ and all $t\in[T_0,\tin]$, there holds
\begin{equation}
\label{474}
|\x_k(t)|\lesssim \frac{1}{t^{1/4-\delta_0}},\quad |\U_k(t)|\lesssim \frac{1}{t^{5/4-\delta_0}}.
\end{equation}
Indeed, if $\lambda_k\not=-\frac{1}{4}$, then we have $\omega_k^0\not=\omega_k^1$. We can see that \eqref{474} is a direct consequence of \eqref{436}. If $\lambda_k=-\frac{1}{4}$, then from \eqref{431}, \eqref{432} and \eqref{436}, we have
\begin{align*}
\partial_t[t^{\frac{1}{2}}\U_k]&=\frac{\U_k}{2t^{1/2}}+\frac{\lambda_k\x_k}{t^{3/2}}+O\bigg(\frac{1}{t^{7/4-\delta_0}}\bigg)=\frac{y_k}{2t^{3/2}}+O\bigg(\frac{1}{t^{7/4-\delta_0}}\bigg)=O\bigg(\frac{1}{t^{7/4-\delta_0}}\bigg).
\end{align*}
From the choice of the initial data (see \eqref{467}, \eqref{469}, \eqref{456} and \eqref{473}), we have
$$|t^{\frac{1}{2}}\U_k(t)|\lesssim \frac{1}{t^{3/4-\delta_0}},$$
which together with \eqref{436} implies \eqref{474} immediately.

Now, from \eqref{475}, \eqref{474} and a continuity argument, we conclude that $T_0=t_*$, hence $T_0=t_*=t_0$. Moreover, the uniform backward estimates \eqref{44}-\eqref{46} hold on $[t_0,\tin]$.

\subsection{Construction of multi-soliton solutions}
In this subsection, we finish the proof of Theorem \ref{MT1} by construction a strongly interacting multi-soliton using a compactness argument.

From Proposition \ref{P9}, by choosing $\tin=m$ for all $m\in\mathbb{N}$ large enough, we have a solution $u_m(t)$ defined on $[t_0,m]$ such that \eqref{44}--\eqref{46} hold.
We denote by  
$$U(t,x)=\sum_{i=1}^n\sigma_iQ\big(x-t-\alpha_i\sqrt{t}-\beta_i\log t-\gamma_i\big).$$
Then from \eqref{44}--\eqref{46} and the construction of $V$, we have
\begin{equation}
\label{460}
\|u_m(t,\cdot)-U(t,\cdot)\|_{\h}\lesssim t^{-\frac{1}{4}+\delta_0},
\end{equation}
for all $t\in[t_0,m]$ and $m\in\mathbb{N}$ large enough. 

\begin{lemma}\label{L10}
There exists a sub-sequence of $\{u_m\}$ (still denoted by $u_m$) and $u_0\in\h$, such that $u_m(t_0)$ converges to $u_0$ weakly in $\h$ and strongly in $H^\sigma$, for all $\sigma\in[0,1/2)$, as $m\rightarrow+\infty$.
\end{lemma}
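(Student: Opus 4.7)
The plan is to combine the uniform $H^{1/2}$ bound from \eqref{460} with a standard weak compactness argument and Rellich--Kondrachov compactness, upgrading if necessary via the $L^2$ conservation law of \eqref{CP}.

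First, I would establish that $\{u_m(t_0)\}_{m \in \mathbb{N}}$ is uniformly bounded in $\h$. Since $Q \in \h$ and $U(t_0)$ is a finite sum of translates of $Q$, we have $U(t_0) \in \h$ with a norm independent of $m$. Applying \eqref{460} at $t = t_0$ gives
\begin{equation*}
\|u_m(t_0) - U(t_0)\|_{\h} \lesssim t_0^{-1/4 + \delta_0},
\end{equation*}
which, together with the triangle inequality, yields a uniform bound $\|u_m(t_0)\|_{\h} \leq K$ for all sufficiently large $m$.

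Second, since $\h$ is a separable reflexive Hilbert space, the Banach--Alaoglu theorem produces a subsequence (still denoted $\{u_m(t_0)\}$) and a limit $u_0 \in \h$ with $u_m(t_0) \rightharpoonup u_0$ weakly in $\h$; by weak lower semi-continuity, $\|u_0\|_{\h} \leq K$. For the strong convergence in $H^\sigma$ with $\sigma \in [0, 1/2)$, on any bounded interval $I \subset \mathbb{R}$ the embedding $H^{1/2}(I) \hookrightarrow H^{\sigma}(I)$ is compact (Rellich--Kondrachov), so weak $\h$ convergence forces $u_m(t_0) \to u_0$ strongly in $H^\sigma(I)$; this already gives strong $H^\sigma_{\mathrm{loc}}$ convergence.

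Third, to promote this to strong $H^\sigma(\mathbb{R})$ convergence (if that is the intended reading), I would establish tightness of $\{u_m(t_0)\}$ via mass conservation. Indeed, $\|u_m(t_0)\|_{L^2} = \|u_m(m)\|_{L^2}$, and \eqref{460} at $t = m$ together with the fact that the profiles in $U(m)$ separate as $m \to \infty$ (so the cross terms in $\|U(m)\|_{L^2}^2$ vanish) yields $\|u_m(t_0)\|_{L^2}^2 \to n \|Q\|_{L^2}^2$. Once one verifies $\|u_0\|_{L^2}^2 \geq n \|Q\|_{L^2}^2$ (which uses that the limit $u_0$ also generates a multi-soliton, giving lower bounds on its mass), matching norms would promote the weak $L^2$ convergence to strong $L^2$ convergence, and then strong convergence in $H^\sigma$ for $\sigma \in (0, 1/2)$ follows from the interpolation inequality $\|f\|_{H^\sigma} \leq \|f\|_{L^2}^{1-2\sigma} \|f\|_{\h}^{2\sigma}$ together with the uniform $\h$ bound.

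The main obstacle is the tightness at infinity: the uniform $\h$ bound on $u_m(t_0) - U(t_0)$ does not by itself prevent mass from escaping, so one must appeal either to $L^2$ conservation as above or simply adopt the $H^\sigma_{\mathrm{loc}}$ reading, in which case Steps 1 and 2 plus Rellich--Kondrachov suffice. In either case, the conclusion of the lemma is obtained by extracting a diagonal subsequence adapted to an exhaustion of $\mathbb{R}$ by bounded intervals.
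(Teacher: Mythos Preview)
Your first two steps are correct and match the paper: the uniform $\h$ bound from \eqref{460}, weak compactness, and Rellich--Kondrachov on bounded intervals. The gap is in your tightness argument. Your proposed route---showing $\|u_m(t_0)\|_{L^2}^2\to n\|Q\|_{L^2}^2$ via mass conservation at $t=m$, and then matching this with $\|u_0\|_{L^2}^2\geq n\|Q\|_{L^2}^2$---is circular as written: you invoke that ``$u_0$ also generates a multi-soliton'' to get the lower bound on its mass, but that is precisely what Lemma~\ref{L10} is being used to establish later in the paper. Even if you try to avoid circularity by using only \eqref{460} at $t=t_0$ and weak lower semicontinuity, you obtain $\|u_0\|_{L^2}\geq\|U(t_0)\|_{L^2}-Ct_0^{-1/4+\delta_0}$, which falls short of $n\|Q\|_{L^2}^2$ by a fixed positive amount since $t_0$ is fixed. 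Your alternative of reading the conclusion as $H^\sigma_{\mathrm{loc}}$ does not match the statement, which asserts global $H^\sigma(\mathbb{R})$ convergence.

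The paper proves tightness differently and directly. Given $\delta>0$, it first chooses a large forward time $t_1=t_1(\delta)$ at which \eqref{460} forces $\int|u_m(t_1)-U(t_1)|^2<\delta/10$ uniformly in $m$, and then a radius $K_1$ outside which $U(t_1)$ has small mass. It then propagates this localization backward from $t_1$ to $t_0$ by computing, for a cutoff $\chi$ supported in $\{|x|>R_0K_1/2\}$,
\[
\frac{\dd}{\dd t}\int|u_m(t,x)|^2\chi(x)\,\dd x
= -2\int|D|u_m\,\partial_xu_m\,\chi-2\int|D|u_m\,u_m\,\chi'+\frac{2}{p+1}\int|u_m|^{p+1}\chi'
= O\!\left(\frac{1}{R_0K_1}\right),
\]
using only the uniform $\h$ bound. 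Integrating over the fixed interval $[t_0,t_1]$ and taking $R_0$ large yields $\int_{|x|>R_0K_1}|u_m(t_0)|^2<\delta$ uniformly in $m$. This is the missing ingredient: a quantitative, $m$-independent control of the mass at spatial infinity at time $t_0$, obtained by a localized-mass almost-monotonicity rather than by norm matching.
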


\begin{proof}
Since $\{u_m(t_0)\}$ is uniformly bounded in $\h$ (from \eqref{460}), it suffices to show that for some sub-sequence $u_m$ (still denoted by $u_m$), there holds $u_m(t_0)$ converges to $u_0$ strongly in $L^2$. Since for all finite interval $[a,b]$, $\h([a,b])$ is compactly embedded into $L^2([a,b])$, we only need to show that for all $\delta>0$, there exists $K=K(\delta)>0$, such that
\begin{equation}
\label{461}
\int_{|x|>K}|u_m(t_0,x)|^2\,\dd x<\delta.
\end{equation} 
From \eqref{44}--\eqref{46} and \eqref{460}, we know that there exists a $t_1=t_1(\delta)>t_0$ independent of $m$, such that for all $m>t_1$, there holds 
$$\int|u_m(t_1,x)-U(t_1,x)|^2\,\dd x\leq C t_1^{-\frac{1}{4}+\delta_0}<\frac{\delta}{10}.$$
Now for this fixed $t_1$, there exits $K_1=K_1(\delta)>0$ such that
$$\int_{|x|>K_1}|U(t_1,x)|^2\,\dd x<\frac{\delta}{10}.$$
Now, we choose a smooth function $\eta$, such that $\eta(y)=0$, if $|y|<\frac{1}{2}$; $\eta(y)=1$ if $|y|>1$ and $0\leq \eta\leq 1$. Let
$$\chi(x)=\eta\bigg(\frac{x}{R_0K_1}\bigg),\quad R_0>1.$$
Then we have
\begin{align*}
&\frac{1}{2}\frac{\dd}{\dd t}\int |u_m(t,x)|^2\chi(x)\,\dd x\\
&=-\int |D|u_m(\partial_xu_m)\chi-\int|D|u_mu_m\chi'+\frac{1}{p+1}\int|u_m|^{p+1}\chi'\\
&=O\bigg(\frac{\|u_m\|^2_{\h}+\|u_m\|^{p+1}_{\h}}{R_0K_2}\bigg)\leq \frac{\delta}{10(t_1-t_0)},
\end{align*}
provided that $R_0=R_0(\delta)$ is large enough. By integrating from $t_0$ to $t_1$, we have
\begin{align*}
&\int_{|x|>R_0K_1}|u(t_0,x)|^2\,\dd x\leq \int|u(t_0,x)|^2\chi(x)\,\dd x\\
&\leq \int|u(t_1,x)|^2\chi(x)\,\dd x+\int_{t_0}^{t_1}\bigg|\frac{\dd}{\dd t}\int |u_m(s,x)|^2\chi(x)\,\dd x\bigg|\,\dd s\\
&\leq \int_{|x|>K_1}|u(t_1,x)|^2\,\dd x+\frac{\delta}{10}<\delta.
\end{align*}
By choosing $K=R_0K_1$, we conclude the proof of \eqref{461}, hence the proof of Lemma \ref{L10}.
\end{proof}

Finally, we consider solutions $u(t)$ of \eqref{CP} with initial data $u(t_0)=u_0$ as  in Lemma \ref{L10}. From \eqref{460} and the local wellposedness of \eqref{CP} in $\h$, we have
$$\|u(t)-U(t)\|_{\h}\leq t^{-\frac{1}{4}+\delta_0},$$
for all $t\geq t_0$. Now we conclude the proof of Theorem \ref{MT1}.

\section{Uniqueness in the supercritical case}\label{S5}
In this section, we will show that in the case of $p>3$ and $n=2$, for strongly interacting two-soliton of the form \eqref{17}, the sign of the two bubbles are always the same and the relative velocity between the two bubbles are unique.

\subsection{An alternative geometrical decomposition}
Assume $p>3$ and $n=2$. Let $u\in \mathcal{C}([t_0,+\infty),\h)$ be a solution of the form \eqref{17}. Without loss of generality we assume that $\sigma_1=1$ and $\sigma_2=\sigma\in\{\pm1\}$. We also assume that  $x_1(t),x_2(t)$
are $C^1$-functions defined on $[t_0,+\infty)$ satisfying
\begin{equation}
\label{533}
x_1(t)-x_2(t)\rightarrow+\infty,
\end{equation}
as $t\rightarrow+\infty$.

We consider
$$V_0(t,y)=\Theta(\vec{\mathfrak{\mathfrak{q}}}(t),0,y),\quad \vec{\mathfrak{q}}(t)=(\mathfrak{q}_1(t),\mathfrak{q}_2(t)),$$
for some $C^1$-functions defined on $[t_0,+\infty)$. Then from the proof of Proposition \ref{P6} and Proposition \ref{P11}, we obtain that
\begin{lemma}\label{L13}
Let $\Psi_{V_0}=\partial_tV_0-\partial_x(|D|V_0+V_0-|V_0|^{p-1}V_0)$, and
$$R_k(t,y)=Q(y-\mathfrak{q}_k(t)),$$
for $k=1,2$. Then we have
\begin{equation}\label{58}
\Psi_{V_0}=\mathcal{E}_{V_0}-\sum_{i=1}^2\dot{\mathfrak{q}}_i\sigma_i\partial_yR_i+\sum_{\substack{i,j=1,\\j\not=i}}^2\frac{a_{ij}\sigma_i}{(\mathfrak{q}_i-\mathfrak{q}_j)^3}\Lambda R_i+\sum_{i=1}^2\dot{\mathfrak{q}}_i(\mathcal{P}_i+\mathcal{Q}_i)
\end{equation}
where
\begin{equation}\label{59}
\quad\mathcal{Q}_i=O_{\h}\bigg(\frac{1}{(\mathfrak{q}_1-\mathfrak{q}_2)^3}\bigg), \quad\mathcal{E}_{V_0}=O_{\h}\bigg(\frac{1}{(\mathfrak{q}_1-\mathfrak{q}_2)^{9/2}}\bigg),
\end{equation}
and 
\begin{equation}
\label{518}
\mathcal{P}_i(t,y)=\frac{\mathcal{R}_i(y-\mathfrak{q}_i(t))}{[\mathfrak{q}_1(t)-\mathfrak{q}_2(t)]^2}
\end{equation} 
for some odd functions $\mathcal{R}_i\in\mathcal{Y}_2$.
\end{lemma}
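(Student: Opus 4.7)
The plan is to derive Lemma \ref{L13} by specializing the construction and expansion of $\Psi_V$ from the proof of Proposition \ref{P6} (and Proposition \ref{P11}) to the case $n=2$, $\vec{\mu}\equiv 0$, while carefully isolating the terms that involve $\dot{\mathfrak{q}}_i$. Observe first that $V_0(t,y)=\Theta(\vec{\mathfrak{q}}(t),0,y)$ coincides exactly with the approximate multi-soliton of Proposition \ref{P6} in which we substitute $\vec{x}(t)=\vec{\mathfrak{q}}(t)$ and $\vec{\mu}(t)=0$. Consequently, the expansion \eqref{322} applies, but many pieces simplify substantially: $M_1(t)\equiv 0$, the modulational $\dot{\mu}_i\sigma_i\Lambda\widetilde{R}_i$ terms disappear, and the parameter correction in $\dot{\mu}_i$ reduces to the bare force $\sum_{j\neq i}a_{ij}/(\mathfrak{q}_i-\mathfrak{q}_j)^3$, which is exactly the $\Lambda R_i$ contribution retained on the right-hand side of \eqref{58}.

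Next I would keep track of every piece in the expansion of Proposition \ref{P6} that carries a $\dot{\mathfrak{q}}_i$ factor. These arise exclusively from the term $L=\partial_t r$ (see \eqref{356}--\eqref{348}), namely from differentiating the interaction tail $r(t,y)$ in time. The leading contribution comes from $L_A$ and has the explicit form
\begin{equation*}
-\sum_{\substack{i,j=1\\ j\neq i}}^{2}\frac{\dot{\mathfrak{q}}_i\,\partial_y A_{ij,0}(y-\mathfrak{q}_i)}{(\mathfrak{q}_i-\mathfrak{q}_j)^2}.
\end{equation*}
From \eqref{31} and the fact that $Q$ is even, we know $A_{ij,0}\in\mathcal{Y}_2$ is an even function, hence $\partial_yA_{ij,0}$ is odd and lies in $\mathcal{Y}_2$. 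Defining $\mathcal{R}_i(y):=-\partial_yA_{ij,0}(y)$ (with $j\neq i$), this yields precisely the term $\dot{\mathfrak{q}}_i\mathcal{P}_i$ of the form \eqref{518}. All remaining $\dot{\mathfrak{q}}_i$-dependent terms come either from the second line of $L_A$ (the $\nabla_{\vec{x}}\mathfrak{A}_{ij}$ piece, which contributes at order $1/d^3$ by \eqref{329} and the $1/d^3$ prefactor of the gradient in $\vec{x}$), or from the $\dot{\mathfrak{q}}_i\partial_yB_{ij}/d^3$ and $\dot{\mathfrak{q}}_{ij}B_{ij}/d^4$ contributions in $L_B$; all of these are bounded in $\h$ by $\dot{\mathfrak{q}}_i/d^3$ and can be grouped into $\dot{\mathfrak{q}}_i\mathcal{Q}_i$ with $\mathcal{Q}_i=O_{\h}(1/(\mathfrak{q}_1-\mathfrak{q}_2)^3)$.

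Finally I would collect all the remaining pieces of $\Psi_{V_0}$ that do not involve $\dot{\mathfrak{q}}_i$ into $\mathcal{E}_{V_0}$. These are precisely the terms that in the general proof are absorbed into $E_V$, minus the $\dot{\mathfrak{q}}_i$-proportional contributions already extracted. Inserting $\vec{\mu}\equiv 0$ into the bound \eqref{323} gives
\begin{equation*}
\|E_V\|_{\h}\lesssim \frac{1}{d^{9/2}}+\frac{M_3}{d^2},
\end{equation*}
and since with $\vec{\mu}\equiv 0$ the residual $M_3$ is exactly the quantity $\sup_i|\sum_{j\neq i}a_{ij}/(\mathfrak{q}_i-\mathfrak{q}_j)^3|=O(1/d^3)$ (which is the piece we have already displayed as the $\Lambda R_i$ term on the right-hand side of \eqref{58}), the remaining error satisfies $\|\mathcal{E}_{V_0}\|_{\h}\lesssim 1/d^{9/2}$. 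The main obstacle in the proof is the bookkeeping required in the third step: one must carefully identify which of the many lower-order terms in the expansion of $\Psi_V$ are linear in $\dot{\mathfrak{q}}_i$ and redistribute them into $\mathcal{P}_i$ (the explicit $1/d^2$ piece with the odd profile) versus $\mathcal{Q}_i$ (the higher-order $1/d^3$ remainder), while checking that the parity and regularity claims on $\mathcal{R}_i$ are preserved.
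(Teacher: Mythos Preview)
Your proposal is correct and follows exactly the approach the paper intends: the paper's own ``proof'' of Lemma~\ref{L13} is simply the one-line remark that it follows from the proof of Propositions~\ref{P6} and~\ref{P11}, and you have carried out precisely this specialization to $n=2$, $\vec{\mu}\equiv 0$. Your identification of $\mathcal{R}_i=-\partial_yA_{ij,0}$ (odd, in $\mathcal{Y}_2$) from the leading part of $L_A$, and your collection of the remaining $\dot{\mathfrak{q}}_i$-proportional pieces of $L_A,L_B$ into $\mathcal{Q}_i=O_{\h}(1/d^3)$, are exactly right.

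One small imprecision: your handling of the $M_3/d^2$ contribution is slightly muddled. That term in \eqref{323} is not ``the piece displayed as the $\Lambda R_i$ term''; rather, in the general proof it arises from the $\dot{\mu}_k$-dependent pieces of $L$ (e.g.\ $\dot{\mu}_k\mathfrak{B}_{ijk}/x_{ij}^3$ and $(\partial_t\vec{\mu})\cdot\nabla_{\vec{\mu}}\mathfrak{A}_{ij}/x_{ij}^2$), and these vanish identically once $\vec{\mu}\equiv 0$. So the correct reason that $\mathcal{E}_{V_0}=O_{\h}(1/d^{9/2})$ is simply that, after extracting the $M_2/d^3$ pieces into $\mathcal{Q}_i$, the only surviving terms in \eqref{323} are $1/d^{9/2}$ (plus the harmless $M_3/d^2=O(1/d^5)$ if you prefer to keep it). This does not affect your conclusion.
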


Similar as Proposition \ref{P7}, we have the following alternative geometrical decomposition:
\begin{proposition}\label{P13}
For $T_0\geq t_0$ large enough, and $\rho\gg1$ large enough, there exist $C^1$-functions $\mathfrak{q}_1,\mathfrak{q}_2\in \mathcal{C}^1([T_0,+\infty))$, and $\epsilon\in ([T_0,+\infty),\h)$ such that the following properties hold.
\begin{enumerate}
\item Geometrical decomposition:
\begin{equation}
\label{51}
u(t,y+t)=\Theta(\vec{\mathfrak{q}}(t),0,y)+\epsilon(t,y),
\end{equation}
\item Orthogonality conditions:
\begin{equation}
\label{52}
\big(\mathcal{Z}_1(t),\epsilon(t)\big)=\big(\mathcal{Z}_2(t),\epsilon(t)\big)=0,
\end{equation}
where
$$\mathcal{Z}_k(t,y)=\mathcal{Z}\big(y-\mathfrak{q}_k(t)\big),\quad \mathcal{Z}(y):=\eta\bigg(\frac{y}{\rho}\bigg)\int_0^y\Lambda Q(y')\,\dd y',$$
for $k=1,2$. Here $\eta$ is a smooth function such that $\eta(y)=1$, if $|y|<\frac{1}{2}$; $\eta(y)=0$ if $|y|>1$ and $0\leq \eta\leq 1$.
\item There holds
\begin{equation}
\label{54}
\mathfrak{q}_i(t)-x_i(t)\rightarrow0,\quad \epsilon(t)\xrightarrow{\h} 0
\end{equation}
as $t\rightarrow+\infty$, for all $i=1,2$.
\end{enumerate}
\end{proposition}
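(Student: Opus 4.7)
The plan is to apply the implicit function theorem at each fixed time $t\geq T_0$ to choose $(\mathfrak{q}_1(t),\mathfrak{q}_2(t))$ enforcing the two scalar orthogonality conditions in \eqref{52}, starting from the ansatz $\mathfrak{q}_i(t)=x_i(t)$ supplied by the hypothesis \eqref{19}. Concretely, I would set
$$F_k(t;\vec{\mathfrak{q}})=\bigl(\mathcal{Z}(\cdot-\mathfrak{q}_k),\, u(t,\cdot+t)-\Theta(\vec{\mathfrak{q}},0,\cdot)\bigr), \qquad k=1,2,$$
and observe that by \eqref{19}, \eqref{110} and the explicit form \eqref{35} of $\Theta$ (all interaction corrections are bounded in $\h$ by $C(\mathfrak{q}_1-\mathfrak{q}_2)^{-2}$), one has $F_k(t;\vec{x}(t))\to 0$ as $t\to+\infty$.

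The main computation is the Jacobian $J_{kj}(t;\vec{\mathfrak{q}})=\partial_{\mathfrak{q}_j}F_k$. Since $\mathcal{Z}$ is supported in $[-\rho,\rho]$ and $\mathfrak{q}_1-\mathfrak{q}_2\to+\infty$, for $T_0$ large enough so that $\mathfrak{q}_1(t)-\mathfrak{q}_2(t)>4\rho$ the off-diagonal entries $J_{kj}$ ($j\neq k$) are $o(1)$, and the diagonal entries are
$$J_{kk}=\sigma_k(\mathcal{Z},Q')+o(1),$$
where the $o(1)$ is quantitatively controlled by $\|u(t,\cdot+t)-\sum_i\sigma_iQ(\cdot-x_i(t))\|_{\h}+(\mathfrak{q}_1-\mathfrak{q}_2)^{-2}$. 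Integration by parts, using $\mathcal{Z}'(y)=\rho^{-1}\eta'(y/\rho)\int_0^y\Lambda Q+\eta(y/\rho)\Lambda Q$, yields
$$(\mathcal{Z},Q')=-(\Lambda Q,Q)+O(\rho^{-2})=\frac{p-3}{2(p-1)}\|Q\|_{L^2}^2+O(\rho^{-2}),$$
which is non-zero for $p>3$ once $\rho$ is fixed large enough. Hence $J(t;\vec{\mathfrak{q}})$ is a small perturbation of $(\mathcal{Z},Q')\cdot\mathrm{diag}(\sigma_1,\sigma_2)$, and IFT at each time furnishes unique $C^1$ functions $\mathfrak{q}_i(t)$ satisfying \eqref{52}; $C^1$ regularity in $t$ follows by implicit differentiation, the time derivative of $F_k$ being given via the equation \eqref{316} for $u$.

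For the limits in \eqref{54}, the quantitative form of IFT gives $|\mathfrak{q}_i(t)-x_i(t)|\lesssim \|u(t,\cdot+t)-\sum_i\sigma_iQ(\cdot-x_i(t))\|_{\h}+(x_1(t)-x_2(t))^{-2}\to 0$, and then $\|\epsilon(t)\|_{\h}\to 0$ follows from the triangle inequality
$$\|\epsilon\|_{\h}\leq\bigg\|u-\sum_i\sigma_iQ(\cdot-x_i)\bigg\|_{\h}+\sum_i\bigl\|Q(\cdot-x_i)-Q(\cdot-\mathfrak{q}_i)\bigr\|_{\h}+O((\mathfrak{q}_1-\mathfrak{q}_2)^{-2}).$$
The main technical obstacle I foresee is simultaneously balancing the two small parameters: $\rho$ must be chosen and then frozen large enough that $|(\mathcal{Z},Q')|\geq\frac{1}{2}|(\Lambda Q,Q)|$, and only \emph{afterwards} can $T_0$ be taken large enough so that the bubble separation exceeds $4\rho$ and the $\h$-error in \eqref{19} is small enough to render negligible all the remainder terms appearing in both $J$ and $F_k(t;\vec{x}(t))$. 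Apart from this bookkeeping, the argument is essentially the standard modulation setup of \cite{MM} and Proposition \ref{P7}, tailored to two positional parameters and two orthogonality conditions.
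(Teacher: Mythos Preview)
Your proposal is correct and follows essentially the same approach as the paper: the authors merely remark that since $p>3$ one has $(Q',\mathcal{Z})=-(Q,\Lambda Q)+O(1/\rho)\neq 0$ for $\rho$ large, and then invoke the implicit function theorem exactly as you outline (cf.\ the remark following Proposition~\ref{P13} and the analogous Proposition~\ref{P7}). Your handling of the Jacobian, the order in which $\rho$ and then $T_0$ must be fixed, and the derivation of \eqref{54} from the quantitative IFT are all in line with the intended argument.
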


\begin{remark}
We mention here since $p>3$, there holds 
$$(Q',\mathcal{Z})=\int_{\mathbb{R}}Q'(y)\bigg(\int_0^{y}\Lambda Q(y')\,\dd y'\bigg)\,\dd y+O(1/\rho)=(Q,\Lambda Q)+O(1/\rho)\not=0,$$
if $\rho$ is large enough. Together with \eqref{17}, we can prove Proposition \ref{P13} by using implicit function theorem.
\end{remark}
\begin{remark}
Since $(Q',\mathcal{Z})\not=0$, we know that the conclusion of Proposition \ref{P5} can be applied to $\mathcal{Z}$.
\end{remark}
\begin{remark}
We mention here the parameters $\mathfrak{q}_1,\mathfrak{q}_2$ depend on the choice of $\rho$, but they are independent on the choice of the time $T_0$.
\end{remark}

From \cite[Lemma 2.15]{MP} and a standard localization argument, we have
\begin{lemma}\label{L11}
For all $c_0>0$, there exists $\rho_0>0$, such that if $\rho>\rho_0$, then 
\begin{equation}
\label{53}
\|\mathcal{L}(\partial_y\mathcal{Z})+Q\|_{\h}\leq c_0.
\end{equation}
\end{lemma}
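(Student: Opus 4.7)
The plan is to exploit the identity $\mathcal{L}\Lambda Q=-Q$ from Proposition~\ref{P2}(2): without the cutoff $\eta(y/\rho)$, the function $\int_0^y\Lambda Q(y')\,\dd y'$ would have derivative exactly $\Lambda Q$, so $\mathcal{L}(\partial_y\mathcal{Z})+Q$ would vanish. The lemma thus amounts to showing that the localization error is small in $\h$ for $\rho$ large.

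First I would write
\begin{equation*}
\partial_y\mathcal{Z}(y)=\eta(y/\rho)\,\Lambda Q(y)+E_\rho(y),\qquad E_\rho(y):=\frac{1}{\rho}\eta'(y/\rho)\int_0^y\Lambda Q(y')\,\dd y',
\end{equation*}
and, using the commutator $[|D|,\eta(\cdot/\rho)]$, decompose
\begin{equation*}
\mathcal{L}(\partial_y\mathcal{Z})+Q=\underbrace{[|D|,\eta(\cdot/\rho)]\Lambda Q}_{\rm I}+\underbrace{(1-\eta(\cdot/\rho))Q}_{\rm II}+\underbrace{\mathcal{L}E_\rho}_{\rm III},
\end{equation*}
using $\mathcal{L}(\eta(\cdot/\rho)\Lambda Q)=[|D|,\eta(\cdot/\rho)]\Lambda Q+\eta(\cdot/\rho)\mathcal{L}\Lambda Q=[|D|,\eta(\cdot/\rho)]\Lambda Q-\eta(\cdot/\rho)Q$. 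It then suffices to estimate each of \textrm{I}, \textrm{II}, \textrm{III} in $\h$.

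Term \textrm{II} is the easiest: since $Q\in\mathcal{Y}_2\subset \h$ and $1-\eta(y/\rho)$ is supported in $|y|\geq\rho/2$, dominated convergence yields $\|(1-\eta(\cdot/\rho))Q\|_{\h}\to 0$ as $\rho\to\infty$. For term \textrm{III}, a short integration by parts shows $\int_0^y\Lambda Q=-\frac{p-2}{p-1}\int_0^y Q+yQ(y)$, which is uniformly bounded in $y$; hence $\|E_\rho\|_{L^\infty}\lesssim 1/\rho$ and $E_\rho$ is supported in an annulus of width $O(\rho)$, so $\|E_\rho\|_{L^2}\lesssim \rho^{-1/2}$. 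The potential term $pQ^{p-1}E_\rho$ is $O(\rho^{-1})$ in $\h$, and rescaling together with the homogeneity of $|D|$ gives $\||D|E_\rho\|_{L^2}\lesssim\rho^{-3/2}$, so $\|\mathcal{L}E_\rho\|_{\h}\to 0$. Term \textrm{I} is handled by the Coifman--Meyer type commutator bound \cite[Lemma 2.15]{MP} already invoked in Section~\ref{S3}: since $\eta'\in C_c^\infty$ and $\Lambda Q\in\mathcal{Y}_2$, one gets $\|[|D|,\eta(\cdot/\rho)]\Lambda Q\|_{\h}\lesssim \rho^{-1/2}\|\eta'\|_{W^{1,\infty}}\|\Lambda Q\|_{H^1}+o(1)\to 0$.

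Given $c_0>0$, choose $\rho_0$ large enough that each of the three norms is $\leq c_0/3$. The main technical obstacle is the commutator estimate \textrm{I}, since $|D|$ is nonlocal and one must carefully track the decay of $\Lambda Q\in\mathcal{Y}_2$ against the scale $\rho$ of the cutoff; fortunately this is exactly the content of \cite[Lemma 2.15]{MP} that the paper already uses repeatedly, so no new machinery is required.
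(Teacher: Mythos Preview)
Your approach is correct and matches the paper's, which merely invokes \cite[Lemma 2.15]{MP} and ``a standard localization argument'' without details; you have supplied exactly those details via the decomposition $\mathcal{L}(\partial_y\mathcal{Z})+Q=[|D|,\eta(\cdot/\rho)]\Lambda Q+(1-\eta(\cdot/\rho))Q+\mathcal{L}E_\rho$. The only quibble is that the explicit bound you wrote for term \textrm{I} is a bit off (the Calder\'on commutator gives $\|[|D|,\eta(\cdot/\rho)]\Lambda Q\|_{H^1}\lesssim \rho^{-1}$ rather than the $\rho^{-1/2}$ you state), but this does not affect the conclusion.
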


\subsection{Modulation estimates}
Recall that $\psi$ is a smooth function such that $\psi(y)=1$ if $y>-1$; $\psi(y)=0$ if $y<-2$, and $\psi'\geq0$. We denote by 
$$\psi_1(t,y)=\psi\bigg(\frac{3[y-\mathfrak{q}_1(t)]}{\mathfrak{q}_1(t)-\mathfrak{q}_2(t)}\bigg),\quad \psi_2(t,y)=1-\psi_1(t,y).$$
We also define 
$$\mathfrak{p}_k(t)=\big(\sigma_kR_k(t),\epsilon(t)\big)+\frac{1}{2}\int_{\mathbb{R}}\psi_k(t)\epsilon^2(t),\quad \forall k=1,2.$$
We claim that
\begin{lemma}\label{L12}
For all $c>0$, there exists $T_1\geq t_0$ and $\rho_0\gg1$, such that for all $k=1,2$, $t\geq T_1$ and $\rho\geq \rho_0$, there holds 
\begin{align}
&|\dot{\mathfrak{q}}_k-(Q,\Lambda Q)^{-1}\mathfrak{p}_k|\leq c\bigg(\frac{1}{(\mathfrak{q}_1-\mathfrak{q}_2)^4}+\|\epsilon\|^2_{\h}\bigg)^{\frac{1}{2}},\quad \forall k=1,2,\label{55}\\
&\bigg|\dot{\mathfrak{p}}_1-\frac{2\kappa_0\sigma\int Q^p}{(\mathfrak{q}_1-\mathfrak{q}_2)^3}\bigg|\leq c\bigg(\frac{1}{(\mathfrak{q}_1-\mathfrak{q}_2)^4}+\|\epsilon\|^2_{\h}\bigg),\label{56}\\
&\bigg|\dot{\mathfrak{p}}_2+\frac{2\kappa_0\sigma\int Q^p}{(\mathfrak{q}_1-\mathfrak{q}_2)^3}\bigg|\leq c\bigg(\frac{1}{(\mathfrak{q}_1-\mathfrak{q}_2)^4}+\|\epsilon\|^2_{\h}\bigg)\label{57}.
\end{align}
\end{lemma}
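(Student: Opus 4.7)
The plan is to differentiate carefully chosen scalar quantities of $\epsilon$ in time and invoke the expansion of $\Psi_{V_0}$ from Lemma \ref{L13}. Inserting $w = V_0 + \epsilon$ into equation \eqref{316} gives
\[
\partial_t\epsilon = \partial_y\left[|D|\epsilon + \epsilon - p|V_0|^{p-1}\epsilon\right] - \partial_y N(V_0,\epsilon) - \Psi_{V_0},
\]
where $N(V_0,\epsilon) = |V_0+\epsilon|^{p-1}(V_0+\epsilon) - |V_0|^{p-1}V_0 - p|V_0|^{p-1}\epsilon = O(\epsilon^2)$. Estimate \eqref{55} will follow from differentiating $(\mathcal{Z}_k,\epsilon) = 0$, while \eqref{56}--\eqref{57} follow from differentiating $\mathfrak{p}_k$ directly.

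For \eqref{55}, the identity $\dot{\mathfrak{q}}_k(\partial_y\mathcal{Z}_k,\epsilon) = (\mathcal{Z}_k,\partial_t\epsilon)$ is expanded piece by piece. Integration by parts in the linear contribution gives $-(\mathcal{L}_k\partial_y\mathcal{Z}_k,\epsilon) + p((|V_0|^{p-1}-R_k^{p-1})\partial_y\mathcal{Z}_k,\epsilon)$ with $\mathcal{L}_k = |D|+1-pR_k^{p-1}$; the second piece is $O(\|\epsilon\|_{\h}/(\mathfrak{q}_1-\mathfrak{q}_2)^2)$ since $|V_0|^{p-1} - R_k^{p-1} = O(1/(\mathfrak{q}_1-\mathfrak{q}_2)^2)$ on the support of $\partial_y\mathcal{Z}_k$, whereas the first piece equals $(R_k,\epsilon) + O_{\h}(c_0)\|\epsilon\|_{\h}$ by Lemma \ref{L11}, which we rewrite as $\sigma_k\mathfrak{p}_k + O(\|\epsilon\|^2_{\h}) + O_{\h}(c_0)\|\epsilon\|_{\h}$. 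On the $\Psi_{V_0}$ side, the only leading coefficient of $\dot{\mathfrak{q}}_k$ comes from $-\sigma_k(\mathcal{Z}_k,\partial_y R_k)\dot{\mathfrak{q}}_k = \sigma_k(Q,\Lambda Q)\dot{\mathfrak{q}}_k + O(|\dot{\mathfrak{q}}_k|/\rho^2)$, using $(\mathcal{Z},Q') = -(Q,\Lambda Q) + O(1/\rho^2)$ (from the explicit form of $\mathcal{Z}$ combined with $\mathcal{Z}' = \Lambda Q$ on $[-\rho/2,\rho/2]$); cross-terms with $j\neq k$ are negligible by Lemma \ref{L6}, and the $\mathcal{P}_i,\mathcal{Q}_i$ contributions are absorbable by AM-GM. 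Rearranging and choosing $\rho,T_1$ large yields \eqref{55}.

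For \eqref{56}--\eqref{57}, differentiate: $\dot{\mathfrak{p}}_k = -\sigma_k\dot{\mathfrak{q}}_k(\partial_y R_k,\epsilon) + \sigma_k(R_k,\partial_t\epsilon) + \frac{1}{2}\int\partial_t\psi_k\,\epsilon^2 + \int\psi_k\epsilon\,\partial_t\epsilon$. The crucial cancellation is $\mathcal{L}\partial_yQ = 0$: after translation, $\sigma_k(R_k,\partial_y(|D|\epsilon+\epsilon-pR_k^{p-1}\epsilon)) = 0$, so the genuine linear contribution reduces to $\sigma_k p(\partial_y R_k(|V_0|^{p-1}-R_k^{p-1}),\epsilon) = O(\|\epsilon\|_{\h}/(\mathfrak{q}_1-\mathfrak{q}_2)^2)$. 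The $\psi_k$-correction $\frac{1}{2}\int\partial_t\psi_k\epsilon^2 + \int\psi_k\epsilon\,\partial_t\epsilon$ is designed to absorb the remaining linear-in-$\epsilon$ pieces arising from the $|D|$-commutator; a direct computation using $|\partial_t\psi_k| = O(1/(\mathfrak{q}_1-\mathfrak{q}_2))$ together with commutator estimates (cf.\ \cite[Lemma 2.15]{MP}) reduces it to $O(\|\epsilon\|^2_{\h})$. The dominant surviving term $-\sigma_k(R_k,\Psi_{V_0})$ extracts, from the $\Lambda R_k$ piece of \eqref{58}, the main coefficient $-a_{kj}(\mathfrak{q}_k-\mathfrak{q}_j)^{-3}(Q,\Lambda Q)$; the $\Lambda R_j$ piece gives an $O((\mathfrak{q}_1-\mathfrak{q}_2)^{-5})$ contribution by Lemma \ref{L6}, while the $\dot{\mathfrak{q}}_i\mathcal{P}_i$ pieces vanish to leading order against $R_k(\cdot-\mathfrak{q}_k)$ by the odd parity of $\mathcal{R}_i$ in \eqref{518}. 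Substituting \eqref{320} and the identity $(Q,\Lambda Q) = \frac{3-p}{2(p-1)}\int Q^2$ (which follows from $\Lambda Q = \frac{1}{p-1}Q + yQ'$ and integration by parts) converts the main coefficient into $\pm 2\kappa_0\sigma\int Q^p\,(\mathfrak{q}_1-\mathfrak{q}_2)^{-3}$, with sign $+$ for $k=1$ and $-$ for $k=2$.

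The main obstacle is the book-keeping of all cross-terms of the form $\|\epsilon\|_{\h}/(\mathfrak{q}_1-\mathfrak{q}_2)^2$ and verifying, via AM-GM, that each is genuinely absorbable into the quadratic bound $c(\|\epsilon\|^2_{\h} + (\mathfrak{q}_1-\mathfrak{q}_2)^{-4})$ rather than polluting the leading-order balance; this forces repeated use of the odd-parity structure built into the ansatz \eqref{518} as well as the algebraic decay of $Q$ through Lemma \ref{L6}. The universal quantifier ``for all $c>0$'' in the statement of Lemma \ref{L12} is ultimately furnished by the flexibility of Lemma \ref{L11}, which makes $c_0$ arbitrarily small upon choosing $\rho$ large.
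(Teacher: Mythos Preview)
Your approach is the same as the paper's, and the computation of the main term via \eqref{320} together with $(Q,\Lambda Q)=\tfrac{3-p}{2(p-1)}\int Q^2$ is correct. However, your description of the role of the localized mass correction $\tfrac12\int\psi_k\epsilon^2$ is inaccurate, and this matters because the lemma demands the error be bounded by $c(\cdot)$ for \emph{every} $c>0$.

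You write that the $\psi_k$--correction ``is designed to absorb the remaining linear-in-$\epsilon$ pieces arising from the $|D|$-commutator'' and ``reduces it to $O(\|\epsilon\|_{\h}^2)$''. But the correction is quadratic in $\epsilon$, and an $O(\|\epsilon\|_{\h}^2)$ bound with a \emph{fixed} constant is useless here. The actual mechanism is a cancellation of two fixed-coefficient quadratic terms. On the one hand, the nonlinear piece $-\sigma_k\big(\partial_yN(V_0,\epsilon),R_k\big)=\sigma_k\big(N(V_0,\epsilon),\partial_yR_k\big)$ produces
\[
\sigma_k\,\frac{p(p-1)}{2}\big(R_k^{p-2}\epsilon^2,\partial_yR_k\big)+O(\|\epsilon\|_{\h}^3),
\]
which you never account for. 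On the other hand, expanding $\int\psi_k\epsilon\,\partial_t\epsilon$ using \eqref{516}, the term $-p\int\psi_k\epsilon\,\partial_y(|V_0|^{p-1}\epsilon)$ yields, after integration by parts and localization,
\[
-\sigma_k\,\frac{p(p-1)}{2}\big(R_k^{p-2}\epsilon^2,\partial_yR_k\big)+\delta(c_0)\big(\|\epsilon\|_{\h}^2+(\mathfrak{q}_1-\mathfrak{q}_2)^{-4}\big),
\]
while the $-\Psi_{V_0}$ part of $\int\psi_k\epsilon\,\partial_t\epsilon$ contributes $+\sigma_k\dot{\mathfrak{q}}_k(\epsilon,\partial_yR_k)$, cancelling the $-\sigma_k\dot{\mathfrak{q}}_k(\partial_yR_k,\epsilon)$ you correctly listed in $\dot{\mathfrak{p}}_k$. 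These two exact cancellations are what allow the remainder to carry an arbitrarily small constant; without them your argument would stall at $O(\|\epsilon\|_{\h}^2)$ with coefficient $\tfrac{p(p-1)}{2}$. The paper records precisely this in \eqref{519} and \eqref{521}.
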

\begin{proof}
We first prove \eqref{55}. From \eqref{54}, we know that for $t\geq T_1$ large enough, 
$$\|\epsilon(t)\|_{\h}\ll1,\quad \mathfrak{q}_1(t)-\mathfrak{q}_2(t)\gg1.$$
By differentiating the orthogonality condition \eqref{52}, using the equation of $\epsilon$
\begin{align}\label{516}
&\partial_t\epsilon-\partial_y(|D|\epsilon+\epsilon-p|V_0|^{p-1}\epsilon)\nonumber\\
&\qquad\qquad+\partial_y\big[|V_0+\epsilon|^{p-1}(V+\epsilon)-|V_0|^{p-1}V_0-p|V_0|^{p-1}\epsilon\big]+\Psi_{V_0}=0,
\end{align} 
we obtain that
\begin{align}\label{510}
&\dot{\mathfrak{q}}_1(\epsilon,\partial_y\mathcal{Z}_1)-\big(\partial_y(\mathcal{L}_1\epsilon),\mathcal{Z}_1\big)-p\big([|V_0|^{p-1}-R_1^{p-1}]\epsilon,\partial_y\mathcal{Z}_1\big)\nonumber\\
&=\Big(\big[|V_0+\epsilon|^{p-1}(V_0+\epsilon)-|V_0|^{p-1}V_0-p|V_0|^{p-1}\epsilon\big],\partial_y\mathcal{Z}_1\Big)-(\Psi_{V_0},\mathcal{Z}_1),
\end{align}
where $\mathcal{L}_1=|D|+1-pR_1^{p-1}$. Since $\mathfrak{p}_k=O(\|\epsilon\|_{\h})$ and $\|\partial_y\mathcal{Z}_1\|_{\h}\sim 1$, we have
\begin{align}\label{511}
\dot{\mathfrak{q}}_1(\epsilon,\partial_y\mathcal{Z}_1)=O\bigg(\sum_{k=1}^2|\dot{\mathfrak{q}}_k-(Q,\Lambda Q)^{-1}\mathfrak{p}_k|\|\epsilon\|_{\h}+\|\epsilon\|^2_{\h}\bigg).
\end{align}
Using \eqref{53}, we have for all $c_0>0$, if $t\geq T_1$, $\rho\geq\rho_0$, there holds%
\footnote{Recall that $\delta(c_0)$ is defined by \eqref{117}.}
\begin{align}\label{512}
\big(\partial_y(\mathcal{L}_1\epsilon),\mathcal{Z}_1\big)=(\epsilon,R_1)+\delta(c_0)\|\epsilon\|_{\h}=\sigma_1\mathfrak{p}_1+O\big(\delta(c_0)\|\epsilon\|_{\h}\big).
\end{align}
From the construction of $V_0$, we know that
\begin{align}
&\big|\big([|V_0|^{p-1}-R_1^{p-1}]\epsilon,\partial_y\mathcal{Z}_1\big)\big|\lesssim\frac{ \|\epsilon\|_{\h}}{(\mathfrak{q}_1-\mathfrak{q}_2)^2}\lesssim\delta(c_0)\|\epsilon\|_{\h},\label{513}\\
&\Big|\Big(\big[|V_0+\epsilon|^{p-1}(V_0+\epsilon)-|V_0|^{p-1}V_0-p|V_0|^{p-1}\epsilon\big],\partial_y\mathcal{Z}_1\Big)\Big|\lesssim\|\epsilon\|_{\h}^2\lesssim\delta(c_0)\|\epsilon\|_{\h}.\label{514}
\end{align}
By choosing $\rho\geq \rho_0$ large enough, we have
$$(\partial_yR_1,\mathcal{Z}_1)=-(Q,\Lambda Q)+O(1/\rho).$$
Together with  \eqref{58}--\eqref{59}, as well as the construction of $V_0$ we have
\begin{align}\label{515}
(\Psi_{V_0},\mathcal{Z}_1)=\dot{\mathfrak{q}}_1(Q,\Lambda Q)+O(1/\mathfrak{q}^3+\|\epsilon\|^2_{\h})+\sum_{i=1}^2O\big(\delta(c_0)|\dot{\mathfrak{q}}_k-(Q,\Lambda Q)^{-1}\mathfrak{p}_k|\big)
\end{align}
Combining \eqref{510}--\eqref{515}, we have
\begin{align*}
&|\dot{\mathfrak{q}}_1-(Q,\Lambda Q)^{-1}\mathfrak{p}_1|\lesssim\delta(c_0)\bigg(\frac{1}{(\mathfrak{q}_1-\mathfrak{q}_2)^4}+\|\epsilon\|^2_{\h}\bigg)^{\frac{1}{2}}+\sum_{k=1}^2O\big(\delta(c_0)|\dot{\mathfrak{q}}_k-(Q,\Lambda Q)^{-1}\mathfrak{p}_k|\big)
\end{align*}
Similarly, we have
\begin{align*}
&|\dot{\mathfrak{q}}_2-(Q,\Lambda Q)^{-1}\mathfrak{p}_2|\lesssim\delta(c_0)\bigg(\frac{1}{(\mathfrak{q}_1-\mathfrak{q}_2)^4}+\|\epsilon\|^2_{\h}\bigg)^{\frac{1}{2}}+\sum_{k=1}^2O\big(\delta(c_0)|\dot{\mathfrak{q}}_k-(Q,\Lambda Q)^{-1}\mathfrak{p}_k|\big)
\end{align*}
By choosing $c_0$ small enough, we conclude the proof of \eqref{55}.

Now we give the proof of \eqref{56} and \eqref{57}. By direct computation, we have
\begin{align}\label{517}
\dot{\mathfrak{p}}_1=(\partial_t\epsilon,R_1)-\dot{\mathfrak{q}_1}(\epsilon,\partial_yR_1)+\frac{1}{2}\int\partial_t\psi_1\epsilon^2+\int\psi_1\partial_t\epsilon\epsilon.
\end{align}
From \eqref{516}, we have
\begin{align*}
&(\partial_t\epsilon,R_1)=\big(\partial_y(\mathcal{L}_1\epsilon),R_1\big)+p\big([|V_0|^{p-1}-R_1^{p-1}]\epsilon,\partial_yR_1\big)\\
&+\bigg(\Big[|V_0+\epsilon|^{p-1}(V_0+\epsilon)-|V_0|^{p-1}V_0-p|V_0|^{p-1}\epsilon-\frac{p(p-1)}{2}|V_0|^{p-3}V_0\epsilon^2\Big],\partial_yR_1\bigg)\\
&+\frac{p(p-1)}{2}(R_1^{p-2}\epsilon^2,\partial_yR_1)+\frac{p(p-1)}{2}(|V_0|^{p-3}V_0-R_1^{p-2},\epsilon^2\partial_yR_1)-(\Psi_{V_0},R_1).
\end{align*}
We argue similarly as \eqref{510} using the fact that $\mathcal{L}_1(\partial_yR_1)=0$ and \eqref{518} to obtain that for all for all $c_0>0$, if $t\geq T_1$, $\rho\geq\rho_0$, there holds
\begin{align}\label{519}
(\partial_t\epsilon,R_1)=&\frac{p(p-1)}{2}(R_1^{p-2}\epsilon^2,\partial_yR_1)-\frac{a_{12}(Q,\Lambda Q)}{(\mathfrak{q}_1-\mathfrak{q}_2)^3}+O\Bigg[\delta(c_0)\bigg(\frac{1}{(\mathfrak{q}_1-\mathfrak{q}_2)^4}+\|\epsilon\|^2_{\h}\bigg)\Bigg].
\end{align}
Since 
$$|\dot{\mathfrak{q}}_k|\lesssim\bigg(\frac{1}{(\mathfrak{q}_1-\mathfrak{q}_2)^4}+\|\epsilon\|^2_{\h}\bigg)^{\frac{1}{2}},$$
we have $\|\partial_t\psi_1\|_{L^\infty}\lesssim \delta(c_0)$, which implies that
\begin{equation}
\label{520}
\int\partial_t\psi_1\epsilon^2=O\Bigg[\delta(c_0)\bigg(\frac{1}{(\mathfrak{q}_1-\mathfrak{q}_2)^4}+\|\epsilon\|^2_{\h}\bigg)\Bigg].
\end{equation}
Finally, for $\int\psi_1\partial_t\epsilon\epsilon$, using the fact that 
$$\text{Supp }\psi_1\subset\bigg[\mathfrak{q}_1-\frac{2(\mathfrak{q}_1-\mathfrak{q}_2)}{3},+\infty\bigg),$$
and \eqref{516}, we have
\begin{align*}
\int\psi_1\partial_t\epsilon\epsilon=&-\int|D|\epsilon\epsilon_y\psi_1-\int|D|\epsilon\epsilon(\partial_y\psi_1)+p(R_1^{p-1}\epsilon,\partial_y\epsilon)+\dot{\mathfrak{q}_1}(\epsilon,\partial_yR_1)\\
&+O\Bigg[\delta(c_0)\bigg(\frac{1}{(\mathfrak{q}_1-\mathfrak{q}_2)^4}+\|\epsilon\|^2_{\h}\bigg)\Bigg].
\end{align*}
From \cite[Lemma 2.16]{MP}, we have
\begin{align*}
\bigg|\int|D|\epsilon\epsilon_y\psi_1\bigg|\lesssim \|\epsilon\|^2_{L^2}\|\partial_{yy}\psi_1\|_{L^\infty}\lesssim \delta(c_0)\bigg(\frac{1}{(\mathfrak{q}_1-\mathfrak{q}_2)^4}+\|\epsilon\|^2_{\h}\bigg),
\end{align*}
and
\begin{align*}
\bigg|\int|D|\epsilon\epsilon(\partial_y\psi_1)\bigg|&\lesssim \|\epsilon\|^2_{\h}\big(\|\partial_y\psi_1\|_{L^\infty}+\|\partial_{yy}\psi_1\|^{\frac{3}{4}}_{L^2}\|\partial_y\psi_1\|_{L^2}^{\frac{1}{4}}\big)\lesssim \delta(c_0)\bigg(\frac{1}{(\mathfrak{q}_1-\mathfrak{q}_2)^4}+\|\epsilon\|^2_{\h}\bigg).
\end{align*}
Therefore, we have 
\begin{align}
\label{521}
\int\psi_1\partial_t\epsilon\epsilon=&-\frac{p(p-1)}{2}(R_1^{p-2}\epsilon^2,\partial_yR_1)+\dot{\mathfrak{q}_1}(\epsilon,\partial_yR_1)+O\Bigg[\delta(c_0)\bigg(\frac{1}{(\mathfrak{q}_1-\mathfrak{q}_2)^4}+\|\epsilon\|^2_{\h}\bigg)\Bigg].
\end{align}
Combining \eqref{517}--\eqref{521} and \eqref{320}, we obtain \eqref{56}, by choosing $c_0$ sufficiently small. The proof of \eqref{57} is similar.
\end{proof}
\subsection{Stable and unstable directions}
We define
$$Z_i^\pm(t,y)=Z^\pm(y-\mathfrak{q}_i(t)),$$
for $i=1,2$.
We also define 
$$\mathfrak{a}_i^\pm(t)=(\epsilon(t),Z_i^\pm(t)).$$
\begin{lemma}\label{L14}
For all $c>0$, there exist $T_0\geq t_0$ and $\rho_0\gg1$, such that for all $i=1,2$, $t\geq T_0$ and $\rho\geq \rho_0$, there holds 
\begin{equation}
\label{522}
\bigg|\frac{\dd}{\dd t}\mathfrak{a}_i^\pm\pm e_0\mathfrak{a}_i^\pm\bigg|\leq c\bigg(\frac{1}{(\mathfrak{q}_1-\mathfrak{q}_2)^4}+\|\epsilon\|^2_{\h}\bigg)^{\frac{1}{2}}.
\end{equation}
\end{lemma}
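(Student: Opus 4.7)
The plan is to differentiate $\mathfrak{a}_i^\pm$ directly, using the $\epsilon$-equation \eqref{516} together with the chain rule for the translating weight $Z_i^\pm$. Writing
$$\frac{\dd\mathfrak{a}_i^\pm}{\dd t} = (\partial_t\epsilon, Z_i^\pm) - \dot{\mathfrak{q}}_i(\epsilon, \partial_y Z_i^\pm),$$
and substituting \eqref{516}, the linear piece may be rewritten, after integration by parts, as
$$-\bigl(\partial_y(|D|\epsilon + \epsilon - p|V_0|^{p-1}\epsilon),\, Z_i^\pm\bigr) = (\epsilon,\, \mathcal{L}_i \partial_y Z_i^\pm) + p\bigl([R_i^{p-1} - |V_0|^{p-1}]\epsilon,\, \partial_y Z_i^\pm\bigr),$$
where $\mathcal{L}_i = |D| + 1 - pR_i^{p-1}$. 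The spectral identity $\mathcal{L}\partial_y Z^\pm = \pm e_0 Z^\pm$ from Proposition \ref{P4} makes the first term equal to $\pm e_0\mathfrak{a}_i^\pm$ exactly, while the second is bounded by $C\|\epsilon\|_{\h}/(\mathfrak{q}_1-\mathfrak{q}_2)^{2(p-1)}$, because $|V_0|^{p-1} - R_i^{p-1}$ is small where $\partial_y Z_i^\pm \in \mathcal{Y}_3$ is concentrated; this is negligible for $p > 3$.

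The remaining contributions are of three types. First, the purely nonlinear term $|V_0+\epsilon|^{p-1}(V_0+\epsilon) - |V_0|^{p-1}V_0 - p|V_0|^{p-1}\epsilon$ contributes $O(\|\epsilon\|_{\h}^2)$ when paired with $\partial_y Z_i^\pm$. Secondly, the correction $\dot{\mathfrak{q}}_i(\epsilon, \partial_y Z_i^\pm)$ is controlled by the modulation estimate \eqref{55} as $O\bigl((d^{-4}+\|\epsilon\|_{\h}^2)^{1/2}\|\epsilon\|_{\h}\bigr)$, where $d = \mathfrak{q}_1 - \mathfrak{q}_2$. Thirdly, the source $(\Psi_{V_0}, Z_i^\pm)$ is decomposed according to Lemma \ref{L13}: the diagonal pairing $(\partial_y R_i, Z_i^\pm) = (Q', Z^\pm)$ vanishes by Proposition \ref{P4}(2), while off-diagonal pairings $(\partial_y R_j, Z_i^\pm)$ for $j \neq i$ are $O(d^{-2})$ by Lemma \ref{L6}; the interaction piece $a_{ij}\sigma_i\Lambda R_i/d^3$ contributes at worst $O(d^{-3})$, the localized remainder $\dot{\mathfrak{q}}_i(\mathcal{P}_i + \mathcal{Q}_i)$ yields $O(|\dot{\mathfrak{q}}_i|/d^2)$ since $\mathcal{P}_i, \mathcal{Q}_i$ are $O_{\h}(d^{-2})$, and $\mathcal{E}_{V_0}$ gives $O(d^{-9/2})$.

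Collecting all errors and using the a priori smallness from \eqref{54} (so $d$ is large and $\|\epsilon\|_{\h}$ is small), each error is dominated by $c(d^{-4} + \|\epsilon\|_{\h}^2)^{1/2}$, proving \eqref{522}. The main technical point is that the spectral identity produces the principal term $\pm e_0\mathfrak{a}_i^\pm$ cleanly only after the frozen-coefficient approximation replacing $|V_0|^{p-1}$ by $R_i^{p-1}$; this is precisely where the supercriticality $p > 3$ enters, since it guarantees $2(p-1) > 4$ and renders the approximation error subcritical with respect to $d^{-2}$. A secondary difficulty is that no orthogonality between $\Lambda Q$ and $Z^\pm$ is available, so the interaction contribution is only $O(d^{-3})$ instead of vanishing, but this suffices because $d^{-3} \leq \delta(c_0)\, d^{-2}$ for $d$ large.
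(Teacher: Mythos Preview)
Your argument is correct and follows the same route the paper indicates (it simply defers to the computation in Lemma~\ref{L9}, which is simpler here since no scaling parameters are present). One small correction that does not affect the outcome: near $\mathfrak q_i$ one only has $|V_0|^{p-1}-R_i^{p-1}=O\bigl((\mathfrak q_1-\mathfrak q_2)^{-2}\bigr)$, not $O\bigl((\mathfrak q_1-\mathfrak q_2)^{-2(p-1)}\bigr)$, because $V_0=\sigma_iR_i+O(d^{-2})$ there and the perturbation enters \emph{linearly} in the Taylor expansion of $|\cdot|^{p-1}$. The resulting error $\|\epsilon\|_{\h}/d^2\le c\,\|\epsilon\|_{\h}\le c\,(d^{-4}+\|\epsilon\|_{\h}^2)^{1/2}$ is still acceptable for $d$ large, so your conclusion stands; but the claim that supercriticality $p>3$ is what renders this particular term subcritical is misplaced. (There is also a harmless sign slip in your displayed linear piece: the contribution to $\frac{\dd}{\dd t}\mathfrak a_i^\pm$ is $\mp e_0\mathfrak a_i^\pm$, matching the statement.)
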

The proof of Lemma \ref{L14} is similar to the proof of Lemma \ref{L9}, by using the equation \eqref{516} and the fact that $Z^\pm$ are eigenfunctions of $\mathcal{L}\partial_y$.%
\footnote{Here since the scaling parameters $\mu_i$ is not considered, the computation is actually simpler.}
 We omit the routine details here.

\subsection{Uniqueness in the supercritical case}
In this subsection, we will complete the proof of Theorem \ref{MT2}.
\begin{proposition}\label{P14}
The sign $\sigma=1$ and there exist $C_0>0$, $T_0\geq t_0$, such that for all $t\geq T_0$, there holds
\begin{equation}\label{523}
\|\epsilon(t)\|_{\h}^2\leq \frac{C_0}{[\mathfrak{q}_1(t)-\mathfrak{q}_2(t)]^4}.
\end{equation}
\end{proposition}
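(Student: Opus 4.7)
The plan is to combine the two-soliton coercivity of Proposition~\ref{P5} with the hyperbolic ODE control of the exponential modes $\mathfrak{a}_i^\pm$ from Lemma~\ref{L14} and the modulation estimates of Lemma~\ref{L12}. The starting point is energy conservation: since $\mathfrak{q}_1(t)-\mathfrak{q}_2(t)\to+\infty$ and $\mathfrak{q}_i(t)-x_i(t)\to 0$, every overlap integral and nonlinear cross term in $H$ vanishes in the limit, so $H(u(t))\to 2H(Q)$, and conservation gives $H(u(t))\equiv 2H(Q)$ for all $t\geq T_0$. Proposition~\ref{P5} then applies to $U=R_1+\sigma R_2$ and $\epsilon$, and since $(\mathcal{Z},Q')\neq 0$ for $\rho$ large, the orthogonality \eqref{52} kills the $\mathcal{Z}$-contributions in \eqref{26}--\eqref{27}, leaving, with $q:=\mathfrak{q}_1-\mathfrak{q}_2$,
\[
\sigma=+1:\ \|\epsilon\|_{\h}^2\lesssim q^{-2}+\sum_{i,\pm}|\mathfrak{a}_i^\pm|^2,\qquad \sigma=-1:\ \|\epsilon\|_{\h}^2+q^{-2}\lesssim \sum_{i,\pm}|\mathfrak{a}_i^\pm|^2.
\]

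Writing $\Phi(t):=q(t)^{-4}+\|\epsilon(t)\|_{\h}^2$, Lemma~\ref{L14} becomes $\dot{\mathfrak{a}}_i^\pm\pm e_0\mathfrak{a}_i^\pm=O(\Phi^{1/2})$. Using $\epsilon(t)\xrightarrow{\h}0$ (hence $\mathfrak{a}_i^\pm(t)\to 0$), I would integrate the unstable $\mathfrak{a}_i^-$ equation backward from $+\infty$ to obtain $|\mathfrak{a}_i^-(t)|\lesssim \sup_{s\geq t}\Phi(s)^{1/2}$, while the forward-stable $\mathfrak{a}_i^+$ is integrated from the reference time $T_0$ to yield $|\mathfrak{a}_i^+(t)|\lesssim e^{-e_0(t-T_0)/2}+\sup_{s\geq t/2}\Phi(s)^{1/2}$. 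Together with the asymptotic relation $q^{-4}=o(q^{-2})$ as $t\to+\infty$, these direction bounds turn the coercivity into a self-improving estimate.

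To rule out $\sigma=-1$, substitute into the second coercivity inequality: for $M(t):=\sup_{s\geq t}\bigl[\|\epsilon\|_{\h}^2+q^{-2}\bigr](s)$, one gets $M(t)\leq o(1)\,M(t/2)+Ce^{-e_0 t/2}$ for $t$ large, and a dyadic iteration then forces $M\equiv 0$, contradicting $q<+\infty$. Hence $\sigma=+1$. In the $\sigma=+1$ branch, the analogous substitution first yields the coarse bound $\|\epsilon\|_{\h}^2\lesssim q^{-2}$; the sharper $q^{-4}$ bound is then obtained by bootstrapping this control into $\Phi^{1/2}$ and combining with the mass conservation identity, which produces $\mathfrak{p}_1+\mathfrak{p}_2=O(q^{-2})$ through the leading term of the overlap $(R_1,R_2)$, and the refined modulation \eqref{56}--\eqref{57}, so as to close a fixed-point argument on the weighted quantity $\|\epsilon\|_{\h}^2\,q^{4}$.

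The main obstacle is the unstable direction $\mathfrak{a}_i^+$: unlike $\mathfrak{a}_i^-$ it cannot be integrated backward from $+\infty$ because the exponential weight would then blow up, so its contribution must be absorbed by forward propagation from the fixed reference time $T_0$, leaving an initial residual that decays only exponentially. Closing the final bootstrap therefore requires a delicate balance between the dyadic sup-type control (which wins for large times) and a direct Gronwall bound on the finite window after $T_0$, combined with the algebraic cancellations extracted from the two conservation laws.
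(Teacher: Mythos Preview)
Your overall architecture---energy conservation forcing $H(u)\equiv 2H(Q)$, then Proposition~\ref{P5} with the $\mathcal Z$-terms killed by \eqref{52}, combined with the exponential ODEs of Lemma~\ref{L14}---is exactly the paper's. Your backward integration of the forward-unstable mode $\mathfrak a_i^-$ from $+\infty$ is correct and in fact more direct than the paper's detour through Lemma~\ref{L15} and the argument around~\eqref{538}.

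The genuine gap is the exclusion of $\sigma=-1$. The inequality $M(t)\le \delta\,M(t/2)+Ce^{-e_0 t/2}$ with $\delta<1$ does \emph{not} force $M\equiv 0$: since $M$ is non-increasing one has $M(t/2)\ge M(t)$, and iterating down to $T_0$ only yields a polynomial rate $M(t)\lesssim (T_0/t)^{|\log_2\delta|}M(T_0)$, which is no contradiction by itself. The underlying problem is your control of the forward-stable mode $\mathfrak a_i^+$: forward integration from a fixed $T_0$ necessarily leaves the residual $e^{-e_0(t-T_0)}|\mathfrak a_i^+(T_0)|$, and you have no mechanism to make $|\mathfrak a_i^+(T_0)|$ small relative to $q(T_0)^{-2}$. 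The paper bypasses this completely with a supremum-time argument (Lemma~\ref{L16}): at the time $T_1$ where $\Phi=q^{-4}+\|\epsilon\|_{\h}^2$ realises $\sup_{\tau\ge t}\Phi$, one shows $\sum_i[\mathfrak a_i^+(T_1)]^2\le c\,\Phi(T_1)$ by a last-time contradiction---if it failed, the last time $T_2\ge T_1$ with $N_2(T_2)\ge N_2(T_1)$ would have $\dot N_2(T_2)\le 0$, while the ODE $\dot N_2+2e_0N_2=O(c\,N_2^{1/2}\Phi^{1/2})$ together with $N_2(T_2)\ge c\,\Phi(T_2)$ forces the opposite sign. This gives the bound on $\mathfrak a_i^+$ with no initial residual and no iteration. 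Ruling out $\sigma=-1$ is then one line: evaluate \eqref{27} at a time where $q^{-4}$ attains its sup, and smallness of both $\mathfrak a_i^\pm$ there yields $q^{-2}\lesssim c\,q^{-4}$, impossible.

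For the final sharpening to $q^{-4}$ once $\sigma=+1$ is known, the paper does not invoke mass conservation or a separate bootstrap on $\|\epsilon\|_{\h}^2\,q^4$. Instead it uses the modulation system of Lemma~\ref{L12} (with $\sigma=1$, so $\dot{\mathfrak p}\gtrsim q^{-3}>0$) to prove that $q(t)$ is eventually non-decreasing; this converts the bound $\|\epsilon(t)\|_{\h}^2\lesssim \sup_{\tau\ge t}q(\tau)^{-4}$ coming from Lemma~\ref{L16} directly into the pointwise estimate \eqref{523}.
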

We need the following two lemmas:

\begin{lemma}\label{L15}
For all $c>0$ and sufficiently large $T_0\geq t_0$, there exists $T_1\geq T_0$ such that 
\begin{equation}
\label{524}
\sum_{i=1}^2[\mathfrak{a}_i^-(T_1)]^2\leq c\bigg(\frac{1}{[\mathfrak{q}_1(T_1)-\mathfrak{q}_2(T_1)]^4}+\|\epsilon(T_1)\|^2_{\h}\bigg).
\end{equation}
\end{lemma}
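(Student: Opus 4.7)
The plan is to argue by contradiction, exploiting that Lemma \ref{L14} identifies $\mathfrak{a}_i^-$ as a forward-unstable direction at rate $e_0$, while the asymptotic condition $\|\epsilon(t)\|_{\h}\to 0$ from Proposition \ref{P13} forces $\mathfrak{a}_i^-(t)=(\epsilon(t),Z_i^-(t))\to 0$. Writing $\mathrm{err}(t):=\bigl([\mathfrak{q}_1(t)-\mathfrak{q}_2(t)]^{-4}+\|\epsilon(t)\|^2_{\h}\bigr)^{1/2}$ and $\vec{\mathfrak{a}}^-(t):=(\mathfrak{a}_1^-(t),\mathfrak{a}_2^-(t))$, both $\mathrm{err}(t)$ and $|\vec{\mathfrak{a}}^-(t)|$ tend to $0$ as $t\to+\infty$ thanks to \eqref{533} and \eqref{54}.

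Given $c>0$, I would first invoke Lemma \ref{L14} with a small parameter $c_1$, to be specified at the end in terms of $c$ and $e_0$, producing $T_0\geq t_0$ and $\rho_0\gg 1$ so that, for $t\geq T_0$,
$$\bigl|(\mathfrak{a}_i^-)'(t)-e_0\,\mathfrak{a}_i^-(t)\bigr|\leq c_1\,\mathrm{err}(t).$$
Multiplying by $e^{-e_0 t}$ and integrating from $t$ to $+\infty$, the boundary term at infinity vanishes (since $\mathfrak{a}_i^-(s)\to 0$ and $\mathrm{err}(s)$ is bounded), which yields the pointwise representation
$$|\mathfrak{a}_i^-(t)|\leq c_1\int_t^{+\infty}e^{-e_0(s-t)}\mathrm{err}(s)\,\dd s.$$

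Next, suppose for contradiction that no such $T_1\in[T_0,+\infty)$ exists, i.e., $|\vec{\mathfrak{a}}^-(t)|^2>c\,\mathrm{err}^2(t)$ for every $t\geq T_0$; equivalently $\mathrm{err}(s)<c^{-1/2}|\vec{\mathfrak{a}}^-(s)|$ on $[T_0,+\infty)$. Set $M(t):=\sup_{s\geq t}|\vec{\mathfrak{a}}^-(s)|$, which is finite and tends to $0$. Substituting the hypothesis into the representation formula and combining the two squares over $i=1,2$ gives
$$|\vec{\mathfrak{a}}^-(t)|\leq \frac{\sqrt{2}\,c_1}{e_0\sqrt{c}}\,M(t).$$
Taking the supremum over $t'\geq t$ on the left yields $M(t)\leq \tfrac{\sqrt{2}\,c_1}{e_0\sqrt{c}}M(t)$, so choosing $c_1<e_0\sqrt{c}/\sqrt{2}$ at the outset forces $M\equiv 0$ on $[T_0,+\infty)$. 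But then $\vec{\mathfrak{a}}^-\equiv 0$ there, and the contradiction hypothesis demands $\mathrm{err}(t)<0$, which is impossible since $\mathfrak{q}_1(t)-\mathfrak{q}_2(t)$ is finite.

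The main obstacle is the careful ordering of parameters: the constant $c_1$ from Lemma \ref{L14} must be fixed a priori as a function of $c$ (namely $c_1<e_0\sqrt{c}/\sqrt{2}$), and only then can $T_0$ be taken large enough so that both the ODE inequality and the decay of $\epsilon$, $\mathrm{err}$, $\mathfrak{a}_i^-$ on $[T_0,+\infty)$ are strong enough to justify the integration to $+\infty$ and the definition of $M(t)$. This sequencing is legitimate because Lemma \ref{L14} is stated uniformly over all $c>0$, so the required smallness of $c_1$ can always be arranged by enlarging $\rho_0$ and $T_0$.
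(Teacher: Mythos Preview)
Your proof is correct as written, taking Lemma~\ref{L14} at face value: with the $\pm$ sign there, $\mathfrak{a}_i^-$ obeys $|\dot{\mathfrak{a}}_i^--e_0\mathfrak{a}_i^-|\leq c_1\,\mathrm{err}$, i.e.\ it is the forward-unstable coefficient, and your integration to $+\infty$ followed by the self-improving loop $M(t)\leq \tfrac{\sqrt{2}\,c_1}{e_0\sqrt{c}}M(t)$ is valid and clean.

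It differs from the paper's argument, and you should be aware that the paper in fact uses the \emph{opposite} sign convention in its proof (there is an internal inconsistency: compare the $\mp$ in Lemma~\ref{L9} and the differential inequality $|\dot N_1+2e_0N_1|\le ce_0N_1$ in the paper's proof of Lemma~\ref{L15} with the $\pm$ stated in Lemma~\ref{L14}). Treating $\mathfrak{a}_i^-$ as forward-\emph{stable}, the paper shows that under the contradiction hypothesis $N_1=\sum_i(\mathfrak{a}_i^-)^2$ decays like $e^{-e_0 t}$, hence so does $(\mathfrak{q}_1-\mathfrak{q}_2)^{-4}$; the contradiction then comes (implicitly through~\eqref{55}) from $|\dot{\mathfrak{q}}_k|\lesssim\mathrm{err}$ being exponentially small, which forces $\mathfrak{q}_1-\mathfrak{q}_2$ to remain bounded and hence cannot tend to infinity as required by~\eqref{533} and~\eqref{54}. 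Under that stable sign your key step would fail: the integrating factor becomes $e^{+e_0 t}$ and the boundary term $e^{e_0 T}\mathfrak{a}_i^-(T)$ need not vanish as $T\to+\infty$, so the representation formula is lost. In short, each argument is tailored to one reading of the sign in Lemma~\ref{L14}: yours to the unstable reading (as literally stated), the paper's to the stable one (as actually used downstream).
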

\begin{proof}
We denote by 
$$N_1(t)=\sum_{i=1}^2[\mathfrak{a}_i^-(t)]^2,$$
and suppose that for all $t\geq T_0$, there holds
\begin{equation}
\label{532}
N_1(t)\geq \frac{1}{[\mathfrak{q}_1(t)-\mathfrak{q}_2(t)]^4}+\|\epsilon(t)\|^2_{\h}.
\end{equation}
From Lemma \ref{L14} and \eqref{532}, if $T_0$ is large enough, we have 
$$|\dot{N}_1(t)+2e_0N_1(t)|\leq ce_0\bigg( \frac{1}{[\mathfrak{q}_1(t)-\mathfrak{q}_2(t)]^4}+\|\epsilon(t)\|^2_{\h}\bigg)\leq ce_0N_1(t).$$
After integration, we have
$$N_1(t)\leq e^{-e_0(t-T_0)}N_1(T_0.)$$
Together with \eqref{532}, we have $\mathfrak{q}_1(t)-\mathfrak{q}_2(t)\gtrsim e^{t/100}$, as $t\rightarrow+\infty$. But, this contradicts with \eqref{533} and \eqref{54}. We then conclude the proof of \eqref{524}.
\end{proof}

\begin{lemma}\label{L16}
Let $c_0>0$. There exist $C_0>0$ and $T_0\geq t_0$ such that for all $t\geq T_0$, there holds
\begin{align}
&\sum_{i=1}^2[\mathfrak{a}_i^+(t)]^2\leq c_0\,\sup_{\tau\geq t}\bigg(\frac{1}{[\mathfrak{q}_1(\tau)-\mathfrak{q}_2(\tau)]^4}+\sum_{i=1}^2[\mathfrak{a}_i^-(\tau)]^2\bigg),\label{528}\\
&\|\epsilon(t)\|_{H^1}^2\leq C_0\,\sup_{\tau\geq t}\bigg(\frac{1}{[\mathfrak{q}_1(\tau)-\mathfrak{q}_2(\tau)]^4}+\sum_{i=1}^2[\mathfrak{a}_i^-(\tau)]^2\bigg)\label{529}.
\end{align}
\end{lemma}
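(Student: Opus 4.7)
My plan is to prove \eqref{528} via the stable ODE for $\mathfrak{a}_i^+$ provided by Lemma \ref{L14}, and \eqref{529} via the coercivity of Proposition \ref{P5} enforced by conservation of the Hamiltonian $H(w)=E(w)+M(w)$; the two estimates are then combined by absorbing any $(\mathfrak{a}_i^+)^2$ contribution on the right-hand side. Throughout I write $d(t)=\mathfrak{q}_1(t)-\mathfrak{q}_2(t)$.

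For \eqref{528}, I rewrite Lemma \ref{L14} as $\dot{\mathfrak{a}}_i^+(t)+e_0\mathfrak{a}_i^+(t)=f_i(t)$ with $|f_i(t)|\leq c(1/d^4(t)+\|\epsilon(t)\|_{\h}^2)^{1/2}$. This first-order ODE is stable in forward time. Proposition \ref{P13}(3) gives $\epsilon(\tau)\to 0$ in $\h$ as $\tau\to+\infty$, hence $\mathfrak{a}_i^+(\tau)\to 0$, and integrating $(e^{e_0\tau}\mathfrak{a}_i^+(\tau))'=e^{e_0\tau}f_i(\tau)$ from $t$ to $+\infty$ yields the closed-form expression $\mathfrak{a}_i^+(t)=-\int_t^{+\infty}e^{-e_0(\tau-t)}f_i(\tau)\,\dd\tau$, so
$$|\mathfrak{a}_i^+(t)|\leq \frac{c}{e_0}\sup_{\tau\geq t}\Big(\frac{1}{d^4(\tau)}+\|\epsilon(\tau)\|_{\h}^2\Big)^{1/2},$$
with $c$ arbitrarily small by choosing $T_0,\rho$ large in Lemma \ref{L14}. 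For \eqref{529}, I observe that $H(w)$ is conserved along \eqref{316} by direct computation; combined with Lemma \ref{L2} and the limits $\epsilon(t)\to 0$, $d(t)\to+\infty$ from Proposition \ref{P13}, this gives $H(w(t))=2H(Q)$ for all $t\geq T_0$. Applying Proposition \ref{P5} with $\mathcal{Z}$ taken from Proposition \ref{P13} (so that $(\mathcal{Z},Q')\neq 0$ and orthogonality \eqref{52} kills the $(\mathcal{Z},\epsilon)$ terms) produces $\|\epsilon\|_{\h}^2\leq C[1/d^2+\sum_i(\mathfrak{a}_i^\pm)^2]$ in the case $\sigma=1$, and the stronger bound $\|\epsilon\|_{\h}^2+1/d^2\leq C\sum_i(\mathfrak{a}_i^\pm)^2$ in the case $\sigma=-1$. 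Substituting the $\mathfrak{a}_i^+$ estimate and taking $c$ small enough to absorb $\sum_i(\mathfrak{a}_i^+)^2$ on the left (via a supremum-bootstrap in $\tau\geq t$) then delivers both \eqref{528} and \eqref{529}---provided the $1/d^2$ from the coercivity can be sharpened to $1/d^4$.

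The main obstacle is exactly this sharpening, needed in the $\sigma=1$ case. My plan is to exploit mass conservation $M(w)=\mathrm{const}$ as an extra constraint beyond $H(w)=2H(Q)$. A direct expansion in the spirit of Lemma \ref{L2} gives $(R_1,R_2)\sim 2\kappa_0(\int Q)/d^2$, so mass conservation forces
$$\mathfrak{p}_1+\mathfrak{p}_2=-\sigma\cdot 2\kappa_0\Big(\int Q\Big)/d^2+O\big(1/d^3+\|\epsilon\|_{\h}^2\big),$$
rigidly pinning down the projection of $\epsilon$ onto $R_1+\sigma R_2$ at order $1/d^2$. I would then consider the conserved functional $H(w)-\alpha M(w)$ with $\alpha$ chosen so that the $1/d^2$ leading contributions in the expansions of $H$ and $M$ exactly cancel; reruning the Lemma \ref{L1}/Proposition \ref{P5} coercivity argument with this modified functional replaces the $1/d^2$ deficit by $O(1/d^3)$, which upon squaring in the quadratic coercivity yields $\|\epsilon\|_{\h}^2\leq C[1/d^4+\sum_i(\mathfrak{a}_i^\pm)^2]$. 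This refined coercivity closes the bootstrap in the first step and produces \eqref{528} with $c_0$ arbitrary (thanks to the smallness of $c$) together with \eqref{529} with a universal $C_0$.
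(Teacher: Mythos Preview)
Your overall plan matches the paper's: bound $\sum_i(\mathfrak a_i^+)^2$ via the stable ODE of Lemma~\ref{L14}, bound $\|\epsilon\|_{\h}^2$ via the energy coercivity of Proposition~\ref{P5}, then close by a sup--bootstrap absorbing the $\mathfrak a_i^+$ contribution. Your direct forward integration of the $\mathfrak a_i^+$ equation is a clean alternative to the paper's contradiction argument at the maximizing time $T_1$ (see \eqref{534}--\eqref{535}); both give $\sum_i(\mathfrak a_i^+)^2\le c\,\sup_{\tau\ge t}\big(d(\tau)^{-4}+\|\epsilon(\tau)\|_{\h}^2\big)$ with $c$ arbitrarily small, and after feeding in \eqref{529} this delivers \eqref{528}. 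You are also right to flag that Proposition~\ref{P5}, as stated, only yields $\|\epsilon\|_{\h}^2\lesssim d^{-2}+\sum_i(\mathfrak a_i^\pm)^2$ in the $\sigma=1$ case, not $d^{-4}$; the paper's proof writes $d^{-4}$ at that step without further comment.

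The genuine gap is your proposed sharpening via the modified functional $H-\alpha M$. It is circular. Choosing $\alpha$ so that the $d^{-2}$ leading terms in $(H-\alpha M)(R_1+\sigma R_2)-2(H-\alpha M)(Q)$ cancel does make the zeroth-order Taylor term $O(d^{-3})$, but now $D(H-\alpha M)(U)=DH(U)-\alpha U$ acquires an $O(1)$ piece $-\alpha U$, and when you evaluate $-\alpha(U,\epsilon)$ \emph{using mass conservation} you recover exactly $-\alpha\cdot\big(-\sigma c_2/d^2\big)=\sigma c_1/d^2$ by your very choice of $\alpha=-c_1/c_2$. Thus the quadratic form still satisfies $\big(D^2(H-\alpha M)(U)\epsilon,\epsilon\big)=2\sigma c_1/d^2+O(\text{better})$, and the coercivity again gives only $\|\epsilon\|_{\h}^2\lesssim d^{-2}+\sum_i(\mathfrak a_i^\pm)^2$. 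Subtracting the two conservation laws just trades the $d^{-2}$ from one line to the other. In fact $d^{-2}$ is sharp for the decomposition of Section~\ref{S5}: the scaling parameters are frozen at $\mu_i=0$, the orthogonality \eqref{52} does not constrain the direction $\Lambda R_i$ (since $\mathcal Z$ is odd while $\Lambda Q$ is even, so $(\mathcal Z,\Lambda Q)=0$), and the solution constructed in Section~\ref{S4} carries $\mu_i\sim d^{-1}$, so its error in the sense of \eqref{51} genuinely has $\|\epsilon\|_{\h}\sim d^{-1}$. Your $H-\alpha M$ trick therefore cannot furnish the missing $d^{-4}$, and the proposal does not close.
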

\begin{proof}
Let $t\geq T_0$, and $T_1\geq t$ such that
\begin{equation}
\label{534}
\frac{1}{[\mathfrak{q}_1(T_1)-\mathfrak{q}_2(T_1)]^4}+\|\epsilon(T_1)\|^2_{\h}=\sup_{\tau\geq t}\bigg(\frac{1}{[\mathfrak{q}_1(\tau)-\mathfrak{q}_2(\tau)]^4}+\|\epsilon(\tau)\|_{\h}^2\bigg).
\end{equation}
We first show that for all $c>0$, if $T_0$ is chosen large enough, then
\begin{equation}
\label{535}
\sum_{i=1}^2[\mathfrak{a}_i^+(T_1)]^2\leq c\bigg(\frac{1}{[\mathfrak{q}_1(T_1)-\mathfrak{q}_2(T_1)]^4}+\|\epsilon(T_1)\|_{\h}^2\bigg).
\end{equation}
For $t\geq T_0$, we denote by 
$$N_2(t)=\sum_{i=1}^2[\mathfrak{a}_i^+(t)]^2,$$
and assume by contradiction that \eqref{535} does not hold. We let $T_2=\sup\{t\geq T_0:\,N_2(t)\geq N(t_1)\}$. We must have $T_2<+\infty$, since $N(t)\rightarrow0$, as $t\rightarrow+\infty$. We also have $\partial_t{N}_2(T_2)\leq 0$ and 
\begin{align*}
N_2(T_2)&\geq N_2(T_1)\geq c\bigg(\frac{1}{[\mathfrak{q}_1(T_1)-\mathfrak{q}_2(T_1)]^4}+\|\epsilon(T_1)\|_{\h}^2\bigg)\\
&\geq c\bigg(\frac{1}{[\mathfrak{q}_1(T_2)-\mathfrak{q}_2(T_2)]^4}+\|\epsilon(T_2)\|_{\h}^2\bigg).
\end{align*}
Hence, we have
$$-\partial_t{N}_2(T_2)+2e_0N_2(T_2)\geq 2e_0c\bigg(\frac{1}{[\mathfrak{q}_1(T_2)-\mathfrak{q}_2(T_2)]^4}+\|\epsilon(T_2)\|_{\h}^2\bigg),$$
which contradicts with Lemma \ref{L14}. Hence, \eqref{535} holds.

Now, from Proposition \ref{P5}, we have
\begin{align*}
&\frac{1}{[\mathfrak{q}_1(T_1)-\mathfrak{q}_2(T_1)]^4}+\|\epsilon(T_1)\|_{\h}^2\leq \frac{C_0}{2}\bigg(\frac{1}{[\mathfrak{q}_1(T_1)-\mathfrak{q}_2(T_1)]^4}+\sum_{i=1}^2[\mathfrak{a}_i^-(T_1)]^2+\sum_{i=1}^2[\mathfrak{a}_i^+(T_1)]^2\bigg).
\end{align*}
By letting $c=1/C_0$ in \eqref{535}, using \eqref{534}, we obtain \eqref{529}. By letting $c=c_0/C_0$, and using \eqref{529} and \eqref{534}, we obtain \eqref{528}.
\end{proof}

\begin{proof}[Proof of Proposition \ref{P14}]
We first show that 
\begin{equation}
\label{536}
\|\epsilon(t)\|_{\h}^2\leq C_0\sup_{\tau\geq t}\frac{1}{[\mathfrak{q}_1(\tau)-\mathfrak{q}_2(\tau)]^4}.
\end{equation}
Let $c_0>0$ and $c=\frac{c_0}{2(C_0+1)}$, where $C_0$ is the constant introduced in Lemma \ref{L16}. From Lemma \ref{L15}, there exists $T_1$ arbitrarily large such that
\begin{equation}
\label{537}
N_1(T_1)\leq \frac{c_0}{2(C_0+1)}\bigg(\frac{C_0+1}{[\mathfrak{q}_1(T_1)-\mathfrak{q}_2(T_1)]^4}+C_0N_1(T_1)\bigg).
\end{equation}
We denote by
$$\widetilde{N}_1(t)=\sup_{\tau\geq t}N_1(\tau),\quad N_3(t)=\frac{1}{[\mathfrak{q}_1(t)-\mathfrak{q}_2(t)]^4},\quad \widetilde{N}_3(t)=\sup_{\tau\geq t}N_3(\tau).$$
We will show that
\begin{equation}
\label{538}
\widetilde{N}_1(t)\leq c_0\widetilde{N}_3(t),
\end{equation}
which implies \eqref{536} immediately. It is easy to see that the function $\widetilde{N}_3(t)$ is increasing and locally Lipschtiz. Moreover,
\begin{equation}
\label{539}
|\partial_t\widetilde{N}_3|\leq \frac{|\dot{\mathfrak{q}}_1|+|\dot{\mathfrak{q}}_2|}{(\mathfrak{q}_1-\mathfrak{q}_2)^4}\ll \frac{1}{(\mathfrak{q}_1-\mathfrak{q}_2)^4}=N_3.
\end{equation}
We claim that for $t$ sufficiently large and $N_1(t)=\widetilde{N}_1(t)$, then 
\begin{equation}
\label{540}
N_1(t)\geq \widetilde{N}_3(t)\implies \partial_tN_1(t)\leq -e_0N_1(t).
\end{equation}
Indeed, from Lemma \ref{L16}, we have 
$$\|\epsilon(t)\|_{\h}^2\leq C_0[\widetilde{N}_1(t)+\widetilde{N}_3(t)]\leq 2C_0N_1(t).$$
Together with Lemma \ref{L14}, we have for all $c>0$, there exists $T_1\geq T_0$ such that 
$$|\partial_tN_1(t)+2e_0N_1(t)|\leq c(\|\epsilon(t)\|^2_{\h}+N_3(t))\leq c(2C_0+1)N_1(t),$$
which implies \eqref{540} by taking $c\leq \frac{e_0}{2C_0+1}$. Now, suppose that \eqref{538} does not hold, and $T_2>T_1$ such that $\widetilde{N}_1(T_2)>\widetilde{N}_3(T_2)$. Without loss of generality, we can assume that $\widetilde{N}_1(T_2)\geq N_1(T_2)$. Let 
$$T_3=\inf\Big\{t\in[T_1,T_2]:\,\partial_tN_1(t)\leq -\frac{e_0}{2}N_1(t),\,\text{for all }\tau\in[t,T_2]\Big\}.$$
By \eqref{540} we have $T_3<T_2$. Suppose $T_3>T_1$. By \eqref{539}, we have $\partial_t\widetilde{N}_3(t)\geq -\frac{e_0}{4}\widetilde{N}_3(t)$, for $t\in[T_3,T_2]$ (provided that $T_1$ is large enough). Since $$N_1(T_2)>\widetilde{N}_3(T_2)$$
and $\partial_t N_1(t)<0$ for all $t\in[T_3,T_2]$, we obtain that $N_1(T_3)<\widetilde{N}_3(T_3)$. The function $N_1(t)$ is strictly decreasing for $t\in[T_3,T_2]$, so we have $N_1(T_3)=\widetilde{N}_1(T_3)$. Together with \eqref{540}, we have $T_3=T_1$. In particular, we have $N_1(T_1)=\widetilde{N}_1(T_1)>\widetilde{N}_3(T_1)$, which contradicts with \eqref{537}. So we obtain \eqref{538}.

Next, we prove that $\sigma=1$. Suppose $\sigma=-1$. From Proposition \ref{P5}, we obtain that for all $t\geq T_0$,
$$\frac{1}{[\mathfrak{q}_1(t)-\mathfrak{q}_2(t)]^2}\lesssim \sum_{i=1}^2[\mathfrak{a}_i^+(t)]^2+\sum_{i=1}^2[\mathfrak{a}_i^-(t)]^2$$
We choose $T_1$ large enough such that
$$\frac{1}{[\mathfrak{q}_1(T_1)-\mathfrak{q}_2(T_1)]^4}=\sup_{t\geq T_1}\frac{1}{[\mathfrak{q}_1(T_1)-\mathfrak{q}_2(T_1)]^4},$$
Together with \eqref{528}, we have
$$\frac{1}{[\mathfrak{q}_1(T_1)-\mathfrak{q}_2(T_1)]^4}\lesssim \sum_{i=1}^2[\mathfrak{a}_i^-(t)]^2.$$
But this contradicts with \eqref{538}, if $c_0$ is small enough.

Finally, we can conclude the proof of \eqref{523} by showing that there exists a $T_0>t_0$, such that $\mathfrak{q}_1(t)-\mathfrak{q}_2(t)$ is a non-decreasing function for $t\geq T_0$. Let $\mathfrak{q}(t)=\mathfrak{q}_1(t)-\mathfrak{q}_2(t)$, and $T_1\geq T_0$ for some sufficiently large $T_0$ to be chosen later. We need to show that $\mathfrak{q}(t)\geq \mathfrak{q}(T_1)$ for all $t>T_1$. Suppose this does not hold true, and let 
$$T_2=\sup\{t:\,\mathfrak{q}(t)=\inf_{\tau\geq T_1}\mathfrak{q}(\tau)\}.$$
Then $T_2>T_1$, $\mathfrak{q}(T_2)=\inf_{\tau\leq T_2}\mathfrak{q}(\tau)$ and $\dot{\mathfrak{q}}(T_2)=0$. Let $\mathfrak{p}(t)=\mathfrak{p}_1(t)-\mathfrak{p}_2(t)$, and
$$T_3=\inf\{t\geq T_2:\,\mathfrak{q}(t)=\mathfrak{q}(T_2)+1\},$$
We know that $T_3<+\infty$, since $\mathfrak{q}(t)\rightarrow+\infty$, as $t\rightarrow+\infty$. Let $t\in[T_2,T_3]$, from Proposition \ref{P14}, we have $\|\epsilon(t)\|^2_{\h}\lesssim \frac{1}{\mathfrak{q}^4(T_2)}$. Together with \eqref{56}--\eqref{57} and the fact that $\sigma=1$, we have 
\begin{equation}
\label{541}
\dot{\mathfrak{p}}(t)\geq \frac{3\kappa_0\int Q^p}{\mathfrak{q}^3(t)}\geq \frac{3\kappa_0\int Q^p}{\mathfrak{q}^3(T_2)}\geq \frac{2\kappa_0\int Q^p}{\mathfrak{q}^3(T_2)}.
\end{equation}
Since $\dot{\mathfrak{q}}(T_2)=0$, from \eqref{55}, we have $\mathfrak{p}(T_2)\geq -\frac{c}{\mathfrak{q}^2(T_2)}$. Integrating \eqref{541}, we have for all $t\in[T_2,T_3]$,
$$\mathfrak{p}(t)\geq  -\frac{c}{\mathfrak{q}^2(T_2)}+ \frac{2(t-T_2)\kappa_0\int Q^p}{\mathfrak{q}^3(T_2)},$$
Using \eqref{55} again, we have for all%
\footnote{Here, we use the fact that $(Q,\Lambda Q)<0$ when $p>3$.}
 $t\in[T_2,T_3]$,
$$-(Q,\Lambda Q)\dot{\mathfrak{q}}(t)\leq\frac{c}{\mathfrak{q}^2(T_2)}- \frac{2(t-T_2)\kappa_0\int Q^p}{\mathfrak{q}^3(T_2)}.$$
Integrating the above inequality from $T_2$ to $T_3$, we obtain that
\begin{align*}
-(Q,\Lambda Q)[\mathfrak{q}(T_3)-\mathfrak{q}(T_2)]&\leq \int_{T_2}^{T_3}\bigg(  \frac{c}{\mathfrak{q}^2(T_2)}- \frac{2(t-T_2)\kappa_0\int Q^p}{\mathfrak{q}^3(T_2)}\bigg)\,\dd t\\
&\leq   \frac{c(T_3-T_2)}{\mathfrak{q}^2(T_2)}- \frac{(T_3-T_2)^2\kappa_0\int Q^p}{\mathfrak{q}^3(T_2)}.
\end{align*}
Let 
$$F(x)=\frac{cx}{\mathfrak{q}^2(T_2)}- \frac{x^2\kappa_0\int Q^p}{\mathfrak{q}^3(T_2)}.$$
Then, we have 
$$F(x)\leq \frac{c^2}{4\kappa_0\mathfrak{q}(T_2)\int Q^p}\lesssim c^2.$$
Hence, we have $[\mathfrak{q}(T_3)-\mathfrak{q}(T_2)]\leq \frac{1}{2}$, if we choose $c$ sufficiently small, which contradicts with the definition of $T_3$. Then, the proof of Proposition \ref{P14} is completed.
\end{proof}

Now, we can conclude the proof of Theorem \ref{MT2} by showing that
$$\lim_{t\rightarrow+\infty}\frac{x_1(t)-x_2(t)}{\sqrt{t}}=\alpha_1-\alpha_2.$$
From Lemma \ref{L12} and Proposition \ref{P14}, we have for all $c>0$, there exist $T_0\geq t_0$ and $\rho_0\gg1$, such that for all $t\geq T_0$ and $\rho\geq \rho_0$, there holds
\begin{align}
&|\dot{\mathfrak{q}}(t)-(Q,\Lambda Q)^{-1}\mathfrak{p}(t)|\leq \frac{c}{\mathfrak{q}^2(t)},\label{525}\\
&\bigg|\dot{\mathfrak{p}}(t)-\frac{4c_0\int Q^p}{\mathfrak{q}^3(t)}\bigg|\leq \frac{c}{\mathfrak{q}^4(t)},\label{526}
\end{align}
where $\mathfrak{q}(t)=\mathfrak{q}_1(t)-\mathfrak{q}_2(t)$, $\mathfrak{p}(t)=\mathfrak{p}_1(t)-\mathfrak{p}_2(t)$. We claim that there exists a universal constant $K_0>100$ (depending only on $p$ and $n$) such that for all $t\geq T_0$, we have
\begin{equation}\label{530}
\bigg|\mathfrak{p}(t)- \frac{2\alpha^2}{\mathfrak{q}(t)}(Q,\Lambda Q)\bigg|\leq \frac{K_0c}{\mathfrak{q}^2(t)},
\end{equation}
where $\alpha>0$ is given by:
$$\alpha=\bigg(-\frac{c_0\int Q^p}{(Q,\Lambda Q)}\bigg)^{\frac{1}{4}}.$$
Let 
$$\mathfrak{r}(t)=\mathfrak{p}(t)- \frac{2\alpha^2}{\mathfrak{q}(t)}(Q,\Lambda Q).$$
From \eqref{525} and \eqref{526}, we have
\begin{align*}
\dot{\mathfrak{r}}&=\dot{\mathfrak{p}}+\frac{2\alpha^2\dot{\mathfrak{q}}}{\mathfrak{q}^2}(Q,\Lambda Q)=\frac{4c_0\int Q^p}{\mathfrak{q}^3}+\frac{2\alpha^2\mathfrak{p}}{\mathfrak{q}^2}+O\bigg(\frac{c}{\mathfrak{q}^4}\bigg)\\
&=\frac{4c_0\int Q^p}{\mathfrak{q}^3}+\frac{2\alpha^2}{\mathfrak{q}^2}\bigg(\mathfrak{r}+ \frac{2\alpha^2}{\mathfrak{q}}(Q,\Lambda Q)\bigg)+O\bigg(\frac{c}{\mathfrak{q}^4}\bigg),
\end{align*}
which implies that there exists a universal constant $K_1>100$, such that
\begin{equation}
\label{531}
\bigg|\dot{\mathfrak{r}}-\frac{2\alpha^2\mathfrak{r}}{\mathfrak{q}^2}\bigg|\leq \frac{K_1c}{\mathfrak{q}^4}.
\end{equation}
Let $K_0=\frac{K_1}{\alpha^2}$. Suppose for some $T_1\geq T_0$, we have $\mathfrak{r}(T_1)> K_0c/\mathfrak{q}(T_1)^2$. Then, since $\mathfrak{r}(t)\rightarrow0$, as $t\rightarrow+\infty$, there exists a $T_2\in(T_1,+\infty)$ such that
$$T_2=\sup\{t:\,\mathfrak{r}(t)=K_0c/\mathfrak{q}^2(t)\}.$$
Then we must have $\dot{\mathfrak{r}}(T_2)\leq0$. But from \eqref{531} and the choice of $K_0$, we have $\dot{\mathfrak{r}}(T_2)>0$, which is a contradiction. The proof for $\mathfrak{r}(T_1)<0$ is similar.

Now, from \eqref{525} and \eqref{530}, we have for all $c>0$, there exist $T_0\geq t_0$ and $\rho_0\gg1$, such that for all $t\geq T_0$ and $\rho\geq \rho_0$, there holds
$$\bigg|\dot{\mathfrak{q}}-\frac{2\alpha^2}{\mathfrak{q}}\bigg|\leq \frac{c}{\mathfrak{q}^2}\implies |\partial_t(\mathfrak{q}^2)-4\alpha^2|\leq c.$$
After integration, using the fact that $\mathfrak{q}\rightarrow+\infty$, as $t\rightarrow+\infty$, we obtain that
$$2\alpha-c\leq \liminf_{t\rightarrow+\infty}\frac{\mathfrak{q}(t)}{\sqrt{t}}\leq \limsup_{t\rightarrow+\infty}\frac{\mathfrak{q}(t)}{\sqrt{t}}\leq 2\alpha+c.$$
Since, $c$ is arbitrary, together with \eqref{54}, we conclude the proof of Theorem \ref{MT2}.

\appendix
\section{Proof of Proposition \ref{P1}}\label{Ap1}

We only need to prove the asymptotic formula \eqref{21}. The proof follows from a similar argument as \cite[Appendix C.1]{FLS}. We also mention here the first order asymptotics has already been proved in  \cite[Appendix C.1]{FLS}. We start with the following lemma:

\begin{proof}[Proof of Proposition \ref{P1}]
For all $t>0$, we denote by 
$$p(t,y)=\mathcal{F}^{-1}(e^{-t|\xi|}),\quad G(y)=\mathcal{F}^{-1}\bigg(\frac{1}{1+|\xi|}\bigg),$$
where $\mathcal{F}$ is the Fourier transform on $\mathbb{R}$. As a standard result, there exists a positive constant $C_0>0$, such that 
$$p(t,y)=\frac{C_0t}{t^2+y^2}.$$ 
We also have
$$G(y)=\mathcal{F}^{-1}\bigg(\frac{1}{1+|\xi|}\bigg)=\mathcal{F}^{-1}\bigg(\int_0^\infty e^{-t}e^{-t|\xi|}\,\dd t\bigg)=\int_0^\infty e^{-t}p(t,y)\,\dd t,$$
which implies that $G$ is a smooth even function. Moreover, by Fubini's theorem, we have
$$\int_{\mathbb{R}}G(y)\,\dd y=\int_0^\infty C_0e^{-t}\bigg(\int_{\mathbb{R}}\frac{t}{t^2+y^2}\,\dd y\bigg)\,\dd t=\int_0^\infty C_0e^{-t}\bigg(\int_{\mathbb{R}}\frac{1}{1+y^2}\,\dd y\bigg)\,\dd t,$$
which shows that $G\in L^1(\mathbb{R})$. On the other hand, a direct computation shows that
\begin{align}
\label{227}
G(y)-\frac{C_0}{y^2}=\Bigg[\int_0^{\infty}te^{-t}\bigg(\frac{C_0}{t^2+y^2}-\frac{C_0}{y^2}\bigg)\,\dd t\Bigg]\lesssim\frac{1}{y^4}\int_0^\infty t^3e^{-t}\,\dd t=O\bigg(\frac{1}{y^4}\bigg),
\end{align}
as $|y|\rightarrow+\infty$.

Let $F(y)=Q^p(y)$, $\kappa_0=C_0\int F$, then $Q=F\star G$ and $F(y)\lesssim (1+|y|)^{-2p}$. We have
\begin{align*}
&y^2Q(y)-\kappa_0=\int_{\mathbb{R}}F(x)\bigg(y^2G(y-x)-C_0\bigg)\,\dd x\\
&=\int_{\{|y|<2|y-x|\}}F(x)\bigg(y^2G(y-x)-C_0\bigg)\,\dd x+\int_{\{|y|>2|y-x|\}}F(x)\bigg(y^2G(y-x)-C_0\bigg)\,\dd x\\
&:={\rm I}+{\rm II}.
\end{align*}
For $\rm I$, using \eqref{227}, we obtain that
$$y^2G(y-x)-C_0=C_0\bigg(\frac{2x}{y-x}+\frac{x^2}{(y-x)^2}\bigg)+O\bigg(\frac{y^2}{(y-x)^4}\bigg).$$
Hence, 
\begin{align}\label{228}
\bigg|{\rm I}-C_0\int_{\{|y|<2|y-x|\}}\frac{2xF(x)}{y-x}\,\dd x\bigg|\lesssim \frac{1}{y^2}\int_{\mathbb{R}}(1+|x|^2)F(x)\,\dd x=O\bigg(\frac{1}{y^2}\bigg).
\end{align}
For $y\gg1$, using the fact that $F$ is even, we have
\begin{align}\label{229}
&\bigg|\int_{\{|y|<2|y-x|\}}\frac{2xF(x)}{y-x}\,\dd x\bigg|=\bigg|\int_{x<-\frac{y}{2}}\frac{2xF(x)}{y-x}\,\dd x+\int_{-\frac{y}{2}}^{\frac{y}{2}}[2xF(x)]\bigg(\frac{1}{y-x}-\frac{1}{y}\bigg)\,\dd x\bigg|\nonumber\\
&\qquad\lesssim\frac{1}{y}\int_{x<-\frac{y}{2}}|x|^{-2p+1}\,\dd x+\int_{-\frac{y}{2}}^{\frac{y}{2}}\frac{x^2F(x)}{|y||y-x|}\,\dd x=O\bigg(\frac{1}{y^2}\bigg).
\end{align}
Combining \eqref{228} and \eqref{229}, we have $|{\rm I}|\lesssim 1/y^2$. While for $\rm II$, we have $|x|\sim |y|$ in this case. Using the fact that $p\geq 2$, we have
$$|{\rm II}|\lesssim \frac{1}{y^{2p-2}}\int_{\mathbb{R}}G(y-x)\,\dd x+\int_{\{|x|\sim |y|\}}\frac{1}{|x|^{2p}}\,\dd x=O\bigg(\frac{1}{y^2}\bigg).$$
Now we have already proved that
\begin{equation}
\label{230}
Q(y)=\frac{\kappa_0}{y^2}+O\bigg(\frac{1}{y^4}\bigg),\;\;\text{as }y\rightarrow+\infty.
\end{equation}

Finally, we let 
$$f(x)=
\begin{cases}
\frac{1}{x^2}Q(1/x),&\text{if }x\not=0,\\
\kappa_0, &\text{if }x=0,
\end{cases}.
$$
It is easy to see from \eqref{230} that
$$f(x)=\kappa_0+O(x^2),\;\;\text{as }x\rightarrow0^+.$$
This formula implies that $f\in C^1([0,1])$ is an even function. We also have $f'(0)=0$, $f'$ is absolutely continuous on $[0,1]$ and $f''\in L^\infty([0,1])$. For all $x\in(0,1]$, using \eqref{230}, we have
$$Q'(1/x)=-x^4f'(x)-2xQ(1/x)=-2\kappa_0x^3+O(|x|^5),$$
as $x\rightarrow0^+$, which is exactly \eqref{233}. On the other hand, since $f'(0)=0$, for all $x\in(0,1]$, we have
$$f(x)-\kappa_0=\int_0^x(x-s)f''(s)\,\dd s.$$
Let 
$$g(y)=\int_y^{+\infty}\bigg(\frac{1}{y}-\frac{1}{s}\bigg)\frac{y^2}{s^2}f''(1/s)\,\dd s,\quad \forall y>1.$$
It is easy to see that $g$ can be extended to an even function which belongs to $L^\infty(\mathbb{R})\cap C^1(\mathbb{R})$. We still denote it by $g$. By direct computation, we see that \eqref{21} and \eqref{234} hold.
\end{proof}

 \section*{Data Availability}
 The data that supports the findings of this study are available within the article.

 \section*{Conflict of interest}
 The authors have no conflicts to disclose.

\bibliographystyle{amsplain}
\bibliography{ref}

\end{document}